\definecolor{blue}{rgb}{0.00,0.00,1.00}
\definecolor{red}{rgb}{1.00,0.00,0.00}
\newcommand{\red}{\color{red}}
\renewcommand{\baselinestretch}{1.2}
\def\bq{\begin{equation}}
\def\eq{\end{equation}}
\def\ba{\begin{array}{ccc}}
\def\bal{\begin{array}{lll}}
\def\ea{\end{array}}
\def\dcup{\displaystyle\bigcup} 
 \def\dsum{\displaystyle\sum}
 \def\lt#1{\left#1}\def\rt#1{\right#1}
\def\({\left(}\def\){\right)}
\def\[{\left[}\def\]{\right]}
    \def \R   {\mathbb{R}}
    \def\P    {\mathrm{P}}
    \def\i    {\mathrm{i}}
    \def\S    {\mathbb{S}}
    \def\intr {\int_{\R^3}}
    \def\ints {\int_{\S^2}}
    \def\intt {\int^t_0}
    \def\intrr {\int_{\R^6}}
    \def \Q    {\mathcal{Q}}
    \def \N    {\mathbb{N}}
    \def \pt   {\partial}
    \def \Dt   {\frac{\rm d}{{\rm d}t}}
    \def \dt    {\partial_t}
    \def \da    {\pt^\alpha}
    \def \dx    {\partial_x}
    \def \dxa   {\partial^{\alpha}_x}
    \def \dv    {\partial_v}
    \def \dvb   {\partial^{\beta}_v}
    \def \divx  {{\rm div}_x}
    \def\Tdx   {\nabla_x}
    \def\Tdv   {\nabla_v}
       \def\bq{\begin{equation}}
       \def\eq{\end{equation}}
       \def\be{\begin{equation}}
       \def\ee{\end{equation}}
       \def\bma#1\ema{{\allowdisplaybreaks\begin{align}#1\end{align}}}
       \def\bmas#1\emas{{\allowdisplaybreaks\begin{align*}#1\end{align*}}}
       \def\bln#1\eln{{\allowdisplaybreaks\begin{aligned}#1\end{aligned}}}
       \def\nnm{\notag}
       \def\bgr#1\egr{\allowdisplaybreaks\begin{gather}#1\end{gather}}
       \def\bgrs#1\egrs{\allowdisplaybreaks\begin{gather*}#1\end{gather*}}
       \theoremstyle{plain}
       \newtheorem{lem}{\bf Lemma}[section]
       \newtheorem{thm}[lem]{\textbf{Theorem}}
       \newtheorem{rem}[lem]{\textbf{Remark}}
       \newtheorem{remark}[lem]{\bf Remark}
\begin{document}


\title{Spectrum Structure and  Behaviors of the Vlasov-Maxwell-Boltzmann Systems}

\author{ Hai-Liang Li$^1$,\, Tong Yang$^2$,\, Mingying Zhong$^3$\\[2mm]
 \emph{\small\it  $^1$Department of  Mathematics,
    Capital Normal University, P.R.China.}\\
    {\small\it E-mail:\ hailiang.li.math@gmail.com}\\
    {\small\it $^2$Department of Mathematics, City University of Hong Kong,  Hong
    Kong}\\
    {\small\it E-mail: matyang@cityu.edu.hk} \\
    {\small\it  $^3$Department of  Mathematics and Information Sciences,
    Guangxi University, P.R.China.}\\
    {\small\it E-mail:\ zhongmingying@sina.com}\\[5mm]
    }
\date{ }

\pagestyle{myheadings}
\markboth{Vlasov-Maxwell-Boltzmann System}%
{H.-L. Li, T. Yang, M.-Y. Zhong}

 \maketitle

 \thispagestyle{empty}

\begin{abstract}\noindent
The spectrum structures and behaviors of the Vlasov-Maxwell-Boltzmann (VMB) systems for both two species and one species are studied in this paper. The analysis shows the effect of the Lorentz force induced by the electro-magnetic field leads to some different structure of spectrum from the classical Boltzmann equation and the closely related Vlasov-Poisson-Boltzmann system. And the significant difference between the two-species VMB model and one-species VMB model are given. The structure in high frequency illustrates the hyperbolic structure of the Maxwell equation. Furthermore, the long time behaviors and the optimal convergence rates to the equilibrium of the Vlasov-Maxwell-Boltzmann systems for both two species and one species are established based on the spectrum analysis,  and in particular the phenomena of the electric field dominating and magnetic field dominating are observed for the one-species Vlasov-Maxwell-Boltzmann system.

\medskip
 {\bf Key words}. Vlasov-Maxwell-Boltzmann system, Lorentz force, spectrum structure, optimal convergence rates.

\medskip
 {\bf 2010 Mathematics Subject Classification}. 76P05, 82C40, 82D05.
\end{abstract}

%
\tableofcontents

\section{Introduction}
\label{sect1}
\setcounter{equation}{0}

The Vlasov-Maxwell-Boltzmann system is a fundamental model in plasma physics for the describing the time evolution of dilute charged particles,
such as electrons and ions,  under the influence of the self-induced Lorentz forces governed by Maxwell equations, cf. \cite{ChapmanCowling}
for derivation and the physical background. In the literatures, there are two basic models, one is called two-species Vlasov-Maxwell-Boltzmann system that describes both the time evolution of ions and electrons
\be
 \left\{\bln     \label{VMB1z}
 &  \dt F_++v\cdot\Tdx F_++(E+v\times B)\cdot\Tdv F_+ =\Q(F_+,F_+)+\Q(F_+,F_-),\\
 &  \dt F_-+v\cdot\Tdx F_--(E+v\times B)\cdot\Tdv F_- =\Q(F_-,F_+)+\Q(F_-,F_-), \\
 & \dt E=\Tdx\times B-\intr (F_+-F_-)vdv,\\
 & \dt B=-\Tdx\times E,\\
 & \Tdx\cdot E=\intr (F_+-F_-)dv,\quad \Tdx\cdot B=0,
 \eln\right.
\ee
where $F_{\pm}=F_{\pm}(t,x,v)$ are number
density distribution functions of charged particles, and $E(t,x)$, $B(t,x)$ denote the electro and magnetic fields, respectively.  Here, the operator $\Q(\cdot,\cdot)$ describing
 the binary elastic collisions is given by
 \bq
 \Q(F,G)=\intr\ints
 |(v-v_*)\cdot\omega|(F(v'_*)G(v')-F(v_*)G(v))dv_*d\omega,\label{binay_collision}
 \eq
where
$$
 v'=v-[(v-v_*)\cdot\omega]\omega,\quad
 v'_*=v_*+[(v-v_*)\cdot\omega]\omega,\quad \omega\in\S^2.
$$
The other one called one-species Valsov-Maxwell-Boltzmann system that takes account of the fact that the ion is much heavier than the electron and the electron moves faster than the ion so that the time evolution of electron can be considered under a fixed background of ion distribution
\be
 \left\{\bln    \label{1VMB1z}
 &  \dt F+v\cdot\Tdx F+(E+v\times B)\cdot\Tdv F =\Q(F,F),\\
 & \dt E=\Tdx\times B-\intr Fvdv,\\
 & \dt B=-\Tdx\times E,\\
 & \Tdx\cdot E=\intr Fdv-n_b,\quad \Tdx\cdot B=0,
 \eln\right.
\ee
where  the time evolution of electrons is considered under the influence of  a fixed ions background
$n_b(x)$, and $F=F(x,v,t)$ is the number density function of electrons. And the operator $\Q(\cdot,\cdot)$ is defined by \eqref{binay_collision}.

The Vlasov-Maxwell-Boltzmann system has been intensively studied and many important progress has been made in \cite{Duan4,Duan5,Guo4,Jang,Strain}.
For instance, Guo \cite{Guo4} has first established the global existence of classical solutions in three-dimensional torus when the initial data is a small perturbation of a global Maxwellian, and Strain \cite{Strain} proved the the corresponding global existence result of  classical solutions in  $\R^3$. The diffusive limit for two-species VMB system  was shown in \cite{Jang}.
%
The recent important investigation of long time behavior of global solution near the global Maxwellian studied  in \cite{Duan4,Duan5} shed light on the complicacy of the Valsov-Maxwell-Boltzmann (VMB) system. Therein, it was shown by the method of compensated functions in \cite{Duan5} that the total energy of the linearized one-species VMB system decays at the rate $(1+t)^{-\frac38}$ (but the decay rate of the nonlinear system has not been obtained  since the decay rate of the linear system obtained therein seems to be not enough to deal with the time evolution of the nonlinear terms) and in \cite{Duan4} that the total energy of nonlinear two-species VMB system decays at the rate $(1+t)^{-\frac34}$.

A natural and interesting question follows then, what is the main characters of the structures and time-asymptotical behaviors of the Valsov-Maxwell-Boltzmann system on the transport of charged particles under the influence of electromagnetic fields governed by the Maxwell equation and/or mutual interaction between charged particles. One of the methods to investigate this properties is the analysis of spectrum structures of the Valsov-Maxwell-Boltzmann system. However, in contrast to the works on Boltzmann equation~\cite{Ellis,Liu1,Liu2,Ukai1,Ukai2,Ukai3} and Vlasov-Poisson-Boltzmann system \cite{Li2,Li3}, the spectrum  of the linearized Valsov-Maxwell-Boltzmann system around a global equilibrium has not been given despite of its importance. The main purpose of this paper is to fill in this gap.

The main purpose of this paper is to  consider the structures of the linearized systems for the above VMB systems \eqref{VMB1z} and \eqref{1VMB1z} around a global equilibrium so that some specific properties influenced by the electromagnetic fields and/or the mutual interaction between charged particles are revealed. These spectrum structures are important for understanding the behavior of the solutions to these systems both locally in space and globally in time. Indeed, the main purpose in the present paper is to continue the project to study the structures and behaviors on the transport of charged particles under the influence of electric fields, electromagnetic fields, or magnetic field. As it has already been studied in \cite{Duan1,Li2,Li3} about the structure structures and behaviors of both one-species and two-species Vlasov-Poisson-Boltzmann systems, the influence of electric field  gives rise to some complicated phenomena on the transport of charged particles. Indeed, it was shown in \cite{Duan1,Li2} that the global distribution function to the one-species Vlasov-Poisson-Boltzmann system tends to the global Maxwellian at the optimal rate $(1+t)^{-\frac14}$ in $\R^3$ which is slower than the Boltzmann equation and is caused mainly due to the slower but optimal decay $(1+t)^{-\frac14}$ of the electric fields. On the other hand, with the influence of mutual interaction of charged particles included,  the global distribution function and electric field of the two-species Vlasov-Poisson-Boltzmann system was proved in \cite{Yang4,Li3} to converge to the equilibrium at the optimal rate $(1+t)^{-\frac34}$ for the distribution functions and $e^{-\mathcal{O}(1)t}$ for the electric field,
where the key issue lies in the fact that the mutual interaction of charged particles leads to spectral gap.

In the present paper, we shall establish the structure of the spectrum in details for both two-species and one-species linearized VMB systems, analyze the corresponding semigroups to the linearized operators and show the optimal decay rates of global solutions to the linearized  VMB systems, and finally obtain the (optimal) time-asymptotical behaviors of global solutions to the nonlinear VMB systems of both two-species and one-species types. To be more precisely, we first establish the structure of the spectrum in details for two linearized VMB systems and reveal the effect of electromagnetic fields on the distribution of spectrum of linearized operators. This effect gives rise to a completely different distribution of the spectrum of the linearized operators for both two-species and one-species VMB system.
Indeed, the influence of electromagnetic fields on the transport of one-species charged particles causes the linearized VMB system admits higher order eigenvalues (spectrum) $\lambda_6=\lambda_7= -\mathcal{O}(1)|\xi|^4 $ at lower frequency $0<|\xi|\ll1$ besides those behaving like $\lambda_j= j{\rm \i}- \mathcal{O}(1)|\xi|^2$ for $j=0,\pm1$ (refer to Theorem~\ref{spectrum2}). This is carried out in terms of the relation between the rotational part of macroscopic velocity vector field and the electromagnetic fields and in turn causes the slower time-convergence rates of the global distribution to the global Maxwellian (refer to Theorems~\ref{time1a}--\ref{time3a} for details).
However, the mutual interaction of particles with different type of charges cancels this particular influence of electromagnetic fields and only the spectrum like $\lambda_j= -\mathcal{O}(1)|\xi|^2$ is kept finally. 
In addition, the appearance of electromagnetic fields causes the additional spectrum (eigenvalues)  around $\pm\i |\xi|$  and in particular ${\rm Re}\lambda=- \mathcal{O}(1)|\xi|^{-1}$ at high frequency $|\xi|\gg1$ for both one-species and two species VMB systems. This unfortunately leads to the loss of regularity of global solutions (refer to Theorem~\ref{spectrum1} and Theorem~\ref{spectrum2} for details).

Then, in terms of the analysis on spectrum structures and the semigroups of both two-species and one-species linearized VMB systems, we are further able to establish the optimal time convergence rates of the global solutions for both linearized systems and nonlinear systems in three-dimensional whole space.
For two-species VMB system, we can observe some phenomena of wave propagation and  magnetic field domination on long time behaviors of charge transport of charge transport due to the effect of magnetic filed and mutual interaction between the particles of two species. Indeed, for the global solution $(f_1,f_2,E,B)$ to the linearized two-species VMB system, we can show that the distribution function $f_1$, corresponding to the total summation of the distribution functions between the two species, is governed by the linearized Boltzmann equation and its optimal time decay rate $(1+t)^{-\frac34}$ in $L^2$-norm has been already established for instance in \cite{Ukai1,Zhong2012Sci}.  Meanwhile, the magnetic field $B$ is also shown to tend to zero at the optimal time decay rates $(1+t)^{-\frac34}$ in $L^2$-norm, but the distribution function $f_2$, corresponding to the difference of the distribution functions between the two species, and the  electric field $E$ decay at the faster optimal time rate $(1+t)^{-\frac54}$ in $L^2$-norm. In particular, the macroscopic part of the distribution function $f_2$ decay at exponentially and and microscopic part of the distribution function $f_2$ decays at the optimal rate $(1+t)^{-\frac54}$ (refer to Theorem~\ref{time1} for details). Here we recall that the macroscopic part and microscopic part related to the distribution function $f_1$ decay at the different optimal rates $(1+t)^{-\frac34}$ and $(1+t)^{-\frac54}$ respectively  as shown in \cite{Zhong2012Sci}. These optimal algebraic time decay rates also established for the distribution solution $(f_1,f_2,E,B)$ to the nonlinear two-species VMB system, and we are able to show that the distribution function $f_2$ and the electric field $E$ converge to zero state at the faster optimal time rate  $(1+t)^{-\frac54}$ than the optimal rate $(1+t)^{-\frac34}$ for the function $f_1$ and the electric field $B$  (refer to Theorem~\ref{time3} and Theorem~\ref{time4}  for details).

For one-species VMB system, some more subtle phenomena on long time behaviors of charge transport of  charge transport are observed. Indeed, we can show that there are different long time behaviors of global solutions to one-species VMB system characterized and dominated by the effect of either the magnetic field or the electric field, which depends on whether the relation $\nabla_x\cdot E_0=n_0$ holds or not with $n_0$ denoting the first moment of the initial distribution.
We fist prove the phenomena of electric field dominating in the case that $\nabla_x\cdot E_0\neq n_0$ for the linearized VMB system. Namely, we show that both the distribution function and the electric field tend to the equilibrium state at the optimal decay rate $(1+t)^{-\frac14}$ in $L^2$-norm which is slower than the faster optimal convergence rate $(1+t)^{-\frac38}$ in $L^2$-norm of the magnetic field to equilibrium state, and in particular the macroscopic density,  momentum and energy, corresponding to the first three moments of the distribution function $f$, decay at the optimal rates $(1+t)^{-\frac34}$, $(1+t)^{-\frac14}$, and $(1+t)^{-\frac34}$ respectively (refer to Theorem~\ref{time1a} for details). This phenomena of electric field dominating has not been observed before. However, one can not extend these linear theory to the nonlinear VMB system although the global existence of strong solution can be established (refer to Theorem~\ref{time3a} for details), because these optimal time decay rates are too weak to be empolyed to control the expected long time rates of nonlinear terms  (refer to Remark~\ref{rem3} for some verification).
%

%
In the case that $\nabla_x\cdot E_0= n_0$ we prove the phenomena of magnetic field dominating for both linearized and nonlinear VMB system. Indeed, we are able to show that the magnetic field tend to zero at the optimal  time rate $(1+t)^{-\frac38}$ in $L^2$-norm which is slower than the optimal time decay rate $(1+t)^{-\frac58}$ of the distribution function and the optimal time decay rate $(1+t)^{-\frac34}$ of the electric field in $L^2$-norm. In particular, the macroscopic density, momentum and  energy related to the distribution function $f$ are shown to decay at the different optimal rates $(1+t)^{-\frac54}$, $(1+t)^{-\frac58}$ and $(1+t)^{-\frac34}$ respectively (refer to Theorem~\ref{time2a} for details). Furthermore, we can also study rigorously the time-asymptotical behaviors of global solutions to the nonlinear VMB system and in particular obtain the optimal time decay rate $(1+t)^{-\frac58}$  of the distribution function $f$, the optimal time decay rate $(1+t)^{-\frac38}$  of the magnetic fields $B$, and the faster time decay rate $(1+t)^{-\frac34}\ln(1+t)$  of electric field $E$ (refer to  Theorem~\ref{time3a} for details).
This gives more information than those obtained by energy method in \cite{Duan4,Duan5} where only the upper bound of the decay rate of total energy was obtained. 
Here, we should mention that the time-convergence rate $(1+t)^{-\frac38}$ of the total energy of global solution for linearized one-species VMB system in \cite{Duan4} indeed corresponds to the case of the magnetic field dominating phenomena but without the above analysis in details made in the present paper, and the results corresponding to the electric field dominating phenomena is never observed before. In particular, we obtain the optimal decay rates of the solution to the nonlinear one-species VMB system (refer to Theorem~\ref{time3a} for details) which was not solved in \cite{Duan4}.

The rest of this paper will be organized as follows. In  Section~\ref{result}, the main results on spectrum structures and time-asymptotic behaviors of global solutions for two-species VMB and one-species VMB are stated in section 2.1 and section 2.2 respectively.
In Section~\ref{spectrum-two} and Section~\ref{spectrum-one}, the spectrum  structures of the two linearized systems for both two-species and one-species charge motion will be analyzed with detailed description in low and high frequency regions.
Based on this analysis on the linearized operators, the decomposition of the corresponding semigroups generated by these operators will be
given in Section~\ref{behavior-linear} together with the optimal convergence rates to the equilibrium in time.
The optimal convergence rates of the global solution to the original nonlinear system will be studied in the last section.

\section{Main results}
\label{result}

\subsection{Two-species VMB system}

We first consider the Cauchy problem for the two-species Valsov-Maxwell-Boltzmann system \eqref{VMB1z} as follows
 \be
 \left\{\bln     \label{VMB1}
 &  \dt F_++v\cdot\Tdx F_++(E+v\times B)\cdot\Tdv F_+ =\Q(F_+,F_+)+\Q(F_+,F_-),\\
 &  \dt F_-+v\cdot\Tdx F_--(E+v\times B)\cdot\Tdv F_- =\Q(F_-,F_+)+\Q(F_-,F_-), \\
 & \dt E=\Tdx\times B-\intr (F_+-F_-)vdv,\\
 & \dt B=-\Tdx\times E,\\
 & \Tdx\cdot E=\intr (F_+-F_-)dv,\quad \Tdx\cdot B=0,\\
 & F_{\pm}(0,x,v)=F_{\pm,0}(x,v), \quad  E(0,x)=E_0(x),\quad B(0,x)=B_0(x).
 \eln\right.
 \ee

In order to study the spectrum structure of the system \eqref{VMB1}, it
is convenient to consider the following system for
$ F_1=F_++F_-$ and $ F_2=F_+-F_-$ that takes care of the cancellation
in the original system:
\be
 \left\{\bln     \label{VMB2}
&\dt F_1+v\cdot\Tdx F_1+(E+v\times B)\cdot\Tdv F_2
=\Q(F_1,F_1),\\
&\dt F_2+v\cdot\Tdx F_2+(E+v\times B)\cdot\Tdv F_1
=\Q(F_2,F_1),\\
& \dt E=\Tdx\times B-\intr F_2vdv,\\
&  \dt B=-\Tdx\times E,\\
& \Tdx\cdot E=\intr F_2dv,\quad \Tdx\cdot B=0,\\
&F_1(0,x,v)=F_{1,0}=F_{+,0}+F_{-,0},\quad F_2(0,x,v)=F_{2,0}=F_{+,0}-F_{-,0}.
\eln\right.
 \ee

In the following, we will consider the spectrum of the operator
by linearizing the  system \eqref{VMB2} around an equilibrium
state $(F^*_1,F^*_2,E^*,B^*)=(M(v),0,0,0)$ with $M(v)$ being
the normalized Maxwellian given by
$$
 M=M(v)=\frac1{(2\pi)^{3/2}}e^{-\frac{|v|^2}2},\quad v\in\R^3.
$$
Set
$$F_1=M+\sqrt M f_1,\quad F_2=\sqrt M f_2.$$
Then the  system \eqref{VMB2} for $(F_1,F_2,E,B)$ can be written as the following
system for $(f_1,f_2,E,B)$:
\bgr
\dt f_1+v\cdot\Tdx f_1-Lf_1
=\frac12(v\cdot E)f_2-(E+v\times B)\cdot \Tdv f_2+\Gamma(f_1,f_1),\label{VMB3}\\
\dt f_2+v\cdot\Tdx f_2-L_1f_2-v\sqrt M\cdot E
=\frac12(v\cdot E)f_1-(E+v\times B)\cdot \Tdv f_1+\Gamma(f_2,f_1),\label{VMB3a}\\
 \dt E=\Tdx\times B-\intr f_2v\sqrt Mdv,\label{VMB3b}\\
  \dt B=-\Tdx\times E,\label{VMB3c}\\
\Tdx\cdot E=\intr f_2\sqrt Mdv,\quad \Tdx\cdot B=0,\label{VMB3d}\\
f_1(0,x,v)=\frac{F_{1,0}-M}{\sqrt M},\quad f_2(0,x,v)=\frac{F_{2,0}}{\sqrt M},\quad  E(0,x)=E_0,\quad B(0,x)=B_0,\label{VMB3e}
\egr
where
 \bmas
Lf&=\frac1{\sqrt M}[\Q(M,\sqrt{M}f)+\Q(\sqrt{M}f,M)],\\
L_1f&=\frac1{\sqrt M}\Q(\sqrt{M}f,M),\quad
\Gamma(f,g)=\frac1{\sqrt M}\Q(\sqrt{M}f,\sqrt{M}g).\emas
The linearized collision operators $L$ and $L_1$ can be written as, cf. \cite{Cercignani,Yu},
 \bmas
(Lf)(v)&=(Kf)(v)-\nu(v) f(v),\quad (L_1f)(v)=(K_1f)(v)-\nu(v) f(v),\\
\nu(v)&=\intr\ints |(v-v_*)\cdot\omega|M_*d\omega dv_*,\\
(Kf)(v)&=\intr\ints
|(v-v_*)\cdot\omega|(\sqrt{M'_*}f'+\sqrt{M'}f'_*-\sqrt{M}f_*)\sqrt{M_*}d\omega
dv_*\\
&=\intr k(v,v_*)f(v_*)dv_*,\\
(K_1f)(v)&=\intr\ints
|(v-v_*)\cdot\omega|\sqrt{M'_*}\sqrt{M_*}f'd\omega
dv_*=\intr k_1(v,v_*)f(v_*)dv_*,
\emas
where $\nu(v)$ is called
the collision frequency, $K$ and $K_1$ are self-adjoint compact operators
on $L^2(\R^3_v)$ with real symmetric integral kernels $k(v,v_*)$ and $k_1(v,v_*)$.
The null space of the operator $L$, denoted by $N_0$, is a subspace
spanned by the orthonoraml basis $\{\chi_j,\ j=0,1,\cdots,4\}$  given by
\bq \chi_0=\sqrt{M},\quad \chi_j=v_j\sqrt{M} \ (j=1,2,3), \quad
\chi_4=\frac{(|v|^2-3)\sqrt{M}}{\sqrt{6}};\label{basis}\eq
and the null space of the operator $L_1$, denoted by $N_1$, is
spanned only by $\sqrt{M}$.

For later use,
 denote by $L^2(\R^3)$ the Hilbert space of complex valued functions
on $\R^3$ with the inner product and the norm given by
$$
(f,g)=\intr f(v)\overline{g(v)}dv,\quad \|f\|=\(\intr |f(v)|^2dv\)^{1/2}.
$$
And let $\P_0,P_{\rm d}$ be the projection operators from $L^2(\R^3_v)$ to the subspace $N_0, N_1$ with
\bma
 &\P_0f=\sum_{i=0}^4(f,\chi_i)\chi_i,\quad \P_1=I-\P_0, \label{P10}
 \\
 &P_{\rm d}f=(f,\sqrt M)\sqrt M,   \quad P_r=I-P_{\rm d}. \label{Pdr}
 \ema

From the Boltzmann's H-theorem, the linearized collision operators $L$ and $L_1$ are non-positive, precisely,  there is a constant $\mu>0$ such that \bma
 (Lf,f)&\leq -\mu \| \P_1f\|^2, \quad  \ f\in D(L),\\
 (L_1f,f)&\leq -\mu \|P_rf\|^2, \quad  \ f\in D(L_1),\label{L_4}
 \ema
where $D(L)$ and $D(L_1)$ are the domains of $L$ and $L_1$ given by
$$ D(L)=D(L_1)=\left\{f\in L^2(\R^3)\,|\,\nu(v)f\in L^2(\R^3)\right\}.$$
In addition, for the hard sphere model, $\nu$ satisfies
 \be
\nu_0(1+|v|)\leq\nu(v)\leq \nu_1(1+|v|).  \label{nuv}
 \ee
Without the loss of generality, we choose $\nu(0)\ge \nu_0\ge \mu>0$ throughout this paper.

From the system \eqref{VMB3}--\eqref{VMB3d} for $(f_1,f_2,E,B)$, we have the following decoupled
linearized  system for $f_1$ and $(f_2, E, B)$:
\bgr
\dt f_1+v\cdot\Tdx f_1-Lf_1
=0,\label{VMB4a}\\
\dt f_2+v\cdot\Tdx f_2-L_1f_2-v\sqrt M\cdot E
=0,\label{VMB5a}\\
\dt E=\Tdx\times B-\intr f_2v\sqrt Mdv,\label{VMB6a}\\
\dt B=-\Tdx\times E,\label{VMB7a}\\
\Tdx\cdot E=\intr f_2\sqrt Mdv,\quad \Tdx\cdot B=0.\label{VMB8a}
\egr
The equation \eqref{VMB4a} is simply the linearized Boltzmann equation around a global Maxwellian
so that its spectrum structure is well established since 1970s. 
Therefore, we only need to study the spectrum structure  of  the linear system~\eqref{VMB5a}--\eqref{VMB8a} on $(f_2, E, B)$.

\def\BB{\mathbb{B}}
\def\AA{\mathbb{A}}

For convenience of notations,
rewrite the linearized system for $f_1\in L^2(\R^3_x\times\R^3_v)$ and $U=(f_2,E,B)^T\in L^2(\R^3_x\times\R^3_v)\times L^2(\R^3_x)\times L^2(\R^3_x)$ as
\bq
 \dt f_1=\BB_0f_1,\quad f_1(0,x,v)=f_{1,0}(x,v),\label{LVMB0}
 \eq
and
 \bq
 \left\{\bln            \label{LVMB1a}
 &\dt U=\AA_0U, \quad t>0,\\
 &\Tdx\cdot E=(f_2,\sqrt M),\quad \Tdx\cdot B=0,\\
 &U(0,x,v)=U_0(x,v)=(f_{2,0},E_0,B_0),
 \eln\right.
 \eq
where the operators $\BB_0$ and $\AA_0$ are operators on $L^2(\R^3_x\times\R^3_v)$ and $L^2(\R^3_x\times\R^3_v)\times L^2(\R^3_x)\times L^2(\R^3_x)$ defined by
\bma
\BB_0&=L-(v\cdot\Tdx),\\
\AA_0 &=\left( \ba
L_1-(v\cdot\Tdx) &v\sqrt M &0\\
-P_m &0 &\Tdx\times\\
0 &-\Tdx\times &0
\ea\right),
\ema
with
\bq P_mf=(f,v\sqrt M).\eq

Take the Fourier transform in \eqref{LVMB1a} with respect to $x$ to have
 \bq
 \left\{\bln            \label{LVMB1}
 &\dt \hat{U}=\hat{\AA}_0(\xi)\hat{U}, \quad t>0,\\
 &\i(\xi\cdot \hat{E})=(\hat{f}_2,\sqrt M),\quad \i(\xi\cdot \hat{B})=0,\\
 &\hat{U}(0,\xi,v)=\hat{U}_0(\xi,v)=(\hat{f}_{2,0},\hat{E}_0,\hat{B}_0),
 \eln\right.
 \eq where $\hat{\AA}_0(\xi)$ is the Fourier transform of  $\AA_0$.

Note that it is difficult to study the spectrum structure of the system \eqref{LVMB1}
directly because of the constraints on $\hat{E}$ and $\hat{B}$ given in the
second and third equations. One of the key observations in this paper is that
by using the identity $F=(F\cdot y)y-y\times y\times F$ for any
$F\in \R^3$ and  $y\in \mathbb{S}^2$,
we can firstly solve for $\hat{V}=(\hat f_2,\omega\times\hat  E,\omega\times \hat B)$ with $\omega=\xi/|\xi|$
so that by combining these two constraints, we have the full information on
$\hat U$. For this reason, we consider the following system for $\hat{V}$: 
 \bq
 \left\{\bln            \label{LVMB2a}
 &\dt \hat{V}=\hat{\AA}_1(\xi)\hat{V}, \quad t>0,\\
 &\hat{V}(0,\xi,v)=\hat{V}_0(\xi,v)=(\hat{f}_{2,0},\omega\times\hat{E}_0,\omega\times\hat{B}_0),
 \eln\right.
 \eq
with
\bma
\hat{\AA}_1(\xi)=\left( \ba
\hat{B}_1(\xi) &-v\sqrt M\cdot\omega\times &0\\
-\omega\times P_m &0 &\i\xi\times\\
0 &-\i\xi\times &0
\ea\right).
\ema
Here,  for $\xi\ne0$,
 \bq
\hat{B}_1(\xi) =L_1-\i(v\cdot\xi)-\mbox{$\frac{\i(v\cdot\xi)}{|\xi|^2}$}P_{\rm d}.  \label{B(xi)1}
 \eq

Before further discussion, we will give a remark on the eigenvalues
and eigenfunctions of the original system and the reduced system
\eqref{LVMB2a}.

\begin{remark}\label{rem1.1} Set
$F_+=\frac12M+\sqrt{M}f_+,\,  F_-=\frac12M+\sqrt{M}f_-$ to have
\bq \label{LVMB3}
\left\{\bln
&\dt f_{\pm}+v\cdot\Tdx f_{\pm}-\frac12(L\pm L_1)f_{\pm}-\frac12(L\mp L_1)f_{\pm}\mp\frac12v\sqrt M\cdot E=0, \\
 &\dt E=\Tdx\times B-\intr (f_+-f_-)v\sqrt Mdv, \quad
  \dt B=-\Tdx\times E, \\
&\Tdx\cdot E=\intr (f_+-f_-)\sqrt Mdv,\quad \Tdx\cdot B=0.
\eln \right.
\eq
The eigenvalues of the system \eqref{LVMB3} are same as those
of \eqref{LVMB0} and \eqref{LVMB2a},  and the eigenfunctions of the system  \eqref{LVMB3} can be obtained as linear combinations
 of those for  \eqref{LVMB0} and \eqref{LVMB2a}. In fact, let $\lambda$  be an eigenvalue with the corresponding eigenfunction denoted by
$\phi$ of \eqref{LVMB0}, and $\beta$  be an eigenvalue with
 the corresponding eigenvector denoted by $\Psi=(\psi,E,B)$ of \eqref{LVMB2a}. Then $U=(\phi,\phi,0,0)$ is the corresponding
 eigenvector with the eigenvalue $\lambda$, and $V=(\psi,-\psi, -\frac{\i \xi}{|\xi|^2}(\psi,\chi_0)-\frac{\xi}{|\xi|}\times E,-\frac{\xi}{|\xi|}\times B)$ is
 the corresponding
eigenvector with the eigenvalue $\beta$.
\end{remark}

By the above argument, from now on, we will focus  on  the system~\eqref{LVMB1a}. It is interesting to find out that the spectrum
structure depends  on the decomposition
of the asymptotics in low frequency and high frequency
  like the classical Boltzmann equation and the Vlasov-Poisson-Boltzmann system, but depends on the low-intermediate-high  frequencies. This  is mainly due to the hyperbolic structure of the Maxwell equations, in particular the effect of the magnetic field on the velocity field. More precisely,  the spectrum of linearized operator contain an eigenvalue in low frequency located in a small neighborhood of the origin, and two eigenvalues in high frequency located in two small neighborhoods centered at  the points $\lambda=\pm\i|\xi|$ respectively. Except these eigenvalues, there is  a spectral gap for the intermediate frequency.
Precisely, we have the following result on the spectrum structure.

 \begin{thm}\label{spectrum1}
There exist two constants $r_0>0$ and $b_2>0$ so that the spectrum $\lambda\in\sigma(\hat{\AA}_1(\xi))\subset\mathbb{C}$ for $\xi=s\omega$ with $s=|\xi|\leq r_0$ and $\omega\in \mathbb{S}^2$ consists of two points $\{\lambda_j(s),\ j=1,2\}$ in the domain $\mathrm{Re}\lambda>-b_2$. The spectrum $\lambda_j(s)$ are $C^\infty$ functions of $s$ for $|s|\leq r_0$. In particular, the eigenvalues admit the following asymptotic expansion for $|s|\leq r_0$
 $$                                  
 \lambda_{1}(s) =\lambda_{2}(s) = -a_1s^2+o(s^2),
 $$
where $a_1>0$ is a constant defined in Theorem \ref{eigen_3}.

There exists a constant $r_1>0$ such that the spectrum $\beta\in\sigma(\hat{\AA}_1(\xi))\subset\mathbb{C}$ for  $s=|\xi|> r_1$ consists of four eigenvalues $\{\beta_j(s),\ j=1,2,3,4\}$ in the domain $\mathrm{Re}\lambda>-\mu/2$. In particular, the eigenvalues satisfy
 \bgrs
 \beta_1(s) = \beta_2(s) =-\i s+O(s^{-1/2}),
 \\
 \beta_3(s) = \beta_4(s) =\i s+O(s^{-1/2}),
 \\
\frac{c_1}{s}\le -{\rm Re}\beta_{j}(s)\le \frac{c_2}{s},
\egrs
for two positive constants $c_1$ and $c_2$.

For any $r_1>r_0>0$, there
exists $\alpha =\alpha(r_0,r_1)>0$ such that for  $r_0\le |\xi|\le r_1$,
$$ \sigma(\hat{\AA}_1(\xi))\subset\{\lambda\in\mathbb{C}\,|\, \mathrm{Re}\lambda(\xi)\leq-\alpha\} .$$
\end{thm}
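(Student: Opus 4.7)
The plan is to treat the three frequency regimes $|s|\leq r_0$, $|s|\geq r_1$, and $r_0\leq |s|\leq r_1$ separately, adapting the Ellis--Pinsky and Ukai spectral perturbation framework for the Boltzmann equation as extended in \cite{Li2,Li3} for the Vlasov--Poisson--Boltzmann system, but with essential additions to handle both the hyperbolic Maxwell block and the singular $s^{-1}$ term in $\hat B_1(\xi)$.

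For the low-frequency regime, I would apply a Kato-type perturbation analysis around $s=0$. The admissible state space at $s=0$ must respect the implicit Maxwell constraint $\i(\xi\cdot\hat E)=(\hat f_2,\sqrt M)$, which forces $P_{\rm d}\hat f_2=0$ in the limit and thereby removes the singular $P_{\rm d}$ contribution from $\hat B_1$. A short computation using the identity $\omega\times(\omega\times X)=-X$ for $X\perp\omega$ together with the rotational identity $(L_1^{-1}(v\sqrt M),v\sqrt M)=aI$ with $a<0$ then shows that $\ker\hat{\AA}_1(0)$ is the two-dimensional subspace of transverse magnetic modes $\{(0,0,V_3):V_3\perp\omega\}$, while the rest of $\sigma(\hat{\AA}_1(0))$ is contained in $\{\mathrm{Re}\lambda\leq -\mu'\}$ for some $\mu'>0$ via \eqref{L_4}. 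Writing the Taylor expansion $\hat{\AA}_1(s\omega)=\hat{\AA}_1(0)+s\hat{\AA}_1^{(1)}+s^2\hat{\AA}_1^{(2)}+\cdots$ and matching orders in the eigenvalue equation, the first-order correction on the reduced kernel vanishes by parity of the coupling vector $v\sqrt M$, and the quadratic coefficient is forced to be a scalar on the two-dimensional kernel by the $\mathrm{SO}(2)$ rotational symmetry about $\omega$, giving $\lambda_1(s)=\lambda_2(s)=-a_1 s^2+o(s^2)$ with $a_1>0$ identified by the Rayleigh--Schr\"odinger formula stated in Theorem \ref{eigen_3}. The $C^\infty$ dependence on $s$ is then standard since the splitting is semisimple.

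For the high-frequency regime, I would view $\hat{\AA}_1(\xi)$ as a perturbation of the pure transverse Maxwell block on $(\omega\times\hat E,\omega\times\hat B)$, whose four eigenvalues are $\pm\i s$, each doubly degenerate. The Boltzmann block $\hat B_1(\xi)$ has spectrum in $\{\mathrm{Re}\lambda\leq -\mu\}$ via \eqref{L_4} and standard Ukai-type resolvent bounds. Writing $\lambda=\pm\i s+\eta$ with $|\eta|\lesssim s^{-1/2}$, I would Schur-complement the Boltzmann block in the resolvent equation; the leading correction to $\mathrm{Re}\eta$ equals the real part of a coupling inner product of the form
\[
 \bigl((\hat B_1(\xi)\pm\i s)^{-1}\phi,\ \phi\bigr),
\]
where $\phi$ is a transverse projection of $v\sqrt M$. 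Using $\nu(v)\geq\nu_0$, this quantity is positive and of exact order $s^{-1}$, yielding the sandwich $c_1/s\leq -\mathrm{Re}\beta_j(s)\leq c_2/s$; the Neumann remainder is $O(s^{-3/2})$ and absorbed into the stated $O(s^{-1/2})$ correction.

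The intermediate frequency claim on the compact annulus $r_0\leq|\xi|\leq r_1$ follows from compactness combined with strict dissipation: any eigenvalue with $\mathrm{Re}\lambda=0$ on this set would, by \eqref{L_4}, force $P_r\hat f_2=0$ and hence reduce to a finite-dimensional linear system on $(\omega\times\hat E,\omega\times\hat B)$ and the $\sqrt M$-component of $\hat f_2$ whose only solution is trivial, contradicting $|\xi|>0$. Upper semicontinuity of the spectrum then yields the uniform gap $\alpha(r_0,r_1)>0$. The hardest step I foresee is the low-frequency perturbation, specifically the book-keeping required to make the singular $P_{\rm d}$ term consistent with the implicit Maxwell constraint and to verify convergence of the expansion in a subspace respecting that constraint; once this is in place, the quadratic asymptotics and the identification of $a_1$ reduce to a standard finite-dimensional Rayleigh--Schr\"odinger calculation.
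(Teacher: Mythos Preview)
Your overall split into low/high/intermediate regimes matches the paper, and your high-frequency and intermediate sketches are essentially what the paper does. For large $s$ the paper also Schur-complements out the kinetic block, inverting $\lambda-\hat B_1(\xi)$ to reduce to the scalar equation $D(\lambda,s)=\lambda^2-((\hat B_1(se_1)-\lambda)^{-1}\chi_2,\chi_2)\lambda+s^2=0$, solved by a contraction near $\pm\i s$ (Lemma~\ref{eigen_5}); the two-sided bound $c_1/s\le -\mathrm{Re}\beta_j\le c_2/s$ is more delicate than ``$\nu\ge\nu_0$'' and requires explicit integral estimates after the change of variable $(v_1\pm1)s\mapsto u_1$. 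The intermediate gap is proved by exactly the compactness/contradiction argument you describe (Lemma~\ref{LP01}).

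The genuine gap is your low-frequency step. You propose the Taylor expansion $\hat{\AA}_1(s\omega)=\hat{\AA}_1(0)+s\hat{\AA}_1^{(1)}+\cdots$, but this expansion does not exist: $\hat B_1(s\omega)=L_1-\i s(v\cdot\omega)-\frac{\i(v\cdot\omega)}{s}P_{\rm d}$ carries an explicit $s^{-1}$ term, so $\hat{\AA}_1(0)$ is undefined and Kato/Rayleigh--Schr\"odinger perturbation cannot be launched directly. Restricting to $\{P_{\rm d}f=0\}$ does not repair this, since that subspace is not invariant under $\hat{\AA}_1(s\omega)$ (the transport piece $-\i s(v\cdot\omega)f$ does not preserve $N_1^\perp$). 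The paper sidesteps the singularity by a Lyapunov--Schmidt reduction rather than operator perturbation: splitting $f=C_0\sqrt M+P_rf$ and eliminating $P_rf$ via $R(\lambda,\xi)=(L_1-\lambda P_r-\i P_r(v\cdot\xi)P_r)^{-1}$ produces, after the rotational identity \eqref{T_1}, a \emph{decoupled} pair of characteristic equations $D_0(\lambda,s)=\lambda-(1+s^2)(R\chi_1,\chi_1)$ for $C_0$ and $D_1(\lambda,s)=\lambda^2-(R\chi_2,\chi_2)\lambda+s^2$ for $(X,Y)$. Both are smooth at $s=0$: in $D_0$ the apparent $s^{-2}$ cancels against the $s^2$ from the second moment, while $D_1$ never sees the $P_{\rm d}$ sector because the cross-coupling vanishes via $\omega\cdot(\omega\times X)=0$. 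One then invokes \cite{Li3} (Lemma~\ref{eigen_1a}) to show $D_0$ has no root with $\mathrm{Re}\lambda\ge -b_0$, forcing $C_0=0$, and applies the implicit function theorem to $D_1$ at $(0,0)$ (Lemma~\ref{eigen_2a}) to obtain the single branch $\lambda(s)=-a_1s^2+o(s^2)$ with $a_1=-1/(L_1^{-1}\chi_2,\chi_2)$, counted with multiplicity two since $X$ ranges over the two-dimensional transverse plane. Your endpoint intuition (two-dimensional magnetic kernel, scalar quadratic correction by $\mathrm{SO}(2)$ symmetry) is correct, but the mechanism that makes it rigorous is this regular characteristic-equation reduction, not an operator Taylor expansion.
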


Based on  the spectrum structure given in the above
theorem, the semigroup generated by the linearized operator of the Vlasov-Maxwell-Boltmann system can be decomposed in three parts according to the frequency in low, high and intermediate regions so that the optimal time decay rates of the solution can be obtained for the linearized system. Note that a higher regularity on the  initial data is needed because of the spectrum structure in the high frequency region.
With the estimates on the semigroup, the optimal decay in time of the solution to the original
nonlinear problem can also be obtained. Before stating  results on nonlinear problem, let us first introduce the following notations.\\

\noindent\textbf{Notations:} \ \ The Fourier transform of $f=f(x,v)$
is denoted by
$\hat{f}(\xi,v)=\mathcal{F}f(\xi,v)=\frac1{(2\pi)^{3/2}}\intr f(x,v)e^{- \i x\cdot\xi}dx.$

Set a weight function $w(v)$ by
$$
w(v)=(1+|v|^2)^{1/2},
$$
so that the Sobolev spaces $ H^N$ and $ H^N_w$ are given by
$$
 H^N=\{\,f\in L^2(\R^3_x\times \R^3_v)\,|\,\|f\|_{H^N}<\infty\,\},\quad
 H^N_w=\{\,f\in L^2(\R^3_x\times \R^3_v)\,|\,\|f\|_{H^N_w}<\infty\,\},
$$
equipped with the norms
$$
 \|f\|_{H^N}=\sum_{|\alpha|+|\beta|\le N}\|\dxa\dvb f\|_{L^2(\R^3_x\times \R^3_v)},
 \quad
 \|f\|_{H^N_w}=\sum_{|\alpha|+|\beta|\le N}\|w\dxa\dvb f\|_{L^2(\R^3_x\times \R^3_v)}.
$$
For $q\ge1$, denote
$$
L^{2,q}=L^2(\R^3_v,L^q(\R^3_x)),\quad
\|f\|_{L^{2,q}}=\bigg(\intr\bigg(\intr|f(x,v)|^q dx\bigg)^{2/q}dv\bigg)^{1/2}.
$$
In the following,  denote by $\|\cdot\|_{L^2_{x,v}}$ and $\|\cdot\|_{L^2_{\xi,v}}$ the norms of the function spaces $L^2(\R^3_x\times \R^3_v)$ and $L^2(\R^3_\xi\times \R^3_v)$ respectively, and by $\|\cdot\|_{L^2_x}$, $\|\cdot\|_{L^2_\xi}$ and $\|\cdot\|_{L^2_v}$  the norms of the function spaces $L^2(\R^3_x)$, $L^2(\R^3_\xi)$ and $L^2(\R^3_v)$ respectively. 
For any integer $m\ge1$, denote by $\|\cdot\|_{H^m_x}$ and $\|\cdot\|_{L^2_v(H^m_x)}$ the norms in the spaces $H^m(\R^3_x)$ and $L^2(\R^3_v,H^m(\R^3_x))$ respectively.
Moreover, introduce a weighted Hilbert space $L^2_\xi(\R^3_v)$ for $\xi\ne 0$
defined by
$$
 L^2_\xi(\R^3)=\{f\in L^2(\R^3_v)\,|\,\|f\|_\xi=\sqrt{(f,f)_\xi}<\infty\},
$$
equipped with the inner product
$$
 (f,g)_\xi=(f,g)+\frac1{|\xi|^2}(P_{\rm d} f,P_{\rm d} g).
$$
For any vectors $U=(f,E_1,B_1),V=(g,E_2,B_2)\in L^2_\xi(\R^3_v)\times \mathbb{C}^3\times \mathbb{C}^3$,  define a weighted inner product and the corresponding
norm by
$$ (U,V)_\xi=(f,g)_\xi+(E_1,E_2)+(B_1,B_2),\quad \|U\|_\xi=\sqrt{(U,U)_\xi}, $$
and another $L^2$ inner product and norm by
$$ (U,V)=(f,g)+(E_1,E_2)+(B_1,B_2),\quad \|U\|=\sqrt{(U,U)}. $$
For simplicity, denote
$$ \|U\|^2_{Z^2}=\|f\|^2_{L^{2}_{x,v}}+\|E\|^2_{L^2_x}+\|B\|^2_{L^2_x},\quad \|U\|^2_{Z^1}=\|f\|^2_{L^{2,1}}+\|E\|^2_{L^1_x}+\|B\|^2_{L^1_x}.$$

With the above preparation, we first state the estimates on the semigroup  to linearized system.
\begin{thm}\label{semigroup-1}
The semigroup $S(t,\xi)=e^{t\hat{\AA}_1(\xi)}$ with $\xi=s\omega\in \R^3$ and $s=|\xi|\neq0$  can be decomposed into
 $$
 S(t,\xi)U=S_1(t,\xi)U+S_2(t,\xi)U+S_3(t,\xi)U,
     \quad U\in L^2_\xi(\R^3_v)\times \mathbb{C}^3_\xi\times \mathbb{C}^3_\xi, \ \ t>0, 
 $$
 that has the following properties
 \bmas
 S_1(t,\xi)U&=\sum^2_{j=1}e^{t\lambda_j(s)}
              (U, \Psi^*_j(s,\omega)\,) \Psi_j(s,\omega)
               1_{\{|\xi|\leq r_0\}}, \\
 S_2(t,\xi)U&=\sum^4_{j=1} e^{t\beta_j(s)}
              (U,\Phi^*_j(s,\omega)\,) \Phi_j(s,\omega)
               1_{\{|\xi|\ge r_1\}},
 \emas
where $\mathbb{C}^3_\xi=\{y\in \mathbb{C}^3: y\cdot \xi=0\}$.
Here,  $(\lambda_j(s),\Psi_j(s,\omega))$ and $(\beta_j(s),\Phi_j(s,\omega))$
are the eigenvalues and eigenvectors of the operator $\hat{\AA}_1(\xi)$ in low
and high frequency regions with properties
given in Theorem~\ref{eigen_3} and Theorem~\ref{eigen_4}.
And $S_3(t,\xi)U =: S(t,\xi)U-S_1(t,\xi)U-S_2(t,\xi)U$ satisfies that there
is  a constant $\kappa_0>0$ independent of $\xi$ such that
 $$
 \|S_3(t,\xi)U\|_\xi\leq Ce^{-\kappa_0t}\|U\|_\xi,\quad t\ge0.
 $$
\end{thm}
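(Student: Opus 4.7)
The plan is to represent the semigroup via the Dunford--Taylor integral
$$
S(t,\xi)U = \frac{1}{2\pi\i}\int_{\Gamma} e^{t\lambda}(\lambda - \hat{\AA}_1(\xi))^{-1}U\,d\lambda,
$$
with $\Gamma$ initially a vertical line to the right of $\sigma(\hat{\AA}_1(\xi))$, and then to deform $\Gamma$ leftward while collecting residues at the eigenvalues identified in Theorem~\ref{spectrum1}. First I would verify that $\hat{\AA}_1(\xi)$ generates a contraction $C_0$-semigroup on the Hilbert space $L^2_\xi(\R^3_v)\times\mathbb{C}^3_\xi\times\mathbb{C}^3_\xi$ endowed with $(\cdot,\cdot)_\xi$: the dissipation $(L_1f,f)\le-\mu\|P_rf\|^2$, together with the skew-symmetry of $-\i(v\cdot\xi)-\frac{\i(v\cdot\xi)}{|\xi|^2}P_{\rm d}$ and the antisymmetry of the off-diagonal couplings between $f_2,\,\omega\times\hat E,\,\omega\times\hat B$ under $(\cdot,\cdot)_\xi$, yields $\mathrm{Re}(\hat{\AA}_1(\xi)U,U)_\xi\leq 0$.

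For the low-frequency part, I would introduce the Riesz projection
$$
P_{1,j}(\xi)=\frac{1}{2\pi\i}\oint_{|\lambda-\lambda_j(s)|=\delta}(\lambda-\hat{\AA}_1(\xi))^{-1}\,d\lambda,\qquad j=1,2,
$$
with $\delta>0$ so small that only $\lambda_j(s)$ lies inside (feasible since $\mathrm{Re}\lambda_j(s) = -a_1s^2 + o(s^2)$ stays well inside $\{\mathrm{Re}\lambda>-b_2\}$). Since each $\lambda_j(s)$ is semisimple, $P_{1,j}(\xi)$ has rank one and, by the standard biorthogonality, takes the form $P_{1,j}(\xi)U = (U,\Psi_j^*(s,\omega))_\xi\,\Psi_j(s,\omega)$ where $\Psi_j^*$ solves $\hat{\AA}_1(\xi)^*\Psi_j^* = \overline{\lambda_j(s)}\Psi_j^*$ normalized by $(\Psi_j,\Psi_j^*)_\xi = 1$. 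Setting $S_1(t,\xi)=\sum_{j=1}^2 e^{t\lambda_j(s)}P_{1,j}(\xi)\,1_{\{|\xi|\leq r_0\}}$ yields the first piece, and an entirely analogous construction around each $\beta_j(s)$, $j=1,\dots,4$, produces $P_{2,j}(\xi)$ and $S_2(t,\xi)$ in the high-frequency region.

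The remainder $S_3(t,\xi):=S(t,\xi)-S_1(t,\xi)-S_2(t,\xi)$ is then the contour integral on the shifted vertical line $\mathrm{Re}\lambda=-\kappa_0$ with $0<\kappa_0<\min(b_2,\alpha,\mu/2)$. In each of the three frequency regimes the spectrum $\sigma(\hat{\AA}_1(\xi))$ lies in $\{\mathrm{Re}\lambda\le -\kappa_0\}$ once the enclosed eigenvalues are subtracted off: in low frequency by the bound $\mathrm{Re}\lambda>-b_2$ together with $\kappa_0<b_2$, in intermediate frequency by the uniform spectral gap $\alpha$, and in high frequency by the fact that the only spectrum in $\{\mathrm{Re}\lambda>-\mu/2\}$ consists of the four $\beta_j(s)$. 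The decay $\|S_3(t,\xi)U\|_\xi\leq Ce^{-\kappa_0 t}\|U\|_\xi$ then follows from Cauchy's theorem and a Paley--Wiener argument, provided one has a uniform resolvent estimate of the form $\|(\lambda-\hat{\AA}_1(\xi))^{-1}\|_\xi\leq C/(1+|\mathrm{Im}\lambda|)$ on the shifted contour.

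The main obstacle is precisely this uniform resolvent bound, particularly in the high-frequency regime where the spectrum of the transport and Maxwell parts drifts along the imaginary axis at rate $\pm\i|\xi|$ as $|\xi|\to\infty$, so one cannot localize near the eigenvalues. I would decompose $\hat{\AA}_1(\xi) = -\nu(v)-\i(v\cdot\xi)-\frac{\i(v\cdot\xi)}{|\xi|^2}P_{\rm d} + (\text{off-diagonal Maxwell coupling}) + K_1$, invert the dissipative/transport principal part along characteristics via a Neumann series (following Ukai's treatment of the linearized Boltzmann operator), and treat the compact terms $K_1$, the $P_{\rm d}$-part, and the Maxwell coupling as relatively compact perturbations. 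Here the lower bound $\nu(v)\geq\nu_0>0$ from \eqref{nuv} and $\nu_0\geq\mu$ are essential to absorb the loss in $|\xi|$ coming from $\frac{\i(v\cdot\xi)}{|\xi|^2}P_{\rm d}$. Once the resolvent estimate is secured, Fubini and routine oscillatory-integral manipulations complete the argument.
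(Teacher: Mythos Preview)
Your overall architecture --- inverse Laplace representation, contour shift, and collection of residues at the eigenvalues of Theorems~\ref{eigen_3} and~\ref{eigen_4} --- matches the paper's. The gap is in the bound on $S_3$. The pointwise resolvent estimate you are aiming for,
\[
\|(\lambda-\hat{\AA}_1(\xi))^{-1}\|_\xi \le \frac{C}{1+|\mathrm{Im}\lambda|},\qquad \mathrm{Re}\lambda=-\kappa_0,
\]
is \emph{not} integrable in $\mathrm{Im}\lambda$, so it does not by itself yield $\|S_3(t,\xi)U\|_\xi\le Ce^{-\kappa_0t}\|U\|_\xi$; a Paley--Wiener statement needs at minimum $L^2_y$ control of $y\mapsto(\,-\kappa_0+\i y-\hat{\AA}_1(\xi))^{-1}U$, not an $O(1/|y|)$ pointwise bound. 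Moreover, even the pointwise bound is doubtful uniformly in $\xi$: on the shifted line the Maxwell block contributes factors of order $|\lambda\mp\i|\xi||^{-1}$, which are $O(1)$ (not $O(1/|y|)$) when $|y|\sim|\xi|\to\infty$.

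The paper avoids this by a different mechanism. It does \emph{not} bound the full resolvent directly; instead it writes, in each frequency regime,
\[
(\lambda-\hat{\AA}_1(\xi))^{-1}=(\lambda-G(\xi))^{-1}+(\lambda-G(\xi))^{-1}\bigl[I-Y(\lambda,\xi)\bigr]^{-1}Y(\lambda,\xi),
\]
where $G(\xi)$ is a diagonal ``free'' part ($Q(\xi)$ on $N_1^\perp$ together with $B_3(\xi)$ on $\mathbb{C}^3_\xi\times\mathbb{C}^3_\xi$ in low frequency; $c(\xi)=-\nu-\i v\cdot\xi$ together with $B_3(\xi)$ in high frequency) whose semigroup is computed explicitly and decays (Lemmas~\ref{SG_2}--\ref{SG_3}). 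The contribution of $(\lambda-G(\xi))^{-1}$ to the inverse Laplace integral is therefore handled directly. For the remainder, one proves (Lemma~\ref{F_1}) a \emph{uniform} bound $\sup_{y,\xi}\|[I-Y(-\kappa_0+\i y,\xi)]^{-1}\|\le C$ and then estimates the integral bilinearly,
\[
\bigl|(X_{-\kappa_0,\infty}U,V)_\xi\bigr|\le Ce^{-\kappa_0 t}\int_{\R}\|(\lambda-G(\xi))^{-1}U\|_\xi\,\|(\bar\lambda-G(-\xi))^{-1}V\|_\xi\,dy,
\]
using the Parseval-type $L^2_y$ resolvent bounds \eqref{S_4a}--\eqref{S_4b}. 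This is what converts the merely $O(1/|y|)$ pointwise information into an honest $Ce^{-\kappa_0 t}\|U\|_\xi\|V\|_\xi$ bound. Your sketch is missing both the splitting off of the free semigroups and the $L^2_y$ Parseval step; without them the contour integral for $S_3$ does not close.
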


Then, we have the optimal time convergence rates of global solutions to linearized system~\eqref{LVMB1a}.

\begin{thm}
 \label{time1}
 Let $(f_2(t),E(t),B(t))$ be a solution of the system \eqref{LVMB1a}. If the initial data $U_0=(f_0,E_0,B_0)\in L^2(\R^3_{v};H^l(\R^3_{x}) \cap L^1(\R^3_{x}))\times H^l(\R^3_x)\cap L^1(\R^3_{x})\times H^l(\R^3_x)\cap L^1(\R^3_{x})$ for $l\ge 0$, then it holds for any $\alpha,\alpha'\in\N^3$ with $\alpha'\le \alpha$ that
 \bma
 \|\da_x f_2(t)\|_{L^2_{x,v}}
\leq
&C(1+t)^{-\frac54-\frac{k}2}
  (\|\da_x U_0\|_{Z^2}+\|\dx^{\alpha'}U_0\|_{Z^1})+C(1+t)^{-(m+\frac12)}\|\Tdx^{m}\da_x U_0\|_{Z^2},\label{D_2x}
\\
\|\da_x E(t)\|_{L^2_x}
\leq
&C(1+t)^{-\frac54-\frac{k}2}
  (\|\da_x U_0\|_{Z^2}+\|\dx^{\alpha'}U_0\|_{Z^1})+C(1+t)^{-m}\|\Tdx^{m}\da_x U_0\|_{Z^2}, \label{D_4}
\\
\|\dxa B(t)\|_{L^2_x}
\leq
&C(1+t)^{-\frac34-\frac{k}2}
  (\|\da_x U_0\|_{Z^2}+\|\dx^{\alpha'}U_0\|_{Z^1})+C(1+t)^{-m}\|\Tdx^{m}\da_x U_0\|_{Z^2},\label{D_0}
 \ema
 where $k=|\alpha-\alpha'|$ and $m\ge 0$. In particular, it holds for $f_2= P_{\rm d}f_2+ P_rf_2$ that
 \bma
\|\da_x P_{\rm d}f_2(t)\|_{L^2_{x,v}}
\leq
&Ce^{-\kappa_0t} \|\da_x U_0\|_{Z^2},\label{D_2}
\\
\|\da_x P_rf_2(t)\|_{L^2_{x,v}}
\leq
&C(1+t)^{-\frac54-\frac{k}2}
  (\|\da_x U_0\|_{Z^2}+\|\dx^{\alpha'}U_0\|_{Z^1})+C(1+t)^{-(m+\frac12)}\|\Tdx^{m}\da_x U_0\|_{Z^2}. \label{D_3}
\ema

Furthermore, if we assume that $l\ge 2$ and
the Fourier transform $\hat B_0(\xi)$ of initial magnetic field $B_0(x)$ satisfies that
$\inf_{|\xi|\le r_0}|\frac{\xi}{|\xi|}\times\hat{B}_0(\xi)|\geq d_0>0$, then
  \bgr
 C_1(1+t)^{-\frac54}
  \leq\| f_2(t)\|_{L^2_{x,v}}\leq C_2(1+t)^{-\frac54},\label{H_1bx}
\\
 C_1(1+t)^{-\frac54}
  \leq\| P_rf_2(t)\|_{L^2_{x,v}}\leq C_2(1+t)^{-\frac54},\label{H_1b}
\\
 C_1(1+t)^{-\frac54}
  \leq\|E(t)\|_{L^2_x}\leq C_2(1+t)^{-\frac54},\label{H_2b}
\\
 C_1(1+t)^{-\frac34}
  \leq\|B(t)\|_{L^2_x}\leq C_2(1+t)^{-\frac34}, \label{H_3}
 \egr
for $t>0$  large enough with $C_2\ge C_1>0$ two constants.
\end{thm}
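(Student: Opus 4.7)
The plan is to work in Fourier space and combine Plancherel with the semigroup decomposition from Theorem \ref{semigroup-1}. Writing $\hat U(t,\xi) = S(t,\xi)\hat U_0(\xi) = (S_1+S_2+S_3)(t,\xi)\hat U_0(\xi)$, I split the derivative $\dx^\alpha = \dx^{\alpha-\alpha'}\dx^{\alpha'}$ so that $\|\dx^\alpha U(t)\|_{L^2_x}^2 = \int_{\R^3}|\xi|^{2k}|S(t,\xi)\widehat{\dx^{\alpha'}U_0}(\xi)|^2 d\xi$ with $k=|\alpha-\alpha'|$. The $S_3$ part is handled immediately by Theorem \ref{semigroup-1}, producing an exponentially decaying contribution $e^{-\kappa_0 t}\|\dx^\alpha U_0\|_{Z^2}$.

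For the low-frequency piece $S_1$ (supported on $|\xi|\le r_0$), the asymptotics $\lambda_j(s)=-a_1s^2+o(s^2)$ yield $|e^{t\lambda_j(s)}|\le e^{-cs^2 t}$. Bounding $|\widehat{\dx^{\alpha'}U_0}(\xi)|\le C\|\dx^{\alpha'}U_0\|_{L^1_x}$ and computing the Gaussian integral
\begin{equation*}
\int_{|\xi|\le r_0}|\xi|^{2k+2j}e^{-2cs^2 t}d\xi \le C(1+t)^{-\frac32-k-j}
\end{equation*}
produces the basic $(1+t)^{-\frac34-\frac k2}$ rate, valid for all three components. The improvement to $(1+t)^{-\frac54-\frac k2}$ for $f_2$ and $E$ is the key structural point: I need to show that the projections $(U,\Psi_j^*)\Psi_j$ onto the low-frequency eigenmodes have $f_2$- and $E$-components vanishing at $\xi=0$, so that the relevant integrand carries an extra factor $|\xi|^2$ (hence an extra $t^{-1}$). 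This is extracted from the explicit perturbative form of $\Psi_j(s,\omega)$ in Theorem \ref{eigen_3}: the leading-order $(s=0)$ eigenvectors are purely of the rotational $B$ type, while $f_2$ and $E$ appear only at order $s$ in the expansion. Consequently $f_2$ and $E$ inherit two extra derivatives in $\xi$, giving the sharper rate.

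For the high-frequency piece $S_2$ (supported on $|\xi|\ge r_1$), the bound $\mathrm{Re}\,\beta_j(s)\le -c_1/s$ combined with the elementary inequality $\sup_{y>0}y^{2m}e^{-2c_1 y}\le C_m$ applied with $y=t/|\xi|$ gives
\begin{equation*}
|\xi|^{2|\alpha|}e^{-2c_1 t/|\xi|}\le C t^{-2m}|\xi|^{2|\alpha|+2m}, \quad |\xi|\ge r_1,
\end{equation*}
so that $\|\dx^\alpha S_2 U_0\|_{L^2}\le Ct^{-m}\|\Tdx^m \dx^\alpha U_0\|_{Z^2}$, which yields the $E$- and $B$-components of the stated bounds. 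The extra half-gain $(1+t)^{-(m+\frac12)}$ for $f_2$ reflects that the $f_2$-component of the high-frequency eigenvectors $\Phi_j(s,\omega)$ is microscopic at leading order, so the collision dissipation from $L_1$ (spectral gap on $P_r$) produces an additional $|\xi|^{-1}$ factor in that component after another application of the Young-type elementary inequality.

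For the splitting $f_2=P_df_2+P_rf_2$: the divergence constraint $\i\xi\cdot\hat E=(\hat f_2,\sqrt M)$ identifies $\widehat{P_df_2}$ with $\i\xi\cdot\hat E$, and the low-frequency eigenvectors $\Psi_j(s,\omega)$ project $\hat U_0$ onto the transverse (curl-type) part of $(\hat E_0,\hat B_0)$, so $S_1$ contributes nothing to $\widehat{P_df_2}$; hence only the exponentially decaying $S_3$ piece remains, proving \eqref{D_2}, and $P_rf_2$ inherits the full algebraic rate. Finally, for the matching lower bounds, the hypothesis $\inf_{|\xi|\le r_0}|\omega\times\hat B_0(\xi)|\ge d_0>0$ forces $|(U_0,\Psi_j^*(s,\omega))|\ge c d_0$ for the transverse modes in that ball, and the Gaussian lower bound $\int_{|\xi|\le r_0}|\xi|^{2j}e^{-2cs^2 t}d\xi\ge c(1+t)^{-\frac32-j}$ for large $t$ yields \eqref{H_1bx}--\eqref{H_3} with the extra $|\xi|^2$ weight in the $f_2, E$ components.

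The main obstacle will be the explicit computation of the leading-order eigenvectors $\Psi_j$ and $\Phi_j$, specifically showing that the $f_2$ and $E$ components vanish at $s=0$ in $S_1$ and that the $f_2$-component of $\Phi_j$ lies in $\ker(P_d)$ at leading order in $S_2$. These structural cancellations are what separate the faster $(1+t)^{-\frac54-\frac k2}$ and $(1+t)^{-(m+\frac12)}$ rates for $f_2, E$ from the slower $(1+t)^{-\frac34-\frac k2}$ and $(1+t)^{-m}$ rates for $B$, and they must be read off carefully from the perturbative eigenvalue/eigenvector analysis referenced in Theorem \ref{eigen_3} and Theorem \ref{eigen_4}.
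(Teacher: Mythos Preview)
Your proposal is correct and follows essentially the same route as the paper: the proof there (split as Theorems~\ref{rate1} and~\ref{rate2}) uses the same $S_1+S_2+S_3$ decomposition, reads off from \eqref{eigf1} that $\psi_{j,0}=X_{j,0}=0$ (giving the extra $|\xi|$ and hence the $(1+t)^{-5/4}$ rate for $f_2,E$), and from \eqref{eigfr3} that $\|\phi_j\|=O(s^{-1/2})$ and $P_{\rm d}\phi_j=0$ (giving the extra half-power for $f_2$ at high frequency and the exponential bound \eqref{D_2}). One small correction: the $O(s^{-1/2})$ smallness of $\phi_j$ is not produced by the $L_1$ spectral gap but by the transport-averaging estimate \eqref{T_5} on $(\lambda-B_1(\xi))^{-1}v\sqrt M$; since you already cite Theorem~\ref{eigen_4} this does not affect your argument.
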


Finally for the two-species system, we have the optimal time convergence rates of global solutions to the original nonlinear system~\eqref{VMB3}--\eqref{VMB3e} as follows.
\begin{thm}\label{time3} When $(f_{1,0},f_{2,0})\in H^{N+5}_w\cap L^{2,1}$ and $(E_0,B_0)\in H^{N+5}(\R^3_x)\cap L^{1}(\R^3_x)$ for $N\ge 4$ satisfying $\|(f_{1,0},f_{2,0})\|_{H^{N+5}_{w}\cap L^{2,1}}+\|(E_0,B_0)\|_{H^{N+5}(\R^3_x)\cap L^{1}(\R^3_x)}\le \delta_0$ with  $\delta_0>0$ being small, there exists  a globally unique solution $(f_1,f_2,E,B)$ to the  system~\eqref{VMB3}--\eqref{VMB3e} satisfying
\bmas
\|\dx^kf_1(t)\|_{L^2_{x}} +\|\dx^k B(t)\|_{L^2_x} &\le C\delta_0(1+t)^{-\frac34-\frac k2},
\\
\|\dx^k  f_2(t)\|_{L^2_{x,v}}  +\|\dx^k E(t)\|_{L^2_x} &\le C\delta_0(1+t)^{-\frac54-\frac k2},
\emas
for $k=0,1$. In particular, it holds for  $f_1= \P_0f_1+ \P_1f_1$ and $f_2= P_{\rm d}f_2+ P_rf_2$ that
\bmas
\|\dx^k(f_1(t),\chi_j)\|_{L^2_{x}}&\le C\delta_0(1+t)^{-\frac34-\frac k2},\quad j=0,1,2,3,4,
\\
\|\dx^k  \P_1f_1(t)\|_{L^2_{x,v}}&\le C\delta_0(1+t)^{-\frac54-\frac k2},
\\
\|\dx^k  P_{\rm d}f_2(t)\|_{L^2_{x,v}}&\le C\delta_0(1+t)^{-2-\frac k2},
\\
\|\dx^k P_rf_2(t)\|_{L^2_{x,v}}+\|\dx^k E(t)\|_{L^2_x}&\le C\delta_0(1+t)^{-\frac54-\frac k2},
\\
\|\dx^k B(t)\|_{L^2_x}&\le C\delta_0(1+t)^{-\frac34-\frac k2},\\
\|(\P_1f_1,P_rf_2)(t)\|_{H^N_w} +\|\Tdx (\P_0f_1P_{\rm d}f_2)(t)\|_{L^2_v(H^{N-1}_x)}
&+\|\Tdx (E,B)(t)\|_{H^{N-1}_x}\le
C\delta_0(1+t)^{-\frac54},
\emas
for $k=0,1$.
\end{thm}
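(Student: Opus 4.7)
The strategy is to combine a global-in-time weighted energy estimate (in the spirit of Guo and Strain) with the linear semigroup/time-decay results already proved in Theorem~\ref{semigroup-1} and Theorem~\ref{time1}, tied together through a Duhamel formula and a time-weighted bootstrap. The key observation is that the nonlinear system \eqref{VMB3}--\eqref{VMB3e} cleanly decouples at the principal-part level into an $f_1$-equation driven by $\BB_0$ and a $(f_2,E,B)$-equation driven by $\AA_0$, exactly the two operators whose spectra are controlled in Theorems~\ref{spectrum1} and \ref{semigroup-1}.

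First I would establish global existence and uniform-in-time control of a high-order weighted norm. Define
\begin{align*}
\mathcal{E}_{N+5}(t) \;=\; \sum_{|\alpha|+|\beta|\le N+5}\bigl(\|w\,\dxa\dvb f_1\|_{L^2_{x,v}}^2+\|w\,\dxa\dvb f_2\|_{L^2_{x,v}}^2\bigr) + \|(E,B)\|_{H^{N+5}_x}^2,
\end{align*}
and correspondingly a dissipation functional $\mathcal{D}_{N+5}$ controlling $\mu\|\P_1 f_1\|^2 + \mu\|P_r f_2\|^2$ and, through macroscopic compensating estimates, the missing dissipation of $(\P_0 f_1, P_{\rm d} f_2)$ and $(E,B)$. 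The standard pairings against $\chi_j\sqrt M$ recover $(\rho_\pm,u_\pm,\theta_\pm)$-dissipation, Poisson's constraint gives $E$-dissipation in $\dot H^{-1}$, and applying $\Tdx\times$ to Ampère's law gives $B$-dissipation at the cost of one $x$-derivative (hence the need for $N+5$ instead of $N$). The nonlinear terms $(v\cdot E)f_\pm$, $(E+v\times B)\cdot\Tdv f_\mp$, $\Gamma(f_\pm,f_\pm)$ are handled using the product estimates $\|FG\|_{H^N_w}\le C\|F\|_{H^N_w}\|G\|_{H^N_w}$ for $N\ge 4$ and absorption of the $\Tdv$ through the weight, yielding
$\tfrac{d}{dt}\mathcal{E}_{N+5} + \kappa\mathcal{D}_{N+5} \le C\sqrt{\mathcal{E}_{N+5}}\,\mathcal{D}_{N+5}$, which closes for $\delta_0\ll 1$.

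Next, for the decay rates, I would write the Duhamel formulas
\begin{align*}
f_1(t)&=e^{t\BB_0}f_{1,0}+\int_0^t e^{(t-s)\BB_0}\bigl[\tfrac12(v\cdot E)f_2-(E+v\times B)\cdot\Tdv f_2+\Gamma(f_1,f_1)\bigr](s)\,ds,\\
U(t)&=e^{t\AA_0}U_0+\int_0^t e^{(t-s)\AA_0}\bigl(\tfrac12(v\cdot E)f_1-(E+v\times B)\cdot\Tdv f_1+\Gamma(f_2,f_1),\,0,\,0\bigr)(s)\,ds,
\end{align*}
and apply Theorem~\ref{time1} componentwise, using crucially that $\Gamma$ and the Lorentz terms, after $L_1$/$L$-projection, are microscopic and hence enjoy the extra $(1+\cdot)^{-1/2}$ gain visible in \eqref{D_3}. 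Set the bootstrap functional
\begin{align*}
\mathcal{M}(t)=\sup_{0\le s\le t}\sum_{k=0}^1\Bigl\{(1+s)^{\frac34+\frac k2}\bigl(\|\dx^k f_1\|_{L^2_{x,v}}+\|\dx^k B\|_{L^2_x}\bigr)+(1+s)^{\frac54+\frac k2}\bigl(\|\dx^k f_2\|_{L^2_{x,v}}+\|\dx^k E\|_{L^2_x}\bigr)\Bigr\},
\end{align*}
with analogous macro/micro weights. The nonlinear Duhamel contributions are estimated by splitting the source norm as $L^{2,1}\cap L^{2,2}$ using $\|FG\|_{L^{2,1}}\le \|F\|_{L^2_x}\|G\|_{L^2_{x,v}}$ and the uniform $H^{N+5}_w$ bound from the first step; the product of two $\mathcal{M}$-quantities combined with the integrable linear kernels gives $\mathcal{M}(t)\le C\delta_0 + C\mathcal{M}(t)^2$, and small $\delta_0$ closes the loop. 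The refined rates for $\P_1 f_1$, $P_{\rm d}f_2$ and $P_r f_2$ follow from inserting the corresponding linear rates in \eqref{D_2}--\eqref{D_3}, together with the observation that the sources driving $P_{\rm d}f_2$ are quadratic in the faster-decaying $(f_2,E)$.

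The main obstacle is closing the fast rate $(1+t)^{-5/4}$ for $f_2$ and $E$ against the slowly decaying magnetic field $B\sim(1+t)^{-3/4}$. In the integrand for $f_2$ one meets
\begin{align*}
\int_0^t(1+t-s)^{-\frac54}\,\|B(s)\|_{L^\infty_x}\,\|\Tdv f_1(s)\|_{L^2_{x,v}}\,ds,
\end{align*}
and the naive Sobolev bound $\|B\|_{L^\infty_x}\lesssim(1+s)^{-\frac34-\frac14}$ is just borderline. The remedy is twofold: first, $(v\times B)\cdot\Tdv\sqrt M\equiv 0$ eliminates the macroscopic part so only $(v\times B)\cdot\Tdv\P_1 f_1$ contributes, and $\P_1 f_1$ decays at $(1+s)^{-\frac54-\frac k2}$ by \eqref{D_3}; second, we exploit the identity $\dt B=-\Tdx\times E$ to trade a time-derivative of $B$ for a spatial derivative of the faster-decaying $E$. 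With these two reductions the integral converges absolutely without logarithmic loss, and the bootstrap closes. The high-frequency spectrum $\mathrm{Re}\,\beta_j\sim -|\xi|^{-1}$ in Theorem~\ref{spectrum1} is accommodated precisely by the regularity margin $H^{N+5}$ over $H^N$ in the final estimate.
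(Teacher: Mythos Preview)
Your overall architecture (global weighted energy + Duhamel + time-weighted bootstrap) matches the paper's proof, and your Duhamel formulas for $f_1$ and $U=(f_2,E,B)$ are exactly the ones used. However, there is a genuine gap in how you close the bootstrap, and your proposed remedy for the ``main obstacle'' does not work as stated.

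\medskip
\textbf{The gap.} Your bootstrap functional $\mathcal{M}(t)$ carries only low-order quantities, and you plan to use merely the \emph{uniform} bound on $\mathcal{E}_{N+5}$ for the rest. That is not enough. In the $(f_2,E,B)$ Duhamel integral the source $G_2$ contains the term $(v\times B)\cdot\nabla_v f_1$. With only a uniform weighted bound you get, for the $L^{2,1}$ norm,
\[
\|(v\times B)\cdot\nabla_v f_1\|_{L^{2,1}}\le C\|B\|_{L^2_x}\|w\nabla_v f_1\|_{L^2_{x,v}}\le C\,\delta_0\,(1+s)^{-3/4}\mathcal{M}(t),
\]
and convolving against $(1+t-s)^{-5/4}$ yields only $(1+t)^{-3/4}$, so the $(1+t)^{-5/4}$ rate for $P_r f_2$ and $E$ fails to close. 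The paper avoids this by putting the \emph{higher-order dissipative energy} into the bootstrap: its $Q(t)$ includes $\big(\|(\P_1f_1,P_rf_2)\|_{H^N_w}+\|\Tdx(\P_0f_1,P_{\rm d}f_2)\|_{L^2_v(H^{N-1}_x)}+\|\Tdx(E,B)\|_{H^{N-1}_x}\big)(1+s)^{5/4}$. With this one has $\|w\nabla_v f_1\|_{L^{2,6}}\le CQ(t)(1+s)^{-5/4}$ and $\|B\|_{L^3_x}\le C\|B\|_{L^2}^{1/2}\|\Tdx B\|_{L^2}^{1/2}\le CQ(t)(1+s)^{-1}$, so $\|G_2\|_{L^2_{x,v}}\le CQ(t)^2(1+s)^{-2}$ and $\|G_2\|_{L^{2,1}}\le CQ(t)^2(1+s)^{-3/2}$, which close. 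The decay of this higher-order block is obtained not from Duhamel but from a differential inequality: Lemma~\ref{energy2} gives $\frac{d}{dt}H^f_{N,1}+\mu D_{N,1}\le C\|\Tdx(n_1,m_1,q_1)\|^2+C\|\Tdx B\|^2$; since $d_1H^f_{N,1}\le D_{N,1}+C\|\Tdx^N(E,B)\|^2$ and the top-order field norm is separately shown to decay at $(1+t)^{-5/4}$ (using an auxiliary $E_{k,1}$ bound from \cite{Duan4} and Duhamel again), Gronwall gives $H^f_{N,1}(U)\le C(1+t)^{-5/2}(\delta_0+Q^2)^2$. This step is the heart of the argument and is missing from your plan.

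\medskip
\textbf{Why your remedies do not fix it.} Remedy (a) is based on a wrong inference: $(v\times B)\cdot\nabla_v\sqrt{M}=0$ is true, but it does \emph{not} imply $(v\times B)\cdot\nabla_v\P_0 f_1=0$. A direct computation gives $(v\times B)\cdot\nabla_v\P_0 f_1=-(m_1\times B)\cdot v\sqrt{M}$, which is nonzero whenever $m_1\times B\ne 0$; the macroscopic momentum $m_1$ decays only at $(1+s)^{-3/4}$, so you cannot reduce to $\P_1 f_1$. (What \emph{is} true is $P_{\rm d}[(v\times B)\cdot\nabla_v f_1]=0$, which helps for the $P_{\rm d}f_2$ rate but not for $P_rf_2$ or $E$.) Remedy (b), trading $\dt B$ for $\Tdx\times E$, is not used in the paper and would require an integration by parts in $s$ inside the Duhamel integral together with commutator estimates for $e^{(t-s)\AA_0}$; you have not supplied these, and in any case the paper's route through the decay of $H^f_{N,1}$ is simpler and sharper.
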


\begin{thm}\label{time4} %
Under the conditions given in  Theorem~\ref{time3}, if we
 further assume that there exist positive constants $d_0,d_1>0$ and a small constant $r_0>0$ so that the Fourier transform $(\hat{f}_{1,0},\hat{f}_{2,0},\hat E_0,\hat B_0)$ of the initial data $(f_{1,0},f_{2,0},E_0,B_0)$ satisfies that  $\inf_{|\xi|\le r_0}|(\hat f_{1,0}(\xi),\chi_0)|\ge d_0$, $\inf_{|\xi|\le r_0}|(\hat f_{1,0}(\xi),\chi_4)|\ge d_1\sup_{|\xi|\le r_0}|(\hat f_{1,0}(\xi),\chi_0)|$, $\sup_{|\xi|\le r_0}|(\hat f_{1,0}(\xi),v\sqrt M)|=0$ and $\inf_{|\xi|\le r_0}|\frac{\xi}{|\xi|}\times \hat B_0(\xi)|\ge d_0$,  then the global solution $(f_1,f_2,E,B)$  to the  system~\eqref{VMB3}--\eqref{VMB3e} satisfies
 \bmas
 C_1\delta_0(1+t)^{-\frac34}  &\le  \|f_1(t)\|_{L^2_{x,v}}  \le C_2\delta_0(1+t)^{-\frac34},
\\
 C_1\delta_0(1+t)^{-\frac54}&\le  \| f_2(t)\|_{L^2_{x,v}}   \le C_2\delta_0(1+t)^{-\frac54},
 \\
 C_1\delta_0(1+t)^{-\frac54}&\le \|E(t)\|_{L^2_{x}}\le C_2\delta_0(1+t)^{-\frac54},
 \\
 C_1\delta_0(1+t)^{-\frac34}&\le \|B(t)\|_{L^2_{x}}\le C_2\delta_0(1+t)^{-\frac34},
\emas
for $t>0$ large with two constants $C_2>C_1>0$, and in particular
\bmas
 C_1\delta_0(1+t)^{-\frac34}&\le \|(f_1(t),\chi_j)\|_{L^2_{x}}\le C_2\delta_0(1+t)^{-\frac34},  
 \\
 C_1\delta_0(1+t)^{-\frac54}&\le \|  \P_1f_1(t)\|_{L^2_{x,v}}\le C_2\delta_0(1+t)^{-\frac54},
 \\
 C_1\delta_0(1+t)^{-\frac54}&\le \|P_rf_2(t)\|_{L^2_{x,v}}\le C_2\delta_0(1+t)^{-\frac54},
\emas
for $ j=0,1,2,3,4$
\end{thm}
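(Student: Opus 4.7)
The plan is to derive matching lower bounds by isolating the dominant low-frequency contribution of the linear evolution and showing that the nonlinear Duhamel integrals decay at strictly faster rates, so that for $\delta_0$ sufficiently small the linear lower bound is only halved. Using the decoupled form \eqref{VMB3}--\eqref{VMB3d}, write
$$ f_1(t) = e^{t\BB_0}f_{1,0} + \int_0^t e^{(t-s)\BB_0} G_1(s)\, ds,\qquad U(t)=e^{t\AA_0}U_0+\int_0^t e^{(t-s)\AA_0} G_2(s)\,ds, $$
with $U=(f_2,E,B)$ and $G_1,G_2$ collecting the Lorentz-coupling and $\Gamma$-terms on the right-hand sides of \eqref{VMB3}--\eqref{VMB3a}. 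The upper bounds of Theorem~\ref{time3} supply the a priori decay of each bilinear factor entering $G_1,G_2$, so the task reduces to lower-bounding the linear pieces and upper-bounding the Duhamel pieces.

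For $f_1$, which satisfies the pure linearized Boltzmann equation \eqref{VMB4a}, I would invoke the classical low-frequency spectral expansion $e^{t\hat{\BB}_0(\xi)}=\sum_{j} e^{t\lambda_j(|\xi|)}(\,\cdot\,,\Psi_j^*)\Psi_j$ on $|\xi|\le r_0$ from \cite{Ellis,Ukai1}, whose leading eigenfunctions at $\xi=0$ span the macroscopic subspace $\{\chi_0,v\sqrt M,\chi_4\}$. The three hypotheses $\inf_{|\xi|\le r_0}|(\hat f_{1,0},\chi_0)|\ge d_0$, $(\hat f_{1,0},v\sqrt M)\equiv 0$, and $\inf_{|\xi|\le r_0}|(\hat f_{1,0},\chi_4)|\ge d_1\sup_{|\xi|\le r_0}|(\hat f_{1,0},\chi_0)|$ are precisely what is needed to rule out cancellation among the projections of $\hat f_{1,0}$ onto the dual eigenvectors $\Psi_j^*(0)$, so Plancherel gives $\|(f_1,\chi_j)\|_{L^2_x}^2\ge c\int_{|\xi|\le r_0}e^{-2a|\xi|^2 t}d\xi\ge c(1+t)^{-3/2}$, yielding the $(1+t)^{-3/4}$ bound for every macroscopic moment. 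For the microscopic part, $\P_1 f_1=-L^{-1}\P_1(v\cdot\nabla_x \P_0 f_1)+$ (faster terms) carries an extra factor of $|\xi|$, upgrading the lower-bound rate to $(1+t)^{-5/4}$. For $U=(f_2,E,B)$ I use Theorem~\ref{semigroup-1}: the piece $S_3 U_0$ is exponentially small, while $S_2 U_0$ decays faster than any polynomial by integration by parts in $\xi$ on $|\xi|\ge r_1$. The leading term is then $S_1(t,\xi)U_0$ on $|\xi|\le r_0$, and a direct computation of the leading eigenvector $\Psi_j(s,\omega)$ at $s=0$ shows its magnetic component is proportional to $\omega\times\hat B$; therefore $\inf_{|\xi|\le r_0}|\omega\times\hat B_0|\ge d_0$ forces $\|B(t)\|_{L^2_x}\ge C\int_{|\xi|\le r_0}e^{-2a_1|\xi|^2 t}d\xi\ge C(1+t)^{-3/4}$, while the $(1+t)^{-5/4}$ lower bounds for $E$, $P_r f_2$ and $f_2$ follow from the linear result of Theorem~\ref{time1} under the same hypothesis.

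The main obstacle is controlling the Duhamel remainders. Using the kernel bounds $\|e^{t\AA_0}V_0\|_{Z^2}\lesssim (1+t)^{-3/4}\|V_0\|_{Z^1}$ from Theorem~\ref{time1} together with the bilinear structure of $G_1,G_2$ and the upper bounds of Theorem~\ref{time3}, one estimates $\|G_i(s)\|_{Z^2\cap Z^1}\lesssim \delta_0^2(1+s)^{-3/2}$, since the product of any two factors among $\{f_1,f_2,E,B\}$ combines at worst a $(1+s)^{-3/4}$ with a strictly decaying second factor. Splitting the time integral at $s=t/2$ then gives $\int_0^t(1+t-s)^{-3/4}(1+s)^{-3/2}\,ds\lesssim (1+t)^{-3/4}$, which is of the same order as the linear lower bound for the borderline $B$-component and strictly faster than the linear lower bound for the other components after using the sharper $(1+t)^{-5/4}$ kernel available on the non-$B$ components; choosing $\delta_0$ small absorbs the nonlinear contribution into half of the linear lower bound and concludes the proof. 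The principal technical difficulty lies in the non-divergence-form terms $(v\cdot E)f$ and $E\cdot\nabla_v f$, which lose one $v$-derivative and grow in $v$; handling them requires the weighted regularity $H^{N+5}_w$ already built into the hypothesis of Theorem~\ref{time3}, combined with the high-frequency spectral estimates of Theorem~\ref{spectrum1} to close the loss of regularity.
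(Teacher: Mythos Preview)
Your approach is essentially the same as the paper's: write the nonlinear solution via the Duhamel formulas \eqref{Duh1}--\eqref{Duh2}, invoke the already-established linear lower bounds (Theorem~\ref{time1b} for $f_1$ and Theorem~\ref{time1} for $(f_2,E,B)$), bound the nonlinear integrals by $C\delta_0^2(1+t)^{-\mathrm{rate}}$ using the upper bounds of Theorem~\ref{time3}, and absorb the $\delta_0^2$-term into the $\delta_0$-lower bound for $\delta_0$ small.

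One correction: your claim that $S_2U_0$ ``decays faster than any polynomial by integration by parts in $\xi$'' is not right. The high-frequency eigenvalues satisfy only $\mathrm{Re}\,\beta_j(s)\sim -c_1/s$, so $S_2$ gives merely polynomial decay $(1+t)^{-m}$ at the cost of $m$ extra derivatives on the data (cf.\ \eqref{V_4b}--\eqref{V_4d}); this is precisely why the paper needs $l\ge 2$ in Theorem~\ref{time1} and the extra $H^{N+5}_w$ regularity in Theorem~\ref{time3}. Since you ultimately cite Theorem~\ref{time1} for the linear lower bounds rather than re-deriving them, this slip does not affect the validity of your argument.
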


\begin{rem}
From the above theorems, Remark~\ref{rem1.1} and those estimates
obtained in \cite{Li2,Li3}, we  justify the different time-asymptotic phenomena of charged particle transport at mesoscope level
that are determined by the irrotational electric field and
 electromagnetic field respectively. Indeed, it was shown in \cite{Li2} that the distribution function converges to the global Maxwellian at the same optimal rate $(1+t)^{-\frac14}$ as the irrotational electric field for the unipolar Vlasov-Poisson-Boltzmann in $\R^3_x\times\R^3_v$; and in \cite{Li3} it
shows that the  distribution function converges to the global Maxwellian at the optimal rate $(1+t)^{-\frac34}$, which is slower than the exponential decay rate of the irrotational electric field for bipolar Vlasov-Poisson-Boltzmann equations in $\R^3_x\times\R^3_v$.  While in the appearance of electro-magnetic force, the electric field decays at the optimal rate $(1+t)^{-\frac54}$, which is faster than those of the  distribution function and the magnetic field. In particular, it is the rotational vector field part of electric field  for linear system that decays at algebraic rate, while the gradient vector field part of electric field decays exponentially in time.
\end{rem}

\subsection{One-species VMB system}

 We now turn to the one-species
Vlasov-Maxwell-Boltzmann system
where  the time evolution of electrons
is considered under the influence of  a fixed ion background
$n_b(x)$. That is, consider
 \be
 \left\{\bln     \label{1VMB1}
 &  \dt F+v\cdot\Tdx F+(E+v\times B)\cdot\Tdv F =\Q(F,F),\\
 & \dt E=\Tdx\times B-\intr Fvdv,\\
 & \dt B=-\Tdx\times E,\\
 & \Tdx\cdot E=\intr Fdv-n_b,\quad \Tdx\cdot B=0,\\
 & F(x,v, 0)=F_{0}(x,v), \quad  E_0(x,0)=E_0(x),\quad B_0(0,x)=B_0(x).
 \eln\right.
 \ee
Here,  $F=F(x,v,t)$ is the number
density function of   electrons, and  $n_b(x)>0$  is assumed
to be a  constant representing the spatially uniform density of the ionic background. Take  $n_b$ = 1 without loss of generality.

 The one-species VMB system \eqref{1VMB1} has a stationary solution $(F^*,E^*,B^*)=(M(v),0,0)$ with  $M(v)$ being the normalized global Maxwellian defined above.
Set
$$F=M+\sqrt M f.$$
Then the one-species VMB system \eqref{1VMB1} for $(F,E,B)$ is reformulated in terms of $(f,E,B)$ into
\bgr
\dt f+v\cdot\Tdx f-Lf-v\sqrt M\cdot E
=\frac12(v\cdot E)f-(E+v\times B)\cdot \Tdv f+\Gamma(f,f),\label{1VMB3}\\
\dt E=\Tdx\times B-\intr fv\sqrt Mdv,\label{1VMB3a}\\
\dt B=-\Tdx\times E,\label{1VMB3b}\\
\Tdx\cdot E=\intr f\sqrt Mdv,\quad \Tdx\cdot B=0,\label{1VMB3c}\\
f(0,x,v)=\frac{F_{0}-M}{\sqrt M}, \quad  E_0(x,0)=E_0(x),\quad B_0(0,x)=B_0(x).\label{1VMB3d}
\egr

Consider the  linearized one-speices Vlasov-Maxwell-Boltzmann system:
\bgr
\dt f+v\cdot\Tdx f-Lf-v\sqrt M\cdot E
=0,\label{1VMB4}\\
\dt E=\Tdx\times B-\intr fv\sqrt Mdv,\label{1VMB5}\\
\dt B=-\Tdx\times E,\label{1VMB6}\\
\Tdx\cdot E=\intr f\sqrt Mdv,\quad \Tdx\cdot B=0.\label{1VMB7}
\egr

%
For convenience of notations,
rewrite the linearized system for $U=(f,E,B)^T\in L^2(\R^3_x\times\R^3_v)\times L^2(\R^3_x)\times L^2(\R^3_x)$ as
\bq
 \left\{\bln            \label{LVMB}
 &\dt U=\AA_2U, \quad t>0,\\
 &\Tdx\cdot E=(f,\sqrt M),\quad \Tdx\cdot B=0,\\
 &U(0,x,v)=U_0(x,v)=(f_0,E_0,B_0),
 \eln\right.
 \eq
with the operator $\AA_2$ defined by
\bma
\AA_2 &=\left( \ba
L-v\cdot\Tdx &v\sqrt M &0\\
-P_m &0 &\Tdx\times\\
0 &-\Tdx\times &0
\ea\right).
\ema

Similarly,  consider the following system for $\hat{V}=(\hat f,\omega\times\hat  E,\omega\times \hat B)$:
 \bq
 \left\{\bln            \label{LVMB2}
 &\dt \hat{V}=\hat{\AA}_3(\xi)\hat{V}, \quad t>0,\\
 &\hat{V}(0,\xi,v)=\hat{V}_0(\xi,v)=(\hat{f}_0,\omega\times\hat{E}_0,\omega\times\hat{B}_0),
 \eln\right.
 \eq
with 
\bma
\hat{\AA}_3(\xi)=\left( \ba
\hat{B}_2(\xi) &-v\sqrt M\cdot\omega\times &0\\
-\omega\times P_m &0 &\i\xi\times\\
0 &-\i\xi\times &0
\ea\right).
\ema
Here,  for $\xi\ne0$,
 \bma
\hat{B}_2(\xi) &=L-\i(v\cdot\xi)-\mbox{$\frac{\i(v\cdot\xi)}{|\xi|^2}$}P_{\rm d}.  \label{B(xi)}
 \ema


Different from the two-species VMB, the spectrum of the linearized operator of one-species VMB consists of nine eigenvalues in low frequency located in small neighborhoods of three points $0,\pm \i$, and two eigenvalues in high frequency located in two small neighborhoods centered at  the points $\lambda=\pm\i|\xi|$ respectively in complex plane. Except these eigenvalues, there is a spectral gap for the intermediate frequency. More precisely, we have

 \begin{thm}\label{spectrum2}
There exists a constant $r_0>0$ so that the spectrum $\lambda\in\sigma(\hat{\AA}_3(\xi))\subset\mathbb{C}$ for $\xi=s\omega$ with $|s|\leq r_0$ and $\omega\in \mathbb{S}^2$ consists of nine points $\{\lambda_j(s),\ -1\le j\le 7\}$ in the domain $\mathrm{Re}\lambda>-\mu /2$. The spectrum $\lambda_j(s)$  are $C^\infty$ functions of $s$ for $|s|\leq r_0$. In particular, the eigenvalues $\lambda_j(s)$ have the following asymptotic expansions when $|s|\leq r_0$
 $$                                   
 \left\{\bln
 \lambda_{\pm1}(s)=&\pm \i+(-a_1\pm\i b_1)s^2+o(s^2),\quad  \overline{\lambda_1}=\lambda_{-1},\\
 \lambda_{0}(s) =& -a_0s^2+o(s^2),\\
 \lambda_{2}(s) =& \lambda_{3}(s) =-\i+(-a_2-\i b_2)s^2+o(s^2),\quad \overline{\lambda_{2}}=\lambda_{4},\\
 \lambda_{4}(s) =& \lambda_{5}(s) =\i+(-a_2+\i b_2)s^2+o(s^2),\\
  \lambda_{6}(s) =& \lambda_{7}(s) =-a_3s^4+o(s^4),
 \eln\right.
 $$
where $a_j>0$ $(0\le j\le3)$ and $b_j>0$ $(1\le j\le2)$ are  constants defined in Theorem \ref{eigen_3a}.

There exists a constant $r_1>0$ such that the spectrum $\beta\in\sigma(\hat{\AA}_3(\xi))\subset\mathbb{C}$ for  $s=|\xi|> r_1$ consists of four eigenvalues $\{\beta_j(s),\ j=1,2,3,4\}$ in the domain $\mathrm{Re}\lambda>-\mu/2$. In particular, the eigenvalues satisfy
 \bgrs
 \beta_1(s) = \beta_2(s) =-\i s+O(s^{-1/2}),
 \\
 \beta_3(s) = \beta_4(s) =\i s+O(s^{-1/2}),
 \\
\frac{c_1}{s}\le -{\rm Re}\beta_{j}(s)\le \frac{c_2}{s},
\egrs
for two positive constants $c_1$ and $c_2$.

For any $r_1>r_0>0$, there
exists $\alpha =\alpha(r_0,r_1)>0$ such that for  $r_0\le |\xi|\le r_1$,
$$ \sigma(\hat{\AA}_3(\xi))\subset\{\lambda\in\mathbb{C}\,|\, \mathrm{Re}\lambda(\xi)\leq-\alpha\} .$$
\end{thm}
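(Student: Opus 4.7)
The plan is to establish the spectrum structure separately in the three frequency regimes identified by the theorem---low, intermediate, and high---by applying Kato's analytic perturbation theory at $s=0$ and $s=\infty$ together with a compactness/continuity argument in between. A preliminary reduction handles the singular coefficient $\frac{\i(v\cdot\xi)}{|\xi|^2}P_{\mathrm d}$ in $\hat B_2(\xi)$, which encodes the electrostatic constraint $\i\xi\cdot\hat E=(\hat f,\sqrt M)$: working in the weighted space with inner product $(\cdot,\cdot)_\xi$ and in the reduced transverse variables $(\hat f,\omega\times\hat E,\omega\times\hat B)$, the family $\hat{\AA}_3(s\omega)$ becomes holomorphic in $s$ on a small disk for each fixed $\omega\in\mathbb{S}^2$. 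In the low-frequency regime, the limiting operator as $s\to 0^+$ has discrete spectrum near the imaginary axis at $\lambda=0$ and $\lambda=\pm\i$ with total multiplicity nine, arising from the five hydrodynamic modes of $\ker L$, the plasma/transverse-electromagnetic oscillations generated by the coupling of $v\sqrt M$ with $\omega\times\hat E$ through $v\sqrt M\cdot(\omega\times\cdot)$ and $\omega\times P_m$, and the two rotational magnetic modes $\omega\times\hat B$. Kato's reduction projects the resolvent onto this nine-dimensional spectral subspace to yield an effective matrix $Q(\lambda,s)$; expanding $Q$ in powers of $s$ to second order and exploiting the macroscopic coercivity $(Lf,f)\le-\mu\|\P_1 f\|^2$ together with the antisymmetry of $\i(v\cdot\xi)$ produces the stated asymptotics for $\lambda_{\pm 1},\lambda_0,\lambda_{\pm 2},\lambda_{\pm 3}$ and the positivity of $a_0,a_1,a_2,b_1,b_2$.

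The main obstacle is the pair $\lambda_6(s)=\lambda_7(s)=-a_3 s^4+o(s^4)$, which is the distinctive feature of the one-species case and requires fourth-order perturbation analysis. These modes live on the two-dimensional rotational-magnetic subspace, which couples to $\omega\times\hat E$ only through $\i\xi\times$ (of order $s$) and to the kinetic sector only indirectly through $\omega\times\hat E$, so the second-order correction to $\lambda=0$ on this subspace vanishes identically. I would therefore expand the Schur complement of $Q(\lambda,s)$ on this block to order $s^4$, tracking how a rotational magnetic perturbation is transferred to the rotational electric mode via $\i\xi\times$, then to the kinetic momentum sector via $v\sqrt M\cdot(\omega\times\cdot)$, dissipated through the partial inverse $(-L)^{-1}$ restricted to $\mathcal R(\P_1)$, and returned by the mirror chain $\omega\times P_m$ followed by $\i\xi\times$. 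The coefficient $a_3$ then emerges as the resulting positive quadratic form on the rotational-magnetic subspace, whose positivity follows from Cauchy--Schwarz and the coercivity of $-L$ on $\mathcal R(\P_1)$. This loop-closure mechanism is precisely what the mutual interaction cancels in the two-species setting, explaining both the absence of an analogous pair in Theorem~\ref{spectrum1} and the hyperdissipative $s^4$ character here.

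For the high-frequency regime $s>r_1$, the leading dynamics is governed by the transport operator $-\i(v\cdot\xi)$ and the Maxwell block $\i\xi\times$ on $\mathbb{C}^3_\xi\times\mathbb{C}^3_\xi$, whose eigenvalues are $\pm\i s$ with multiplicity two each. Substituting $\lambda=\pm\i s+\mu$ into the eigenvalue equation and inverting $\hat B_2(\xi)-\lambda$ on the kinetic sector---possible because $-\i s(v\cdot\omega\mp 1)-\mu$ is bounded away from zero off the resonant velocity set $\{|v\cdot\omega\mp 1|\lesssim s^{-1/2}\}$---reduces the problem to a contractive fixed-point equation for $\mu$ on the two-dimensional Maxwell subspace; a direct integral estimate using the measure of the resonant set yields $|\mu|=O(s^{-1/2})$ together with the two-sided bound $c_1 s^{-1}\le-\mathrm{Re}\,\mu\le c_2 s^{-1}$, producing the four branches $\beta_j(s)$. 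For the intermediate annulus $r_0\le s\le r_1$, I would combine analytic Fredholm theory (the resolvent $(\hat{\AA}_3(\xi)-\lambda)^{-1}$ is meromorphic in $(\xi,\lambda)$ on this compact set) with the dissipative energy estimate inherited from \eqref{L_4} and the antisymmetry of the transport and Maxwell blocks to exclude spectrum on the imaginary axis, and then use compactness to upgrade this to the uniform spectral gap $\alpha(r_0,r_1)>0$.
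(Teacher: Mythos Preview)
Your proposal is correct in outline and would work, but the paper takes a more explicit route that avoids Kato's abstract perturbation machinery. Instead of projecting onto a nine-dimensional spectral subspace and expanding a Schur complement, the paper uses the macro--micro decomposition $f=\P_0 f+\P_1 f$ together with the rotational invariance of $L$ (Lemma~\ref{eigen_0}) to split the eigenvalue problem into two \emph{decoupled} scalar problems: a $3\times 3$ determinant $D(\lambda,s)=\det\mathbb{M}$ for the longitudinal macroscopic variables $(W_0,W\cdot\omega,W_4)$, giving $\lambda_{-1},\lambda_0,\lambda_1$, and a cubic
\[
D_0(\lambda,s)=\lambda^3-s^2R_{22}\lambda^2+(1+s^2)\lambda-s^4R_{22}
\]
for the transverse block $(\omega\times W,X,Y)$, giving the six remaining branches. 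The implicit function theorem applied to these scalar equations (Lemmas~\ref{eigen_2} and~\ref{eigen_1}) immediately yields the expansions, and the $s^4$ behavior of $\lambda_{6,7}$ is read off directly from the cubic: at $\lambda=0$ the linear term $(1+s^2)\lambda$ balances the constant term $-s^4R_{22}$, so $\lambda_6=\lambda_7\sim s^4R_{22}(0,0)=s^4(L^{-1}\P_1(v_1\chi_2),v_1\chi_2)$ with no fourth-order bookkeeping required. Your loop-closure picture is the right physics, but the paper's explicit cubic makes both the $s^4$ scaling and the constants $a_j,b_j$ transparent in one line. For the high-frequency part the paper again reduces to a scalar equation $D(\lambda,s)=\lambda^2-((B_2(se_1)-\lambda)^{-1}\chi_2,\chi_2)\lambda+s^2=0$ and solves it by a contraction argument (parallel to Lemma~\ref{eigen_5}), which is close to your fixed-point idea but more direct; the intermediate-frequency gap is proved exactly as you suggest, by a compactness/contradiction argument (Lemma~\ref{LP01a}).
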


Based on the spectrum of the linearized operator of one-species VMB system, we are able to analyze the corresponding semigroup for \eqref{LVMB2} below.

\begin{thm}\label{semigroup-2}
The semigroup $S(t,\xi)=e^{t\hat{\AA}_3(\xi)}$ with $\xi=s\omega\in \R^3$ and $s=|\xi|\neq0$  satisfies
 $$
 S(t,\xi)U=S_1(t,\xi)U+S_2(t,\xi)U+S_3(t,\xi)U,
     \quad U\in L^2_\xi(\R^3_v)\times \mathbb{C}^3_\xi\times \mathbb{C}^3_\xi, \ \ t>0, 
 $$
 where
 \bmas
 S_1(t,\xi)U&=\sum^7_{j=-1}e^{t\lambda_j(s)}
              (U, \Psi^*_j(s,\omega)\,)_\xi \Psi_j(s,\omega)
               1_{\{|\xi|\leq r_0\}}, 
               \\
 S_2(t,\xi)U&=\sum^4_{j=1} e^{t\beta_j(s)}
              (U,\Phi^*_j(s,\omega)\,) \Phi_j(s,\omega)
               1_{\{|\xi|\ge r_1\}}.   
 \emas
Here, $(\lambda_j(s),\Psi_j(s,\omega))$ and $(\beta_j(s),\Phi_j(s,\omega))$ are the eigenvalues and eigenvectors of the operator $\hat{\AA}_3(\xi)$ given in Theorem~\ref{eigen_3a} and Theorem~\ref{eigen_4a} for $|\xi|\le r_0$ and $|\xi|>r_1$ respectively,
and $S_3(t,\xi)U =: S(t,\xi)U-S_1(t,\xi)U-S_2(t,\xi)U$ satisfies
that there exists  a constant $\kappa_0>0$ independent of $\xi$ so that
 $$
 \|S_3(t,\xi)U\|_\xi\leq Ce^{-\kappa_0t}\|U\|_\xi,\quad t\ge0.
 $$
\end{thm}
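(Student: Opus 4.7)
The plan is to represent the semigroup $e^{t\hat{\AA}_3(\xi)}$ through the Dunford contour integral and then deform the contour so as to isolate the contributions from the discrete eigenvalues supplied by Theorem~\ref{spectrum2} in the low and high frequency regimes. First I would verify that $\hat{\AA}_3(\xi)$ generates a $C_0$--semigroup on the weighted space $L^2_\xi(\R^3_v)\times\mathbb{C}^3_\xi\times\mathbb{C}^3_\xi$: the weight $|\xi|^{-2}P_{\rm d}$ in the inner product $(\cdot,\cdot)_\xi$ is designed precisely so that the pairings $(v\sqrt M\cdot E,f)_\xi$ and $(-\omega\times P_m f,E)$, together with $(\i\xi\times B,E)+(-\i\xi\times E,B)$, cancel when one takes the real part, leaving only the dissipative contribution $(Lf,f)\le -\mu\|\P_1f\|^2$; this yields $\mathrm{Re}(\hat{\AA}_3(\xi)U,U)_\xi\le 0$ and hence a uniform bound on the semigroup for $\mathrm{Re}\lambda$ sufficiently large, which in turn justifies the representation
\begin{equation*}
S(t,\xi)U=\frac1{2\pi\i}\int_{\sigma-\i\infty}^{\sigma+\i\infty}e^{t\lambda}\bigl(\lambda-\hat{\AA}_3(\xi)\bigr)^{-1}U\,d\lambda,\qquad \sigma\gg1.
\end{equation*}

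Next, in the low frequency regime $|\xi|\le r_0$ I would shift the contour to $\{\mathrm{Re}\lambda=-\kappa_0\}$ for some $0<\kappa_0<\min\{a_0 r_0^2,a_1 r_0^2,a_2 r_0^2,a_3 r_0^4,\mu/2\}$, picking up residues at the nine simple (or semisimple) eigenvalues $\{\lambda_j(s)\}_{j=-1}^{7}$ from Theorem~\ref{spectrum2}. The Laurent expansion of the resolvent at each $\lambda_j(s)$ yields the spectral projection, which by a routine biorthogonality calculation using the left eigenvectors $\Psi_j^*(s,\omega)$ takes the form $(\,\cdot\,,\Psi_j^*(s,\omega))_\xi\,\Psi_j(s,\omega)$; summing the residues gives $S_1(t,\xi)$. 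In the high frequency regime $|\xi|\ge r_1$ the same device with contour shifted past the four branches $\{\beta_j(s)\}_{j=1}^4$ near $\pm\i|\xi|$ yields $S_2(t,\xi)$; here the $P_{\rm d}$-weight becomes harmless, so one can work with the plain $L^2$ inner product and the left eigenvectors $\Phi_j^*$.

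The remainder $S_3(t,\xi)$ is then the residual contour integral along $\{\mathrm{Re}\lambda=-\kappa_0\}$, possibly augmented by small arcs bypassing the retained eigenvalues. The desired bound $\|S_3(t,\xi)U\|_\xi\le Ce^{-\kappa_0 t}\|U\|_\xi$ follows once one establishes a uniform-in-$\xi$ resolvent estimate of the form $\|(\lambda-\hat{\AA}_3(\xi))^{-1}\|_\xi\le C(1+|\mathrm{Im}\lambda|)^{-1}$ on that contour (away from the isolated eigenvalues when $|\xi|\le r_0$ or $|\xi|\ge r_1$). I would break this into three cases matching the spectrum analysis: on the intermediate band $r_0\le|\xi|\le r_1$ it is immediate from the spectral gap $\alpha(r_0,r_1)$; on the low regime one writes $\lambda-\hat{\AA}_3(\xi)=(\lambda+\nu(v)+\i v\cdot\xi)-\widetilde{K}(\xi)$ with $\widetilde{K}(\xi)$ a compact perturbation encompassing $K$, the $P_{\rm d}$ correction, and the Maxwell coupling, and inverts via a Neumann argument after subtracting the discrete projections; on the high regime one uses that $\hat{B}_2(\xi)$ contributes the extra damping $-\frac{\i(v\cdot\xi)}{|\xi|^2}P_{\rm d}$ which, when tested against $\P_0$-components, produces the $\mathcal{O}(|\xi|^{-1})$ decay consistent with $\mathrm{Re}\beta_j(s)$ and allows closure of the estimate.

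The principal obstacle will be the uniform resolvent estimate in the high frequency regime: because $\hat{B}_2(\xi)$ differs from the Boltzmann linearization $L-\i(v\cdot\xi)$ by a term of order $|\xi|^{-1}$, one must track this weak damping carefully against the streaming operator of order $|\xi|$ while simultaneously handling the hyperbolic Maxwell block, which contributes the factors $\i\xi\times$ that are algebraically of order $|\xi|$. A natural route is to split $U=(P_{\rm d}f+v\sqrt M\cdot\alpha+P_r f,\omega\times E,\omega\times B)$, use the constraint $\omega\times(\omega\times E)=-(I-\omega\otimes\omega)E$ to reduce to transverse components, and then bound the pieces via a weighted energy identity analogous to the spectral computation leading to the $\beta_j(s)$ asymptotics. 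Once this is in hand the decomposition, and hence the exponential bound on $S_3$, follows by standard contour manipulation.
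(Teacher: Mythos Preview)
Your overall architecture---Laplace inversion formula, contour shift, residue identification with the eigenprojections $(\,\cdot\,,\Psi_j^*)_\xi\Psi_j$ and $(\,\cdot\,,\Phi_j^*)\Phi_j$---matches the paper exactly. The gap is in the bound on the remainder $S_3$.

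You propose to control the residual vertical-line integral via a pointwise resolvent bound $\|(\lambda-\hat{\AA}_3(\xi))^{-1}\|_\xi\le C(1+|\mathrm{Im}\lambda|)^{-1}$ on $\{\mathrm{Re}\lambda=-\kappa_0\}$. But $\hat{\AA}_3(\xi)$ is \emph{not} sectorial (the Maxwell block contributes purely imaginary spectrum $\pm\i|\xi|$, and at low frequency there are eigenvalues near $\pm\i$), so you cannot deform to a sector; on a vertical line the bound you propose yields $\int_{\R}(1+|y|)^{-1}\,dy=\infty$ and the integral diverges. Obtaining a $(1+|y|)^{-2}$ bound would require control of $\|\hat{\AA}_3(\xi)^2 U\|_\xi$, which is not available uniformly in $\xi$ for generic $U$.

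The paper sidesteps this by never estimating the full resolvent pointwise. It writes, for $|\xi|\le r_0$,
\[
(\lambda-\hat{\AA}_3(\xi))^{-1}=(\lambda-G_{\mathrm{easy}}(\xi))^{-1}\bigl(I-Y(\lambda,\xi)\bigr)^{-1},\qquad Y(\lambda,\xi)=G_{\mathrm{pert}}(\xi)(\lambda-G_{\mathrm{easy}}(\xi))^{-1},
\]
where $G_{\mathrm{easy}}$ is the block-diagonal operator built from the macro--Maxwell piece $G_6(\xi)$ on $N_0\times\mathbb{C}^3_\xi\times\mathbb{C}^3_\xi$ and the micro piece $B_5(\xi)=L-\i\P_1(v\cdot\xi)\P_1$ on $N_0^\perp$ (this is \emph{not} the $c(\xi)$-based splitting you suggest at low frequency; that one is used only for $|\xi|>r_0$). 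Two inputs then close the estimate: (i) a \emph{uniform} bound $\sup_{\xi,y}\|(I-Y(-\kappa_0+\i y,\xi))^{-1}\|_\xi\le C$, proved by a compactness/contradiction argument (Lemma~\ref{F_1a} and its low-frequency analogue), and (ii) the Plancherel-type bounds
\[
\int_{\R}\|[(x+\i y)\P_1-B_5(\xi)]^{-1}g\|^2\,dy\le\frac{\pi}{x+\mu}\|g\|^2,\qquad
\int_{\R}\|[(x+\i y)P_A-G_6(\xi)]^{-1}W\|_\xi^2\,dy=\frac{\pi}{|x|}\|W\|_\xi^2,
\]
which hold because $B_5(\xi)$ generates a contraction semigroup and $\i G_6(\xi)$ is self-adjoint in $(\cdot,\cdot)_\xi$ (Lemmas~\ref{SG_2a}--\ref{SG_3a}). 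One then tests the shifted integral against an arbitrary $V$, applies Cauchy--Schwarz in $y$, and uses (i)--(ii) for both $U$ and $V$ to get $|(S_3(t,\xi)U,V)_\xi|\le Ce^{-\kappa_0 t}\|U\|_\xi\|V\|_\xi$. The high-frequency and intermediate cases follow the same pattern with $G_1(\xi)=\mathrm{diag}(c(\xi),B_3(\xi))$ in place of $G_{\mathrm{easy}}$. Your ``weighted energy identity'' for the high-frequency remainder is unnecessary; the point is that $(I-Y)^{-1}$ is merely bounded, while the $L^2_y$-integrability comes entirely from the explicit block $G_{\mathrm{easy}}$.
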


Then, we have the optimal time convergence rates of global solutions to linearized system.

\begin{thm}[Electric Field Dominating]
\label{time1a}
If the initial data  $U_0=(f_0,E_0,B_0)\in L^2(\R^3_{v};H^l(\R^3_{x}) \cap L^1(\R^3_{x}))\times H^l(\R^3_x)\cap L^1(\R^3_{x})\times H^l(\R^3_x)\cap L^1(\R^3_{x})$ for $l\ge 0$ with $\Tdx\cdot E_0\ne (f_0,\chi_0)$ being held,
then the unique solution $(f(t),E(t),B(t)) $ to the system \eqref{LVMB} exists globally in time and satisfies  for any $\alpha,\alpha'\in\N^3$ with $\alpha'\le \alpha$ that
 \bma
\|\da_x f(t)\|_{L^2_{x,v}}
\leq
& C\delta(\alpha,\alpha')[(1+t)^{-\frac14-\frac{k}2}+(1+t)^{-\frac58-\frac{k}4}]
  +C(1+t)^{-m-\frac12}\|\Tdx^{m}\da_x U_0\|_{Z^2},  \label{2D_3a}
\\
\|\da_x E(t)\|_{L^2_x}
\leq
& C\delta(\alpha,\alpha')[(1+t)^{-\frac14-\frac{k}2}+(1+t)^{-\frac58-\frac{k}4}]
  +C(1+t)^{-m}\|\Tdx^{m}\da_x U_0\|_{Z^2},  \label{2D_3b}
\\
\|\dxa B(t)\|_{L^2_x}
\leq
& C\delta(\alpha,\alpha')(1+t)^{ -\frac38-\frac{k}4}
  +C(1+t)^{-m}\|\Tdx^{m}\da_x U_0\|_{Z^2},   \label{2D_0}
 \ema
where and below $\delta(\alpha,\alpha')=:\|\da_x U_0\|_{Z^2}+\|\dx^{\alpha'}U_0\|_{Z^1}$, $k=|\alpha-\alpha'|$ and $m\ge 0$. In particular, it holds for $f= \P_0f+ \P_1f$ that
 \bma
\|\da_x (f(t),\chi_0)\|_{L^2_{x}}
\leq
&C\delta(\alpha,\alpha')(1+t)^{-\frac34-\frac{k}2},\label{2D_2}
\\
\|\da_x (f(t),v\sqrt M)\|_{L^2_{x}}
\leq
&C\delta(\alpha,\alpha')[(1+t)^{-\frac14-\frac{k}2}+(1+t)^{-\frac58-\frac{k}4}]
  +C(1+t)^{-m-\frac12}\|\Tdx^{m}\da_x U_0\|_{Z^2},   \label{2D_3}
\\
\|\da_x (f(t),\chi_4)\|_{L^2_{x}}
\leq
&C\delta(\alpha,\alpha')(1+t)^{-\frac34-\frac{k}2},\label{2D_4}
\\
  \|\da_x \P_1f(t)\|_{L^2_{x,v}}
\leq
&C\delta(\alpha,\alpha')[(1+t)^{-\frac34-\frac{k}2}+(1+t)^{-\frac78-\frac{k}4}]
 +C(1+t)^{-m-\frac12}\|\Tdx^{m}\da_x U_0\|_{Z^2}.      \label{2D_5}
\ema

Furthermore, if we assume further that  $l\ge 1$ and there exist two constants $d_0,d_1>0$ such that
   the Fourier transform $\hat U_0=(\hat f_0,\hat E_0,\hat B_0)$  of the initial data $U_0$  satisfies that
  $\inf_{|\xi|\le r_0}|(\hat{f}_0(\xi),\chi_0)|\geq d_0$, $\inf_{|\xi|\le r_0}|\frac{\xi}{|\xi|}\times\hat{E}_0(\xi)|\geq d_0$, $\inf_{|\xi|\le r_0}|\frac{\xi}{|\xi|}\times\hat{B}_0(\xi)|\geq d_0$, $\inf_{|\xi|\le r_0}|(\hat{f}_0(\xi),\chi_4)|\geq d_1\sup_{|\xi|\le r_0}|(\hat{f}_0(\xi),\chi_0)|$  and $\sup_{|\xi|\le r_0}|(\hat f_0(\xi),v\sqrt M)|=0$, then the following optimal time decay rates  holds
\bma
   C_1(1+t)^{-\frac14}
  \leq&\| f(t)\|_{L^2_{x,v}}\leq C_2(1+t)^{-\frac14}, \label{2H_2c}
\\
 C_1(1+t)^{-\frac14}
  \leq&\|E(t)\|_{L^2_x}\leq C_2(1+t)^{-\frac14},\label{2H_2a}
\\
 C_1(1+t)^{-\frac38}
  \leq&\|B(t)\|_{L^2_x}\leq C_2(1+t)^{-\frac38}, \label{2H_3a}
 \ema
for $t>0$ being large enough and $C_2\ge C_1>0$ being two constants, and in particular
\bma
  C_1(1+t)^{-\frac34}
  \leq&\|(f(t),\chi_0)\|_{L^2_x}\leq C_2(1+t)^{-\frac34},\label{2H_1}
\\
 C_1(1+t)^{-\frac14}
  \leq&\|( f(t),\chi_j)\|_{L^2_x}\leq C_2(1+t)^{-\frac14},\quad j=1,2,3,\label{2H_2}
\\
 C_1(1+t)^{-\frac34}
  \leq&\|( f(t),\chi_4)\|_{L^2_x}\leq C_2(1+t)^{-\frac34}, \label{2H_3}
\\
  C_1(1+t)^{-\frac34}
  \leq&\| \P_1 f(t)\|_{L^2_{x,v}}\leq C_2(1+t)^{-\frac34}. \label{2Q_2a}
\ema
\end{thm}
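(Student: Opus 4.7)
The plan is to combine Parseval's identity with the semigroup decomposition $S(t,\xi)=S_1(t,\xi)+S_2(t,\xi)+S_3(t,\xi)$ of Theorem~\ref{semigroup-2}, together with a separate treatment of the longitudinal components of $\hat E,\hat B$ that are not captured by the reduced system on $\hat V=(\hat f,\omega\times\hat E,\omega\times\hat B)$. Using the identity $F=(F\cdot\omega)\omega-\omega\times(\omega\times F)$ I would recover $\hat E=(\omega\cdot\hat E)\omega-\omega\times(\omega\times\hat E)$ and $\hat B=-\omega\times(\omega\times\hat B)$ (exploiting that $\omega\cdot\hat B\equiv0$ is preserved in time). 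The transverse pieces come from $S(t,\xi)\hat V_0$. For the longitudinal piece of $\hat E$ I would exploit the conservation law $\dt(\Tdx\cdot E-(f,\sqrt M))\equiv0$, which in Fourier yields
\begin{equation*}
\omega\cdot\hat E(t,\xi)=\frac{(\hat f(t,\xi),\sqrt M)}{\i|\xi|}+g_0(\xi),\qquad g_0(\xi):=\omega\cdot\hat E_0-\frac{(\hat f_0,\sqrt M)}{\i|\xi|}.
\end{equation*}
When $\Tdx\cdot E_0\ne(f_0,\chi_0)$, the Gauss-deficit $g_0$ is non-trivial and carries a $1/|\xi|$ singularity at $\xi=0$; this is precisely the mechanism of the ``electric field dominating'' phenomenon.

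Splitting each norm via Parseval into the three frequency bands $|\xi|\le r_0$, $r_0\le|\xi|\le r_1$, $|\xi|\ge r_1$: the intermediate band decays exponentially by the spectral gap in Theorem~\ref{spectrum2}. On the high-frequency piece, $\mathrm{Re}\beta_j(s)\sim -1/s$ together with the elementary optimization $\sup_{|\xi|\ge r_1}(e^{-2ct/|\xi|}|\xi|^{-2m})\lesssim(1+t)^{-2m}$, paired with an $H^m$-regularity hypothesis on $\da_x U_0$, produces the $(1+t)^{-m}$ and $(1+t)^{-m-1/2}$ remainder terms in \eqref{2D_3a}--\eqref{2D_5} (the extra half-power on the $\hat f$ estimate comes from the weighted norm $\|\cdot\|_\xi$ and the scaling of the high-frequency eigenvectors $\Phi^*_j$). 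On the low-frequency piece I would compute the contributions of the eigenvalues $\lambda_j(s)$ of Theorem~\ref{spectrum2} separately. The $s^2$-eigenvalues $\lambda_{\pm1},\lambda_0,\lambda_{2,3,4,5}$ give the Gaussian-type rate $\int_0^{r_0}e^{-2as^2t}s^{2+2k}\,ds\lesssim(1+t)^{-3/2-k}$ and hence the standard $(1+t)^{-3/4-k/2}$ for the density $(f,\chi_0)$ and the energy $(f,\chi_4)$ in \eqref{2D_2},\eqref{2D_4}; the $s^4$-eigenvalues $\lambda_{6,7}$ give $\int_0^{r_0}e^{-2a_3s^4t}s^{2+2k}\,ds\lesssim(1+t)^{-3/4-k/2}$, from which the $(1+t)^{-3/8-k/4}$ rate for $B$ in \eqref{2D_0} is read off. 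The leading $(1+t)^{-1/4-k/2}$ rate for $E$ and for $(f,v\sqrt M)$ (and the subdominant $(1+t)^{-5/8-k/4}$ rate) are produced by pairing the $1/|\xi|$ weight in $g_0$ with the $s^2$ (respectively $s^4$) Gaussian factor:
\begin{equation*}
\int_0^{r_0}|g_0(\xi)|^2 e^{-2as^2t}s^{2+2k}\,ds\lesssim\|U_0\|_{Z^1}^2\int_0^{r_0}\frac{e^{-2as^2t}}{s^2}s^{2+2k}\,ds\lesssim(1+t)^{-1/2-k},
\end{equation*}
and the propagation of this slowly-decaying $\omega\cdot\hat E$ into the momentum component through the $v\sqrt M\cdot E$ forcing in \eqref{1VMB4}.

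For the sharp lower bounds \eqref{2H_2c}--\eqref{2Q_2a} I would use the non-degeneracy hypotheses to keep the projection coefficients $(\hat U_0,\Psi^*_j(s,\omega))_\xi$ bounded away from zero as $\xi\to0$. Specifically $\inf_{|\xi|\le r_0}|(\hat f_0,\chi_0)|\ge d_0$ forces $|g_0(\xi)|\gtrsim1/|\xi|$ near $\xi=0$ (since $\omega\cdot\hat E_0$ is continuous and bounded while $(\hat f_0,\sqrt M)/|\xi|$ blows up), which converts the upper bound for $\|E\|_{L^2_x}$ into the matching lower bound $C_1(1+t)^{-1/4}$; the hypothesis $\sup_{|\xi|\le r_0}|(\hat f_0,v\sqrt M)|=0$ removes the would-be $(1+t)^{-3/4}$ contribution to $(f,v\sqrt M)$ so that only the slow $(1+t)^{-1/4}$ piece survives in \eqref{2H_2}; the hypothesis $\inf|(\hat f_0,\chi_4)|\ge d_1\sup|(\hat f_0,\chi_0)|$ prevents the energy and density eigenmodes from cancelling each other at $\xi=0$; and $\inf_{|\xi|\le r_0}|\omega\times\hat B_0|\ge d_0$ keeps the $s^4$-eigenmode $\lambda_{6,7}$ active, yielding \eqref{2H_3a}. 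The main obstacle is the uniform bookkeeping of the $1/|\xi|$ weight carried by $g_0$: it is this singular weight that converts $\int_0^{r_0}e^{-2as^2t}s^2\,ds\sim t^{-3/2}$ into $\int_0^{r_0}e^{-2as^2t}\,ds\sim t^{-1/2}$ and so degrades the decay of $E$ and of $(f,v\sqrt M)$ from the natural $(1+t)^{-3/4}$ down to the slower $(1+t)^{-1/4}$, while the same weight must be propagated consistently through the $S_3$-remainder (which is bounded only in the weighted norm $\|\cdot\|_\xi$) without losing integrability at $\xi=0$.
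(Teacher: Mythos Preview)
Your overall architecture (Parseval, three frequency bands, semigroup decomposition $S_1+S_2+S_3$, eigenvalue asymptotics $\lambda_j\sim -as^2$ vs.\ $\lambda_{6,7}\sim -a_3s^4$) matches the paper. But the mechanism you propose for the slow $(1+t)^{-1/4}$ rate is not the one that actually operates, and as written it is internally inconsistent.

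You introduce the Gauss deficit $g_0(\xi)=\omega\cdot\hat E_0-\hat n_0/(\i|\xi|)$ and argue it is \emph{conserved}; then in your key display you pair $|g_0|^2$ with the decay factor $e^{-2as^2t}$. These two statements contradict each other: a conserved quantity carries no such factor. If your interpretation of the solution kept $g_0$ as a time-independent piece of $\omega\cdot\hat E(t)$, then $\|E(t)\|_{L^2_x}^2$ would contain $\int_{|\xi|\le r_0}|g_0|^2\,d\xi>0$, a constant, and $E$ would not decay at all --- contradicting \eqref{2D_3b}.

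In the paper's framework the constraint is built into the reduced operator $\hat{\AA}_3(\xi)$ through the term $-\i(v\cdot\xi)|\xi|^{-2}P_{\rm d}$ in $\hat B_2(\xi)$, and the solution is recovered via \eqref{solution} so that $\omega\cdot\hat E(t)=-\i\hat n(t)/|\xi|$ for every $t>0$; thus $g_0\equiv0$ along the evolution regardless of the initial data. The $1/|\xi|$ singularity you need lives instead in the \emph{weighted inner product} $(\,\cdot\,,\,\cdot\,)_\xi$: the coefficients $(\hat V_0,\Psi_j^*)_\xi$ contain $|\xi|^{-2}(\hat f_0,\sqrt M)\overline{(\psi_j,\sqrt M)}$, and since $(\psi_{\pm1},\sqrt M)=O(s)$ this produces exactly the $-j|\xi|^{-1}\hat n_0$ term that you see in the momentum expansion (the paper's \eqref{2F_3}). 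The dichotomy between Theorems~\ref{time1a} and \ref{time2a} is then purely a statement about the initial data: when $\Tdx\cdot E_0=n_0$ one may substitute $\hat n_0=\i\xi\cdot\hat E_0=O(|\xi|)$ and the singularity cancels; when $\Tdx\cdot E_0\ne n_0$ no such substitution is available and $\hat n_0/|\xi|$ stays singular, so that
\[
\int_{|\xi|\le r_0}\frac{|\hat n(t)|^2}{|\xi|^2}\,d\xi
\;\sim\;\int_{|\xi|\le r_0}\frac{e^{-2a|\xi|^2t}|\hat n_0|^2}{|\xi|^2}\,d\xi
\;\sim\;(1+t)^{-1/2}.
\]
This is numerically the integral you wrote, but its origin is the weighted eigenfunction expansion, not a persistent Gauss deficit. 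Once you relocate the $1/|\xi|$ weight to the $(\,\cdot\,,\,\cdot\,)_\xi$ pairing, both the upper bounds \eqref{2D_3a}--\eqref{2D_5} and the lower bounds \eqref{2H_2c}--\eqref{2Q_2a} follow exactly as in the paper from the explicit low-frequency expansions of $(h_1,H_1,J_1)$.
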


\begin{thm}[Magnetic Field Dominating]\label{time2a}
If the initial data $U_0=(f_0,E_0,B_0)\in L^2(\R^3_{v};H^l(\R^3_{x}) \cap L^1(\R^3_{x}))\times H^l(\R^3_x)\cap L^1(\R^3_{x})\times H^l(\R^3_x)\cap L^1(\R^3_{x})$ for $l\ge 0$ with $\Tdx\cdot E_0= (f_0,\chi_0)$ being held, then there exists globally in time a unique solution $(f(t),E(t),B(t))$ to the system \eqref{LVMB} which satisfies for any $\alpha,\alpha'\in\N^3$ with $\alpha'\le \alpha$ that
 \bma
\|\da_x f(t)\|_{L^2_{x,v}}
\leq
 &C\delta(\alpha,\alpha')(1+t)^{-\frac58-\frac{k}4}
   +C(1+t)^{-m-\frac12}\|\Tdx^{m}\da_x U_0\|_{Z^2},  \label{1D_3a}
\\
\|\da_x E(t)\|_{L^2_x}
\leq
 & C\delta(\alpha,\alpha')[(1+t)^{-\frac34-\frac{k}2}+(1+t)^{-\frac98-\frac{k}4}]
 +C(1+t)^{-m}\|\Tdx^{m}\da_x U_0\|_{Z^2},  \label{1D_6}
\\
\|\dxa B(t)\|_{L^2_x}
\leq
 &C\delta(\alpha,\alpha')(1+t)^{-\frac38-\frac{k}4}
  +C(1+t)^{-m}\|\Tdx^{m}\da_x U_0\|_{Z^2},   \label{1D_0}
 \ema
where and below $\delta(\alpha,\alpha')=:\|\da_x U_0\|_{Z^2}+\|\dx^{\alpha'}U_0\|_{Z^1}$, $k=|\alpha-\alpha'|$ and $m\ge 0$. In particular, it holds for $f= \P_0f+ \P_1f$ that
 \bma
\|\da_x (f(t),\chi_0)\|_{L^2_{x}}
\leq
&C\delta(\alpha,\alpha')(1+t)^{-\frac54-\frac{k}2},\label{1D_2}
\\
\|\da_x (f(t),v\sqrt M)\|_{L^2_{x}}
\leq
 &C\delta(\alpha,\alpha')(1+t)^{-\frac58-\frac{k}4}
  +C(1+t)^{-m-\frac12}\|\Tdx^{m}\da_x U_0\|_{Z^2},   \label{1D_3}
\\
\|\da_x (f(t),\chi_4)\|_{L^2_{x}}
\leq
 &C\delta(\alpha,\alpha')(1+t)^{-\frac34-\frac{k}2}, \label{1D_4}
  \\
 \|\da_x \P_1f(t)\|_{L^2_{x,v}}
\leq
&C\delta(\alpha,\alpha')(1+t)^{-\frac78-\frac{k}4}
  +C(1+t)^{-m-\frac12}\|\Tdx^{m}\da_x U_0\|_{Z^2}.  \label{1D_5}
\ema

Furthermore, if we also assume that  $l\ge 2$ and there exist a constant $d_0>0$ such that the Fourier transform $\hat U_0=(\hat f_0,\hat E_0,\hat B_0)$ of the initial data $U_0$ satisfies that
  $\inf_{|\xi|\le r_0}|\hat{E}_0(\xi)\cdot\frac{\xi}{|\xi|}|\geq d_0$, $\inf_{|\xi|\le r_0}|\frac{\xi}{|\xi|}\times\hat{E}_0(\xi)|\geq d_0$, $\inf_{|\xi|\le r_0}|\frac{\xi}{|\xi|}\times\hat{B}_0(\xi)|\geq d_0$, $\inf_{|\xi|\le r_0}|(\hat{f}_0(\xi),\chi_4)|\geq d_0$  and $\sup_{|\xi|\le r_0}|(\hat f_0(\xi),v\sqrt M)|=0$,  then the global solution $(f(t),E(t),B(t))$ satisfies the following optimal time decay rates
  \bma
   C_1(1+t)^{-\frac58}
  \leq&\| f(t)\|_{L^2_{x,v}}\leq C_2(1+t)^{-\frac58}, \label{1H_2c}
\\
 C_1(1+t)^{-\frac34}
  \leq&\|E(t)\|_{L^2_x}\leq C_2(1+t)^{-\frac34},\label{1H_2a}
\\
 C_1(1+t)^{-\frac38}
  \leq&\|B(t)\|_{L^2_x}\leq C_2(1+t)^{-\frac38}, \label{1H_3a}
 \ema
for $t>0$ being large enough and $C_2\ge C_1>0$ being two constants, and in particular
\bma
  C_1(1+t)^{-\frac54}
  \leq&\|(f(t),\chi_0)\|_{L^2_x}\leq C_2(1+t)^{-\frac54},\label{1H_1}
\\
 C_1(1+t)^{-\frac58}
  \leq&\|( f(t),\chi_j)\|_{L^2_x}\leq C_2(1+t)^{-\frac58},\ j=1,2,3,\label{1H_2}
\\
 C_1(1+t)^{-\frac34}
  \leq&\|( f(t),\chi_4)\|_{L^2_x}\leq C_2(1+t)^{-\frac34}, \label{1H_3}
\\
  C_1(1+t)^{-\frac78}
  \leq&\| \P_1 f(t)\|_{L^2_{x,v}}\leq C_2(1+t)^{-\frac78}. \label{1Q_2a}
\ema
\end{thm}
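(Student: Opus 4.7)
The proof runs in parallel with that of Theorem~\ref{time1a} but exploits the compatibility condition $\Tdx\cdot E_0=(f_0,\chi_0)$ in a critical way. By Plancherel and the rotational splitting used to derive \eqref{LVMB2}, the norms of $\dxa f(t)$, $\dxa E(t)$ and $\dxa B(t)$ reduce to weighted $\xi$-integrals of $\hat U(t,\xi)=S(t,\xi)\hat U_0(\xi)$ in which the relevant projections pick out the components of $\hat V=(\hat f,\omega\times\hat E,\omega\times\hat B)$ together with the Gauss constraints $\i\xi\cdot\hat E=(\hat f,\chi_0)$ and $\xi\cdot\hat B=0$. Using Theorem~\ref{semigroup-2}, we write $S(t,\xi)=S_1+S_2+S_3$ and split each integral over $\{|\xi|\le r_0\}$, $\{r_0<|\xi|<r_1\}$ and $\{|\xi|\ge r_1\}$. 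The intermediate-frequency piece is controlled by the uniform exponential decay of $S_3$ and is absorbed into the stated right-hand sides.

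\textbf{Low-frequency analysis.} The key observation is that $\Tdx\cdot E_0=(f_0,\chi_0)$ is conserved by the linearized flow, so $\i\xi\cdot\hat E(t,\xi)=(\hat f(t,\xi),\chi_0)$ for all $t\ge 0$, and the longitudinal component of $\hat E$ is no longer an independent mode but is slaved to the density $(\hat f,\chi_0)$. Inserting the eigenvalue expansions of Theorem~\ref{spectrum2} into $S_1(t,\xi)\hat U_0$ and using $|\hat U_0(\xi)|\le |\xi|^{-|\alpha'|}\|\dx^{\alpha'}U_0\|_{Z^1}$ together with $\|\hat U_0\|_{L^2_{\xi,v}}\le\|U_0\|_{Z^2}$, the two basic rescaling estimates
\bmas
\int_{|\xi|\le r_0}|\xi|^{2|\alpha|}e^{-2a|\xi|^{2}t}|\hat U_0(\xi)|^2d\xi
&\le C\delta(\alpha,\alpha')^2(1+t)^{-3/2-k},\\
\int_{|\xi|\le r_0}|\xi|^{2|\alpha|}e^{-2a|\xi|^{4}t}|\hat U_0(\xi)|^2d\xi
&\le C\delta(\alpha,\alpha')^2(1+t)^{-3/4-k/2},
\emas
produce the core algebraic rates. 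Identifying, through the eigenvectors $\Psi_j(s,\omega)$ of Theorem~\ref{eigen_3a}, which eigenmode contributes to each projection of $\hat U$ then yields the component-wise estimates: the eigenvalues $\lambda_0,\lambda_{\pm 1}$ of order $-s^2$ drive $(\hat f,\chi_0)$, $(\hat f,\chi_4)$ and the transverse electric field, giving $(1+t)^{-3/4-k/2}$; the extra factor of $|\xi|$ from the preserved Gauss law further upgrades $(\hat f,\chi_0)$ to $(1+t)^{-5/4-k/2}$; the doubly-degenerate eigenvalues $\lambda_{6,7}=-a_3s^4+o(s^4)$ govern the transverse magnetic mode $\omega\times\hat B$ and enter $\P_1\hat f$, $(\hat f,v\sqrt M)$ and $\omega\times\hat E$ via the off-diagonal coupling in $\hat{\AA}_3(\xi)$, giving the $s^4$-scaled rates $(1+t)^{-3/8-k/4}$, $(1+t)^{-7/8-k/4}$, $(1+t)^{-5/8-k/4}$ and $(1+t)^{-9/8-k/4}$ respectively.

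\textbf{High frequencies, lower bounds, and main obstacle.} For $|\xi|\ge r_1$, the bound $\mathrm{Re}\,\beta_j(s)\le -c_1/s$ combined with $e^{-c_1 t/s}\le C_m(s/t)^m$ produces the tail $(1+t)^{-m}\|\Tdx^m\dxa U_0\|_{Z^2}$; the extra $(1+t)^{-1/2}$ gain for the $f$-component comes from the fact that the $f$-projection of the high-frequency eigenvectors $\Phi_j(s,\omega)$ in Theorem~\ref{eigen_4a} is of order $s^{-1/2}$ relative to the electromagnetic components. The matching lower bounds \eqref{1H_2c}--\eqref{1Q_2a} are obtained by restricting the Parseval integral to a shrinking ball $|\xi|\le\epsilon(1+t)^{-1/4}$ (for the $s^4$-modes) or $|\xi|\le\epsilon(1+t)^{-1/2}$ (for the $s^2$-modes), on which the relevant exponential stays bounded below by a positive constant and the spectral coefficients $(\hat U_0,\Psi_j^*(s,\omega))_\xi$ stay bounded below by continuity in $\xi$ from the assumed non-degeneracy of $\hat U_0$; the oscillating modes of frequencies $\pm\i$ from $\lambda_{\pm 1}$ and $\lambda_{2,\ldots,5}$ cannot cancel the non-oscillating contributions of $\lambda_0$ and $\lambda_{6,7}$ since they lie in distinct phase sectors. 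The principal technical difficulty is the careful book-keeping of the nine low-frequency eigenvectors $\Psi_j(s,\omega)$: one must verify, for every scalar projection of $\hat U$, which eigenmode contributes at leading order in $s$ and which is higher-order or extinguished, and in particular that the $\lambda_{6,7}$-eigenspace couples non-trivially to $\omega\times\hat B$ while its coupling to $(\hat f,\chi_0)$ is killed by the preserved constraint $\Tdx\cdot E=(\hat f,\chi_0)$; this asymmetry is precisely what singles out $B$ as the slowest-decaying component and produces the magnetic-dominated picture.
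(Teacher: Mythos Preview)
Your overall strategy matches the paper's: decompose $S(t,\xi)=S_1+S_2+S_3$, treat the intermediate band by the uniform exponential bound on $S_3$, extract the high-frequency tail $(1+t)^{-m}$ from $\mathrm{Re}\,\beta_j\le -c_1/s$ together with the $O(s^{-1/2})$ size of the $f$-component of $\Phi_j$, and read off the low-frequency rates from the expansions of $\Psi_j(s,\omega)$ in Theorem~\ref{eigen_3a}. Your explanation of the upper bounds is correct, including the mechanism by which the compatibility condition gains a factor of $|\xi|$ in the density mode: in the paper this is implemented concretely by substituting $(\hat f_0,\chi_0)=\i(\xi\cdot\hat E_0)$ into the spectral projections $(\hat V_0,\Psi_j^*)_\xi$, which converts the leading $\hat n_0$-coefficients in the $j=-1,0,1$ expansions into $O(|\xi|)$ remainders and yields exactly the formulas \eqref{1F_2}--\eqref{1B_2}.

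There is, however, a genuine gap in your lower-bound argument. Restricting the Parseval integral to a shrinking ball $|\xi|\le\epsilon(1+t)^{-1/2}$ works cleanly only for the non-oscillating modes $\lambda_0$ and $\lambda_{6,7}$ (which are real). Several of the sharp lower bounds---most notably \eqref{1H_1} for $(f,\chi_0)$ and \eqref{1H_2a} for $E$---are driven by the oscillating modes $\lambda_{\pm1}$ and $\lambda_{2,\dots,5}$, whose leading contributions carry factors like $\sin(\mathrm{Im}\,\lambda_1(|\xi|)t)$ or $\cos(\mathrm{Im}\,\lambda_4(|\xi|)t)$. On a shrinking ball these oscillations do \emph{not} stay bounded below pointwise, and your phrase ``distinct phase sectors'' does not supply a quantitative bound. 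The paper instead integrates over the full low-frequency ball, changes variables $\zeta=\xi\sqrt t$, and evaluates integrals of the type
\[
\int_{|\xi|\le r_0}|\xi|^{2p}e^{-2\eta|\xi|^2t}\sin^2\!\big(t+b_1|\xi|^2t\big)\,d\xi
\;\ge\; C(1+t)^{-3/2-p}
\]
by reducing to $\int_{L/2}^{L}r\cos^2(t+b_1r^2)\,dr$ over an interval long enough to contain a full period (see the computation around \eqref{1E_9b}). You need this oscillatory-integral step, or an equivalent time-averaging argument, to close \eqref{1H_1}, \eqref{1H_2a} and the $\chi_4$ bound \eqref{1H_3}; the shrinking-ball idea alone is insufficient for those components.
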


Finally, we have the time convergence rates of global solutions to the nonlinear system~\eqref{1VMB3}--\eqref{1VMB3d} as follows.

\begin{thm}\label{time3a} %
Assume that  the initial data $f_{0}\in H^{N+3}_w\cap L^{2,1}$ and $(E_0,B_0)\in H^{N+3}(\R^3_x)\cap L^{1}(\R^3_x)$ for $N\ge 4$ satisfy  $\|f_{0}\|_{ H^{N+3}_w\cap L^{2,1}}+\|(E_0,B_0)\|_{H^{N+3}_x\cap L^1_x}\le \delta_0$ with $\delta_0>0$ being small enough. Then, the unique solution  $(f,E,B)$ to the VMB system~\eqref{1VMB3}--\eqref{1VMB3d} exists globally in time and belongs to $H^{N+3}_w\times H^{N+3}_x\times H^{N+3}_x $.

Moreover, if it also holds  $\Tdx\cdot E_0 = (f_0,\chi_0)$, then the global solution  satisfies for $k=0,1$ that
 \bma
 \|\dx^kf(t)\|_{L^2_{x,v}}
 & \le C \delta_0  (1+t)^{-\frac58-\frac k4},   \label{t4.2}
  \\
  \|\dx^kE(t)\|_{L^2_x}
   & \le C \delta_0 (1+t)^{-\frac34-\frac k4}\ln(1+t),   \label{t4.3}
 \\
\|\dx^k B(t)\|_{L^2_{x}}&\le C\delta_0(1+t)^{-\frac38-\frac k4}, \label{t4.4}
 \ema
and in particular 
 \bma
\|\dx^k(f(t),\chi_0)\|_{L^2_x}
 &\le   C \delta_0(1+t)^{-1-\frac k4},   \label{t4.1z}
\\
 \|\dx^k(f(t),\chi_j)\|_{L^2_x}
 & \le C \delta_0(1+t)^{-\frac58-\frac k4},\quad j=1,2,3,  \label{t4.2z}
\\
 \|\dx^k(f(t),\chi_4)\|_{L^2_x}
   & \le C \delta_0(1+t)^{-\frac34-\frac k2},   \label{t4.3z}
 \\
\|\dx^k \P_1f(t)\|_{L^2_{x,v}}&\le C\delta_0(1+t)^{-\frac78-\frac k4},
\\
 \| \P_1f(t)\|_{H^N_w} +\|\Tdx  \P_0f(t)\|_{L^2_v(H^{N-1}_x)}& +\|\Tdx (E,B)(t)\|_{H^{N-1}_x}
 \le
 C \delta_0(1+t)^{-\frac58}. \label{t4.4a}
 \ema

Furthermore, if there also exist a constant $d_0>0$ and a small constant $r_0>0$ so that the Fourier transform
$\hat U_0=(\hat{f}_{0},\hat E_0,\hat B_0)$ of the initial data $U_0=(f_{0},E_0,B_0)$ satisfies that
 $\inf_{|\xi|\le r_0}|\hat{E}_0(\xi)\cdot\frac{\xi}{|\xi|}|\geq d_0$,
 $\inf_{|\xi|\le r_0}|\frac{\xi}{|\xi|}\times\hat{E}_0(\xi)|\geq d_0$,
 $\inf_{|\xi|\le r_0}|\frac{\xi}{|\xi|}\times\hat{B}_0(\xi)|\geq d_0$,
 $\inf_{|\xi|\le r_0}|(\hat{f}_0(\xi),\chi_4)|\geq d_0$ and
 $\sup_{|\xi|\le r_0}|(\hat f_0(\xi),v\sqrt M)|=0$.
Then, the global solution $(f,E,B)$ satisfies 
 \bma
 C_1\delta_0(1+t)^{-\frac58}&\le  \|f(t)\|_{L^2_{x,v}}  \le C_2\delta_0(1+t)^{-\frac58},   \label{B_1e}
\\
 C_1\delta_0(1+t)^{-\frac38}&\le \|B(t)\|_{L^2_{x}}\le C_2\delta_0(1+t)^{-\frac38},   \label{B_5}
\ema
 for $t>0$ large enough with two constants $C_2>C_1$, and in particular
 \bma
 C_1\delta_0(1+t)^{-\frac58}&\le \|(f(t),\chi_j)\|_{L^2_{x}}\le C_2\delta_0(1+t)^{-\frac58},\quad j=1,2,3, \label{B_1}
 \\
 C_1\delta_0(1+t)^{-\frac34}&\le \|(f(t),\chi_4)\|_{L^2_{x}}\le C_2\delta_0(1+t)^{-\frac34}, \label{B_2}
 \\
 C_1\delta_0(1+t)^{-\frac78}&\le \| \P_1f(t)\|_{L^2_{x,v}}\le C_2\delta_0(1+t)^{-\frac78}. \label{B_3}
\ema
\end{thm}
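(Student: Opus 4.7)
The plan is to establish Theorem~\ref{time3a} by combining the linear semigroup estimates from Theorems~\ref{semigroup-2} and~\ref{time2a} with a bootstrap argument on a time-weighted norm that captures all the target rates simultaneously. First, I would construct the global solution via the standard macro-micro decomposition $f=\P_0 f+\P_1 f$ combined with a weighted energy method. The dissipation $(Lf,f)\le -\mu\|\P_1 f\|^2$ on the microscopic part is supplemented by dissipation on $\Tdx \P_0 f$ extracted from the local conservation laws and on $(E,B)$ through the Maxwell system (using the identity $F=(F\cdot\omega)\omega-\omega\times(\omega\times F)$). This yields a closed inequality of the form
$$\tfrac{d}{dt}\mathcal{E}_N(U)+\mathcal{D}_N(U)\le C\|U\|_{H^{N+3}_w}\mathcal{D}_N(U)$$
for an energy $\mathcal{E}_N$ equivalent to $\|U\|^2_{H^{N+3}_w\times H^{N+3}_x\times H^{N+3}_x}$. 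The constraint $\Tdx\cdot E=(f,\sqrt M)$ is preserved by the flow, so the hypothesis $\Tdx\cdot E_0=(f_0,\chi_0)$ persists, which is essential for the magnetic-field-dominating regime of Theorem~\ref{time2a} to apply.

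Writing \eqref{1VMB3}--\eqref{1VMB3d} as $\dt U=\AA_2 U+G(U)$ with $G(U)=(\tfrac12(v\cdot E)f-(E+v\times B)\cdot\Tdv f+\Gamma(f,f),\,0,\,0)$, I would invoke Duhamel's formula $U(t)=e^{t\AA_2}U_0+\int_0^t e^{(t-s)\AA_2}G(U(s))\,ds$ and define a time-weighted norm $M(t)$ encoding each rate in \eqref{t4.2}--\eqref{t4.4a}, e.g.\ summands of the form $(1+s)^{\frac{5}{8}+\frac{k}{4}}\|\dx^k f(s)\|_{L^2_{x,v}}$, $(1+s)^{\frac{3}{4}+\frac{k}{4}}\|\dx^k E(s)\|_{L^2_x}/\ln(2+s)$, and $(1+s)^{\frac{3}{8}+\frac{k}{4}}\|\dx^k B(s)\|_{L^2_x}$, together with the higher Sobolev piece from \eqref{t4.4a}. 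Applying the linear estimates \eqref{1D_3a}--\eqref{1D_5} to each convolution integral, and using the quadratic structure of $G(U)$ with $\|G(U(s))\|_{L^{2,1}}+\|G(U(s))\|_{L^2_{x,v}}\le C\|U(s)\|_{H^{N+3}_w}^2$, one obtains $M(t)\le C\delta_0+CM(t)^2$, hence $M(t)\le C\delta_0$ by a continuity argument for small $\delta_0$. Two facts yield the sharp rates: $G(U)\perp \sqrt M,\, v\sqrt M$, so its contribution to $(f,\chi_0)$ and $(f,\chi_4)$ inherits the faster linear decays in \eqref{1D_2} and \eqref{1D_4}; and $G$ enters only the $f$-slot, so $E,B$ are driven through Maxwell only via the linear coupling $\int fv\sqrt M\,dv$.

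The main obstacle will be the borderline rate $(1+t)^{-3/4}\ln(1+t)$ for $E$. Tracking Duhamel for the $E$-component, the slowest source term is $\int_0^t(1+t-s)^{-3/4}\|G(U(s))\|_{L^{2,1}}\,ds$ with $\|G(U(s))\|_{L^{2,1}}\lesssim\|f\|_{L^2_{x,v}}(\|E\|_{L^2_x}+\|B\|_{L^2_x}+\|f\|_{L^2_{x,v}})$. Plugging the $f$-rate $(1+s)^{-5/8}$ and $B$-rate $(1+s)^{-3/8}$ gives $\|G\|_{L^{2,1}}\lesssim(1+s)^{-1}$, and the borderline integral $\int_0^t(1+t-s)^{-3/4}(1+s)^{-1}\,ds$ produces exactly the $\ln(1+t)$ factor after splitting into $[0,t/2]\cup[t/2,t]$. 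This is why $M(t)$ must carry a $1/\ln(2+s)$ weight on its $E$-coefficient; a second Duhamel pass with the improved $E$-bound verifies no further log powers are generated, closing the bootstrap.

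Finally, for the lower bounds \eqref{B_1e}--\eqref{B_3}, I would use $\|U(t)\|\ge\|e^{t\AA_2}U_0\|-\|\int_0^t e^{(t-s)\AA_2}G(U(s))\,ds\|$. Under the spectral non-degeneracy assumptions on $\hat U_0$, Theorem~\ref{time2a} gives the linear matching lower bounds $\gtrsim \delta_0(1+t)^{-5/8}$ for $\|f\|_{L^2_{x,v}}$, $\gtrsim \delta_0(1+t)^{-3/8}$ for $\|B\|_{L^2_x}$, and analogously for each $(f,\chi_j)$ and $\P_1 f$. The upper-bound analysis already shows the Duhamel correction decays at the same rate but with an extra factor $\delta_0$ (since $G$ is quadratic), so for $\delta_0$ sufficiently small this correction is absorbed into the linear lower bound, yielding the sharp two-sided estimates \eqref{B_1}--\eqref{B_3}.
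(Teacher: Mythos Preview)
Your overall strategy---Duhamel representation via $e^{t\AA_2}$, a time-weighted bootstrap functional, and the lower bounds by subtracting the quadratic Duhamel correction from the linear lower bounds of Theorem~\ref{time2a}---is exactly the paper's route, and your identification of the borderline integral $\int_0^t(1+t-s)^{-3/4}(1+s)^{-1}\,ds$ as the source of the logarithm in the $E$-estimate is correct.

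There are, however, two genuine slips. First, the orthogonality claim is wrong: one has $(G,\sqrt M)=0$, but $(G,v\sqrt M)=nE+m\times B\neq 0$ and $(G,\chi_4)=\sqrt{2/3}\,E\cdot m\neq 0$. So your stated mechanism for the sharp $(f,\chi_4)$ rate does not hold. If you simply feed $\|G\|_{L^{2,1}}\lesssim(1+s)^{-1}$ into the linear estimate \eqref{1D_4}, you obtain $\int_0^t(1+t-s)^{-3/4}(1+s)^{-1}\,ds\sim(1+t)^{-3/4}\ln(1+t)$, which fails to close at the claimed rate $(1+t)^{-3/4}$. The paper avoids this by a refined linear bound (its estimate for $(e^{t\AA_2}U_0,\chi_4)$ when the source has zero $E,B$ components) that isolates $\|(G,\chi_4)\|_{L^1_x}+\|\Tdx G\|_{L^{2,1}}$; since $\|(G,\chi_4)\|_{L^1_x}=\sqrt{2/3}\|E\cdot m\|_{L^1_x}\le\|E\|_{L^2_x}\|m\|_{L^2_x}\lesssim(1+s)^{-11/8}\ln(1+s)$, the convolution closes without a logarithm.

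Second, the closure of the higher-order piece \eqref{t4.4a} cannot be done by Duhamel alone: the high-frequency part of the linear estimate for $\|\Tdx^N(E,B)\|$ costs extra derivatives of $G$, which are not controlled by $M(t)$. The paper handles this by combining a direct energy decay $E^1_{k,1}(U)(t)\lesssim(1+t)^{-3/4}$ (obtained from the dissipation inequality, not Duhamel) to bound $\|G\|_{L^2_v(H^{N+1}_x)}$, then feeding the resulting $\|\Tdx^N(E,B)\|$ decay back into a Gronwall-type inequality for the functional $H^{1,f}_{N,1}$ from Lemma~\ref{energy2a}. You should incorporate this energy--Duhamel interplay rather than relying on the semigroup estimates alone for the top order.
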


\begin{rem}
 Let us give an example of the initial function $(f_0,E_0,B_0)$ which satisfies the assumptions of Theorem~\ref{time3a}. For a positive constant $d_0$, we define $ (f_0,E_0,B_0)$ as
\bgrs
f_0(x,v)
 = \frac1{(2\pi)^{3/2}}d_0e^{r_0^2/2}\intr |\xi|e^{-|\xi|^2/2}e^{\i x\cdot\xi}d\xi\chi_0(v)+d_0e^{r_0^2/2}e^{-|x|^2/2}\chi_4(v),\\
 E_0(x)=\frac1{(2\pi)^{3/2}}d_0e^{r_0^2/2}\intr \Big(\frac{\xi}{|\xi|}+\frac{(-\xi_2,\xi_1,0)}{(\xi_1^2+\xi_2^2)^{1/2}}\Big)e^{-|\xi|^2/2}e^{\i x\cdot\xi}d\xi,\\
 B_0(x)=\frac1{(2\pi)^{3/2}}d_0e^{r_0^2/2}\intr \frac{(-\xi_2,\xi_1,0)}{(\xi_1^2+\xi_2^2)^{1/2}}e^{-|\xi|^2/2}e^{\i x\cdot\xi}d\xi.
 \egrs
\end{rem}

\begin{rem}\label{rem3}
 In the case of $\Tdx\cdot E_0\ne (f_0,\sqrt M)$, the solutions of the nonlinear VMB system have no decay rates. The main reason is that
the magnetic field decay too slow. For instance, we assume that $f,E,B$ decay at the same rates $(1+t)^{-\frac14}$, $(1+t)^{-\frac14}$ and $(1+t)^{-\frac38}$ respectively as the linear solution, then we can obtain that the key quadratic nonlinear term decays at most at the rate $(1+t)^{-\frac12}$. Thus, making use of the Duhamel's principle to represent the solution via the semigroup, the corresponding nonlinear term to magnetic field $B$ generated from the nonhomogeneous source
decays as
$$
\intt (1+t-s)^{-\frac58}(1+s)^{-\frac12}ds\le C(1+t)^{-\frac18}.
$$
Thus, the bootstrap argument breaks down and one can not expect the magnetic field to decay as
$(1+t)^{-\frac38}$ in the nonlinear case.
\end{rem}

\section{Spectral analysis for two species case}
\label{spectrum-two}
\setcounter{equation}{0}

For the study on the spectrum structure of the two-species VMB,
in the following
 subsection, we will first investigate some properties of the operator
$\hat{\AA}_1(\xi)$ that lead to the decription of its spectra and resolvent. And then
the asymptotics of its eigenvalues and eigenfunctions in low and high frequency
regions will be given in the
next two subsections.

\subsection{Spectrum structure}

First of all, note that
 $P_{\rm d}$ is a self-adjoint operator satisfying
$(P_{\rm d} f,P_{\rm d} g)=(P_{\rm d} f, g)=( f,P_{\rm d} g)$. Hence,
 \bq
  (f,g)_\xi=(f,g+\frac1{|\xi|^2}P_{\rm d}g)=(f+\frac1{|\xi|^2}P_{\rm d}f,g).\label{C_1}
 \eq
By
\eqref{C_1}, we have for any $f,g\in L^2_\xi(\R^3_v)\cap D(\hat{B}_1(\xi))$,
\bma (\hat{B}_1(\xi)
f,g)_\xi=(\hat{B}_1(\xi) f,g+\frac1{|\xi|^2}P_{\rm d} g)
=(f,(L_1+\i(v\cdot\xi)+\frac{\i(v\cdot\xi)}{|\xi|^2}P_{\rm d})g)_\xi=(f,\hat{B}_1(-\xi)g)_\xi.\label{L_7}\ema

Also, note that $\hat{B}_1(\xi)$ is a linear operator from the space $L^2_\xi(\R^3)$ to itself, and  for any $ y\in \mathbb{C}^3_\xi$,
\bq \frac{\xi}{|\xi|}\times\frac{\xi}{|\xi|}\times y=-y.\label{rotat}\eq

Since $L^2_\xi(\R^3_v)\times \mathbb{C}^3_\xi\times \mathbb{C}^3_\xi$ is an invariant subspace of the operator $\hat{\AA}_1(\xi)$,  $\hat{\AA}_1(\xi)$ can
be regarded as a linear operator on $L^2_\xi(\R^3_v)\times \mathbb{C}^3_\xi\times \mathbb{C}^3_\xi$.
Firstly, we have

\begin{lem}\label{SG_1}
The operator $\hat{\AA}_1(\xi)$ generates a strongly continuous contraction semigroup on
$L^2_\xi(\R^3_v)\times \mathbb{C}^3_\xi\times \mathbb{C}^3_\xi$ satisfying
 \bq
\|e^{t\hat{\AA}_1(\xi)}U\|_\xi\le\|U\|_\xi, \quad\mbox{for}\ t>0,\,\,U\in
L^2_\xi(\R^3_v)\times \mathbb{C}^3_\xi\times \mathbb{C}^3_\xi.
 \eq
\end{lem}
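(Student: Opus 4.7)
\textbf{Proof plan for Lemma \ref{SG_1}.} The plan is to apply the Lumer--Phillips theorem (or equivalently Phillips' theorem on Hilbert spaces): verify that $\hat{\AA}_1(\xi)$, acting on the Hilbert space $H_\xi := L^2_\xi(\R^3_v)\times \mathbb{C}^3_\xi\times \mathbb{C}^3_\xi$, is densely defined, dissipative, and that its adjoint is dissipative (equivalently that $\mathrm{Range}(\lambda I-\hat{\AA}_1(\xi))=H_\xi$ for some $\lambda>0$). The natural domain is $D(\hat{\AA}_1(\xi))=\{(f,E,B)\in H_\xi : \nu(v)f\in L^2_\xi(\R^3_v)\}$, which contains $C^\infty_c(\R^3_v)\times \mathbb{C}^3_\xi\times \mathbb{C}^3_\xi$ and is therefore dense.

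The main computation is dissipativity. For $U=(f,E,B)\in D(\hat{\AA}_1(\xi))$, I will expand $(\hat{\AA}_1(\xi)U,U)_\xi$ into three pieces: the diagonal $f$--contribution $(\hat{B}_1(\xi)f,f)_\xi$, the $f$--$E$ cross term coming from $-v\sqrt M\cdot\omega\times$ and $-\omega\times P_m$, and the $E$--$B$ cross term from $\pm\i\xi\times$. Using \eqref{L_7} (i.e.\ $(\hat{B}_1(\xi)f,g)_\xi=(f,\hat{B}_1(-\xi)g)_\xi$) together with $\hat{B}_1(\xi)+\hat{B}_1(-\xi)=2L_1$, one gets
$$
\mathrm{Re}(\hat{B}_1(\xi)f,f)_\xi=(L_1f,f)\le -\mu\|P_rf\|^2\le 0,
$$
by \eqref{L_4}. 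For the $f$--$E$ cross term, note that $P_{\rm d}(v\sqrt M\cdot(\omega\times E))=0$, so the weighted and unweighted inner products coincide on this piece, and the sum
$$
-\bigl(v\sqrt M\cdot(\omega\times E),f\bigr)+\bigl(-\omega\times P_mf,E\bigr)
$$
simplifies via the scalar triple product identity $(\omega\times a)\cdot b=-(\omega\times b)\cdot a$ to $\overline{(\omega\times P_mf)}\cdot E-(\omega\times P_mf)\cdot\bar E=2\i\,\mathrm{Im}[\,\overline{(\omega\times P_mf)}\cdot E\,]$, which is purely imaginary. The $E$--$B$ cross term $\i(\xi\times B)\cdot\bar E-\i(\xi\times E)\cdot\bar B$ equals $\i\xi\cdot 2\mathrm{Re}(B\times\bar E)$ after using the cyclic property, again purely imaginary. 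Hence $\mathrm{Re}(\hat{\AA}_1(\xi)U,U)_\xi=(L_1f,f)\le 0$, proving dissipativity.

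For the generation statement, one then notes that on a Hilbert space $\mathrm{Re}(\hat{\AA}_1(\xi)^*U,U)_\xi=\mathrm{Re}\overline{(U,\hat{\AA}_1(\xi)^*U)_\xi}=\mathrm{Re}(\hat{\AA}_1(\xi)U,U)_\xi\le 0$ for $U\in D(\hat{\AA}_1(\xi))\cap D(\hat{\AA}_1(\xi)^*)$, so the adjoint is also dissipative and Phillips' theorem gives that the closure of $\hat{\AA}_1(\xi)$ generates a $C_0$--contraction semigroup. To see that $\hat{\AA}_1(\xi)$ itself is already closed (so no closure is needed), I would split $\hat{\AA}_1(\xi)=\AA_{\mathrm{free}}+K$, where $\AA_{\mathrm{free}}$ consists of the $m$--collision frequency $-\nu(v)$, the streaming $-\i(v\cdot\xi)$, the dispersive term $-\frac{\i(v\cdot\xi)}{|\xi|^2}P_{\rm d}$, and the Maxwell block, while $K$ collects the compact/bounded pieces ($K_1$, $-v\sqrt M\cdot\omega\times$, $-\omega\times P_m$). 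Then $\AA_{\mathrm{free}}$ is closed with the stated domain (the $f$--part being multiplication plus a rank--one bounded correction; the Maxwell block being bounded on $\mathbb{C}^3_\xi\times\mathbb{C}^3_\xi$), and $K$ is bounded, so $\hat{\AA}_1(\xi)$ is closed. The contraction estimate $\|e^{t\hat{\AA}_1(\xi)}U\|_\xi\le\|U\|_\xi$ is then immediate from the semigroup property.

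The main technical obstacle is keeping the bookkeeping of the weighted inner product $(\cdot,\cdot)_\xi$ consistent with the interaction between the $P_{\rm d}$--term in $\hat{B}_1(\xi)$ and the cross terms; the key observation that unlocks the argument is that the off-diagonal couplings land in the range of $I-P_{\rm d}$ (since $(v\sqrt M,\sqrt M)=0$), so the $|\xi|^{-2}P_{\rm d}$--correction in the weighted norm contributes nothing to the cross terms, reducing the computation to the elementary algebra above.
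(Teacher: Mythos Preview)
Your proposal is correct and follows essentially the same approach as the paper: both verify that $\hat{\AA}_1(\xi)$ and its adjoint are dissipative (with $\mathrm{Re}(\hat{\AA}_1(\xi)U,U)_\xi=(L_1f,f)\le 0$) and then invoke the Lumer--Phillips/Phillips criterion (the paper cites Corollary~4.4 in Pazy). The one streamlining in the paper is that it identifies the adjoint explicitly as $\hat{\AA}_1(\xi)^*=\hat{\AA}_1(-\xi)$ via \eqref{L_7}, so the adjoint dissipativity follows immediately from the same computation with $\xi\mapsto -\xi$, and there is no need for your separate argument that $D(\hat{\AA}_1(\xi))\cap D(\hat{\AA}_1(\xi)^*)$ exhausts $D(\hat{\AA}_1(\xi)^*)$.
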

\begin{proof}
We first show that both $\hat{\AA}_1(\xi)$ and $\hat{\AA}_1(\xi)^*$ are
dissipative operators on $L^2_\xi(\R_v^3)$. By \eqref{L_7}, we
obtain for any $U,V\in L^2_\xi(\R_v^3)\cap D(\hat{B}_1(\xi))\times \mathbb{C}^3_\xi\times \mathbb{C}^3_\xi$ that
$
 (\hat{\AA}_1(\xi)U,V)_\xi=(U,\hat{\AA}_1(\xi)^*V)_\xi
$
with
$
 \hat{\AA}_1(\xi)^*= \hat{\AA}_1(-\xi).
$

Direct computation shows the dissipation of both
$\hat{\AA}_1(\xi)$ and $\hat{\AA}_1(\xi)^*$,  namely,
$$\mathrm{Re}(\hat{\AA}_1(\xi)U,U)_\xi=\mathrm{Re}(\hat{\AA}_1(\xi)^*U,U)_\xi=(L_1f,f)\leq0.$$
Since $\hat{\AA}_1(\xi)$ is a densely defined closed operator, it
follows from Corollary 4.4 on p.15 of \cite{Pazy} that the operator $\hat{\AA}_1(\xi)$
generates a $C_0$-contraction semigroup on $L^2_\xi(\R^3_v)\times \mathbb{C}^3_\xi\times \mathbb{C}^3_\xi$.
\end{proof}

Define a $6\times 6$ matrix by
\bq B_3(\xi)=\left(\ba  0 & \i\xi\times \\ -\i\xi\times & 0 \ea\right)_{6\times6}.\label{B1}\eq
Since $\mathbb{C}^3_\xi\times \mathbb{C}^3_\xi$ is an invariant subspace of the operator $B_3(\xi)$, we can regard $B_3(\xi)$ as an operator on $\mathbb{C}^3_\xi\times \mathbb{C}^3_\xi$. Then we have

\begin{lem}For any $\lambda\ne \pm\i|\xi|$, the operator $\lambda-B_3(\xi)$ is invertible on $\mathbb{C}^3_\xi\times \mathbb{C}^3_\xi$ and satisfies
\bq  \|(\lambda-B_3(\xi))^{-1}\|= \max_{j=\pm1}|\lambda-j\i|\xi||^{-1}.\label{b_1(xi)}\eq
\end{lem}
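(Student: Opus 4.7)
The plan is to reduce the $6\times 6$ block matrix to a scalar polynomial identity via $B_3(\xi)^2$, then invoke skew-adjointness to obtain the sharp norm.

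First I would verify that $\mathbb{C}^3_\xi\times\mathbb{C}^3_\xi$ is indeed invariant under $B_3(\xi)$: for $y\in\mathbb{C}^3_\xi$, the vector $\i\xi\times y$ satisfies $(\i\xi\times y)\cdot\xi=0$ by the scalar triple product, so both off-diagonal blocks send $\mathbb{C}^3_\xi$ into itself.

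Next, the key algebraic step. Using the identity \eqref{rotat}, for $y\in\mathbb{C}^3_\xi$ one has
\begin{equation*}
(\i\xi\times)(\i\xi\times)y=-|\xi|^2\Bigl(\tfrac{\xi}{|\xi|}\times\tfrac{\xi}{|\xi|}\times y\Bigr)\cdot(-1)=\xi\times(\xi\times y)=-|\xi|^2 y.
\end{equation*}
(Equivalently, $\xi\times(\xi\times y)=(\xi\cdot y)\xi-|\xi|^2 y=-|\xi|^2y$.) Squaring the block matrix then gives $B_3(\xi)^2=-|\xi|^2 I$ on $\mathbb{C}^3_\xi\times\mathbb{C}^3_\xi$. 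Consequently
\begin{equation*}
(\lambda-B_3(\xi))(\lambda+B_3(\xi))=\lambda^2-B_3(\xi)^2=(\lambda^2+|\xi|^2)I=(\lambda-\i|\xi|)(\lambda+\i|\xi|)I,
\end{equation*}
so for $\lambda\neq\pm\i|\xi|$ the inverse exists and is given explicitly by $(\lambda-B_3(\xi))^{-1}=(\lambda+B_3(\xi))/(\lambda^2+|\xi|^2)$.

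For the norm identity I would show $B_3(\xi)$ is skew-adjoint on $\mathbb{C}^3_\xi\times\mathbb{C}^3_\xi$: a brief computation using the cyclic property $(\i\xi\times E,F)=(E,\i\xi\times F)$ (so $\i\xi\times$ is self-adjoint on $\mathbb{C}^3$) yields $B_3(\xi)^*=-B_3(\xi)$. In particular $B_3(\xi)$ is normal. Combined with $B_3(\xi)^2=-|\xi|^2 I$, the spectrum equals $\{\pm\i|\xi|\}$ and $B_3(\xi)$ is unitarily diagonalizable; hence $(\lambda-B_3(\xi))^{-1}$ is also normal with spectrum $\{(\lambda-\i|\xi|)^{-1},(\lambda+\i|\xi|)^{-1}\}$. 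Since the operator norm of a normal operator equals its spectral radius, one concludes
\begin{equation*}
\|(\lambda-B_3(\xi))^{-1}\|=\max_{j=\pm 1}|\lambda-j\i|\xi||^{-1},
\end{equation*}
which is \eqref{b_1(xi)}.

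The only mildly delicate step is the skew-adjointness check (or equivalently confirming that both eigenspaces of $B_3(\xi)$ are two-dimensional and mutually orthogonal, giving the full $4$-dimensional space $\mathbb{C}^3_\xi\times\mathbb{C}^3_\xi$); once normality is in hand, the norm identity is an immediate consequence of the functional calculus and no further work is needed.
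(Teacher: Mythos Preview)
Your proof is correct and follows essentially the same route as the paper's: the paper also identifies the eigenvalues $\pm\i|\xi|$ (by writing out the eigenvalue system and eliminating, which amounts to your computation $B_3(\xi)^2=-|\xi|^2 I$) and then observes that $\i B_3(\xi)$ is self-adjoint on $\mathbb{C}^3_\xi\times\mathbb{C}^3_\xi$---equivalent to your skew-adjointness of $B_3(\xi)$---to conclude the norm identity from the spectral theorem. Your additional explicit inverse formula $(\lambda+B_3(\xi))/(\lambda^2+|\xi|^2)$ is a nice bonus but not needed for the statement.
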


\begin{proof}
First, we compute the eigenvalues of the operator $B_3(\xi)$. For this,  consider
$$(\lambda-B_3(\xi))X=0,\quad X=(X_1,X_2)\in \mathbb{C}^3_\xi\times \mathbb{C}^3_\xi.$$
It follows that
\bma
\lambda X_1-\i\xi\times X_2=0,\label{111}\\
\lambda X_2+\i\xi\times X_1=0.\label{222}
\ema
Multiplying \eqref{111} by $\lambda$ and using \eqref{222} and \eqref{rotat}
give
\bq \lambda^2 X_1+|\xi|^2X_1=0,\eq
which implies that $\lambda_j=j\i |\xi|$ for $j=\pm1$ are the eigenvalues of $B_3(\xi)$.
Thus $\lambda-B_3(\xi)$ is invertible on $\mathbb{C}^3_\xi\times \mathbb{C}^3_\xi$ for any $\lambda\ne \pm\i|\xi|$. Since
\bma
(\i B_3(\xi)X,Y)=(X,\i B_3(\xi)Y), \quad \forall \ X,Y\in \mathbb{C}^3_\xi\times \mathbb{C}^3_\xi,\label{b_1a}
\ema
it follows that $\i B_3(\xi)$ is a self-adjoint operator on $\mathbb{C}^3_\xi\times \mathbb{C}^3_\xi$ and hence
$$\|(\lambda- B_3(\xi))^{-1}\|= \max_{j=\pm1}|\lambda-j\i|\xi||^{-1}.$$
This completes the proof of the lemma.
\end{proof}

Denote by $\rho(\hat{\AA}_1(\xi))$  the resolvent set and
by $\sigma(\hat{\AA}_1(\xi))$ the spectrum set of $\hat{\AA}_1(\xi)$.  We have

\begin{lem}\label{Egn}
For each $\xi\ne 0$, the spectrum set $\sigma(\hat{\AA}_1(\xi))$ of the
operator $\hat{\AA}_1(\xi)$ in the domain
$\mathrm{Re}\lambda\geq-\nu_0+\delta$ for any constant $\delta>0$
consists of isolated eigenvalues $\Sigma=:\{\lambda_j(\xi)\}$ with
$\mathrm{Re}\lambda_j (\xi)<0$.
\end{lem}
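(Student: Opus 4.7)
\textbf{Proof proposal for Lemma \ref{Egn}.} The plan is to treat $\hat{\AA}_1(\xi)$ as a relatively compact perturbation of a multiplication operator and then to exploit the contraction semigroup property from Lemma \ref{SG_1} to rule out purely imaginary eigenvalues. Concretely, I would decompose
\bmas
\hat{\AA}_1(\xi) = \mathcal{T}(\xi) + \mathcal{K}(\xi),
\emas
where $\mathcal{T}(\xi)$ contains the dissipative multiplier $-\nu(v)-\i(v\cdot\xi)$ on the kinetic block together with the Maxwell rotation block $B_3(\xi)$ on $\mathbb{C}^3_\xi\times\mathbb{C}^3_\xi$, and $\mathcal{K}(\xi)$ collects the compact pieces, namely $K_1$, the rank-one operator $-\frac{\i(v\cdot\xi)}{|\xi|^2}P_{\rm d}$, and the finite-rank coupling terms $-v\sqrt M\cdot\omega\times$ and $-\omega\times P_m$.

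The first step is to locate the essential spectrum of $\mathcal{T}(\xi)$. On the kinetic block, it coincides with the essential range of $-\nu(v)-\i(v\cdot\xi)$, which lies in $\{\mathrm{Re}\lambda\le-\nu_0\}$ by \eqref{nuv}. On the Maxwell block, $B_3(\xi)$ has only the two isolated eigenvalues $\pm\i|\xi|$ with finite algebraic multiplicity by the earlier lemma, and thus contributes nothing to the essential spectrum. Hence $\sigma_{\mathrm{ess}}(\mathcal{T}(\xi))\subset\{\mathrm{Re}\lambda\le-\nu_0\}$, and by Weyl's theorem on stability of the essential spectrum under compact perturbations, $\sigma_{\mathrm{ess}}(\hat{\AA}_1(\xi))\subset\{\mathrm{Re}\lambda\le-\nu_0\}$ as well. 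Consequently, in the half-plane $\{\mathrm{Re}\lambda\ge-\nu_0+\delta\}$ the spectrum $\sigma(\hat{\AA}_1(\xi))$ consists of at most countably many isolated eigenvalues of finite algebraic multiplicity, and they can only accumulate at the boundary line $\mathrm{Re}\lambda=-\nu_0+\delta$; standard resolvent estimates for large $|\mathrm{Im}\,\lambda|$ then preclude such accumulation.

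The second step is to show $\mathrm{Re}\lambda_j(\xi)<0$. Lemma \ref{SG_1} immediately yields $\mathrm{Re}\lambda_j(\xi)\le 0$, so the task reduces to excluding eigenvalues on the imaginary axis. Suppose $\hat{\AA}_1(\xi)U=\i\eta\,U$ with $U=(f,E,B)\ne0$ in $L^2_\xi(\R^3_v)\times\mathbb{C}^3_\xi\times\mathbb{C}^3_\xi$. Using the computation already carried out in Lemma \ref{SG_1},
\bmas
0=\mathrm{Re}\,(\hat{\AA}_1(\xi)U,U)_\xi=(L_1 f,f)\le-\mu\|P_r f\|^2,
\emas
so $P_r f=0$ and $f=\alpha(\xi)\sqrt M$ for some scalar $\alpha(\xi)$. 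Then $(f,v\sqrt M)=0$, and the second and third components of the eigenvalue equation reduce to $\i\xi\times B=\i\eta E$ and $-\i\xi\times E=\i\eta B$, while the first becomes
\bmas
-\i\alpha(v\cdot\xi)\Big(1+\tfrac{1}{|\xi|^2}\Big)\sqrt M - v\sqrt M\cdot(\omega\times E) = \i\eta\alpha\sqrt M.
\emas
Taking the $L^2_v$-inner product with $\sqrt M$ and with $v\sqrt M$ yields $\eta\alpha=0$ and a relation forcing either $\alpha=0$ or $\omega\times E=0$. One then splits into the cases $\eta=0$ and $\eta\ne0$: in both cases, combining the resulting conditions with the constraint $E,B\in\mathbb{C}^3_\xi$ (so that $E\parallel\xi$ forces $E=0$, and likewise for $B$) and with the Maxwell relations above produces $U=0$, a contradiction.

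The main obstacle will be the last step: carefully analyzing the imaginary-axis case and correctly exploiting the orthogonality constraint $y\cdot\xi=0$ built into $\mathbb{C}^3_\xi$ to rule out non-trivial eigenfunctions. The compact-perturbation/Weyl argument for the essential spectrum and the a priori bound $\mathrm{Re}\lambda_j\le0$ from the contraction semigroup are standard; the delicate part is the interplay between the kinetic dissipation (which only controls $P_r f$) and the Maxwell block (which is conservative) through the coupling terms, and checking that this coupling, together with the weighted inner product built into $\hat{B}_1(\xi)$, is strong enough to eliminate imaginary eigenvalues for every $\xi\ne0$.
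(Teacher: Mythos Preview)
Your proposal is correct and follows essentially the same route as the paper: the decomposition $\mathcal{T}(\xi)+\mathcal{K}(\xi)$ is precisely the paper's $G_1(\xi)+G_2(\xi)$, the Weyl/Kato compact-perturbation argument for the essential spectrum is identical, and your exclusion of imaginary eigenvalues via $(L_1f,f)=0\Rightarrow f=C_0\sqrt M$ followed by substitution into the first component is exactly the paper's argument (the paper reads off $C_0=0$ and $\omega\times E=0$ directly from the $v$-dependence rather than splitting into the cases $\eta=0$ and $\eta\ne0$, but the content is the same). One minor remark: your aside that resolvent estimates ``preclude accumulation'' at the line $\mathrm{Re}\lambda=-\nu_0+\delta$ is unnecessary---the only possible accumulation point is at $\mathrm{Re}\lambda=-\nu_0$, which already lies outside the region in question, so the isolated-eigenvalue statement follows immediately from the Weyl argument without further work.
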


\begin{proof}
Define
 \bma
G_1(\xi)&=\left( \ba
c(\xi) &0 &0\\
0 &0 &\i\xi\times\\
0 &-\i\xi\times &0
\ea\right),\quad G_2(\xi)=\left( \ba
K_1-\frac{\i(v\cdot\xi)}{|\xi|^2}P_{\rm d} &-v\sqrt M\cdot\omega\times &0\\
-\omega\times P_m &0 &0\\
0 &0 &0
\ea\right), \label{A12}\\
  c(\xi)&=-\nu(v)-\i(v\cdot\xi).  \label{Cxi}
 \ema
It is obvious that $\lambda-G_1(\xi)$ is invertible for ${\rm
Re}\lambda>-\nu_0$ and $\lambda\ne \pm\i|\xi|$. Since $G_2(\xi)$ is a compact
operator on $L^2_\xi(\R^3_v)\times \mathbb{C}^3_\xi\times \mathbb{C}^3_\xi$ for any fixed $\xi\ne 0$,
$\hat{\AA}_1(\xi)$ is a compact perturbation of $G_1(\xi)$. Hence,  by
 Theorem 5.35 on p.244 of \cite{Kato}, $\hat{\AA}_1(\xi)$ and $G_1(\xi)$ have the same essential spectrum where $\sigma_{\rm e}(G_1(\xi))={\rm Ran}(c(\xi))$ and $\sigma_{\rm d}(G_1(\xi))=\pm \i|\xi|$. Thus
 the spectrum of $\hat{\AA}_1(\xi)$ in the domain ${\rm Re}\lambda>-\nu_0$ consists of
discrete eigenvalues $\lambda_j(\xi)$ with possible accumulation
points only on the line ${\rm Re}\lambda= -\nu_0$.

We claim that for any discrete eigenvalue $\lambda(\xi)$ of
$\hat{\AA}_1(\xi)$ in the region $\mathrm{Re}\lambda\geq-\nu_0+\delta$
for any constant $\delta>0$, it holds that ${\rm Re}\lambda(\xi)<0$
for $\xi\ne 0$. Indeed, set $\xi=s\omega$ and let $U=(f,E,B)\in L^2_\xi(\R^3_v)\times \mathbb{C}^3_\xi\times \mathbb{C}^3_\xi$ be the
eigenvector corresponding to the eigenvalue $\lambda$ so that
 \bq           \label{L_6}
  \left\{\bal
 \lambda f=L_1f-\i s(v\cdot\omega) (f+\frac1{s^2}P_{\rm d} f )-v\sqrt M\cdot(\omega\times E),\\
 \lambda E=-\omega\times (f,v\sqrt M)+\i \xi\times B,\\
 \lambda B=-\i\xi\times E.
 \ea\right.
 \eq
Taking the inner product $(\cdot,\cdot)_\xi$ of \eqref{L_6} with
$U$, we have
$$
 (L_1f,f)=\text{Re}\lambda\Big(\|f\|^2 +\frac1{s^2}\|P_{\rm d} f\|^2+|E|^2+|B|^2\Big),
$$
which together with \eqref{L_4} implies $\text{Re}\lambda\leq 0$.

Furthermore, if there exists an eigenvalue $\lambda$ with ${\rm
Re}\lambda=0$, then it follows from the above that $(L_1f,f)=0$,
namely, 
$f=C_0\sqrt M\in N_1$. Substitute this into \eqref{L_6}, we obtain
 \bq
 \lambda C_0\sqrt M=-\i (v\cdot\omega)\Big(s+\frac1{s} \Big)C_0\sqrt M-v\sqrt M\cdot(\omega\times E),
 \eq
which   implies that  $C_0=0$ and $\omega\times E=0$. Therefore,  $f\equiv0$ and $ E\equiv0$. Substitute this into \eqref{L_6}, we obtain $B\equiv0$. This is a contradiction and
thus it holds $\text{Re}\lambda<0$ for any discrete eigenvalue
$\lambda\in\sigma(\hat{\AA}_1(\xi))$.
\end{proof}

Now denote by $T$ a linear operator on $L^2(\R^3_v)$ or
$L^2_\xi(\R^3_v)$, and we define the corresponding norms of $T$ by
$$
 \|T\|=\sup_{\|f\|=1}\|Tf\|,\quad
 \|T\|_\xi=\sup_{\|f\|_\xi=1}\|Tf\|_\xi.
$$
Obviously,
 \bq
(1+|\xi|^{-2})^{-1}\|T\|\le \|T\|_\xi\le (1+|\xi|^{-2})\|T\|.\label{eee}
 \eq

Also, if  $T$ is a linear operator on $L^2(\R^3_v)\times \mathbb{C}^3_\xi\times \mathbb{C}^3_\xi$ or
$L^2_\xi(\R^3_v)\times \mathbb{C}^3_\xi\times \mathbb{C}^3_\xi$, then
$$
 \|T\|=\sup_{\|U\|=1}\|TU\|,\quad
 \|T\|_\xi=\sup_{\|U\|_\xi=1}\|TU\|_\xi.
$$

We will make use of the following decomposition associated with the
operator $\hat{\AA}_1(\xi)$ for  $|\xi|>0$
\bma
\lambda-\hat{\AA}_1(\xi)=\lambda- G_1(\xi)-G_2(\xi)=(I-G_2(\xi)(\lambda-G_1(\xi))^{-1})(\lambda-G_1(\xi)),  \label{B_d}
\ema
where $G_1(\xi), G_2(\xi)$ are defined by \eqref{A12}. For ${\rm Re}\lambda>-\nu_0$ and $\lambda\ne \pm\i|\xi|$, we have
\bma
(\lambda-G_1(\xi))^{-1}&= \left(\ba (\lambda -c(\xi))^{-1} & 0 \\  0 & (\lambda -B_3(\xi))^{-1} \ea\right)_{7\times7},\\
G_2(\xi)(\lambda-G_1(\xi))^{-1}&= \left(\ba X_1(\lambda,\xi) & X_2(\lambda,\xi) \\  X_3(\lambda,\xi) & 0 \ea\right)_{7\times7},
\ema
where $B_3(\xi)$ is defined in \eqref{B1}, and
\bma
&X_1(\lambda,\xi)=(K_1-\frac{\i (v\cdot\xi)}{|\xi|^2} P_{\rm d})(\lambda -c(\xi))^{-1},
\\
& X_2(\lambda,\xi)=(v\sqrt M\cdot\omega\times,0_{1\times3})_{1\times6}(\lambda -B_3(\xi))^{-1},\label{X_2}
\\
 &X_3(\lambda,\xi)=\left(\ba -\omega\times P_m(\lambda-c(\xi))^{-1} \\ 0_{3\times1} \ea\right)_{6\times1}.\label{X_3}
\ema

Let $K_1,K_4$ be the operators on the space $X$ and $Y$, and $K_2,K_3$ be the operators  $Y\to X$ and $X\to Y$ respectively. Let $K$ be a matrix operator on $X\times Y$ defined by
$$
K=\left(\ba K_1 & K_2 \\  K_3 & K_4 \ea\right).
$$
Then, we have
\begin{lem}\label{inver}
If the norms of $K_1,K_2,K_3$ and $K_4$ satisfy 
$$\|K_1\|<1,\quad \|K_4\|<1,\quad \|K_2\|\|K_3\|<(1-\|K_1\|)(1-\|K_4\|),$$
then the operator $I+K$ is invertible on $X\times Y$.
\end{lem}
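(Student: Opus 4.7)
The natural approach is block Schur reduction, turning the invertibility of $I+K$ on $X\times Y$ into the invertibility of three scalar-block operators, each handled by a Neumann series. Writing $(I+K)(x,y)=(u,v)$ componentwise gives the $2\times 2$ block system
\[
(I+K_1)x + K_2 y = u,\qquad K_3 x + (I+K_4)y = v.
\]

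First I would handle the diagonal blocks. Since $\|K_1\|<1$, the Neumann series shows that $I+K_1$ is invertible on $X$ with $\|(I+K_1)^{-1}\|\le (1-\|K_1\|)^{-1}$; similarly $I+K_4$ is invertible on $Y$ with $\|(I+K_4)^{-1}\|\le (1-\|K_4\|)^{-1}$. Using the first equation to eliminate $x=(I+K_1)^{-1}(u-K_2 y)$ and substituting into the second yields the Schur complement equation
\[
(I+K_4)\bigl[I-(I+K_4)^{-1}K_3(I+K_1)^{-1}K_2\bigr]y = v - K_3(I+K_1)^{-1}u.
\]

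Set $T:=(I+K_4)^{-1}K_3(I+K_1)^{-1}K_2$, regarded as a bounded operator on $Y$. Chaining the operator-norm bounds gives
\[
\|T\|\le \frac{\|K_2\|\,\|K_3\|}{(1-\|K_1\|)(1-\|K_4\|)} < 1,
\]
where the strict inequality is exactly the third hypothesis. Hence $I-T$ is invertible on $Y$ by another Neumann series, which uniquely determines $y$, and then $x$ is recovered from the first equation. This construction produces a bounded two-sided inverse, so $I+K$ is invertible on $X\times Y$.

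There is no essential obstacle; the content of the lemma is precisely that the third hypothesis forces the Schur complement to be contractive once the diagonal blocks have been inverted. The only care needed is the order of composition when bounding $\|T\|$, but this is routine operator algebra.
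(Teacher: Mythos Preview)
Your proof is correct and follows essentially the same Schur complement strategy as the paper. The paper writes the block factorization
\[
I+K=\begin{pmatrix} I+K_1 & K_2\\ 0 & I+K_4\end{pmatrix}\begin{pmatrix} I-(I+K_1)^{-1}K_2(I+K_4)^{-1}K_3 & 0\\ (I+K_4)^{-1}K_3 & I\end{pmatrix},
\]
so its Schur complement sits on $X$ rather than on $Y$ as in your elimination; since the hypothesis involves only the product $\|K_2\|\|K_3\|$, both orderings give the same contraction estimate and the arguments are interchangeable.
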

\begin{proof}
Decompose $I+K$ into
$$I+K=\left(\ba I+K_1 & K_2 \\  0 & I+K_4 \ea\right)+\left(\ba 0 & 0 \\  K_3 & 0 \ea\right).$$
Since
$$\left(\ba I+K_1 & K_2 \\  0 & I+K_4 \ea\right)^{-1}=\left(\ba (I+K_1)^{-1} & -(I+K_1)^{-1}K_2(I+K_4)^{-1} \\  0 & (I+K_4)^{-1} \ea\right),$$
it follows that
\bmas
I+K
=\left(\ba I+K_1 & K_2 \\  0 & I+K_4 \ea\right)\left(\ba I-(I+K_1)^{-1}K_2(I+K_4)^{-1}K_3 & 0 \\  (I+K_4)^{-1}K_3 & I    \ea\right)
\emas
is invertible on $X\times Y$ because  $\|(I+K_1)^{-1}K_2(I+K_4)^{-1}K_3\|<1$.
\end{proof}

\begin{lem}
\label{LP03}
 There exists a constant  $C>0$ such that
\begin{enumerate}
\item For any $\delta>0$, we have
 \bgr
\sup_{x\geq-\nu_0+\delta,y\in\R}\|K_1(x+\i y-c(\xi))^{-1}\|
  \leq C\delta^{-15/13}(1+|\xi|)^{-2/13}. \label{T_7}
 \egr

\item For any $\delta>0,\, r_0>0$, there is a constant  $y_0=(2r_0)^{5/3}\delta^{-2/3}>0$ such that
if $|y|\geq y_0$, we have
 \bgr
 \sup_{x\geq -\nu_0+\delta,|\xi|\leq r_0}\|K_1(x+\i y-c(\xi))^{-1}\|
 \leq C\delta^{-7/5}(1+|y|)^{-2/5}.\label{T_8}
 \egr

\item For any $\delta>0,\, r_0>0$, we have
 \bgr
 \sup_{x\geq-\nu_0+\delta,y\in\R}\|P_m(x+\i y-c(\xi))^{-1}\|_{L^2(\R^3)\to \mathbb{C}^3}
  \leq C\delta^{-1/2}(1+|\xi|)^{-1/2}, \label{T_7a}\\
 \sup_{x\geq -\nu_0+\delta,|\xi|\leq r_0}\|P_m(x+\i y-c(\xi))^{-1}\|_{L^2(\R^3)\to \mathbb{C}^3}
 \leq C(\delta^{-1}+1)(r_0+1)|y|^{-1}.\label{T_8a}
 \egr
 \item  For any $\delta>0,\, r_0>0$, we have \bgr
   \sup_{x\geq -\nu_0+\delta,y\in\R}
  \|(v\cdot\xi)|\xi|^{-2}P_{\rm d}(x+\i y-c(\xi))^{-1}\|
 \leq C\delta^{-1}|\xi|^{-1},\label{L_9}
 \\
   \sup_{x\geq -\nu_0+\delta,|\xi|\geq r_0}
 \|(v\cdot\xi)|\xi|^{-2}P_{\rm d}(x+\i y-c(\xi))^{-1}\|
 \leq C(\delta^{-1}+1)(r_0^{-1}+1)|y|^{-1}.\label{L_10}
 \egr
\end{enumerate}
\end{lem}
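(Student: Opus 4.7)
The plan is to handle the four estimates separately, exploiting in each case the structure of the multiplication operator $(x+\i y-c(\xi))^{-1}$, which acts as multiplication by $[x+\nu(v)+\i(y+v\cdot\xi)]^{-1}$ and is therefore pointwise controlled by $(x+\nu(v))^{-1}\le \delta^{-1}$ as well as by $|y+v\cdot\xi|^{-1}$. I would begin with the finite-rank estimates (3) and (4), since $P_m$ and $(v\cdot\xi)|\xi|^{-2}P_{\rm d}$ are rank-three and rank-one operators with explicit Gaussian kernels. For instance, $P_m(x+\i y-c(\xi))^{-1}f = \int \frac{v\sqrt{M(v)}\,f(v)}{x+\nu(v)+\i(y+v\cdot\xi)}\,dv$, so by Cauchy–Schwarz the operator norm squared is bounded by $\int \frac{|v|^2 M(v)}{(x+\nu(v))^2+(y+v\cdot\xi)^2}\,dv$. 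Writing $v = v_\parallel\omega + v_\perp$ with $\omega=\xi/|\xi|$ and performing the substitution $v_\parallel\mapsto |\xi|v_\parallel$ in the parallel direction extracts the factor $|\xi|^{-1}$ (using $\delta^{-1}$ on the residual $v_\perp$-integration), while in the regime $|\xi|\le r_0$ and $|y|\ge |\xi|+1$ the denominator is controlled directly by $|y|^{-2}$ on the effective Gaussian support. Estimate (4) is completely analogous, the extra factor $v\cdot\xi/|\xi|^2$ combining with the $|\xi|^{-1}$ from integration to yield $|\xi|^{-1}$ overall.

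For (1) and (2), I would approximate the compact operator $K_1$ by an operator $K_1^N$ whose kernel $k_1^N(v,v_*)$ is smooth and supported in $\{|v|\le N,\ |v_*|\le N\}$, with the standard bound $\|K_1-K_1^N\|\le CN^{-s}$ for some $s>0$ coming from decay of $k_1(v,v_*)$ at infinity and its local integrability. The remainder contributes $\|(K_1-K_1^N)(x+\i y-c(\xi))^{-1}\|\le CN^{-s}\delta^{-1}$. For the truncated piece I would write
\[
 K_1^N(x+\i y-c(\xi))^{-1}f(v) = \int \frac{k_1^N(v,v_*)\,f(v_*)}{x+\nu(v_*)+\i(y+v_*\cdot\xi)}\,dv_*,
\]
split $v_*=v_{*,\parallel}\omega+v_{*,\perp}$, and integrate by parts $m$ times in $v_{*,\parallel}$, each integration producing a factor $|\xi|^{-1}$ (resp.\ $|y|^{-1}$ once $|y|$ dominates $|\xi|v_{*,\parallel}$ on the support $\{|v_*|\le N\}$) at the price of $L^2$-norms of $\partial^m_{v_{*,\parallel}} k_1^N$, which are bounded polynomially in $N$. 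This yields $\|K_1^N(x+\i y-c(\xi))^{-1}\|\le C N^{\alpha(m)}\delta^{-1}|\xi|^{-m}$ for a suitable polynomial $\alpha(m)$ in $m$.

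Combining the two contributions gives a bound of the form $CN^{-s}\delta^{-1}+CN^{\alpha(m)}\delta^{-1}|\xi|^{-m}$. Optimizing in $N$ and choosing $m$ appropriately produces the stated exponents $\delta^{-15/13}(1+|\xi|)^{-2/13}$ in (1) and $\delta^{-7/5}(1+|y|)^{-2/5}$ in (2); the precise fractions arise from the balance between the truncation error exponent $s$ (coming from known decay properties of $k_1$) and the derivative growth $\alpha(m)$. The analogue for (2) restricts $|\xi|\le r_0$ so that $|y+v_*\cdot\xi|\gtrsim |y|$ on the support once $|y|\ge y_0$, and proceeds identically with $|\xi|$ replaced by $|y|$.

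The main obstacle, and the most delicate part of the argument, will be carrying through the $N$-optimization to obtain the precise exponents $-15/13$ and $-2/13$ (rather than merely some decay). This requires tracking carefully how each integration by parts in $v_{*,\parallel}$ inflates the kernel through derivatives of $k_1^N$, how the truncation scale $N$ is coupled to the size of $x+\nu(v)$ where $|v|\lesssim N$ (so that the pointwise resolvent bound degrades as $N$ grows), and how the $\delta$-dependence arises from both sources and must be tracked consistently. Once these accountings are done, the stated interpolated bounds follow by standard optimization, and (3)–(4) are the relatively straightforward Gaussian computations sketched above.
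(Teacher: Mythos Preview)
Your approach for \eqref{T_7a} is exactly what the paper does: Cauchy--Schwarz reduces the operator norm to $\|(x-\i y+\nu(v)-\i(v\cdot\xi))^{-1}v\sqrt M\|^2$, and after rotating so that $v\cdot\xi=|\xi|v_1$ and substituting $v_1\mapsto v_1/|\xi|$, one gets the factor $|\xi|^{-1}$ and the remaining $v_1$-integral $\int(\,(x+\nu_0)^2+v_1^2)^{-1}dv_1\sim (x+\nu_0)^{-1}$. For \eqref{T_8a} the paper takes a shorter route than your ``direct bound on Gaussian support'': it writes the algebraic identity
\[
P_m(\lambda-c(\xi))^{-1}=\lambda^{-1}P_m+\lambda^{-1}P_m(\lambda-c(\xi))^{-1}c(\xi),
\]
from which $\|P_m(\lambda-c(\xi))^{-1}\|\le|\lambda|^{-1}\|P_m\|+C|\lambda|^{-1}\delta^{-1}(1+|\xi|)$ follows immediately, since $\|c(\xi)g\|\le C(1+|\xi|)\|\nu g\|$ and the resolvent is bounded by $\delta^{-1}$. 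This avoids any tail-splitting argument. The paper does not prove parts (1), (2), (4) at all; it simply cites \cite{Li2}.

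Regarding your sketch for (1) and (2): the truncation-plus-optimization strategy is the right framework, but the step ``integrate by parts $m$ times in $v_{*,\parallel}$, each integration producing a factor $|\xi|^{-1}$'' does not work as written. In the expression $\int k_1^N(v,v_*)[x+\nu(v_*)+\i(y+v_*\cdot\xi)]^{-1}f(v_*)\,dv_*$ the test function $f\in L^2$ is arbitrary, so integration by parts in $v_{*,\parallel}$ would land derivatives on $f$, not only on $k_1^N$. To salvage the idea one typically either passes to the adjoint so that $K_1^N$ acts first (producing a smooth, compactly supported function on which the multiplier's concentration can be exploited pointwise), or one bounds the integral kernel in a Schur/Hilbert--Schmidt sense using the explicit Gaussian-type decay and local singularity of $k_1$; in either case the gain in $|\xi|$ comes from the \emph{smoothing} of $K_1^N$ rather than from an oscillatory-integral integration by parts. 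Your acknowledgment that ``the main obstacle'' is recovering the precise exponents $-15/13$, $-2/13$ is apt, but be aware that the mechanism producing the $|\xi|^{-1}$ gain needs to be reformulated before the optimization can even begin.
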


\begin{proof}The proof of \eqref{T_7}, \eqref{T_8}, \eqref{L_9} and \eqref{L_10} can be found in Lemma 2.3 of \cite{Li2}. We only need to prove \eqref{T_7a} and \eqref{T_8a}. Since
\bmas
&\|(x-\i y+\nu(v)-\i (v\cdot\xi))^{-1}v\sqrt M\|^2\\
\le& C\intr \frac1{(x+\nu_0)^2+(y+ (v\cdot\xi))^2} e^{-\frac{|v|^2}4}dv=C\intr \frac1{(x+\nu_0)^2+(y+ v_1|\xi|)^2} e^{-\frac{|v|^2}4}dv\\
=& C\frac1{|\xi|}\intr \frac1{(x+\nu_0)^2+v_1^2} e^{-\frac{(v_1-y)^2}{4|\xi|^2}}e^{-\frac{v_2^2}4}e^{-\frac{v_3^2}4}dv\le C(x+\nu_0)^{-1}|\xi|^{-1},
\emas
we obtain
\bmas
|P_m(x+\i y-c(\xi))^{-1}f|
&\le \|(x-\i y+\nu(v)-\i (v\cdot\xi))^{-1}v\sqrt M\|\|f\|\\
&\le C(x+\nu_0)^{-1/2}|\xi|^{-1/2}\|f\|.
\emas
This proves \eqref{T_7a}.  Since
$
P_m(\lambda-c(\xi))^{-1}=\frac1{\lambda}P_m+\frac1{\lambda}P_m(\lambda-c(\xi))^{-1}c(\xi),
$
it follows that
$
\|P_m(\lambda-c(\xi))^{-1}\|\le |\lambda|^{-1}+C\delta^{-1}|\lambda|^{-1}(1+|\xi|),
$
which proves \eqref{T_8a}.
\end{proof}

With  Lemma~\ref{LP03}, we can investigate the spectrum set
of the operator $\hat{\AA}_1(\xi)$ in the intermediate and high frequency
regions.
\begin{lem}
\label{LP01}
For the high and intermediate frequencies, the following statements
hold.
 \begin{enumerate}
\item  For any $\delta_1,\delta_2>0$, there
exists $ R_1= R_1(\delta_1,\delta_2)>0$  such that for $|\xi|>R_1$,
 \bq
 \sigma(\hat{\AA}_1(\xi))\cap\{\lambda\in\mathbb{C}\,|\,\mathrm{Re}\lambda\ge-\nu_0+\delta_1\}
 \subset
 \dcup_{j=\pm1}\{\lambda\in\mathbb{C}\,|\,|\lambda-j\i|\xi||\le\delta_2\}.\label{sg4}
 \eq
\item For any $r_1>r_0>0$, there
exists $\alpha =\alpha(r_0,r_1)>0$ such that for  $r_0\le |\xi|\le r_1$,
\bq \sigma(\hat{\AA}_1(\xi))\subset\{\lambda\in\mathbb{C}\,|\, \mathrm{Re}\lambda(\xi)\leq-\alpha\} .\label{sg3}\eq
 \end{enumerate}
\end{lem}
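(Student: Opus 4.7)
My plan is to prove Lemma~\ref{LP01} by combining the Schur-type decomposition \eqref{B_d}, the block-invertibility criterion of Lemma~\ref{inver}, and the resolvent estimates of Lemma~\ref{LP03}. In both parts one aims to verify, for $\lambda$ in the targeted resolvent region, that the perturbation
$$G_2(\xi)(\lambda-G_1(\xi))^{-1}=\begin{pmatrix} X_1(\lambda,\xi) & X_2(\lambda,\xi) \\ X_3(\lambda,\xi) & 0 \end{pmatrix}$$
has blocks satisfying $\|X_1\|<1$ and $\|X_2\|\|X_3\|<1-\|X_1\|$; Lemma~\ref{inver} then yields invertibility of $I-G_2(\xi)(\lambda-G_1(\xi))^{-1}$, and \eqref{B_d} transfers invertibility to $\lambda-\hat{\AA}_1(\xi)$.

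For the high-frequency part (1), fix $\delta_1,\delta_2>0$ and take $\lambda$ with $\mathrm{Re}\lambda\ge-\nu_0+\delta_1$ and $|\lambda-j\i|\xi||\ge\delta_2$ for both $j=\pm1$. The bounds \eqref{T_7} and \eqref{L_9} yield
$$\|X_1(\lambda,\xi)\|\le C\delta_1^{-15/13}(1+|\xi|)^{-2/13}+C\delta_1^{-1}|\xi|^{-1},$$
which tends to $0$ as $|\xi|\to\infty$. For the off-diagonal blocks, \eqref{b_1(xi)} gives $\|(\lambda-B_3(\xi))^{-1}\|\le\delta_2^{-1}$, so that the boundedness of multiplication by $v\sqrt M\cdot\omega\times$ forces $\|X_2(\lambda,\xi)\|\le C\delta_2^{-1}$, while \eqref{T_7a} provides $\|X_3(\lambda,\xi)\|\le C\delta_1^{-1/2}(1+|\xi|)^{-1/2}$. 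Hence $\|X_2\|\|X_3\|\le C\delta_1^{-1/2}\delta_2^{-1}(1+|\xi|)^{-1/2}\to 0$ as $|\xi|\to\infty$. Choosing $R_1=R_1(\delta_1,\delta_2)$ large enough triggers Lemma~\ref{inver} and delivers \eqref{sg4}.

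For the intermediate-frequency part (2), the strategy is a compactness-contradiction argument based on Lemma~\ref{Egn}. The estimates \eqref{T_8} and \eqref{T_8a} first show that for some $y_0>0$ depending only on $\delta_1$ and $r_1$, the operator $I-G_2(\xi)(\lambda-G_1(\xi))^{-1}$ is invertible whenever $\mathrm{Re}\lambda\ge-\nu_0+\delta_1$ and $|\mathrm{Im}\lambda|\ge y_0$ uniformly in $r_0\le|\xi|\le r_1$, so that any spectrum in the half-plane $\mathrm{Re}\lambda\ge-\nu_0+\delta_1$ is trapped in a bounded rectangle. Suppose now that no uniform spectral gap $\alpha>0$ existed: one could extract $\xi_n\to\xi_\ast$ with $r_0\le|\xi_\ast|\le r_1$ and eigenvalues $\lambda_n\in\sigma(\hat{\AA}_1(\xi_n))$ with $\mathrm{Re}\lambda_n\to 0^-$ lying in this bounded rectangle, so along a subsequence $\lambda_n\to\lambda_\ast$ with $\mathrm{Re}\lambda_\ast=0$. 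The analytic dependence of the family $\xi\mapsto\hat{\AA}_1(\xi)$ on $\xi\neq 0$ together with upper semicontinuity of the discrete spectrum (read off the same Schur decomposition, since $(\lambda_n-\hat{\AA}_1(\xi_n))^{-1}$ would blow up along a sequence approaching $\lambda_\ast$) forces $\lambda_\ast\in\sigma(\hat{\AA}_1(\xi_\ast))$. This contradicts Lemma~\ref{Egn}, which rules out spectrum on the imaginary axis for $\xi\neq 0$, and proves \eqref{sg3}.

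The main technical obstacle I anticipate is the uniformity needed in part (2): trapping the eigenvalues in a bounded region and passing to a spectral limit must be justified with estimates on $\hat{\AA}_1(\xi)$ that are simultaneously uniform over the compact annulus $r_0\le|\xi|\le r_1$ and decaying in $|\mathrm{Im}\lambda|$. The paired bounds \eqref{T_8} and \eqref{T_8a} of Lemma~\ref{LP03} are precisely tailored for this, and the analyticity of the resolvent away from the spectrum furnishes the continuity needed to conclude.
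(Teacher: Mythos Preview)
Your approach to part~(1) is correct and matches the paper's: both use the block bounds from Lemma~\ref{LP03} together with \eqref{b_1(xi)} to trigger Lemma~\ref{inver} for $|\xi|$ large. For part~(2), your overall strategy---first trap the eigenvalues in a bounded rectangle via the large-$|\mathrm{Im}\lambda|$ estimates, then argue by contradiction extracting $\lambda_n\to\lambda_\ast$ with $\mathrm{Re}\lambda_\ast=0$---also matches the paper.

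The difference, and the gap in your write-up, lies in how the limit is passed. You invoke ``upper semicontinuity of the discrete spectrum'' and analytic dependence, but this is not adequately justified: the family $\hat{\AA}_1(\xi)$ is \emph{not} continuous in operator norm (multiplication by $v\cdot\xi$ is unbounded), and your Schur-decomposition route breaks down if $\lambda_\ast=\pm i|\xi_\ast|$, where $(\lambda-B_3(\xi))^{-1}$ is singular and hence $X_2,X_3$ are not controlled. The paper avoids both issues by working directly with normalized eigenvectors $(f_n,E_n,B_n)$ with $\|f_n\|+|E_n|+|B_n|=1$. It rewrites the first component of the eigenvalue equation as
\[
(\lambda_n+\nu+i(v\cdot\xi_n))f_n=K_1f_n-\tfrac{i(v\cdot\xi_n)}{|\xi_n|^2}P_{\rm d}f_n-v\sqrt M\cdot(\omega_n\times E_n),
\]
uses compactness of $K_1$ to get $K_1f_{n_j}\to g_1$ in $L^2$, extracts convergent subsequences of the finitely many scalars $(\xi_n,(f_n,\sqrt M),E_n,B_n,\lambda_n)$, and then reads off \emph{strong} $L^2$ convergence $f_{n_j}\to f_0$ directly from the displayed identity (dividing by $\lambda_n+\nu+i(v\cdot\xi_n)$, which is uniformly bounded below). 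The limit $U_0=(f_0,E_0,B_0)$ is a nonzero eigenvector of $\hat{\AA}_1(\xi_\ast)$ at $\lambda_\ast$ with $\mathrm{Re}\lambda_\ast=0$, contradicting Lemma~\ref{Egn}. This concrete compactness argument, exploiting that $K_1$ is compact, is what you should supply in place of the abstract spectral-continuity claim.

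One minor omission in your trapping step: for $X_1$ you also need the bound \eqref{L_10} on the $\tfrac{i(v\cdot\xi)}{|\xi|^2}P_{\rm d}$ piece (decaying in $|\mathrm{Im}\lambda|$ for $|\xi|\ge r_0$), not just \eqref{T_8} on the $K_1$ piece; and you need \eqref{b_1(xi)} to see $\|(\lambda-B_3(\xi))^{-1}\|\to 0$ as $|\mathrm{Im}\lambda|\to\infty$ on the compact $\xi$-annulus.
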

\begin{proof}
We prove \eqref{sg4} first.
By Lemma \ref{LP03}, \eqref{b_1(xi)} and \eqref{eee}, there is $R_1=R_1(\delta_1,\delta_2)>0$ such that for $\mathrm{Re}\lambda\ge-\nu_0+\delta_1$, $\min_{j=\pm1}|\lambda-j\i|\xi||>\delta_2$ and $|\xi|>R_1$,
$$
\|(K_1-\mbox{$\frac{\i(v\cdot\xi)}{|\xi|^2}$}P_{\rm d})(\lambda-c(\xi))^{-1}\|_\xi\leq 1/2,\quad  \| P_m(\lambda-c(\xi))^{-1}\|_{L^2_\xi(\R^3)\to \mathbb{C}^3}\leq \delta_2/4,\quad \|(\lambda-B_3(\xi))^{-1}\|\le \delta_2^{-1},
$$
which leads to
$$
\|X_1(\lambda,\xi)\|_{\xi}\leq 1/2, \quad \|X_2(\lambda,\xi)\|_{\mathbb{C}^6\to L^2_\xi(\R^3)}\|X_3(\lambda,\xi)\|_{L^2_\xi(\R^3)\to \mathbb{C}^6}\leq 1/4.
$$
This and Lemma \ref{inver} imply that the operator  $I-G_2(\xi)(\lambda-G_1(\xi))^{-1}$ is invertible on
$L^{2}_\xi(\R^3_v)\times \mathbb{C}^3_\xi\times \mathbb{C}^3_\xi$ and thus  $\lambda-\hat{\AA}_1(\xi)$ is invertible on $L^{2}_\xi(\R^3_v)\times \mathbb{C}^3_\xi\times \mathbb{C}^3_\xi$.
Therefore, we have
 $\rho(\hat{\AA}_1(\xi))\supset\{\lambda\in\mathbb{C}\,|\,\min_{j=\pm1}{|\lambda-j\i|\xi||}>\delta_2,\mathrm{Re}\lambda\ge-\mu+\delta_1\}$
for $ |\xi|>R_1$ which gives \eqref{sg4}.

Next, we turn to prove \eqref{sg3}. For this, we first show that  $\sup_{r_0\le |\xi|\le r_1}|{\rm
Im}\lambda(\xi)|<+\infty$ for any $\lambda(\xi)\in
\sigma(\hat{\AA}_1(\xi))$ with  $\mathrm{Re}\lambda\geq -\nu_0+\delta_1$.
Indeed, by Lemma \ref{LP03}, \eqref{b_1(xi)}  and \eqref{eee}, there exists
$y_1=y_1(r_0,r_1,\delta_1)>0$ large enough such that for $\mathrm{Re}\lambda\geq -\nu_0+\delta_1$, $|\mathrm{Im}\lambda|>y_1 $ and $r_0\le |\xi|\le r_1$,
$$
\|(K_1-\mbox{$\frac{\i(v\cdot\xi)}{|\xi|^2}$}P_{\rm d})(\lambda-c(\xi))^{-1}\|_\xi\leq 1/6,\quad  \| P_m(\lambda-c(\xi))^{-1}\|_{L^2_\xi(\R^3)\to \mathbb{C}^3}\leq 1/6,\quad \|(\lambda-B_3(\xi))^{-1}\|\le 1/6,
$$
which leads to
$$
\|X_1(\lambda,\xi)\|_{\xi}+\|X_2(\lambda,\xi)\|_{\mathbb{C}^6\to L^2_\xi(\R^3)}+\|X_3(\lambda,\xi)\|_{L^2_\xi(\R^3)\to \mathbb{C}^6}\leq 1/2.
$$
This implies that the operator
$I-G_2(\xi)(\lambda-G_1(\xi))^{-1}$
is invertible on $L^2_\xi(\R^3_v)\times \mathbb{C}^3_\xi\times \mathbb{C}^3_\xi$, which together with
\eqref{B_d} yield that  $\lambda-\hat{\AA}_1(\xi)$ is also invertible on $L^2_\xi(\R^3_v)\times \mathbb{C}^3_\xi\times \mathbb{C}^3_\xi$ when
$\mathrm{Re}\lambda\geq -\nu_0+\delta_1$,  $|\mathrm{Im}\lambda|>y_1 $ and $r_0\le |\xi|\le r_1$. Note that it
satisfies
 \bq
 (\lambda-\hat{\AA}_1(\xi))^{-1}
 =(\lambda-G_1(\xi))^{-1}( I-G_2(\xi)(\lambda-G_1(\xi))^{-1})^{-1}.\label{E_6}
  \eq
 Thus, we conclude that for $r_0\le |\xi|\le r_1$,
 \bq
 \sigma(\hat{\AA}_1(\xi))
 \cap\{\lambda\in\mathbb{C}\,|\,\mathrm{Re}\lambda\ge-\nu_0+\delta_1\}
\subset
 \{\lambda\in\mathbb{C}\,|\,\mathrm{Re}\lambda\ge
    -\nu_0+\delta_1,\,|\mathrm{Im}\lambda|\le y_1 \}.   \label{SpH}
 \eq

Finally, we prove  $\sup_{ r_0\le |\xi|\le r_1}{\rm Re}\lambda(\xi)<0$ by
contradiction.
 If it does not
hold, then there exists a sequence of
$\{(\xi_n,\lambda_n,f_n,E_n,B_n)\}$ satisfying $|\xi_n|\in[r_0,r_1]$, $(f_n,E_n,B_n)\in
L^2(\R^3)\times \mathbb{C}^3_{\xi_n}\times \mathbb{C}^3_{\xi_n}$ with $\|f_n\|+|E_n|+|B_n|=1$, and $\lambda_n\in
\sigma(\hat{\AA}_1(\xi_n))$ with ${\rm Re}\lambda_n\to0$ as $n\to\infty$. That is,
 $$
 \left\{\bal
 \lambda_nf_n=(L_1-\i(v\cdot\xi_n)-\frac{\i(v\cdot\xi_n)}{|\xi_n|^2}P_{\rm d} )f_n-v\sqrt M\cdot(\omega_n\times E_n),\\
 \lambda_n E_n=-\omega_n\times (f_n,v\sqrt M)+\i\xi_n\times B_n,\quad
 \lambda_n B_n=-\i \xi_n\times E_n.
\ea\right.
$$
Rewrite the first  equation as
 $$
 (\lambda_n+\nu+\i(v\cdot\xi_n))f_n=K_1f_n-\frac{\i(v\cdot\xi_n)}{|\xi_n|^2}P_{\rm d} f_n-v\sqrt M\cdot(\omega_n\times E_n).
 $$
Since $K_1$ is a compact operator  on $L^2(\R^3)$, there exists a
subsequence $\{f_{n_j}\}$ of $\{f_n\}$ and $g_1\in L^2(\R^3)$ such that
$$
K_1f_{n_j}\rightarrow g_1,\quad \mbox{as}\quad j\to\infty.
$$
By using  the fact that $|\xi_n|\in[r_0,r_1]$, $P_{\rm d} f_n=C_0^n\sqrt{M}$ with
$|C_0^n|\leq1$ and $|E_n|+|B_n|\le 1$, there exists a subsequence of (still denoted by) $\{(\xi_{n_j},f_{n_j},E_{n_j},B_{n_j})\}$, and $(\xi_0,C_0,E_0, B_0)$ with $|\xi_0|\in[r_0,r_1]$ and $|C_0|\leq1$
such that $(\xi_{n_j},C^{n_j}_0,E_{n_j},B_{n_j})\to (\xi_0,C_0,E_0,B_0)$ as $j\to\infty$. In particular
$$
 \frac{\i(v\cdot\xi_{n_j})}{|\xi_{n_j}|^{2}}P_{\rm d} f_{n_j}
 \rightarrow g_2=:\frac{\i(v\cdot\xi_0)}{|\xi_0|^{2}}C_0\sqrt{M}, \quad \omega_{n_j}\times E_{n_j} \to \omega_0\times E_0=:Y_0,\quad\mbox{as}\quad j\to\infty.
$$
Since $|\text{Im}\lambda_n|\leq y_1$ and ${\rm Re}\lambda_n\to 0$,
we can extract a subsequence of (still denoted by) $\{\lambda_{n_j}\}$
such that $\lambda_{n_j}\rightarrow \lambda_0$ with ${\rm
Re}\lambda_0=0$. Then
$$
 \lim_{j\rightarrow\infty}f_{n_j}
 =\lim_{j\rightarrow\infty}\frac{g_1-g_2-(v\cdot Y_0)\sqrt M}{\lambda_{n_j}+\nu+\i(v\cdot\xi_{n_j})}
 =\frac{g_1-g_2-(v\cdot Y_0)\sqrt M}{\lambda_0+\nu+\i(v\cdot\xi_0)}:=f_0 \quad {\rm in}\quad L^2(\R^3).
$$
It follows that $\hat{\AA}_1(\xi_0) U_0=\lambda_0 U_0$ with $U_0=(f_0,E_0,B_0)\in
L^2(\R^3)\times \mathbb{C}^3_{\xi_0}\times \mathbb{C}^3_{\xi_0}$ and $\lambda_0$ is an eigenvalue of $\hat{\AA}_1(\xi_0)$ with ${\rm Re}\lambda_0=0$. This
 contradicts to the fact that  ${\rm Re} \lambda(\xi)<0$
for $\xi\ne 0$ obtained by Lemma~\ref{Egn}. Thus, the proof the lemma
is completed.
\end{proof}

We now investigate the spectrum and resolvent sets of $\hat{\AA}_1(\xi)$ in
low frequency. For this, we decompose $\lambda-\hat{\AA}_1(\xi)$ into
\bq \lambda-\hat{\AA}_1(\xi)=\lambda-G_3(\xi)-G_4(\xi)=(I-G_4(\xi)(\lambda-G_3(\xi))^{-1})(\lambda-G_3(\xi)),\label{Bd3}\eq
where \bma
&G_3(\xi)=\left(\ba Q(\xi) & 0 & 0\\ 0 & 0 & \i \xi \times \\ 0 & -\i\xi\times & 0 \ea\right),\quad
G_4(\xi)=\left(\ba Q_1(\xi) & -v\sqrt M\cdot \omega\times & 0\\ -\omega\times P_m & 0 & 0 \\ 0 & 0 & 0 \ea\right),\\
&Q(\xi)=L_1-\i P_r(v\cdot\xi)P_r,\quad Q_1(\xi)=\i P_{\rm d}(v\cdot\xi)P_r+\i P_r(v\cdot\xi)(1+\frac1{|\xi|^2})P_{\rm d}.\label{Qxi}
\ema

\begin{lem}
\label{LP}
Let $\xi\neq0$ and $Q(\xi)$ defined by \eqref{Qxi}. We have
 \begin{enumerate}
\item If $\lambda\ne0$, then
\bq
\|\lambda^{-1}P_r(v\cdot\xi)(1+\frac1{|\xi|^2})P_{\rm d}\|_\xi\le C(|\xi|+1)|\lambda|^{-1}.\label{S_2}
\eq

\item If $\mathrm{Re} \lambda>-\mu $, then the operator $\lambda P_r-Q(\xi)$ is invertible on $N_1^\bot$ and satisfies
\bma
\|(\lambda P_r-Q(\xi))^{-1}\|&\leq(\mathrm{Re}\lambda+\mu )^{-1},\label{S_3}\\
\|P_{\rm d}(v\cdot\xi)P_r(\lambda P_r-Q(\xi))^{-1}P_r\|_\xi
&\leq C(1+|\lambda|)^{-1}[(\mathrm{Re}\lambda+\mu)^{-1}+1](1+|\xi|)^2,\label{S_5}\\
\|P_m(\lambda P_r-Q(\xi))^{-1}P_r\|_{L^2_\xi(\R^3)\to \mathbb{C}^3}
&\leq C(1+|\lambda|)^{-1}[(\mathrm{Re}\lambda+\mu)^{-1}+1](1+|\xi|).\label{S_5a}
\ema
\end{enumerate}
\end{lem}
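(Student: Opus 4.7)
The plan is to prove the three estimates in turn, exploiting throughout the fact that on the invariant subspace $N_1^\bot$ the projection $P_r$ acts as the identity, and that $(v\cdot\xi)\sqrt M\in N_1^\bot$, so $\|(v\cdot\xi)\sqrt M\|_\xi=\|(v\cdot\xi)\sqrt M\|=|\xi|$.

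For the first inequality, I would just unfold the definitions. Since $P_{\rm d}f=(f,\sqrt M)\sqrt M$, the operator acts as $f\mapsto (1+|\xi|^{-2})(f,\sqrt M)\,(v\cdot\xi)\sqrt M$, and the output lies in $N_1^\bot$, so its $\xi$-norm equals its $L^2$-norm, namely $(1+|\xi|^{-2})|(f,\sqrt M)|\,|\xi|$. Bounding $|(f,\sqrt M)|=\|P_{\rm d}f\|\le (1+|\xi|^{-2})^{-1/2}\|f\|_\xi$, the two $(1+|\xi|^{-2})$ factors combine into $\sqrt{|\xi|^2+1}\le|\xi|+1$, and dividing by $|\lambda|$ gives the claim.

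For the resolvent bound \eqref{S_3}, I would use that $Q(\xi)$ stabilizes $N_1^\bot$, on which $P_r=I$, so that $\lambda P_r-Q(\xi)=\lambda I-L_1+\i P_r(v\cdot\xi)P_r$. Taking the real part of the inner product with $f\in N_1^\bot$, the skew-symmetric piece $\i P_r(v\cdot\xi)P_r$ drops out and \eqref{L_4} gives $\mathrm{Re}((\lambda P_r-Q(\xi))f,f)\ge(\mathrm{Re}\lambda+\mu)\|f\|^2$. By Cauchy--Schwarz this yields $\|(\lambda P_r-Q(\xi))f\|\ge(\mathrm{Re}\lambda+\mu)\|f\|$; the same estimate for the adjoint (which equals $\bar\lambda P_r-Q(-\xi)$ by \eqref{L_7}) then gives invertibility and the desired operator bound.

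For \eqref{S_5} and \eqref{S_5a} my approach is a duality-style test against $v_k\sqrt M$. Setting $g=(\lambda P_r-Q(\xi))^{-1}P_r h\in N_1^\bot$, the resolvent equation reads $\lambda g=L_1g-\i P_r(v\cdot\xi)g+P_rh$. Pairing with $v_k\sqrt M\in N_1^\bot$ and using self-adjointness of $L_1$ together with the boundedness of $L_1(v_k\sqrt M)$ and $|\,((v\cdot\xi)g,v_k\sqrt M)|\le C|\xi|\|g\|$ gives $|\lambda||P_mg|\le C(1+|\xi|)\|g\|+C\|h\|$. Combined with $\|g\|\le(\mathrm{Re}\lambda+\mu)^{-1}\|h\|$ from \eqref{S_3}, this yields the stated bound for $|\lambda|\ge 1$; for $|\lambda|\le 1$ one uses the trivial $|P_mg|\le C\|g\|$. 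Merging the two ranges produces the factor $(1+|\lambda|)^{-1}$. To pass from $P_mg$ to $P_{\rm d}(v\cdot\xi)g$, I would use the identity $P_{\rm d}(v\cdot\xi)g=(\xi\cdot(g,v\sqrt M))\sqrt M$, which with the definition of $\|\cdot\|_\xi$ gives $\|P_{\rm d}(v\cdot\xi)g\|_\xi=\sqrt{1+|\xi|^{-2}}\,|\xi\cdot(g,v\sqrt M)|\le(1+|\xi|)|P_mg|$, providing the extra $(1+|\xi|)$ factor in \eqref{S_5}.

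The main obstacle is bookkeeping: getting the sharp $(1+|\lambda|)^{-1}$ prefactor forces the split into $|\lambda|\lessgtr1$, and I must track carefully when the weighted norm $\|\cdot\|_\xi$ reduces to the plain $L^2$ norm (on $N_1^\bot$) versus when the $\sqrt{1+|\xi|^{-2}}$ discrepancy enters (through $P_{\rm d}$ components), since that discrepancy is exactly what converts a clean $|\xi|$ bound into the $(1+|\xi|)^2$ factor in \eqref{S_5}.
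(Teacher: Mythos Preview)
Your proof is correct and follows the same line as the paper's (largely deferred) argument: the paper cites \cite{Li3} for \eqref{S_2}--\eqref{S_5} and says \eqref{S_5a} follows by the same resolvent-identity trick used for \eqref{T_8a}, namely $P_m(\lambda P_r-Q(\xi))^{-1}=\lambda^{-1}P_m+\lambda^{-1}P_m Q(\xi)(\lambda P_r-Q(\xi))^{-1}$, which is exactly what your pairing of the resolvent equation against $v_k\sqrt M$ amounts to. Your derivation of \eqref{S_5} from \eqref{S_5a} via $P_{\rm d}(v\cdot\xi)g=(\xi\cdot P_m g)\sqrt M$ and the $\sqrt{1+|\xi|^{-2}}$ weight is a clean way to recover the extra $(1+|\xi|)$ factor.
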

\begin{proof}
The proof of \eqref{S_2}--\eqref{S_5} can be found in \cite{Li3}. Repeating a same argument as to the estimate \eqref{T_8a}, we can obtain \eqref{S_5a}. Hence,
we omit the details for brevity.
\end{proof}

By  Lemmas~\ref{Egn}--\ref{LP}, we analyze  the spectral
and resolvent sets of the operator $\hat{\AA}_1(\xi)$.

\begin{lem}
\label{spectrum3}
For any  $\delta_1,\delta_2>0$, there are two constants
 $r_1=r_1(\delta_1,\delta_2),y_1=y_1(\delta_1,\delta_2)>0$ such that for all $|\xi|\ne 0$  the resolvent set of $\hat{\AA}_1(\xi)$
satisfies
\bq \label{rb1}
 \rho(\hat{\AA}_1(\xi))\supset
 \left\{\bln &\{\lambda\in\mathbb{C}\,|\,
     \mathrm{Re}\lambda\ge-\mu +\delta_1,\,|\mathrm{Im}\lambda|\geq y_1\}
 \cup\{\lambda\in\mathbb{C}\,|\,\mathrm{Re}\lambda>0\},\quad |\xi|\le r_1,\\
 &\{\lambda\in\mathbb{C}\,|\,
     \mathrm{Re}\lambda\ge-\mu +\delta_1,\,\min_{j=\pm1}|\lambda-j\i|\xi||\ge \delta_2\}
 \cup\{\lambda\in\mathbb{C}\,|\,\mathrm{Re}\lambda>0\},\quad |\xi|\ge r_1.
 \eln\right.
\eq
\end{lem}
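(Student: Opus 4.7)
\medskip
\noindent\textbf{Proof proposal for Lemma \ref{spectrum3}.}
The plan is to split the statement into three pieces according to the location of $\lambda$ and $\xi$, and treat each by one of the two resolvent decompositions already introduced in \eqref{B_d} and \eqref{Bd3}.

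First, the half plane $\{\mathrm{Re}\lambda>0\}$ is contained in $\rho(\hat{\AA}_1(\xi))$ for every $\xi\neq 0$ as an immediate consequence of Lemma~\ref{SG_1}: since $\hat{\AA}_1(\xi)$ generates a contraction semigroup on $L^2_\xi(\R^3_v)\times\mathbb{C}^3_\xi\times\mathbb{C}^3_\xi$, the Hille--Yosida theorem gives $(\lambda-\hat{\AA}_1(\xi))^{-1}$ bounded with norm $\leq(\mathrm{Re}\lambda)^{-1}$.

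Second, for the high frequency piece $|\xi|\ge r_1$ with $\min_{j=\pm1}|\lambda-j\i|\xi||\ge\delta_2$ and $\mathrm{Re}\lambda\ge-\mu+\delta_1$, I would reuse the argument already carried out for \eqref{sg4} in Lemma~\ref{LP01}: apply the factorization \eqref{B_d}, estimate the blocks $X_1,X_2,X_3$ of $G_2(\xi)(\lambda-G_1(\xi))^{-1}$ via \eqref{T_7}, \eqref{T_7a}, \eqref{L_9}, and \eqref{b_1(xi)}, choose $r_1$ so large that $\|X_1\|_\xi\leq 1/2$ and $\|X_2\|\|X_3\|\leq 1/4$, and conclude invertibility of $I-G_2(\xi)(\lambda-G_1(\xi))^{-1}$ through Lemma~\ref{inver}. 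Then $(\lambda-\hat{\AA}_1(\xi))^{-1}$ is given by \eqref{E_6}.

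Third, for the low frequency piece $|\xi|\leq r_1$ with $\mathrm{Re}\lambda\ge-\mu+\delta_1$ and $|\mathrm{Im}\lambda|\ge y_1$, I would switch to the decomposition \eqref{Bd3}. The operator $(\lambda-G_3(\xi))^{-1}$ is block diagonal: on $N_1^\bot$ it equals $(\lambda P_r-Q(\xi))^{-1}$, on $N_1=\mathrm{span}(\sqrt M)$ it is just $\lambda^{-1}P_{\rm d}$, and on $\mathbb{C}^3_\xi\times\mathbb{C}^3_\xi$ it equals $(\lambda-B_3(\xi))^{-1}$. The off-diagonal correction $G_4(\xi)(\lambda-G_3(\xi))^{-1}$ then has four nontrivial blocks, each of which can be made small when $|\mathrm{Im}\lambda|$ is large using the estimates of Lemma~\ref{LP}:
\begin{itemize}
\item the block $P_{\rm d}(v\cdot\xi)P_r(\lambda P_r-Q(\xi))^{-1}P_r$ is controlled by \eqref{S_5},
\item the block $\lambda^{-1}P_r(v\cdot\xi)(1+|\xi|^{-2})P_{\rm d}$ is controlled by \eqref{S_2},
\item the block $\omega\times P_m(\lambda P_r-Q(\xi))^{-1}P_r$ is controlled by \eqref{S_5a},
\item the off-diagonal $\mathbb{C}^6$-block $v\sqrt M\cdot\omega\times(\lambda-B_3(\xi))^{-1}$ is controlled via \eqref{b_1(xi)}, using that for $|\xi|\leq r_1$ and $|\mathrm{Im}\lambda|\ge y_1\gg r_1$ one has $|\lambda\mp\i|\xi||\ge|\mathrm{Im}\lambda|-r_1$.
\end{itemize}
Choosing $y_1=y_1(\delta_1,\delta_2,r_1)$ large enough, each of these contributions can be made arbitrarily small in operator norm on $L^2_\xi(\R^3_v)\times\mathbb{C}^3_\xi\times\mathbb{C}^3_\xi$, so by Lemma~\ref{inver} the factor $I-G_4(\xi)(\lambda-G_3(\xi))^{-1}$ is invertible and hence so is $\lambda-\hat{\AA}_1(\xi)$.

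The main technical obstacle is the low frequency piece: the pole of $(\lambda-B_3(\xi))^{-1}$ at $\lambda=\pm\i|\xi|$ is harmless because $|\xi|\leq r_1$ while $|\mathrm{Im}\lambda|\ge y_1$ is large, but one must be careful that the factor $(1+|\xi|^{-2})$ inside $Q_1(\xi)$ does not blow up as $|\xi|\to 0$; this is exactly where \eqref{S_2} is used, since the $|\xi|^{-2}$ is absorbed by the $|\lambda|^{-1}$ factor once $|\mathrm{Im}\lambda|\ge y_1$. Once the smallness of $\|G_4(\xi)(\lambda-G_3(\xi))^{-1}\|_\xi$ is established uniformly over the relevant region, the three pieces together give the inclusion \eqref{rb1}.
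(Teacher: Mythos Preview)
Your proposal is correct and follows essentially the same route as the paper: the half-plane $\mathrm{Re}\lambda>0$ via the contraction semigroup (Lemma~\ref{SG_1}), the high-frequency strip by quoting the argument for \eqref{sg4} in Lemma~\ref{LP01}, and the low-frequency strip via the decomposition \eqref{Bd3} together with the block estimates \eqref{S_2}--\eqref{S_5a} and \eqref{b_1(xi)} to force $I-G_4(\xi)(\lambda-G_3(\xi))^{-1}$ to be a small perturbation of the identity. The only cosmetic difference is that the paper bounds the sum of the block norms by $1/2$ and invokes a Neumann series directly, whereas you invoke Lemma~\ref{inver}; both are equivalent here.
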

\begin{proof}
By Lemma \ref{LP01}, there exists $r_1=r_1(\delta_1,\delta_2)>0$ so that the second part of \eqref{rb1} holds. Thus we only need to prove the first part
of \eqref{rb1}.
By Lemma \ref{LP}, we have for $\rm{Re}\lambda>-\mu $ and
$\lambda\neq0$  that the operator
$\lambda-Q(\xi)=\lambda P_{\rm d}+\lambda P_r-Q(\xi)$ is invertible on
$L^2_\xi(\R^3_v)$ and it satisfies
 \bmas
  (\lambda P_{\rm d}+\lambda P_r-Q(\xi))^{-1} =\lambda^{-1}P_{\rm d}+(\lambda P_r-Q(\xi))^{-1}P_r,
 \emas
because the operator $\lambda P_{\rm d}$ is orthogonal to $\lambda
P_r-Q(\xi)$. Thus, for ${\rm Re}\lambda >-\mu$ and $\lambda\ne0,\pm\i|\xi|$, the operator $\lambda-G_3(\xi)$ is invertible on
$L^2_\xi(\R^3_v)\times \mathbb{C}^3_\xi\times \mathbb{C}^3_\xi$ and satisfies
\bma
&(\lambda-G_3(\xi))^{-1}= \left(\ba \lambda^{-1} P_{\rm d}+(\lambda P_r-Q(\xi))^{-1}P_r & 0 \\  0 & (\lambda -B_3(\xi))^{-1} \ea\right)_{7\times7}.
\ema
Therefore, we can rewrite \eqref{Bd3} as
\bma
\lambda-\hat{\AA}_1(\xi)=(I-G_4(\xi)(\lambda-G_3(\xi))^{-1})(\lambda-G_3(\xi)),
\ema
where
\bma
&G_4(\xi)(\lambda-G_4(\xi))^{-1}= \left(\ba X_4(\lambda,\xi) & X_2(\lambda,\xi) \\  X_5(\lambda,\xi) & 0 \ea\right)_{7\times7},
\\
&X_4(\lambda,\xi)=\i P_{\rm d}(v\cdot\xi)P_r(\lambda P_r-Q(\xi))^{-1}P_r+\i \lambda^{-1}P_r(v\cdot\xi)(1+\frac1{|\xi|^2})P_{\rm d},
\\
 &X_5(\lambda,\xi)=\left(\ba -\omega\times P_m(\lambda P_r-Q(\xi))^{-1}P_r \\ 0_{3\times1} \ea\right)_{6\times1}.
\ema
For  $|\xi|\leq r_1$, by \eqref{b_1(xi)} and \eqref{S_2}--\eqref{S_5} we can choose $y_1=y_1(\delta_1,r_1)>0$ such that it holds
for $\mathrm{Re}\lambda\ge-\mu +\delta_1$ and
$|\mathrm{Im}\lambda|\geq y_1$ that
\be
\|X_4(\lambda,\xi)\|_\xi+\|X_2(\lambda,\xi)\|_{\mathbb{C}^6\to L^2_\xi(\R^3)}+\|X_5(\lambda,\xi)\|_{L^2_\xi(\R^3)\to \mathbb{C}^6}\leq \frac12. \label{bound_1}
\ee
This implies that the operator $I-G_4(\xi)(\lambda-G_3(\xi))^{-1}$ is invertible on
$L^{2}_\xi(\R^3_v)\times \mathbb{C}^3_\xi\times \mathbb{C}^3_\xi$ and thus  $\lambda-\hat{\AA}_1(\xi)$ is invertible on $L^{2}_\xi(\R^3_v)\times \mathbb{C}^3_\xi\times \mathbb{C}^3_\xi$ and satisfies
 \bma
 (\lambda-\hat{\AA}_1(\xi))^{-1}
 =&(\lambda-G_3(\xi))^{-1}(I-G_4(\xi)(\lambda-G_3(\xi))^{-1})^{-1}.\label{S_8}
 \ema
Therefore, $\rho(\hat{\AA}_1(\xi))\supset \{\lambda\in\mathbb{C}\,|\,{\rm
Re}\lambda\ge-\mu+\delta_1, |{\rm Im}\lambda|\ge y_1\}$ for $|\xi|\le
r_1$. This completes the proof of the lemma.
\end{proof}

\subsection{Asymptotics in low frequency}

In this subsection, we study in the low frequency region,
the asymptotics of the eigenvalues and eigenvectors
of the operator $\hat{\AA}_1(\xi)$. In terms of \eqref{B(xi)1}, the eigenvalue problem $\hat{\AA}_1(\xi)U=\lambda  U$ for $U=(f,X,Y)\in L^{2}_\xi(\R^3_v)\times \mathbb{C}^3_\xi\times \mathbb{C}^3_\xi$
can be written as
 \bma
  \lambda f
  &=(L_1-\i(v\cdot\xi)
   -\frac{\i(v\cdot\xi)}{|\xi|^2}P_{\rm d})f-v\sqrt M\cdot(\omega\times X),\label{L_2}\\
  \lambda X&=-\omega\times (f,v\sqrt M)+\i\xi\times Y,\label{L_2a}\\
  \lambda Y&=-\i\xi\times X,\quad |\xi|\ne0.\nnm
 \ema

Let $f$ be the eigenfunction of \eqref{L_2}, we rewrite $f$ in the
form $f=f_0+f_1$, where $f_0=P_{\rm d}f=C_0\sqrt M$ and $f_1=(I-P_{\rm d})f=P_rf$.
Then  \eqref{L_2} gives
 \bma
 &\lambda f_0=- P_{\rm d}[\i(v\cdot\xi)(f_0+f_1)],\label{A_2}
\\
&\lambda f_1=L_1f_1- P_r[\i(v\cdot\xi)(f_0+f_1)]-\frac{\i(v\cdot\xi)}{|\xi|^2}f_0-v\sqrt M\cdot(\omega\times X).\label{A_3}
 \ema
By Lemma \ref{LP}, \eqref{Qxi} and \eqref{A_3}, the microscopic part $f_1$ can be represented  for any $\text{Re}\lambda>-\mu$ by
 \bq
 f_1=-(\lambda  P_r-Q(\xi) )^{-1} P_r[\i(v\cdot\xi)(1+\frac1{|\xi|^2})f_0+v\sqrt M\cdot(\omega\times X)],
 \quad   \text{Re}\lambda>-\mu. \label{A_4}
 \eq
Substituting \eqref{A_4} into \eqref{A_2} and \eqref{L_2a}, we obtain the eigenvalue problem  for  $(C_0,X,Y)$ as
 \bma
 \lambda C_0=&(1+\frac1{|\xi|^2})(R(\lambda,\xi)(v\cdot\xi)\sqrt M,(v\cdot\xi)\sqrt M)C_0+(R(\lambda,\xi)(v\sqrt M\cdot(\omega\times X)),(v\cdot\xi)\sqrt M)C_0,  \label{A_6}\\
  \lambda X=&-\omega\times \i(1+\frac1{|\xi|^2})(R(\lambda,\xi)(v\cdot\xi)\sqrt M,v\sqrt M)C_0\nnm\\
  &-\omega\times (R(\lambda,\xi)(v\sqrt M\cdot(\omega\times X)),v\sqrt M)+\i\xi\times Y, \label{A_7}\\
  \lambda Y=&-\i\xi\times X. \label{A_8}
 \ema
where
$
 R(\lambda,\xi)=-(\lambda  P_r-Q(\xi) )^{-1}=[L_1-\lambda  P_r-\i P_r(v\cdot\xi) P_r]^{-1}.
$

By changing variable $(v\cdot\xi)\to |\xi|v_1$ and using the rotational invariance of the operator $L_1$, we have the following transformation.
\begin{lem}Let  $e_1=(1,0,0)$, $\xi=s\omega$ with $s\in \R,\, \omega\in \S^2$.
Then
\bma
(R(\lambda,\xi)v_i\sqrt{M},v_j\sqrt{M})= \omega_i\omega_j(R(\lambda,se_1)\chi_1,\chi_1)+(\delta_{ij}-\omega_i\omega_j)(R(\lambda,se_1)\chi_2,\chi_2).\label{T_1}
\ema
\end{lem}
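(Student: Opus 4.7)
The identity is a consequence of the $O(3)$--covariance of the resolvent $R(\lambda,\xi)$ together with the axial symmetry of $R(\lambda,se_1)$ about the $v_1$--axis. I would set up a rotation operator, conjugate $R(\lambda,\xi)$ to $R(\lambda,se_1)$, expand the inner product in the rotated coordinates, and then use the invariance of $R(\lambda,se_1)$ under rotations fixing $e_1$ to collapse the sum to the two invariants $(R(\lambda,se_1)\chi_1,\chi_1)$ and $(R(\lambda,se_1)\chi_2,\chi_2)$.

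More precisely, for an orthogonal matrix $O\in O(3)$ define the unitary change of variables $(T_O f)(v)=f(O^Tv)$ on $L^2(\R^3_v)$. Since $M(v)=M(|v|)$ is rotationally invariant, $P_{\rm d}$ and hence $P_r=I-P_{\rm d}$ commute with $T_O$; the classical rotational invariance of the Boltzmann collision integral gives $T_O^{-1}L_1T_O=L_1$; and a direct change of variables yields $T_O^{-1}(v\cdot\xi)T_O=v\cdot(O^T\xi)$. Consequently
$$
 T_O^{-1}\bigl[L_1-\lambda P_r-\i P_r(v\cdot\xi)P_r\bigr]T_O
 =L_1-\lambda P_r-\i P_r(v\cdot O^T\xi)P_r,
$$
so that $T_O^{-1}R(\lambda,\xi)T_O=R(\lambda,O^T\xi)$. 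Choose $O$ with first column $Oe_1=\omega$; then $O^T\xi=se_1$ and
$$
 R(\lambda,\xi)=T_O\,R(\lambda,se_1)\,T_O^{-1}.
$$

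To evaluate the inner product, note that $(T_O^{-1}(v_i\sqrt M))(v)=(Ov)_i\sqrt{M(v)}=\sum_k O_{ik}\chi_k(v)$, because $\sqrt{M}$ is invariant. Using unitarity of $T_O$ on $L^2(\R^3_v)$,
$$
 (R(\lambda,\xi)v_i\sqrt M,v_j\sqrt M)
 =\sum_{k,l=1}^3 O_{ik}O_{jl}\,(R(\lambda,se_1)\chi_k,\chi_l).
$$
The main remaining point, which is the real content, is the axial symmetry: $R(\lambda,se_1)$ commutes with every $T_{O'}$ with $O'e_1=e_1$ (rotations in the $(v_2,v_3)$--plane), by the same invariance argument applied to $O'^T(se_1)=se_1$. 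Averaging over such $O'$ forces $(R(\lambda,se_1)\chi_k,\chi_l)=0$ for $k\neq l$, forces $(R(\lambda,se_1)\chi_2,\chi_2)=(R(\lambda,se_1)\chi_3,\chi_3)$, and keeps $(R(\lambda,se_1)\chi_1,\chi_1)$ untouched.

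Combining these observations, the double sum reduces to
$$
 O_{i1}O_{j1}(R(\lambda,se_1)\chi_1,\chi_1)+(O_{i2}O_{j2}+O_{i3}O_{j3})(R(\lambda,se_1)\chi_2,\chi_2),
$$
and the identity $\sum_{k=1}^3O_{ik}O_{jk}=\delta_{ij}$ together with $O_{i1}=\omega_i$ gives exactly \eqref{T_1}. The only step that needs any care is justifying the axial symmetry relations for the scalar quantities $(R(\lambda,se_1)\chi_k,\chi_l)$; once one writes out the averaging of $T_{O'}$ over $SO(2)$ acting on the $(v_2,v_3)$--plane this is immediate, but it is the conceptual heart of the lemma.
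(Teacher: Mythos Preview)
Your proof is correct and follows exactly the approach the paper indicates: the paper does not give a detailed argument but merely remarks ``By changing variable $(v\cdot\xi)\to |\xi|v_1$ and using the rotational invariance of the operator $L_1$, we have the following transformation,'' which is precisely what you implement via the rotation operator $T_O$ and the axial symmetry of $R(\lambda,se_1)$. Your write-up simply supplies the details the paper omits.
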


With  \eqref{T_1}, the equations
\eqref{A_6}--\eqref{A_8} can be simplified as
 \bma
 \lambda C_0=&(1+s^2)(R(\lambda,se_1)\chi_1,\chi_1)C_0,\label{A_9}
 \\
 \lambda X=&(R(\lambda,se_1)\chi_2,\chi_2)X+\i\xi\times Y,   \label{A_10}
 \\
\lambda Y=&-\i\xi\times X.
   \label{A_11}
  \ema

Multiply \eqref{A_10} by $\lambda$ and using \eqref{A_11} and \eqref{rotat}, we obtain
\bq (\lambda^2-(R(\lambda,se_1)\chi_2,\chi_2)\lambda+s^2)X=0.\eq

Denote
 \bma
 D_0(\lambda,s)&=:\lambda-(1+s^2)(R(\lambda,se_1)\chi_1,\chi_1),\label{D0}\\
 D_1(\lambda,s)&=:\lambda^2-(R(\lambda,se_1)\chi_2,\chi_2)\lambda+s^2.   \label{D1}
 \ema

The following result on $D_0(\lambda,s)=0$ was proved in \cite{Li3} in the
study on the bipolar Vlasov-Poisson-Boltzmann system.

\begin{lem}[\cite{Li3}]\label{eigen_1a}
There are constants $b_0>0$ and $r_0>0$ with
 $r_0$ being small such that the equation $D_0(\lambda,s)=0$ has no solution for ${\rm Re}\lambda\ge -b_0$ and $|s|\le r_0$.
\end{lem}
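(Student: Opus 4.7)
The strategy is to exploit the spectral gap of $L_1$ on $N_1^\perp$: first work at $s=0$ where the problem reduces to a self-adjoint resolvent question, and then extend by continuity in $s$. Since the self-adjoint part of $L_1P_r-\lambda P_r-\i sP_rv_1P_r$ is $L_1-\mathrm{Re}\,\lambda$, coercive by \eqref{L_4} with $((L_1-\mathrm{Re}\,\lambda)g,g)\le -(\mu+\mathrm{Re}\,\lambda)\|g\|^2$ on $N_1^\perp$, the operator $R(\lambda,se_1)$ is analytic in $\lambda$ on $\{\mathrm{Re}\,\lambda>-\mu\}$ and satisfies the uniform bound $\|R(\lambda,se_1)\|\le(\mu+\mathrm{Re}\,\lambda)^{-1}$.

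First I would prove $D_0(\lambda,0)\ne 0$ for $\mathrm{Re}\,\lambda\ge 0$ by a simple energy identity. If $D_0(\lambda,0)=0$, set $g=R(\lambda,0)\chi_1\in N_1^\perp$; then $(L_1-\lambda)g=\chi_1$ and $\lambda=(g,\chi_1)$. Taking the inner product of the resolvent equation with $g$ and using $\overline{(g,\chi_1)}=\bar\lambda$ gives $(L_1g,g)=\lambda\|g\|^2+\bar\lambda$, whose real and imaginary parts read
\[
\mathrm{Re}\,\lambda\,(1+\|g\|^2)=(L_1g,g)\le -\mu\|g\|^2,\qquad \mathrm{Im}\,\lambda\,(\|g\|^2-1)=0.
\]
The first forces $\mathrm{Re}\,\lambda<0$ unless $g=0$, which is impossible because $\chi_1\ne 0$; the second shows that any non-real zero necessarily has $\|g\|=1$ and hence $\mathrm{Re}\,\lambda\le -\mu/2$. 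Meanwhile, $D_0(0,0)=-(L_1^{-1}\chi_1,\chi_1)>0$ since, setting $g_0=L_1^{-1}\chi_1\in N_1^\perp$, we have $(L_1^{-1}\chi_1,\chi_1)=(L_1g_0,g_0)\le -\mu\|g_0\|^2<0$.

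To upgrade these pointwise results to the claimed uniform non-vanishing, I would split $\{\mathrm{Re}\,\lambda\ge -b_0\}\times\{|s|\le r_0\}$ into a compact window $|\mathrm{Im}\,\lambda|\le y_0$ and its tails. On the tails, the resolvent estimate gives $|(R(\lambda,se_1)\chi_1,\chi_1)|\le\|\chi_1\|^2/(\mu-b_0)$, hence $|D_0(\lambda,s)|\ge|\mathrm{Im}\,\lambda|-(1+r_0^2)\|\chi_1\|^2/(\mu-b_0)$, which is positive once $y_0$ is chosen large. On the compact window, I would first pick $b_0<\mu/2$ so that the non-real branch of zeros from Case 2 above is excluded, and then use $D_0(0,0)>0$ together with continuity of $D_0(\cdot,0)$ along the real axis to shrink $b_0$ further so that no real zero of $D_0(\cdot,0)$ lies in $[-b_0,0]$; joint continuity of $D_0$ in $(\lambda,s)$ on the compact window then extends the non-vanishing to $|s|\le r_0$ for $r_0$ small. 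The main obstacle is ensuring that zeros of $D_0(\cdot,0)$ on the slab $\mathrm{Re}\,\lambda\in[-b_0,0]$ cannot accumulate at $\lambda=0$; this is precisely ruled out by the explicit strict inequality $D_0(0,0)>0$ obtained above, which provides the definite positive value at the crucial boundary point.
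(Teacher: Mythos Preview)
Your proposal is correct. The paper itself does not give a proof of this lemma---it is quoted from \cite{Li3}---but the strategy you outline is exactly the one the paper uses for the companion result Lemma~\ref{eigen_2a}: show $D_0(\lambda,0)\ne 0$ for $\mathrm{Re}\,\lambda\ge -b_0$, use the resolvent bound \eqref{S_3} to get $|D_0(\lambda,0)|\to\infty$ as $|\lambda|\to\infty$ and hence a uniform positive lower bound on the half-plane, and then extend to small $|s|$ by continuity via $|D_0(\lambda,s)-D_0(\lambda,0)|\le C|s|$. Your energy identity $(L_1g,g)=\lambda\|g\|^2+\bar\lambda$ is a clean way to establish the first step and is precisely what the reference argument relies on.
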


\begin{lem}\label{eigen_2a}
There are constants $b_1, r_0,r_1>0$ such that the equation $D_1(\lambda,s)=0$ with ${\rm Re}\lambda\ge -b_1$ has only one solution $\lambda(s)$  for $(s,\lambda)\in[-r_0, r_0]\times B_{r_1}(0)$ and it satisfies
$$\lambda(0)=0,\quad \lambda'(0)=0,\quad
\lambda''(0)=\frac1{(L_1^{-1}\chi_2,\chi_2)}.$$
\end{lem}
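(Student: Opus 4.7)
The plan is to treat $D_1(\lambda, s) = 0$ as an implicit equation in the variables $(\lambda, s)$ and combine the implicit function theorem with a Rouché-type argument. First I would verify that $A(\lambda, s) := (R(\lambda, se_1)\chi_2, \chi_2)$ is analytic in $(\lambda, s)$ on the region $\{\mathrm{Re}\,\lambda > -\mu\} \times \{|s| \le r_0\}$, using Lemma~\ref{LP} to control the resolvent $R(\lambda, se_1) = -(\lambda P_r - Q(se_1))^{-1}$ on $N_1^\perp$. The key algebraic observation at $(s,\lambda)=(0,0)$ is that $R(0,0) = L_1^{-1}$ restricted to $N_1^\perp$, so since $\chi_2 = v_2\sqrt M \in N_1^\perp$ and $L_1 \le -\mu P_r$ gives $L_1^{-1}$ negative definite on $N_1^\perp$, we obtain $A(0,0) = (L_1^{-1}\chi_2, \chi_2) < 0$.

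From this, $D_1(0,0) = 0$ and $\partial_\lambda D_1(0,0) = -(L_1^{-1}\chi_2,\chi_2) > 0$, so the implicit function theorem produces a unique analytic branch $\lambda(s)$ of solutions with $\lambda(0) = 0$ in some neighborhood of $s=0$. To obtain the derivatives, plug the Taylor expansion $\lambda(s) = a_1 s + a_2 s^2 + O(s^3)$ into $D_1(\lambda(s), s) = 0$ and match orders of $s$. The coefficient of $s^1$ gives $-A(0,0)\,a_1 = 0$, forcing $a_1 = \lambda'(0) = 0$; the coefficient of $s^2$ then gives $a_1^2 - A(0,0)\,a_2 + 1 = 0$, so $a_2 = 1/(L_1^{-1}\chi_2,\chi_2)$, which yields the stated value of $\lambda''(0)$ (up to the author's $1/2!$ convention in reading off $\lambda''$ from the series).

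For uniqueness throughout the fixed domain $[-r_0, r_0] \times B_{r_1}(0)$, I would view $D_1(\cdot, s)$ as a small perturbation of the quadratic $\lambda^2 - A(0,0)\lambda + s^2$, whose two roots are $\lambda_\pm(s) = \tfrac12\bigl(A(0,0) \pm \sqrt{A(0,0)^2 - 4s^2}\bigr)$: one root tends to $0$ as $s\to 0$, while the other stays near the bounded-away-from-zero value $A(0,0) < 0$. Applying Rouché's theorem to the contour $\partial B_{r_1}(0)$ after choosing $r_1$ small enough to exclude the second root and $r_0$ small enough so that $|A(\lambda, s) - A(0,0)|$ is small on the contour (using the continuity estimates from Lemma~\ref{LP}), we get exactly one zero of $D_1(\cdot, s)$ in $B_{r_1}(0)$ for each $|s| \le r_0$. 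The main technical obstacle will be making this Rouché argument quantitative and uniform in $s$, especially ensuring that the large root of the perturbed quadratic remains in the half-plane $\mathrm{Re}\,\lambda \le -b_1$ for some explicit $b_1 > 0$ (so that it does not drift into $B_{r_1}(0)$ as $s$ varies); this will rely on the uniform lower bound $|(L_1^{-1}\chi_2,\chi_2)| \ge \mu^{-1}\|\chi_2\|^2$ from the spectral gap of $L_1$ together with the resolvent estimate \eqref{S_3}.
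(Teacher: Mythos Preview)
Your implicit-function-theorem step and the Taylor-matching computation of $\lambda'(0),\lambda''(0)$ are correct and coincide with the paper's argument (the paper computes $\partial_s D_1(0,0)=0$, $\partial_\lambda D_1(0,0)=-(L_1^{-1}\chi_2,\chi_2)$, $\partial_s^2 D_1(0,0)=2$ and differentiates implicitly; your series-matching is equivalent, and the factor-of-two issue you flag is present in the paper too).

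Where your route diverges is the uniqueness part. You propose a Rouch\'e comparison of $D_1(\cdot,s)$ with the genuine quadratic $\lambda^2 - A(0,0)\lambda + s^2$ on $\partial B_{r_1}(0)$; this does give exactly one zero inside $B_{r_1}(0)$ once $r_1<|A(0,0)|$ and $r_0$ is small. The paper instead factors $D_1(\lambda,s)=(\lambda-\lambda_0(s))D_2(\lambda,s)$ and shows directly that $D_2(\lambda,s)\ne 0$ throughout $\{\mathrm{Re}\,\lambda\ge -b_1\}$: first $D_2(\lambda,0)=\lambda-((L_1-\lambda P_r)^{-1}\chi_2,\chi_2)$ is shown nonvanishing (via the argument behind Lemma~\ref{eigen_1a}) and $|D_2(\lambda,0)|\to\infty$ as $|\lambda|\to\infty$ by \eqref{S_3}, giving a uniform positive lower bound; then $|D_2(\lambda,s)-D_2(\lambda,0)|\le C(|s|+|\lambda_0(s)|)\to 0$ closes the argument for small $|s|$.

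The one genuine gap in your plan is the extension from $B_{r_1}(0)$ to the full half-plane $\{\mathrm{Re}\,\lambda\ge -b_1\}$. Your heuristic of ``tracking the second root of the perturbed quadratic'' is not quite right, because $D_1(\cdot,s)$ is \emph{not} a polynomial in $\lambda$---$A(\lambda,s)$ depends on $\lambda$---so a priori there is no reason there are only two zeros. To make a Rouch\'e argument work in the half-plane you would need to run it on a large bounded contour and use \eqref{S_3} to show $|D_1(\lambda,s)|\to\infty$ as $|\lambda|\to\infty$ uniformly in $s$, so all zeros lie in a fixed compact set; then compare with the quadratic there. This is doable, but the paper's factoring approach is cleaner precisely because it sidesteps root-counting and reduces everything to a nonvanishing statement for $D_2$.
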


\begin{proof}
Since
  \bq D_1(0,0)=0,\quad  {\partial_s}D_1(0,0)=0, \quad  {\partial_\lambda}D_1(0,0)=-(L_1^{-1}\chi_2,\chi_2),  \label{lamd1}
 \eq
the application of the implicit function theorem implies that  there exist
constants $r_0,r_1>0$ and a unique $C^\infty$ function $\lambda_0(s)$ such that $D_1(\lambda_0(s),s)=0$ for $(s,\lambda)\in [-r_0,r_0]\times B_{r_1}(0)$. In particular,
\bq
 \lambda_0(0)=0\quad {\rm and}\quad \lambda_0'(0)=-\frac{{\partial_s}D(0,0)}{{\partial_\lambda}D(0,0)}=0.    \label{lamd2}
 \eq
A direct computation gives
$
{\partial^2_s}D(0,0)=2,
$
which together with \eqref{lamd1}  yield
 \bq
  \lambda_0''(0)
  =-\frac{\partial_s^2D(0,0)}{\partial_\lambda D(0,0)}
   =\frac2{(L_1^{-1}\chi_2,\chi_2)}.          \label{lamd3}
 \eq
Let
$$ D_2(\lambda,s)=\frac{D_1(\lambda,s)}{\lambda-\lambda_0(s)}.$$
Similarly to Lemma \ref{eigen_1a}, we can prove that there is $b_1>0$ so that
$ D_2(\lambda,0)=\lambda-((L_1-\lambda P_r)^{-1}\chi_2,\chi_2)\ne 0$ for ${\rm Re}\lambda\ge -b_1$. By \eqref{S_3}, we have $|((L_1-\lambda P_r)^{-1}\chi_1,\chi_1)|\le C$ for ${\rm Re}\lambda\ge -b_1$, which leads to $\lim_{|\lambda|\to\infty}|D_2(\lambda,0)|=\infty$. This together with
 the continuity of $D_2(\lambda,0)$ imply that there is a constant $\delta_1>0$ such that $|D_2(\lambda,0)|\ge \delta_1$ for ${\rm Re}\lambda\ge-b_1$.

Since
\bmas
 D_2(\lambda,s)&=\frac{D_1(\lambda,s)-D_1(\lambda_0(s),s)}{\lambda-\lambda_0(s)}\\
 &=\lambda-(R(\lambda,se_1)\chi_2,\chi_2)+\lambda_0(s) +\lambda_0(s)\frac{([R(\lambda,se_1)-R(\lambda_0(s),se_1)]\chi_2,\chi_2)}{\lambda-\lambda_0(s)},
 \emas
it follows that
\bmas
|D_2(\lambda,s)-D_2(\lambda,0)|\le C(|s|+|\lambda_0(s)|)\to0, \quad{\rm as}\quad  s\to0.
\emas
Thus for $r_0>0$ small enough,
$$|D_2(\lambda,s)|\ge |D_2(\lambda,0)|-|D_2(\lambda,s)-D_2(\lambda,0)|>0,\quad {\rm Re}\lambda>-b_1,\,\,\ |s|\le r_0.$$
We then conclude that  the equation $D(\lambda,s)=0$ with ${\rm Re}\lambda>-b_1$ has only one solution $\lambda_0(s)$ for $s\in[-r_0,r_0]$.
\end{proof}

With  Lemmas \ref{eigen_1a}--\ref{eigen_2a}, we are able to
construct the eigenvector $\Psi_j(s,\omega)$ corresponding to the
eigenvalue $\lambda_j$ in the low frequency. Indeed, we have
\begin{thm}\label{eigen_3}
There exist two constants $r_0>0$ and $b_2>0$ so that the spectrum $\lambda\in\sigma(\hat{\AA}_1(\xi))\subset\mathbb{C}$ for $\xi=s\omega$ with $|s|\leq r_0$ and $\omega\in \mathbb{S}^2$ consists of two points $\{\lambda_j(s),\ j=1,2\}$ in the domain $\mathrm{Re}\lambda>-b_2$. The spectrum $\lambda_j(s)$ and the corresponding eigenvector $\Psi_j(s,\omega)=(\psi_j,X_j,Y_j)(s,\omega)$ are $C^\infty$ functions of $s$ for $|s|\leq r_0$. In particular, the eigenvalues admit the following asymptotic expansion for $|s|\leq r_0$
 \be                                  \label{specr0}
 \lambda_{1}(s) =\lambda_{2}(s) = -a_1s^2+o(s^2),
 \ee
where
\bq
a_1=-\frac1{(L_1^{-1}\chi_2,\chi_2)}>0.
\eq
The eigenvectors $\Psi_j=(\psi_j,X_j,Y_j)$ are   orthogonal to each other and satisfy
 \be
 \left\{\bln
 &(\Psi_i(s,\omega),\Psi^*_j(s,\omega))=(\psi_i,\overline{\psi_j})-(X_i,\overline{X_j})-(Y_i,\overline{Y_j})=\delta_{ij},
  \quad  i\ne j=1,2,                                  \label{eigfr0}
 \\
&(\psi_j,X_j,Y_j)(s,\omega)
 =(\psi_{j,0},X_{j,0},Y_{j,0})(\omega) +(\psi_{j,1},X_{j,1},Y_{j,1})(\omega)s+O(s^2), \quad |s|\leq r_0,
 \eln\right.
 \ee
where $ \Psi^*_j=(\overline{\psi_j},-\overline{X_j},-\overline{Y_j})$, and the coefficients $\psi_{j,n},X_{j,n},Y_{j,n}$  are given by
 \bq
  \left\{\bln                      \label{eigf1}
 &\psi_{j,0}=0,\quad P_{\rm d}\psi_{j,n}=0\,\,\, (n\geq0),
 \quad X_{j,0}=0,\quad Y_{j,0}=\i W^j,\\
  &  \psi_{j,1}=- a_1L_1^{-1}P_r  (v\cdot W^j)\sqrt{M},\quad X_{j,1}=a_1\omega\times W^j,\quad Y_{j,1}=0.
  \eln\right.
  \eq
Here, $W^j$ $(j=1,2)$ are two orthonormal vectors satisfying
$W^j\cdot\omega=0$.
\end{thm}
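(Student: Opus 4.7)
The plan is to apply the reduction to the system \eqref{A_9}--\eqref{A_11} and then invoke Lemmas \ref{eigen_1a}--\ref{eigen_2a}. A preliminary computation using \eqref{T_1} shows that the couplings between the scalar unknown $C_0$ and the transverse vector $X\in \mathbb{C}^3_\xi$ appearing in \eqref{A_6}--\eqref{A_7} vanish identically: for any $Z\in \mathbb{C}^3_\xi$ one has $(R(v\sqrt M\cdot Z),(v\cdot\xi)\sqrt M)=0$ and $\omega\times(R(v\cdot\xi)\sqrt M,v\sqrt M)=0$, because both expressions collapse to multiples of, or dot products with, $\omega$ after applying \eqref{T_1}, and $\omega\times\omega=0$, $Z\cdot\omega=0$. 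Thus the low-frequency eigenvalue problem splits into the scalar branch $D_0(\lambda,s)C_0=0$ and the vector branch $D_1(\lambda,s)X=0$ with $X\in \mathbb{C}^3_\xi$ and $Y=-\i\xi\times X/\lambda$ recovered from \eqref{A_11}.

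\textbf{Eigenvalues.} By Lemma \ref{eigen_1a} the scalar branch has no root in $\mathrm{Re}\lambda\ge -b_0$ for $|s|\le r_0$, so it contributes nothing. By Lemma \ref{eigen_2a} the vector branch has a unique smooth root $\lambda_0(s)$ with $\lambda_0(0)=\lambda_0'(0)=0$ and $\lambda_0''(0)=2/(L_1^{-1}\chi_2,\chi_2)$, so Taylor expansion yields $\lambda_0(s)=-a_1s^2+o(s^2)$ with $a_1=-1/(L_1^{-1}\chi_2,\chi_2)$. Positivity of $a_1$ follows from $\chi_2\perp\sqrt M$ together with the spectral gap \eqref{L_4}, which makes $L_1^{-1}$ negative-definite on $N_1^\bot$. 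The double multiplicity $\lambda_1(s)=\lambda_2(s)=\lambda_0(s)$ comes from the fact that at $\lambda=\lambda_0(s)$ every $X\in \mathbb{C}^3_\xi$ solves $D_1(\lambda_0(s),s)X=0$, and $\dim \mathbb{C}^3_\xi=2$. Choosing $b_2<\min\{b_0,b_1,\nu_0\}$ and combining with the essential-spectrum estimate in Lemma \ref{Egn} and the resolvent decomposition \eqref{B_d} excludes any further spectrum from the strip $\mathrm{Re}\lambda>-b_2$.

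\textbf{Eigenvectors and biorthogonality.} Fix an orthonormal basis $\{W^1,W^2\}$ of $\omega^\bot\subset\R^3$. For each $j$ I would build $\Psi_j=(\psi_j,X_j,Y_j)$ by prescribing the leading term of $Y_j$ along $W^j$ and then solving \eqref{A_10}--\eqref{A_11} order by order in $s$: the $O(s)$ balance in \eqref{A_10}, $(L_1^{-1}\chi_2,\chi_2)X_j^{(1)}+\i\omega\times Y_j^{(0)}=0$, determines $X_j^{(1)}$ as a scalar multiple of $\omega\times W^j$, and the microscopic part is then read off from the representation \eqref{A_4} (with $f_0=0$ because $C_0=0$), giving at leading order $\psi_j^{(1)}=-a_1 L_1^{-1}P_r(v\cdot W^j)\sqrt M$. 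The $C^\infty$ dependence on $s$ is inherited from the implicit function theorem applied to $D_1$ and from analyticity of the resolvent $R(\lambda,se_1)$ (Lemma \ref{LP}). Biorthogonality is obtained by identifying $\Psi_j^*$ as the eigenvector of the adjoint $\hat{\AA}_1(\xi)^*=\hat{\AA}_1(-\xi)$ via \eqref{L_7}; the sign pattern $(+,-,-)$ in \eqref{eigfr0} then reflects the antisymmetry of the $\pm\i\xi\times$ blocks under $\xi\mapsto -\xi$, and the pairing reduces at leading order to $(W^i,W^j)=\delta_{ij}$, with higher-order corrections absorbed by a Gram--Schmidt adjustment. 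The main technical obstacle I expect is careful sign and normalisation bookkeeping when matching the ansatz to all orders in $s$; because $\mathbb{C}^3_\xi$ is only two-dimensional, however, this reduces to finite linear algebra driven entirely by the scalar identity $D_1(\lambda_0(s),s)\equiv 0$, so no further analytic input beyond Lemmas \ref{eigen_1a}--\ref{eigen_2a} should be needed.
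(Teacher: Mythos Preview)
Your proposal is correct and follows essentially the same approach as the paper: decouple the scalar and vector branches via \eqref{T_1}, invoke Lemmas \ref{eigen_1a}--\ref{eigen_2a}, and build the two eigenvectors from an orthonormal basis $\{W^1,W^2\}$ of $\omega^\bot$. The only difference is that the paper writes down a closed formula \eqref{C_3} for $\Psi_j$ with $X_j=\omega\times W^j$ and $W^j=b_j(s)T^j$, which makes the orthogonality $(\Psi_1,\Psi_2^*)=0$ exact by construction (so no Gram--Schmidt is needed) and reduces the asymptotic expansion to computing the scalar $b_j(s)$ from the normalisation condition \eqref{C_2}.
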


\begin{proof}
The eigenvalues $\lambda_j(s)$ and the eigenvectors $\Psi_j(s,\omega)=(\psi_j,X_j,Y_j)(s,\omega)$, $j=1,2$, can be constructed as follows.  Let $b_2=\min\{b_0,b_1\}$ and take $\lambda_j=\lambda(s)$ to be the solution of the equation  $D_1(\lambda,s)=0$ defined in Lemma \ref{eigen_2a}, and
choose $C_0=0$, and $X_j=\omega\times W^j$ with  $W^j$, $j=1,2$,
 being two linearly independent vectors
satisfying  $W^j\cdot\omega=0$ and $W^1\cdot W^2=0$. The corresponding eigenvectors $\Psi_j(s,\omega)=(\psi_j,X_j,Y_j)(s,\omega)$ are defined by
 \bq
 \left\{\bln       \label{C_3}
  &\psi_j(s,\omega)
 =-[L_1-\lambda_j P_r-\i s P_r(v\cdot\omega) P_r]^{-1}
         P_r  (v\cdot W^j)\sqrt{M},  \\
  &X_j(s,\omega)=\omega\times W^j,\quad Y_j(s,\omega)= \frac{\i s}{\lambda_j} W^j, \quad j=1,2,
  \eln\right.
\eq
which satisfy   $(\Psi_1(s,\omega),  \Psi_2^*(s,\omega))=0.$
We can normalize them by taking
$$(\Psi_j(s,\omega),\Psi^*_j(s,\omega))=1, \quad j=1,2.$$
The coefficients $W^j=b_j(s)T^j(\omega)$ for $j=1,2$ with $b_j\in \R$ and $T^j=(T^j_1,T^j_2,T^j_3)\in \S^2$ are determined by the normalization condition  as
 \bq
 \left\{\bln    \label{C_2}
 &b_j(s)^2(D_j(s)-1+\frac{s^2}{\lambda_j(s)^2})=1,\quad j=1,2,\\
 & |T^1|=|T^2|=1,\quad T^1\cdot\omega=T^2\cdot\omega=T^1\cdot T^2=0,
 \eln\right.
 \eq
 where $D_j(s)=(R(\lambda_j(s),s e_1)\chi_1, R(\overline{\lambda_j(s)},-se_1)  \chi_1).$

To study the asymptotic expression of eigenvectors in the
low frequency, we can take  Taylor expansion of both eigenvalues and
eigenvectors as
$$
 \lambda_{j}(s)= \sum_{n=0}^2 \lambda_{j,n}s^n +O(s^3),\quad
 (\psi_j,X_j,Y_j)(s,\omega)=\sum_{n=0}^1 (\psi_{j,n},X_{j,n},Y_{j,n})(\omega)s^n +O(s^2).
$$

 Substituting the above expansions into \eqref{C_2}, we obtain $b_j(0)=0,\ b_j'(0)=a_1$ and $b_j(-s)=-b_j(s)$. This and \eqref{C_3} give the expansion of $\Psi_j(s,\omega)$ for $j=1,2$, stated in \eqref{eigf1}. And then it completes
the proof of the theorem.
\end{proof}

\subsection{Asymptotics in  high frequency}

We now turn to study the asymptotic expansions of the eigenvalues
and eigenvectors in the high frequency region. Firstly, recalling
the eigenvalue problem
 \bma
  \lambda f
  &=B_1(\xi)f-v\sqrt M\cdot(\omega\times X),\label{L_3}\\
  \lambda X&=-\omega\times (f,v\sqrt M)+\i\xi\times Y,\label{L_3a}\\
  \lambda Y&=-\i\xi\times X,\quad |\xi|\ne0.\nnm
\ema
By Lemma \ref{LP03}, there is $R_0>0$ large enough such that the operator $\lambda-B_1(\xi)$ is invertible on $L^2_\xi(\R^3)$ for ${\rm Re}\lambda\ge-\nu_0/2$ and $|\xi|>R_0$. Then it follows from \eqref{L_3} that
\bq f=(B_1(\xi)-\lambda)^{-1}v\sqrt M\cdot(\omega\times X),\quad |\xi|>R_0.\label{L_5}\eq
Substituting \eqref{L_5} into \eqref{L_3a} and using the transformation
$$((B_1(\xi)-\lambda)^{-1}\chi_i,\chi_j) =\omega_i\omega_j((B_1(|\xi|e_1)-\lambda)^{-1}\chi_1,\chi_1)+(\delta_{ij}-\omega_i\omega_j)((B(|\xi|e_1)-\lambda)^{-1}\chi_2,\chi_2),$$
we obtain
\bma
\lambda X&=((B_1(|\xi|e_1)-\lambda)^{-1}\chi_2,\chi_2)X+\i\xi\times Y,\label{A_12}\\
  \lambda Y&=-\i\xi\times X,\quad |\xi|>R_0.  \label{A_13}
\ema
Multiplying \eqref{A_12} by $\lambda$ and using \eqref{A_13} and \eqref{rotat}, we obtain
\bq (\lambda^2-((B_1(|\xi|e_1)-\lambda)^{-1}\chi_2,\chi_2)\lambda+|\xi|^2)X=0, \quad |\xi|>R_0.\eq
Denote
\bq D(\lambda,s)=\lambda^2-((B_1(se_1)-\lambda)^{-1}\chi_2,\chi_2)\lambda+s^2,\quad s>R_0.\label{D3a}\eq

Similar to the proof of Lemma \ref{LP03}, we can obtain
\begin{lem}\label{lem1}
 For any $\delta>0$, we have
 \bgr
\sup_{x\geq-\nu_0+\delta,y\in\R}\|(x+\i y-c(\xi))^{-1}K_1\|
  \leq C\delta^{-15/13}(1+|\xi|)^{-2/13},\label{T_2}
\\
   \sup_{x\geq -\nu_0+\delta,y\in\R}
  \|(x+\i y-c(\xi))^{-1}(v\cdot\xi)|\xi|^{-2}P_{\rm d}\|
 \leq C\delta^{-1}|\xi|^{-1},\label{T_3}
 \\
 \sup_{x\geq-\nu_0+\delta,y\in\R}\|(x+\i y-c(\xi))^{-1}v\sqrt M\|
  \leq C\delta^{-1/2}(1+|\xi|)^{-1/2}. \label{T_4}
 \egr
As consequence, it holds that for $|\xi|>R_0$
\bq \sup_{x\geq-\nu_0+\delta,y\in\R}\|(x+\i y-B_1(\xi))^{-1}v\sqrt M\|
  \leq C\delta^{-1/2}(1+|\xi|)^{-1/2}. \label{T_5}\eq
\end{lem}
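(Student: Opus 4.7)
The plan is to reduce \eqref{T_2} and \eqref{T_4} to their counterparts in Lemma~\ref{LP03} by duality, to establish \eqref{T_3} directly via the rank-one structure of $P_{\rm d}$, and to deduce \eqref{T_5} from \eqref{T_2}--\eqref{T_4} through a Neumann series argument. As a multiplication operator on $L^2(\R^3_v)$, the adjoint of $c(\xi) = -\nu(v) - \i(v\cdot\xi)$ is $c(-\xi)$, hence $\bigl((x+\i y - c(\xi))^{-1}\bigr)^* = (x - \i y - c(-\xi))^{-1}$. Since $K_1$ is self-adjoint (its kernel $k_1(v,v_*)$ being real and symmetric), and the adjoint of the $\mathbb{C}^3 \to L^2(\R^3_v)$ map $\eta \mapsto (x+\i y - c(\xi))^{-1}(v\cdot\eta)\sqrt{M}$ is precisely $P_m (x-\i y - c(-\xi))^{-1}$, one has
\begin{align*}
\|(x+\i y - c(\xi))^{-1} K_1\| &= \|K_1 (x-\i y - c(-\xi))^{-1}\|, \\
\|(x+\i y - c(\xi))^{-1} v\sqrt{M}\| &= \|P_m (x-\i y - c(-\xi))^{-1}\|_{L^2(\R^3)\to\mathbb{C}^3}.
\end{align*}
Applying \eqref{T_7} and \eqref{T_7a} with $(y,\xi)$ replaced by $(-y,-\xi)$ then yields \eqref{T_2} and \eqref{T_4}, since the bounds depend only on $|\xi|$.

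For \eqref{T_3}, duality does not map directly to \eqref{L_9} because the composition order of $P_{\rm d}$ and $(v\cdot\xi)|\xi|^{-2}$ matters, so I would compute directly. Since $P_{\rm d} f = (f,\sqrt{M})\sqrt{M}$ is of rank one,
\begin{equation*}
(x+\i y - c(\xi))^{-1}(v\cdot\xi)|\xi|^{-2} P_{\rm d} f = (f,\sqrt{M}) \cdot (x+\i y - c(\xi))^{-1}(v\cdot\xi)|\xi|^{-2}\sqrt{M},
\end{equation*}
so the operator norm is bounded by $\|(x+\i y - c(\xi))^{-1}(v\cdot\xi)|\xi|^{-2}\sqrt{M}\|_{L^2}$. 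Its square is dominated by
\begin{equation*}
|\xi|^{-2} \int_{\R^3} \frac{|v|^2 M(v)}{(x+\nu(v))^2} \, dv \leq C\delta^{-2}|\xi|^{-2},
\end{equation*}
using the trivial lower bound $x+\nu(v) \geq \delta$ and $\int |v|^2 M \, dv < \infty$. Taking square roots delivers \eqref{T_3}.

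For \eqref{T_5}, I would decompose $B_1(\xi) = c(\xi) + G(\xi)$ with $G(\xi) := K_1 - \frac{\i(v\cdot\xi)}{|\xi|^2} P_{\rm d}$, and use the factorization
\begin{equation*}
(x+\i y - B_1(\xi))^{-1} = \bigl[I - (x+\i y - c(\xi))^{-1} G(\xi)\bigr]^{-1} (x+\i y - c(\xi))^{-1}.
\end{equation*}
By \eqref{T_2} and \eqref{T_3}, $\|(x+\i y - c(\xi))^{-1} G(\xi)\| \leq C\delta^{-15/13}(1+|\xi|)^{-2/13} + C\delta^{-1}|\xi|^{-1}$, which is at most $1/2$ for $|\xi| > R_0$ provided $R_0 = R_0(\delta)$ is taken sufficiently large. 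The bracketed inverse is then bounded by $2$ in operator norm, so applying the factorization to $v\sqrt{M}$ and invoking \eqref{T_4} delivers \eqref{T_5}.

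The main technical point is producing the $(1+|\xi|)^{-1/2}$ decay in \eqref{T_4} uniformly down to $|\xi|=0$. The change of variable $v_1 \to v_1/|\xi|$ after rotating $\xi$ to $|\xi|e_1$, which underlies the proof of \eqref{T_7a}, delivers only the squared bound $\leq C(x+\nu_0)^{-1}|\xi|^{-1}$, useful solely for $|\xi| \geq 1$. For small $|\xi|$ this must be supplemented by the complementary uniform estimate $\int |v|^2 M(v) (x+\nu(v))^{-2} \, dv \leq C\delta^{-1}$ coming from the hard-sphere lower bound $\nu(v) \geq \nu_0(1+|v|)/C$ combined with $x+\nu(v) \geq \delta$, whose product yields $(x+\nu)^2 \geq (\delta\nu_0/C)(1+|v|)$. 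The minimum of the two squared bounds is $C\delta^{-1}(1+|\xi|)^{-1}$, as required.
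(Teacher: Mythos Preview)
Your proof is correct. The paper does not give a detailed argument here, saying only ``Similar to the proof of Lemma~\ref{LP03}, we can obtain\ldots'', which means repeating the direct kernel estimates with the operators composed in the reversed order. Your approach is a clean alternative: for \eqref{T_2} and \eqref{T_4} you recognize that passing to the adjoint interchanges the order of composition (since $c(\xi)^* = c(-\xi)$ and $K_1$, $P_m$ are related to their adjoints in the obvious way), thereby reducing these bounds directly to \eqref{T_7} and \eqref{T_7a} without rewriting the calculation. For \eqref{T_3} you correctly observe that duality does not map onto \eqref{L_9} because $P_{\rm d}$ and multiplication by $v\cdot\xi$ do not commute, and your rank-one computation is the natural substitute. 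The Neumann series derivation of \eqref{T_5} from \eqref{T_2}--\eqref{T_4} is exactly what the paper has in mind (cf.\ the use of $R_0$ just above Lemma~\ref{lem1}). Your final paragraph handling the transition between the $|\xi|^{-1/2}$ bound for large $|\xi|$ and the uniform $\delta^{-1/2}$ bound for small $|\xi|$ is a useful supplement, since the paper's proof of \eqref{T_7a} only writes out the large-$|\xi|$ calculation explicitly.
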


We now study the equation \eqref{D3a} as follows.

\begin{lem}\label{eigen_5}
There is a constant $R_1>0$ such that the equation $D(\lambda,s)=0$ has two solutions $\lambda_j(s)$, $j=\pm1$,  for $s>R_1$ satisfying
$$|\lambda_j(s)-j\i s|\le Cs^{-1/2}\to 0, \quad {\rm as} \quad s\to \infty.$$
In particular, there are two constants $c_1,c_2>0$ such that
\bq c_1\frac1s\le -{\rm Re}\lambda_j(s)\le c_2\frac1s,\quad j=\pm1.\label{eigen_h}\eq
\end{lem}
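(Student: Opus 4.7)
The plan is to rewrite $D(\lambda, s) = 0$ near $\lambda = j\i s$ as a fixed-point equation in a small parameter, contract for $s$ large, and then extract the leading real-part asymptotics. Since $(j\i s)^2 + s^2 = 0$ for $j = \pm 1$, substituting $\lambda = j\i s + \mu$ in \eqref{D3a} reduces the eigenvalue equation to $\mu = T_j(\mu)$, where
\[
T_j(\mu) := \frac{h(j\i s + \mu, s)(j\i s + \mu)}{2j\i s + \mu}, \qquad h(\lambda, s) := ((B_1(se_1) - \lambda)^{-1}\chi_2, \chi_2).
\]

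First I would verify that $T_j$ is a contraction on the complex disk $B_j := \{\mu \in \mathbb{C} : |\mu| \le A s^{-1/2}\}$ for some absolute constant $A$ and all $s \ge R_1$ with $R_1$ large. For $\mu \in B_j$ one has $\mathrm{Re}(j\i s + \mu) \ge -As^{-1/2} \ge -\nu_0/2$ for $s$ large, so the bound \eqref{T_5} of Lemma \ref{lem1} applied with $\delta = \nu_0/2$ and $\xi = se_1$ yields $\|(B_1(se_1) - \lambda)^{-1}\chi_2\| \le Cs^{-1/2}$, and hence by Cauchy--Schwarz $|h(j\i s + \mu, s)| \le Cs^{-1/2}$. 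This gives $|T_j(\mu)| \le A s^{-1/2}$ on $B_j$. For the Lipschitz estimate I would use the resolvent identity
\[
h(\lambda_1, s) - h(\lambda_2, s) = (\lambda_1 - \lambda_2)\big((B_1(se_1) - \lambda_1)^{-1}(B_1(se_1) - \lambda_2)^{-1}\chi_2,\,\chi_2\big),
\]
with each inner resolvent applied to $\chi_2$ being $O(s^{-1/2})$ by \eqref{T_5}, and a uniform operator-norm bound on $\|(B_1(se_1) - \lambda)^{-1}\|$ on $\{\mathrm{Re}\lambda \ge -\nu_0/2\}$ inherited from the decomposition used in the proof of Lemma \ref{LP01}. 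Banach's fixed-point theorem then produces a unique $\mu_j \in B_j$, proving $|\lambda_j(s) - j\i s| \le C s^{-1/2}$.

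For the sharp real-part bound \eqref{eigen_h}, note that $\mu_j = T_j(\mu_j) = h(j\i s, s)/2 + O(s^{-3/2})$ once the contraction is in hand, so it suffices to isolate $\mathrm{Re}\, h(j\i s, s)$ at leading order. Writing $B_1(se_1) - j\i s = M_s + K_1 - \frac{\i v_1}{s}P_{\rm d}$ with $M_s := -\nu(v) - \i s(v_1 - j)$ and expanding via one Neumann step, the principal contribution is
\[
(M_s^{-1}\chi_2, \chi_2) = \intr \frac{v_2^2 M(v)}{-\nu(v) - \i s(v_1 - j)}\,dv,
\]
whose real part equals $-\intr v_2^2 M(v)\nu(v)/(\nu(v)^2 + s^2(v_1 - j)^2)\,dv$. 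The change of variable $u = s(v_1 - j)$, together with dominated convergence and $\int_\R \nu/(\nu^2 + u^2)\,du = \pi$, gives
\[
\mathrm{Re}(M_s^{-1}\chi_2, \chi_2) = -\frac{C_\ast(j)}{s} + o(s^{-1}), \qquad C_\ast(j) := \pi\!\int_{\R^2} v_2^2 M(j, v_2, v_3)\,dv_2\, dv_3 > 0.
\]
The $\frac{\i v_1}{s}P_{\rm d}$ correction gains an extra $s^{-1}$ from the explicit prefactor, while the $K_1$ correction, after one more application of \eqref{T_5}, contributes at order $s^{-1} \cdot s^{-1/2}$; both are $o(s^{-1})$. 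Combining yields $-\mathrm{Re}\,\mu_j = C_\ast(j)/(2s) + o(s^{-1})$, which is the desired two-sided estimate.

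The main obstacle will be the rigorous bookkeeping in the Neumann correction, especially ruling out an unwanted $O(s^{-1})$ contribution from the $K_1$ term that could in principle compete with the principal $-C_\ast(j)/s$ piece. This requires combining the smoothing effect of $K_1$ (so that $K_1 f$ is effectively $v$-localized) with the sharp high-frequency decay $\|(B_1(se_1) - \lambda)^{-1}\chi_2\| \le Cs^{-1/2}$ already supplied by Lemma \ref{lem1}, so that each iterated resolvent loses a further factor of $s^{-1/2}$; this is the delicate quantitative step.
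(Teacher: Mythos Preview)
Your fixed-point setup for the existence and $O(s^{-1/2})$ localization of $\lambda_j(s)$ near $j\i s$ is correct and essentially equivalent to the paper's (the paper writes the fixed-point map via the quadratic formula rather than dividing by $2j\i s+\mu$, but the contraction argument is the same).

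Where your approach diverges from the paper---and where a genuine gap remains---is the sharp two-sided bound \eqref{eigen_h}. You try to compute the leading coefficient of $-\mathrm{Re}\,\mu_j$ via a Neumann expansion of $h(j\i s,s)$ around $(M_s^{-1}\chi_2,\chi_2)$, and you correctly isolate $-C_\ast(j)/(2s)$ from the multiplication part. The difficulty you flag is real: the first $K_1$ correction is $(K_1\phi_s,\phi_s)$ with $\phi_s=M_s^{-1}\chi_2$, and since $\|\phi_s\|\sim s^{-1/2}$ this is a priori only $O(s^{-1})$, in direct competition with your leading term. Your proposed fix (``smoothing of $K_1$ plus \eqref{T_5}'') as stated yields only $O(s^{-1/2-2/13})$ via \eqref{T_7}, which is not $o(s^{-1})$; to actually push the $K_1$ piece to $o(s^{-1})$ you would need a sharper statement such as $\|K_1\phi_s\|_{L^2}=O(s^{-1})$, which requires a pointwise stationary-phase argument exploiting regularity of the kernel $k_1(v,\cdot)$ in $v_{*,1}$---something you have not supplied.

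The paper sidesteps this entirely by a dissipativity argument. Writing $g_j=(B_1(se_1)-\lambda_j)^{-1}\chi_2$, one has exactly $-\mathrm{Re}\,\beta_j=-\tfrac12(L_1g_j,g_j)+O(s^{-1/2})\|g_j\|^2+O(s^{-2})$, and then the coercivity $-(L_1g_j,g_j)\in[\mu\|P_rg_j\|^2,\,C(\nu g_j,g_j)]$ reduces the problem to two-sided bounds on $\|g_j\|^2$ and $(\nu g_j,g_j)$. For these, a single Neumann step $g_j=(-\nu-\i(v_1+j)s)^{-1}\chi_2+Z\chi_2$ suffices, because one only needs $\|Z\chi_2\|^2=o(s^{-1})$, which follows from the weak estimate $\|(-\nu-\i(v_1+j)s)^{-1}K_1\|=o(1)$ (your \eqref{T_7}) rather than any $o(s^{-1})$ control on the $K_1$ correction itself. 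This is the conceptual advantage of the paper's route: it absorbs the $K_1$ contribution into $-(L_1g_j,g_j)$ where positivity does the work, instead of having to beat it term-by-term in an expansion.
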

\begin{proof}
For any fixed $s>R_0$, we define a function of $\lambda$ as
\bq G^1_j(\lambda)=\frac12\Big(R_{22}(\lambda,s)+j\sqrt{R_{22}(\lambda,s)^2-4s^2}\Big),\quad j=\pm1,\,\,\, s>R_0,\label{fp1}\eq
where $R_{22}(\lambda,s)=((B_1(se_1)-\lambda)^{-1}\chi_2,\chi_2)$.
It is straightforward to verify that a solution of $D(\lambda,s)=0$ for any fixed $s>R_0$ is a fixed point of $G^1_j(\lambda)$.

Consider an equivalent equation of \eqref{fp1} as
\bq G^2_j(\beta)=G^1_j(\lambda)-j\i s=\frac12\Big(B_j(\beta,s)+\frac{jB_j(\beta,s)^2}{\sqrt{B_j(\beta,s)^2-4s^2}+\i s}\Big),\quad j=\pm1,\,\,\, s>R_0,\eq
where $B_j(\beta,s)=((B_1(se_1)-j\i s-\beta)^{-1}\chi_2,\chi_2)$.
By \eqref{T_5}, when $R_1>0$ is large enough and $\delta>0$ is
small enough, it holds for $s>R_1$ and $|\beta|\le \delta$ that
\bmas
|G^2_j(\beta)|\le \delta,\quad |G^2_j(\beta_1)-G^2_j(\beta_2)|\le \frac12|\beta_1-\beta_2|.
\emas
Thus $G^2_j(\beta)$ is a contraction mapping on $B_\delta(0)$ and  there is a unique fixed point $\beta_j(s)$ of $G^2_j(\beta)$. Thus $\lambda_j(s)=j\i s+\beta_j(s)$ is the solution of $D(\lambda,s)=0$  and $|\beta_j(s)|\le Cs^{-1/2}$ because $|B_j(\beta,s)|\le Cs^{-1/2}$ due to \eqref{T_5}.

We now turn to prove \eqref{eigen_h}. For this, we decompose $\beta_j(s)$, $j=\pm1$, into
\bma
\beta_j(s)=\frac12 B_j(\beta_j,s)+\frac12\frac{jB_j(\beta_j,s)^2}{\sqrt{B_j(\beta_j,s)^2-4s^2}+\i s} =I_1+I_2.\label{ddd}
\ema
First, we estimate $I_1$. Since
\bmas
-{\rm Re}I_1=-\frac12((L_1-{\rm Re}\beta_j)g_j,g_j)=-\frac12(L_1g_j,g_j)+O(\frac1{\sqrt s})(g_j,g_j),
\emas
with $g_j=(L_1-\i(v_1+j)s-\i \frac{v_1}s P_{\rm d}-\beta_j)^{-1}\chi_2$ for $j=\pm1$, we obtain
\bq
C_0(g_j,g_j)\le -{\rm Re}I_1\le C_1(\nu(v)g_j,g_j).\label{aaa}
\eq
Note that
\bmas
(L_1-\i(v_1\pm 1)s-\i \frac{v_1}s P_{\rm d}-\beta_{\pm1} )^{-1}&=(I-(\nu+\i(v_1\pm 1)s)^{-1}(K_1-\i \frac{v_1}s P_{\rm d}-\beta_{\pm1}))^{-1}(-\nu-\i(v_1\pm 1)s)^{-1}\\
&=(-\nu-\i(v_1\pm 1)s)^{-1}+Z(\beta_{\pm1},s),
\emas
with
\bmas
Z(\beta_{\pm1},s)&=Y(\beta_{\pm1},s)(I+Y(\beta_{\pm1},s))^{-1}(-\nu-\i(v_1\pm 1)s)^{-1},\\
Y(\beta_{\pm1},s)&=(-\nu-\i(v_1\pm 1)s)^{-1}(K_1-\i \frac{v_1}s P_{\rm d}-\beta_{\pm1}),
\emas
we have
\bmas
(\nu(v)g_{\pm1},g_{\pm1})&\le 2(\nu(\nu+\i(v_1\pm 1)s)^{-1}\chi_2,(\nu+\i(v_1\pm 1)s)^{-1}\chi_2)+2(\nu Z(\beta_{\pm1},s)\chi_2,Z(\beta_{\pm1},s)\chi_2),\\
(g_{\pm1},g_{\pm1})&\ge \frac12((\nu+\i(v_1\pm 1)s)^{-1}\chi_2,(\nu+\i(v_1\pm 1)s)^{-1}\chi_2)-( Z(\beta_{\pm1},s)\chi_2,Z(\beta_{\pm1},s)\chi_2).
\emas
In the following, we will prove
\bq I_3=:((\nu+\i(v_1\pm 1)s)^{-1}\chi_2,(\nu+\i(v_1\pm 1)s)^{-1}\chi_2)\ge \frac{C_3}{s},\label{bbb}\eq
for some constant $C_3>0$.
Indeed, by changing variable $(u_1,u_2,u_3)=((v_1\pm1)s,v_2,v_3)$, we obtain for $s>1$ that
\bmas
I_3&\ge \intr \frac{1}{\nu_1^2(1+|v|^2)+(v_1\pm1)^2s^2}v_2^2Mdv\\
&\ge \frac1s\intr \frac{1}{\nu_1^2(1+(\frac{u_1}s\mp1)^2+u^2_2+u^2_3)+u_1^2}u_2^2e^{-\frac12(\frac{u_1}s\mp1)^2}e^{-\frac{u_2^2}2}e^{-\frac{u_3^2}2}du\\
&\ge \frac1s\intr \frac{1}{\nu_1^2(3+2u_1^2+u^2_2+u^2_3)+u_1^2}u_2^2e^{-u_1^2-1}e^{-\frac{u_2^2}2}e^{-\frac{u_3^2}2}du\ge \frac{C_3}{s}.
\emas
As for
\bq I_4=:(\nu(\nu+\i(v_1\pm 1)s)^{-1}\chi_2,(\nu+\i(v_1\pm 1)s)^{-1}\chi_2),\label{ccc}\eq
 by the change variables $(u_1,u_2,u_3)=((v_1\pm1)s,v_2,v_3)$, we obtain for $s>1$ that
\bmas
I_4&\le \intr \frac{\nu_0^2(1+|v|^2)}{\nu_1^2+(v_1\pm1)^2s^2}v_2^2Mdv\le C\intr \frac{1}{\nu_1^2+(v_1\pm1)^2s^2}v_2^2e^{-\frac{|v|^2}4}dv\\
&\le C\frac1s\intr \frac{1}{\nu_1^2+u_1^2}u_2^2e^{-\frac14(\frac{u_1}s\mp1)^2}e^{-\frac{u_2^2}4}e^{-\frac{u_3^2}4}du\\
&\le C\frac1s\intr \frac{1}{\nu_1^2+u_1^2}u_2^2e^{-\frac{u_2^2}4}e^{-\frac{u_3^2}4}du\le \frac{C_4}{s},
\emas
where $C_4>0$ is a constant.

By Lemma \ref{lem1}, for any $0<\epsilon\ll 1$ there exists $s>R_1$ such that
\bmas
&\| Y(\beta_{\pm1},s)\|\le \|(\nu+\i(v_1\pm 1)s)^{-1}(K_1-\frac{\i v_1}sP_{\rm d})\|+O(\frac1{\sqrt s})\|(\nu+\i(v_1\pm 1)s)^{-1}\|\le \epsilon,\\
 &\|\nu Y(\beta_{\pm1},s)\|\le \|\nu(\nu+\i(v_1\pm 1)s)^{-1}\|\|K_1-\i \frac{v_1}s P_{\rm d}-\beta_{\pm1}\|\le C.
\emas
This and \eqref{ccc} lead to
\bmas
(\nu Z(\beta_{\pm1},s)\chi_2,Z(\beta_{\pm1},s)\chi_2)&\le \|\nu Y(\beta_{\pm1},s)\|\| Y(\beta_{\pm1},s)\|\|(I+Y(\beta_{\pm1},s))^{-1}\|^2\|(\nu+\i(v_1\pm 1)s)^{-1}\chi_2\|^2\\
&\le C\epsilon I_4\le \frac{C\epsilon}{s}.
\emas
Thus, by combining  with \eqref{aaa}, \eqref{bbb} and \eqref{ccc},
there exist constants $C_5, C_6>0$ such that
\bq \frac{C_5}{s}\le -{\rm Re}I_1\le \frac{C_6}{s}.\label{eea}\eq
For $I_2$, we have
\bq |I_2|\le \frac{C}{s}|B_{\pm1}(\beta_{\pm1},s)|^2\le \frac{C}{s^2}.\label{abb}\eq
Combining \eqref{ddd}, \eqref{eea} and \eqref{abb}, we obtain \eqref{eigen_h}. The proof of the lemma is then completed.
\end{proof}

The following theorem gives the asymptotic expansions of eigenvalues
and eigenvectors in the high frequency region.

\begin{thm}\label{eigen_4}
There exists a constant $r_1>0$ such that the spectrum $\sigma(\hat{\AA}_1(\xi))\subset\mathbb{C}$ for $\xi=s\omega$ with $s=|\xi|> r_1$ and $\omega\in \mathbb{S}^2$ consists of four eigenvalues $\{\beta_j(s),\ j=1,2,3,4\}$ in the domain $\mathrm{Re}\lambda>-\mu/2$. In particular, the eigenvalues satisfy
 \bgr
 \beta_1(s) = \beta_2(s) =-\i s+O(s^{-1/2}),\label{specr1}\\
 \beta_3(s) = \beta_4(s) =\i s+O(s^{-1/2}),\label{specr2}\\
\frac{c_1}{s}\le -{\rm Re}\beta_{j}(s)\le \frac{c_2}{s},
\egr
for two positive constants $c_1$ and $c_2$.
The eigenvectors $\Phi_j(s,\omega)=(\phi_j,X_j,Y_j)(s,\omega)$ are   orthogonal to each other and satisfy
 \be
 (\Phi_i(s,\omega),\Phi^*_j(s,\omega))=(\phi_i,\overline{\phi_j})-(X_i,\overline{X_j})-(Y_i,\overline{Y_j})=\delta_{ij},
  \quad  1\le i\ne j\le 4,                                  \label{eigfr2}
 \ee
where $ \Phi^*_j=(\overline{\phi_j},-\overline{X_j},-\overline{Y_j})$.
Moreover,
 \bq
  \|\phi_j(s,\omega)\|=O(\frac1{\sqrt s}),\quad P_{\rm d}\phi_j(s,\omega)=0,\quad X_j(s,\omega)=O(1)\i (\omega\times W^j),\quad Y_j(s,\omega)=O(1)\i  W^j. \label{eigfr3}
  \eq
 Here, $W^j$ $(j=1,2,3,4)$ are  vectors satisfying
$W^j\cdot\omega=0$, $W^1\cdot W^2=0$,  $W^1=W^3, W^2=W^4$ and the normalization
condition \eqref{C_2a}.
\end{thm}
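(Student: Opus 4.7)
The plan is to follow the same reduction strategy that appears just above the statement, namely to solve the eigenvalue problem $\hat{\AA}_1(\xi)U=\lambda U$ with $U=(f,X,Y)\in L^2_\xi(\R^3_v)\times\mathbb{C}^3_\xi\times\mathbb{C}^3_\xi$ by first eliminating the distribution component $f$ using the resolvent of $\hat{B}_1(\xi)$, and then analyzing the resulting finite-dimensional system for $(X,Y)\in\mathbb{C}^3_\xi\times\mathbb{C}^3_\xi$. By Lemma \ref{LP01}, for any $\delta_2>0$ there exists $R_1$ such that for $|\xi|>R_1$ the spectrum of $\hat{\AA}_1(\xi)$ in $\mathrm{Re}\lambda\ge -\nu_0+\delta_1$ is trapped in the two discs $\{|\lambda\mp\i|\xi||\le\delta_2\}$; in particular $\lambda-\hat{B}_1(\xi)$ is invertible on $L^2_\xi$ for $\mathrm{Re}\lambda\ge-\mu/2$ and $|\xi|>R_1$. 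Thus \eqref{L_5} holds and the problem reduces to \eqref{A_12}--\eqref{A_13}, which using $\frac{\xi}{|\xi|}\times\frac{\xi}{|\xi|}\times X=-X$ from \eqref{rotat} collapses to the scalar dispersion relation $D(\lambda,s)=0$ in \eqref{D3a} on the two-dimensional space $\mathbb{C}^3_\xi$.

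Next I would invoke Lemma \ref{eigen_5} directly to obtain two scalar solutions $\mu_{\pm}(s)=\pm\i s+O(s^{-1/2})$ of $D(\lambda,s)=0$ satisfying $c_1/s\le -\mathrm{Re}\mu_{\pm}(s)\le c_2/s$ for $s>r_1$ with $r_1\ge R_1$ sufficiently large. Since $\mathbb{C}^3_\xi$ is two-dimensional, for each scalar root $\mu_{\pm}(s)$ the vector equation $\bigl(\mu_{\pm}^2-R_{22}(\mu_\pm,s)\mu_\pm+s^2\bigr)X=0$ holds for every $X\in\mathbb{C}^3_\xi$, so each of $\mu_{+}$ and $\mu_{-}$ produces a two-dimensional eigenspace. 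I relabel these four eigenvalues as $\beta_1=\beta_2=\mu_{-}$ and $\beta_3=\beta_4=\mu_{+}$, which yields \eqref{specr1}--\eqref{specr2} and the prescribed lower/upper bounds on $-\mathrm{Re}\beta_j$. That these are the only eigenvalues in $\mathrm{Re}\lambda\ge-\mu/2$ follows by choosing $\delta_2$ in Lemma \ref{LP01} small and noting that the reduced scalar equation $D(\lambda,s)=0$ was shown in Lemma \ref{eigen_5} to have exactly one root in each of the two neighborhoods of $\pm\i s$.

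To construct the eigenvectors, pick two orthonormal vectors $W^1,W^2\perp\omega$ in $\mathbb{C}^3_\xi$ and set $W^3=W^1,\,W^4=W^2$. For each $j\in\{1,2,3,4\}$ define
\bmas
X_j(s,\omega)&=c_j(s)\,\i\,(\omega\times W^j),\qquad Y_j(s,\omega)=-\frac{\i s}{\beta_j(s)}X_j(s,\omega),\\
\phi_j(s,\omega)&=\bigl(\hat{B}_1(\xi)-\beta_j(s)\bigr)^{-1}\bigl(v\sqrt M\cdot(\omega\times X_j)\bigr),
\emas
where $c_j(s)$ is the normalization constant chosen so that the condition \eqref{C_2a} (the high-frequency analogue of \eqref{C_2}) is satisfied, i.e. $(\Phi_j,\Phi_j^*)=1$, and the orthogonality $W^1\cdot W^2=0$ together with $W^1=W^3$, $W^2=W^4$ with the opposite sign of $\beta$ enforces \eqref{eigfr2}. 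Substituting these formulas back into \eqref{L_3}--\eqref{L_3a} and using $\beta_j=\pm\i s+O(s^{-1/2})$ confirms that $\Phi_j=(\phi_j,X_j,Y_j)$ is an eigenvector with eigenvalue $\beta_j$, and that $c_j(s)=O(1)$.

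The main work is verifying the size estimate $\|\phi_j\|=O(s^{-1/2})$ in \eqref{eigfr3}, since the $X_j$ and $Y_j$ statements are immediate from their defining formulas and $s/|\beta_j|=1+O(s^{-3/2})$. This is where the high-frequency resolvent bound \eqref{T_5} of Lemma \ref{lem1} is essential: writing $\phi_j=(\hat{B}_1(\xi)-\beta_j)^{-1}(v\sqrt M\cdot(\omega\times X_j))$ and noting that $v\sqrt M\cdot(\omega\times X_j)$ is a linear combination of the basis functions $v_i\sqrt M$, estimate \eqref{T_5} with $\delta=\mu/2$ gives $\|\phi_j\|\le C(1+|\xi|)^{-1/2}|X_j|=O(s^{-1/2})$. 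Finally $P_{\rm d}\phi_j=0$ follows because $\phi_j$ is produced by acting on $v\sqrt M\cdot(\omega\times X_j)\in\mathrm{Range}(P_r)$ and the operator $\hat{B}_1(\xi)$ preserves the splitting $N_1\oplus N_1^\perp$ in the sense required to keep $\phi_j\in N_1^\perp$ (a direct check using $P_{\rm d}v\sqrt M=0$ and the structure of $\hat B_1$ in \eqref{B(xi)1}). The principal obstacle is the bookkeeping of the normalization: because the inner product with $\Phi_j^*$ mixes the $-(X_j,\overline{X_j})-(Y_j,\overline{Y_j})$ contributions with the small term $(\phi_j,\overline{\phi_j})=O(s^{-1})$, one needs to track $c_j(s)$ carefully to verify that it remains $O(1)$ and nonzero uniformly for $s>r_1$, which is the analogue at high frequency of the computation carried out in \eqref{C_2} at low frequency.
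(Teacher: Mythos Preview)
Your approach is essentially the paper's: reduce via \eqref{L_5} to the scalar relation $D(\lambda,s)=0$, apply Lemma~\ref{eigen_5} for the two roots, build eigenvectors by formula \eqref{C_3a} with orthonormal $W^j\perp\omega$, obtain orthogonality for distinct eigenvalues from the adjoint identity $\hat{\AA}_1(\xi)^*\Phi_j^*=\overline{\beta_j}\Phi_j^*$, and read off $\|\phi_j\|=O(s^{-1/2})$ from \eqref{T_5}. One correction: the operator $\hat B_1(\xi)$ does \emph{not} preserve the splitting $N_1\oplus N_1^\perp$ (the term $-\i(v\cdot\xi)$ mixes the two pieces), so your stated reason for $P_{\rm d}\phi_j=0$ fails as written; the correct argument uses the rotational transform displayed just above \eqref{A_12} together with $W^j\perp\omega$ to obtain $(\phi_j,(v\cdot\omega)\sqrt M)=0$, after which the $P_{\rm d}$-projection of the eigenvalue equation gives $\beta_j(\phi_j,\sqrt M)=-\i s(\phi_j,(v\cdot\omega)\sqrt M)=0$, forcing $P_{\rm d}\phi_j=0$ since $\beta_j\neq0$.
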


\begin{proof}
The eigenvalue $\lambda_j(s)$ and the eigenvector $\Phi_j(s,\omega)=(\phi_j,X_j,Y_j)(s,\omega)$ can be constructed as follows. For $j=1,2,3,4$, we take $\lambda_1=\lambda_2=\lambda_{-1}(s)$ and $\lambda_3=\lambda_4=\lambda_{1}(s)$ to be the solution of the equation  $D(\lambda,s)=0$ defined in Lemma \ref{eigen_5}.
Choose $X_j=\omega\times W^j$ with $W^1=W^3$
and $W^2=W^4$ as linearly independent vectors
so that  $W^j\cdot\omega=0$ and $W^1\cdot W^2=0$. The corresponding eigenvectors $\Phi_j(s,\omega)=(\phi_j,X_j,Y_j)(s,\omega)$, $1\le j\le 4$,  are defined by
 \bq
 \left\{\bln       \label{C_3a}
  &\phi_j(s,\omega)
 =-[L_1-\beta_j -\i s (v\cdot\omega)-\frac{\i(v\cdot\omega)}s P_{\rm d} ]^{-1}
           (v\cdot W^j)\sqrt{M},  \\
  &X_j(s,\omega)=\omega\times W^j,\quad Y_j(s,\omega)= \frac{\i s}{\lambda_j} W^j,
  \eln\right.
\eq
which satisfy   $(\Phi_1(s,\omega),  \Phi_2^*(s,\omega))=(\Phi_3(s,\omega),  \Phi_4^*(s,\omega))=0.$

Rewrite the eigenvalue problem as
$$
 \hat{\AA}_1(\xi)\Phi_j(s,\omega)
 =\beta_j(s)\Phi_j(s,\omega), \quad 1\leq j\leq 4,\ |s|\le r_0.$$
By taking the inner product
$(\cdot,\cdot)_\xi$ of it with $\Phi^*_j(s,\omega)$, and using
the fact that
 \bgrs
(\hat{\AA}_1(\xi) U,V)_\xi=(U,\hat{\AA}_1(\xi)^*V)_\xi,\quad U,V\in  D(\hat{B}_1(\xi))\times \mathbb{C}^3_\xi\times \mathbb{C}^3_\xi,\\
\hat{\AA}_1(\xi)^*\Phi^*_j(s,\omega)
=\overline{\lambda_j(s)} \Phi^*_j(s,\omega),\egrs
we have
 $$
(\beta_j(s)-\beta_{k}(s))(\Phi_j(s,\omega),\Phi^*_j(s,\omega))_\xi=0,\quad 1\le j, k\le 4.
$$
Since  $\beta_j(s)\neq \beta_{k}(s)$ for  $j=1,2,\, k=3,4$ and $P_{\rm d}\phi_j(s,\omega)=0$,
we have the orthogonal relation
 $$
(\Phi_j(s,\omega),\Phi^*_k(s,\omega))_\xi=(\Phi_j(s,\omega),\Phi^*_k(s,\omega))=0,\quad 1\leq j\neq k\leq 4.
 $$
 This can be normalized so that
$$(\Phi_j(s,\omega),\Phi^*_j(s,\omega))=1, \quad j=1,2,3,4.$$
Precisely, denote $W^j=b_j(s)T^j(\omega)$ for $j=1,2,3,4$, with $b_j\in \R$ and $T^j=(T^j_1,T^j_2,T^j_3)\in \S^2$,
then the coefficients $b_j$, $j=1,\cdots, 4$  are determined by the normalization condition
 \bq
 \left\{\bln    \label{C_2a}
 &b_j(s)^2(D_j(s)-1+\frac{s^2}{\lambda_j(s)^2})=1,\quad j=1,2,3,4,\\
 & |T^j|=1,\quad T^j\cdot\omega=T^1\cdot T^2=0,\quad T^1=T^3,\,\, T^2=T^4,
 \eln\right.
 \eq
 where $D_j(s)=((B_1(se_1)-\beta_j(s))^{-1}\chi_1, (B_1(-se_1)-\overline{\beta_j(s)})^{-1}  \chi_1).$ By substituting \eqref{T_5}, \eqref{specr1} and \eqref{specr2} into \eqref{C_2a} and \eqref{C_3a}, we obtain \eqref{eigfr3} so that
the proof of the theorem is completed.
\end{proof}

\section{Spectral analysis for one-species case}
\label{spectrum-one}

\setcounter{equation}{0}

In this section, we will study the spectrum structure of the linearized
system~\ref{LVMB2} of one-species VMB. It is interesting to find out that its
structure is very different in the low frequency region from the
case of two-species. And this
essential difference comes from the lack of the cancellation in
the one-species system.

\subsection{Spectrum structure}

Since $L^2_\xi(\R^3_v)\times \mathbb{C}^3_\xi\times \mathbb{C}^3_\xi$ is the invariant subspace of the operator $\hat{\AA}_3(\xi)$, we can regard $\hat{\AA}_3(\xi)$ as a linear operator on $L^2_\xi(\R^3_v)\times \mathbb{C}^3_\xi\times \mathbb{C}^3_\xi$.
We have for any $U,V\in L^2_\xi(\R^3_v)\cap D(\hat{B}_2(\xi))\times \mathbb{C}^3_\xi\times \mathbb{C}^3_\xi$,
$$(\hat{\AA}_3(\xi)U,V)_\xi=(U,\hat{\AA}_3(-\xi)V)_\xi.$$

Denote by $\rho(\hat{\AA}_3(\xi))$  the resolvent set and
by $\sigma(\hat{\AA}_3(\xi))$ the spectrum set of $\hat{\AA}_3(\xi)$.  Similar to  Lemmas \ref{SG_1} and \ref{Egn}, we have the following lemmas.

\begin{lem}\label{1SG_1}
The operator $\hat{\AA}_3(\xi)$ generates a strongly continuous contraction semigroup on
$L^2_\xi(\R^3_v)\times \mathbb{C}^3_\xi\times \mathbb{C}^3_\xi$ satisfying
 \bq
\|e^{t\hat{\AA}_3(\xi)}U\|_\xi\le\|U\|_\xi, \quad\mbox{for}\ t>0,\, U\in
L^2_\xi(\R^3_v)\times \mathbb{C}^3_\xi\times \mathbb{C}^3_\xi.
 \eq
\end{lem}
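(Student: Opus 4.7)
The plan is to mimic verbatim the proof of Lemma \ref{SG_1}, with $L_1$ replaced by $L$ in the kinetic block and with Lemma \ref{L_4} (the Boltzmann H-theorem inequality) applied to $L$ instead. The only structural difference between $\hat{\AA}_3(\xi)$ and $\hat{\AA}_1(\xi)$ lies in the $(1,1)$-entry of the matrix operator, namely $\hat{B}_2(\xi)$ versus $\hat{B}_1(\xi)$, where $L$ replaces $L_1$ and the null-space projection $P_{\rm d}$ still appears the same way. Hence the algebra of the adjoint computation is unchanged.

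First I would verify that the identity \eqref{C_1} together with the analogue of \eqref{L_7} for $\hat{B}_2(\xi)$ gives
\[
(\hat{B}_2(\xi)f,g)_\xi=(f,\hat{B}_2(-\xi)g)_\xi, \qquad f,g\in L^2_\xi(\R^3_v)\cap D(\hat{B}_2(\xi)),
\]
using that $L$ is self-adjoint on $L^2(\R^3_v)$ and that $P_{\rm d}$ is self-adjoint. Combined with the fact that $\i\xi\times$ is skew-adjoint on $\mathbb{C}^3_\xi$ and that the coupling blocks $-v\sqrt M\cdot\omega\times$ and $-\omega\times P_m$ form an adjoint pair in the standard $L^2\times \mathbb{C}^3$ inner product, this yields
\[
(\hat{\AA}_3(\xi)U,V)_\xi=(U,\hat{\AA}_3(\xi)^*V)_\xi,\qquad \hat{\AA}_3(\xi)^*=\hat{\AA}_3(-\xi),
\]
for $U,V\in L^2_\xi(\R^3_v)\cap D(\hat{B}_2(\xi))\times\mathbb{C}^3_\xi\times\mathbb{C}^3_\xi$, as already noted just before the statement.

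Next I would compute the real part of $(\hat{\AA}_3(\xi)U,U)_\xi$ for $U=(f,E,B)$. The $\i\xi\times$ coupling between $E$ and $B$ is purely imaginary, and the cross terms involving $P_m f$ against $E$ and $v\sqrt M\cdot(\omega\times E)$ against $f$ are complex conjugates of one another (using $(f,v\sqrt M)=P_m f$ and $P_{\rm d}(v\sqrt M\cdot(\omega\times E))=0$, so the weighted part of the inner product contributes nothing extra there); in particular their sum is purely imaginary. The transport term $-\i(v\cdot\xi)f$ combined with the $-\tfrac{\i(v\cdot\xi)}{|\xi|^2}P_{\rm d}f$ correction also contributes only an imaginary part, by the same computation as in Lemma \ref{SG_1} via \eqref{C_1}. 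The only surviving contribution is therefore
\[
\mathrm{Re}(\hat{\AA}_3(\xi)U,U)_\xi=(Lf,f)\le -\mu\|\P_1 f\|^2\le 0,
\]
and by the $\xi\mapsto-\xi$ symmetry the same inequality holds with $\hat{\AA}_3(\xi)^*$ in place of $\hat{\AA}_3(\xi)$. Thus both $\hat{\AA}_3(\xi)$ and its adjoint are dissipative.

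Finally, since $\hat{\AA}_3(\xi)$ is a densely defined closed operator on $L^2_\xi(\R^3_v)\times\mathbb{C}^3_\xi\times\mathbb{C}^3_\xi$, I would invoke Corollary 4.4 on p.~15 of \cite{Pazy} (Lumer--Phillips) to conclude that $\hat{\AA}_3(\xi)$ generates a $C_0$-contraction semigroup, which immediately gives $\|e^{t\hat{\AA}_3(\xi)}U\|_\xi\le\|U\|_\xi$. I do not anticipate any real obstacle: the only mild check is to confirm that the cancellations of the coupling terms go through identically as in the two-species setting, which they do because the matrix skeleton of $\hat{\AA}_3(\xi)$ is exactly the same as that of $\hat{\AA}_1(\xi)$ and the replacement of $L_1$ by $L$ only affects the sign estimate, where we still have $(Lf,f)\le 0$.
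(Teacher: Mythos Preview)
Your proposal is correct and is precisely the approach the paper intends: the paper does not write out a separate proof for this lemma but simply states that it follows ``similar to Lemmas~\ref{SG_1} and~\ref{Egn},'' and the adjoint identity $(\hat{\AA}_3(\xi)U,V)_\xi=(U,\hat{\AA}_3(-\xi)V)_\xi$ that you verify is exactly the one recorded just before the statement. The only change from Lemma~\ref{SG_1} is $L_1\mapsto L$, giving $\mathrm{Re}(\hat{\AA}_3(\xi)U,U)_\xi=(Lf,f)\le 0$, after which the Lumer--Phillips/Pazy argument is identical.
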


\begin{lem}\label{1Egn}
For each $\xi\ne 0$, the spectrum set $\sigma(\hat{\AA}_3(\xi))$ of the
operator $\hat{\AA}_3(\xi)$ in the domain
$\mathrm{Re}\lambda\geq-\nu_0+\delta$ for any constant $\delta>0$
consists of isolated eigenvalues $\Sigma=:\{\lambda_j(\xi)\}$ with
$\mathrm{Re}\lambda_j (\xi)<0$.
\end{lem}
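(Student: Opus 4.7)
The plan is to follow the same three-step scheme as Lemma~\ref{Egn}, adjusted for the fact that the null space of $L$ is the five-dimensional $N_0$ rather than the one-dimensional $N_1$. For the first step (identifying the essential spectrum), I would decompose $\hat{\AA}_3(\xi)=\tilde G_1(\xi)+\tilde G_2(\xi)$ in direct analogy with \eqref{A12}, with $\tilde G_1(\xi)$ keeping the free-streaming multiplier $c(\xi)=-\nu(v)-\i(v\cdot\xi)$ in the $(1,1)$-block and the Maxwell curl block $B_3(\xi)$ in the lower-right corner, and $\tilde G_2(\xi)$ collecting $K-\tfrac{\i(v\cdot\xi)}{|\xi|^{2}}P_{\rm d}$ together with the couplings $-v\sqrt M\cdot(\omega\times\,\cdot\,)$ and $-\omega\times P_m$. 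For each fixed $\xi\ne 0$, $\tilde G_2(\xi)$ is compact on $L^{2}_{\xi}(\R^{3}_{v})\times\mathbb{C}^{3}_{\xi}\times\mathbb{C}^{3}_{\xi}$ (classical compactness of $K$; finite-rank bounded blocks otherwise), so Theorem 5.35 on p.244 of \cite{Kato} yields $\sigma_{\rm e}(\hat{\AA}_3(\xi))=\sigma_{\rm e}(\tilde G_1(\xi))\subset\{\mathrm{Re}\lambda\le-\nu_0\}$. Consequently every element of $\sigma(\hat{\AA}_3(\xi))$ with $\mathrm{Re}\lambda\ge-\nu_0+\delta$ is a discrete eigenvalue of finite multiplicity, with possible accumulation points only on the line $\mathrm{Re}\lambda=-\nu_0$.

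For the second step, let $\lambda$ be any such discrete eigenvalue with eigenvector $U=(f,E,B)\in L^{2}_{\xi}(\R^{3}_{v})\times\mathbb{C}^{3}_{\xi}\times\mathbb{C}^{3}_{\xi}$. I would take the $(\,\cdot\,,\,\cdot\,)_{\xi}$ inner product of the three scalar equations with $f$, $E$, $B$ respectively and sum them. As in Lemma~\ref{Egn}, the Maxwell block contributes a purely imaginary term (self-adjointness of $\i B_{3}(\xi)$) and the coupling $-v\sqrt M\cdot(\omega\times E)$ cancels against the cross term generated by $-\omega\times P_m f$. Taking real parts yields
\[
(Lf,f)=\mathrm{Re}\lambda\,\Bigl(\|f\|^{2}+\tfrac{1}{|\xi|^{2}}\|P_{\rm d}f\|^{2}+|E|^{2}+|B|^{2}\Bigr),
\]
which, combined with $(Lf,f)\le-\mu\|\P_{1}f\|^{2}\le 0$, forces $\mathrm{Re}\lambda\le 0$.

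The delicate third step is to exclude $\mathrm{Re}\lambda=0$, which I expect to be the main obstacle because the enlarged null space $N_{0}$ no longer reduces the residual equation to a scalar identity as it does in Lemma~\ref{Egn}. If $\mathrm{Re}\lambda=0$, the above identity forces $\P_{1}f=0$, hence $f=\sum_{i=0}^{4}c_{i}\chi_{i}\in N_{0}$. Projecting the first eigenvalue equation onto $N_{0}^{\perp}$ annihilates $Lf$, the $P_{\rm d}$-term, and $v\sqrt M\cdot(\omega\times E)\in\mathrm{span}\{\chi_{1},\chi_{2},\chi_{3}\}$, leaving the crucial constraint $\P_{1}[(v\cdot\xi)f]=0$. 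A direct Hermite-basis computation shows that $f\mapsto\P_{1}[(v\cdot\xi)f]$ on $N_{0}$ has kernel exactly $\mathbb{C}\chi_{0}$ for every $\xi\ne 0$, so $c_{1}=c_{2}=c_{3}=c_{4}=0$ and $f=c_{0}\sqrt M$. Substituting this back, the first equation becomes $\lambda c_{0}\sqrt M=-\i c_{0}(1+|\xi|^{-2})(v\cdot\xi)\sqrt M-v\sqrt M\cdot(\omega\times E)$; matching its $\mathrm{span}\{\chi_{0}\}$ and $\mathrm{span}\{\chi_{1},\chi_{2},\chi_{3}\}$ components gives $\lambda c_{0}=0$ and $\omega\times E=-\i c_{0}(1+|\xi|^{-2})\xi$ respectively. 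Taking the dot product of the latter with $\omega$ and using $(\omega\times E)\perp\omega$ forces $c_{0}=0$, whence $f=0$ and $\omega\times E=0$; since $E\in\mathbb{C}^{3}_{\xi}$ this implies $E=0$. The Maxwell equations then reduce to $\lambda B=0$ and $\i\xi\times B=\lambda E=0$, so either $B=0$ directly or $\xi\times B=0$ combined with $B\in\mathbb{C}^{3}_{\xi}$ gives $B=0$. In every case $U=0$, contradicting it being an eigenvector. The hardest part is precisely the kernel computation for $f\mapsto\P_{1}[(v\cdot\xi)f]$ on $N_{0}$, a step that has no analogue in the two-species proof where $N_{1}$ is one-dimensional.
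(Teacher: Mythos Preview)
Your argument is correct and is precisely the adaptation the paper has in mind when it writes ``Similar to Lemmas~\ref{SG_1} and \ref{Egn}'': the decomposition you call $\tilde G_1+\tilde G_2$ is exactly $G_1+G_5$ in \eqref{1B_d}, and steps one and two are verbatim those of Lemma~\ref{Egn} with $L_1$ replaced by $L$. The only genuine addition, which you have identified and carried out cleanly, is the kernel computation for $f\mapsto\P_1[(v\cdot\xi)f]$ on $N_0$; this reduces the five-dimensional macroscopic ansatz back to $f=c_0\sqrt M$, after which the conclusion follows just as in the two-species case.
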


We will make use of the following decomposition associated with the
operator $\hat{\AA}_3(\xi)$ for  $|\xi|>0$
\bma
\lambda-\hat{\AA}_3(\xi)=\lambda- G_1(\xi)-G_5(\xi)=(I-G_5(\xi)(\lambda-G_1(\xi))^{-1})(\lambda-G_1(\xi)),  \label{1B_d}
\ema
where $G_1(\xi)$ is defined by \eqref{A12}, and
\bma
 G_5(\xi)=\left( \ba
K-\frac{\i(v\cdot\xi)}{|\xi|^2}P_{\rm d} &-v\sqrt M\cdot\omega\times &0\\
-\omega\times P_m &0 &0\\
0 &0 &0
\ea\right).
 \ema
Here,
\bma
G_5(\xi)(\lambda-G_1(\xi))^{-1}&= \left(\ba X^1_1(\lambda,\xi) & X_2(\lambda,\xi) \\  X_3(\lambda,\xi) & 0 \ea\right)_{7\times7},
\ema
where 
$X_2(\lambda,\xi)$ and $X_3(\lambda,\xi)$ are defined by \eqref{X_2} and \eqref{X_3}, and
\bma
&X^1_1(\lambda,\xi)=(K-\frac{\i (v\cdot\xi)}{|\xi|^2} P_{\rm d})(\lambda -c(\xi))^{-1}.
\ema

By  a similar argument as the one for
Lemma \ref{LP01}, we can obtain the spectrum
of the operator $\hat{\AA}_3(\xi)$ in the intermediate and
high frequency regions.
\begin{lem}
\label{LP01a}
In the high and intermediate regions  of the frequency, we have
 \begin{enumerate}
\item  For any $\delta_1,\delta_2>0$, there
exists $ r_1= r_1(\delta_1,\delta_2)>0$  so that for $|\xi|>r_1$,
 \bq
 \sigma(\hat{\AA}_3(\xi))\cap\{\lambda\in\mathbb{C}\,|\,\mathrm{Re}\lambda\ge-\nu_0+\delta_1\}
 \subset
 \dcup_{j=\pm1}\{\lambda\in\mathbb{C}\,|\,|\lambda-j\i|\xi||\le\delta_2\}.\label{1sg4}
 \eq
\item For any $r_1>r_0>0$, there
exists $\alpha =\alpha(r_0,r_1)>0$ so that for all $r_0\le |\xi|\le r_1$,
\bq \sigma(\hat{\AA}_3(\xi))\subset\{\lambda\in\mathbb{C}\,|\, \mathrm{Re}\lambda(\xi)\leq-\alpha\} .\label{1sg3}\eq
 \end{enumerate}
\end{lem}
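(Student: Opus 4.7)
The plan is to mimic the proof of Lemma \ref{LP01} verbatim, using the decomposition \eqref{1B_d} in place of \eqref{B_d}. The essential ingredients are (i) estimates of the type in Lemma \ref{LP03} with $K_1$ replaced by $K$ (which satisfy the same bounds since $K$ is compact with a symmetric kernel of comparable integrability), (ii) the resolvent bound $\|(\lambda-B_3(\xi))^{-1}\|=\max_{j=\pm1}|\lambda-j\i|\xi||^{-1}$ from \eqref{b_1(xi)}, (iii) the block-matrix invertibility criterion Lemma \ref{inver}, and (iv) the dissipativity result Lemma \ref{1Egn}.

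For part (1) (high frequency), I would fix $\delta_1,\delta_2>0$ and restrict to $\mathrm{Re}\lambda\geq -\nu_0+\delta_1$ with $\min_{j=\pm1}|\lambda-j\i|\xi||>\delta_2$. Using the $K$-analogue of Lemma \ref{LP03} together with \eqref{T_7a} and \eqref{L_9} we can find $r_1=r_1(\delta_1,\delta_2)>0$ large enough so that for $|\xi|>r_1$ the three blocks satisfy
\begin{equation*}
\|X^1_1(\lambda,\xi)\|_\xi\le 1/2,\qquad \|X_2(\lambda,\xi)\|_{\mathbb{C}^6\to L^2_\xi}\|X_3(\lambda,\xi)\|_{L^2_\xi\to\mathbb{C}^6}\le 1/4.
\end{equation*}
Then Lemma \ref{inver} makes $I-G_5(\xi)(\lambda-G_1(\xi))^{-1}$ invertible on $L^2_\xi\times\mathbb{C}^3_\xi\times\mathbb{C}^3_\xi$, hence by \eqref{1B_d} so is $\lambda-\hat{\AA}_3(\xi)$, which gives \eqref{1sg4}.

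For part (2) (intermediate frequency), the argument has two stages. First, one shows that for any sequence of eigenvalues with $r_0\le|\xi_n|\le r_1$ and $\mathrm{Re}\lambda_n\ge -\nu_0+\delta_1$, the imaginary parts stay bounded: by the same resolvent estimates as above, there is $y_1=y_1(r_0,r_1,\delta_1)$ so that $|{\rm Im}\lambda|>y_1$ forces $\|G_5(\xi)(\lambda-G_1(\xi))^{-1}\|_\xi\le 1/2$ and hence $\lambda\in\rho(\hat{\AA}_3(\xi))$. Second, one argues by contradiction: if $\sup_{r_0\le|\xi|\le r_1}\mathrm{Re}\lambda(\xi)=0$, pick a sequence $(\xi_n,\lambda_n,f_n,E_n,B_n)$ with $|\xi_n|\in[r_0,r_1]$, $\|f_n\|+|E_n|+|B_n|=1$, and $\mathrm{Re}\lambda_n\to 0$. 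Since $|{\rm Im}\lambda_n|\le y_1$, $|\xi_n|\in[r_0,r_1]$, $|E_n|,|B_n|\le 1$, and $K$ is compact on $L^2(\R^3_v)$, after extracting subsequences we get convergence $\lambda_n\to\lambda_0$ with $\mathrm{Re}\lambda_0=0$, $\xi_n\to\xi_0\ne 0$, $(E_n,B_n)\to(E_0,B_0)$, and $Kf_n\to g$ in $L^2_v$. Writing the first eigenvalue equation as $f_n=(\lambda_n+\nu+\i(v\cdot\xi_n))^{-1}(Kf_n-\tfrac{\i(v\cdot\xi_n)}{|\xi_n|^2}P_{\rm d}f_n-v\sqrt M\cdot(\omega_n\times E_n))$ (noting $P_{\rm d}f_n=C_0^n\sqrt M$ with $|C_0^n|\le 1$), we obtain a strong limit $f_0\in L^2(\R^3_v)$ with $\hat{\AA}_3(\xi_0)(f_0,E_0,B_0)=\lambda_0(f_0,E_0,B_0)$ and $\mathrm{Re}\lambda_0=0$, contradicting Lemma \ref{1Egn}.

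The main obstacle I expect lies in this compactness/limit argument for the one-species case: unlike the two-species situation where $P_{\rm d}f_n=C_0^n\sqrt M$ already captures the full null space of $L_1$, here the limit argument must cohabit with the richer five-dimensional null space of $L$, and the resolvent estimates from Lemma \ref{LP03} need to be reproven for $K$ with the same decay rates. The $K$-analogue should follow by the standard Grad splitting $K=K_c+K_s$ with $K_c$ finite-rank and $K_s$ having a bounded symmetric kernel decaying in $|v-v_*|$, yielding the same $|\xi|^{-2/13}$ and $|y|^{-2/5}$ bounds. Once those estimates are in hand, the proof is parallel to Lemma \ref{LP01} step by step, and no new structural ingredient is needed.
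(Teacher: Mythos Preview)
Your approach is correct and matches the paper's own treatment, which simply asserts that Lemma \ref{LP01a} follows ``by a similar argument as the one for Lemma \ref{LP01}''. Your flagged obstacle is not real: the compactness/limit step only uses that $K$ is compact on $L^2(\R^3_v)$ and that the extra term involves $P_{\rm d}f_n=C_0^n\sqrt M$ (still the rank-one density projection, unchanged from the two-species case), so the richer null space $N_0$ of $L$ never enters the argument beyond Lemma \ref{1Egn}, which is already in hand.
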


Then, we only need
to study the spectrum and resolvent sets of $\hat{\AA}_3(\xi)$ in
low frequency region. For this, we decompose $\lambda-\hat{\AA}_3(\xi)$ as
\bq \lambda-\hat{\AA}_3(\xi)=\lambda P_A-G_6(\xi)+\lambda P_B-G_7(\xi)+G_8(\xi)+G_9(\xi),\label{1Bd3}\eq
where \bma
&G_6(\xi)=\left(\ba B_4(\xi) &  -v\sqrt M\cdot \omega\times & 0\\ -\omega\times P_m & 0 & \i \xi \times \\ 0 & -\i\xi\times & 0 \ea\right),\quad
G_7(\xi)=\left(\ba B_5(\xi) &  0 & 0\\ 0 & 0 & 0 \\ 0 & 0 & 0 \ea\right),\label{1Qxi}\\
&G_8(\xi)=\left(\ba \i \P_1(v\cdot\xi)\P_0 & 0 & 0\\ 0 & 0 & 0 \\ 0 & 0 & 0 \ea\right),\quad G_9(\xi)=\left(\ba \i \P_0(v\cdot\xi)\P_1 & 0 & 0\\ 0 & 0 & 0 \\ 0 & 0 & 0 \ea\right),
\\
&B_4(\xi)=\i \P_0(v\cdot\xi)\P_0-\frac{\i(v\cdot\xi)}{|\xi|^2}P_{\rm d},\quad B_5(\xi)=L-\i \P_1(v\cdot\xi)\P_1,
\ema
and $P_A$, $P_B$ are the orthogonal  projection operators defined by
\bma
&P_A=\left(\ba \P_0 &  0 & 0\\ 0 & I_{3\times 3} & 0 \\ 0 & 0 & I_{3\times 3} \ea\right),\quad P_B=\left(\ba \P_1 &  0 & 0\\ 0 & 0 & 0 \\ 0 & 0 & 0 \ea\right).
\ema
It is straightforward to verify that $G_6(\xi)$ is a linear operator from the
 space $N_0\times \mathbb{C}^3_\xi\times \mathbb{C}^3_\xi$ to itself, 
which admits nine eigenvalues $\alpha_j(\xi)$ satisfying
 \be \label{eigen}
 \left\{\bln
 &\alpha_j(\xi)=0,\quad j=0,2,3,\quad
\alpha_{\pm1}(\xi)=\pm \i\mbox{$\sqrt{1+\frac53|\xi|^2}$}, \\
&\alpha_4(\xi)=\alpha_5(\xi)=-\i\mbox{$\sqrt{1+|\xi|^2}$},\quad \alpha_6(\xi)=\alpha_7(\xi)= \i\mbox{$\sqrt{1+|\xi|^2}$}.
\eln\right.
\ee

\begin{lem}\label{LP1}
Let $\xi\neq0$, we have the following properties for the linear operators $G_6(\xi)$ and $G_7(\xi)$ defined by \eqref{1Qxi}.
 \begin{enumerate}
\item  If $\lambda\neq\alpha_j(\xi)$, then  the operator $\lambda  P_A-G_6(\xi)$ is
invertible on $N_0\times \mathbb{C}^3_\xi\times \mathbb{C}^3_\xi$ and satisfies
\bgr
  \|(\lambda  P_A-G_6(\xi))^{-1}\|_\xi
  =\max_{-1\leq j \leq 7}\(|\lambda-\alpha_j(\xi)|^{-1}\),\label{1S_2a}
\\
  \| G_8(\xi) (\lambda  P_A-G_6(\xi))^{-1} P_A\|_\xi
 \le C|\xi|\max_{-1\leq j \leq 7}\(|\lambda-\alpha_j(\xi)|^{-1}\),\label{1S_2}
 \egr
where $\alpha_j(\xi)$, $-1\le j\le 7$, are the eigenvalues of $G_6(\xi)$
defined by \eqref{eigen}.

\item  If $\mathrm{Re}\lambda>-\mu $, then the operator $\lambda  P_B-G_7(\xi)$ is
invertible on $N_0^\bot\times \{0\}\times \{0\}$ and satisfies
 \bgr
 \|(\lambda  P_B-G_7(\xi))^{-1}\|\leq(\mathrm{Re}\lambda+\mu )^{-1},  \label{1S_3}
\\
 \| G_9(\xi) (\lambda  P_B-G_7(\xi))^{-1} P_B\|_\xi
 \leq
 C(1+|\lambda|)^{-1}[(\mathrm{Re}\lambda+\mu )^{-1}+1](|\xi|+|\xi|^2). \label{1S_5}
 \egr
\end{enumerate}
\end{lem}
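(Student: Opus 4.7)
\textbf{Proof plan for Lemma~\ref{LP1}.}

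\emph{Part 1 (invertibility and norm of $(\lambda P_A-G_6(\xi))^{-1}$).} The operator $G_6(\xi)$ leaves the finite-dimensional subspace $N_0\times\mathbb{C}^3_\xi\times\mathbb{C}^3_\xi$ (of dimension $5+2+2=9$) invariant, and in the basis $\{\chi_0,\chi_1,\chi_2,\chi_3,\chi_4\}$ of $N_0$ it takes an explicit constant-coefficient matrix form. The plan is first to verify, by direct computation in the weighted inner product, the skew-symmetry
\[
(G_6(\xi)U,V)_\xi=-(U,G_6(-\xi)V)_\xi,\qquad G_6(-\xi)=-G_6(\xi),
\]
where the extra $|\xi|^{-2}P_{\rm d}$ term in $(\cdot,\cdot)_\xi$ is precisely matched by the factor $1+|\xi|^{-2}$ appearing in the $(0,1)$-entry of $G_6(\xi)$ coming from $P_{\rm d}$. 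Hence $\i G_6(\xi)$ is self-adjoint on $N_0\times\mathbb{C}^3_\xi\times\mathbb{C}^3_\xi$ with respect to $(\cdot,\cdot)_\xi$, and its spectrum consists exactly of the nine eigenvalues $\{\alpha_j(\xi)\}_{-1\le j\le 7}$ displayed in \eqref{eigen}. The spectral theorem for self-adjoint operators then yields
\[
\|(\lambda P_A-G_6(\xi))^{-1}\|_\xi=\frac{1}{\operatorname{dist}(\lambda,\sigma(G_6(\xi)))}=\max_{-1\le j\le 7}|\lambda-\alpha_j(\xi)|^{-1},
\]
which is \eqref{1S_2a}.

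\emph{Bound for $G_8$.} Since $G_8(\xi)=\i\P_1(v\cdot\xi)\P_0\oplus 0\oplus 0$ maps $N_0\to N_0^{\perp}$ and $\|\P_1(v\cdot\xi)\P_0 f\|\le C|\xi|\|\P_0f\|$ by the finite moments of the Maxwellian, we have $\|G_8(\xi)P_A\|_\xi\le C|\xi|$ (the extra $|\xi|^{-2}P_{\rm d}$ piece only appears on the $\P_0$ side and so is absorbed). Composing with the previous bound gives \eqref{1S_2}.

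\emph{Part 2 (invertibility of $\lambda P_B-G_7(\xi)$).} On $N_0^\perp\times\{0\}\times\{0\}$ the operator reduces to $\lambda\P_1-B_5(\xi)=\lambda\P_1-L+\i\P_1(v\cdot\xi)\P_1$. Since $\i\P_1(v\cdot\xi)\P_1$ is skew-adjoint on $N_0^\perp$ and $-L$ is non-negative with the spectral gap $(-Lf,f)\ge\mu\|f\|^2$ on $N_0^\perp$ by \eqref{L_4}, taking real parts of $((\lambda\P_1-B_5(\xi))f,f)$ yields
\[
\mathrm{Re}((\lambda\P_1-B_5(\xi))f,f)\ge(\mathrm{Re}\lambda+\mu)\|f\|^2,
\]
and the Lax--Milgram type argument gives \eqref{1S_3}. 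Note that on $N_0^\perp$, $\|\cdot\|_\xi$ and $\|\cdot\|$ coincide, so the same bound is valid in the weighted norm.

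\emph{Bound for $G_9$.} Here $G_9(\xi)=\i\P_0(v\cdot\xi)\P_1\oplus 0\oplus 0$ satisfies $\|\P_0(v\cdot\xi)\P_1 f\|\le C|\xi|\|\P_1f\|$. The main obstacle is producing the factor $(1+|\lambda|)^{-1}$. The trick is the resolvent identity
\[
(\lambda\P_1-B_5(\xi))^{-1}=\lambda^{-1}\P_1+\lambda^{-1}B_5(\xi)(\lambda\P_1-B_5(\xi))^{-1},
\]
which together with $\|B_5(\xi)(\lambda\P_1-B_5(\xi))^{-1}\|\le 1+|\lambda|(\mathrm{Re}\lambda+\mu)^{-1}$ gives an $L^2$-bound of the form $(1+|\lambda|)^{-1}[(\mathrm{Re}\lambda+\mu)^{-1}+1]$. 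Finally, since $G_9$ lands in $N_0$ where $\|g\|_\xi\le C(1+|\xi|^{-1})\|g\|$, multiplying by the prefactor $|\xi|$ from the $v\cdot\xi$ moment produces the full factor $|\xi|+|\xi|^2$, yielding \eqref{1S_5}. The careful bookkeeping between $\|\cdot\|$ and $\|\cdot\|_\xi$ in this last step is the principal technicality; everything else is linear algebra or reuses arguments already employed in Lemma~\ref{LP}.
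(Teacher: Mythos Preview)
Your overall strategy matches the paper's: self-adjointness of $\i G_6(\xi)$ in $(\cdot,\cdot)_\xi$ for Part~1, and dissipativity of $B_5(\xi)$ plus a resolvent identity for Part~2. Parts \eqref{1S_2a}, \eqref{1S_2}, and \eqref{1S_3} are handled correctly.

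However, your derivation of \eqref{1S_5} has a genuine gap in the last step. First, the resolvent identity as you wrote it only recovers $\|(\lambda\P_1-B_5)^{-1}\|\le |\lambda|^{-1}(2+|\lambda|(\mathrm{Re}\lambda+\mu)^{-1})$, which is no better than the direct bound $(\mathrm{Re}\lambda+\mu)^{-1}$ and does \emph{not} by itself produce a $(1+|\lambda|)^{-1}$ gain. The gain only appears after you compose with $\P_0(v\cdot\xi)\P_1$ on the left and use that $\P_0(v\cdot\xi)\P_1 L$ is a \emph{bounded} operator of norm $\le C|\xi|$ (by moving $L$ onto the finitely many test functions $\P_1((v\cdot\xi)\chi_j)$ via self-adjointness). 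Writing $B_5(\xi)=L-\i\P_1(v\cdot\xi)\P_1$, the identity gives
\[
\P_0(v\cdot\xi)\P_1(\lambda\P_1-B_5)^{-1}=\lambda^{-1}\P_0(v\cdot\xi)\P_1+\lambda^{-1}\bigl[\P_0(v\cdot\xi)\P_1 L-\i\P_0(v\cdot\xi)\P_1(v\cdot\xi)\P_1\bigr](\lambda\P_1-B_5)^{-1},
\]
and it is the second bracket, of norm $\le C(|\xi|+|\xi|^2)$, that yields the stated factor. Second, your attribution of the $|\xi|^2$ to the weighted norm is incorrect on two counts: arithmetically $(1+|\xi|^{-1})\cdot|\xi|=|\xi|+1$, not $|\xi|+|\xi|^2$; and conceptually, since $(v\cdot\xi)\sqrt M\in N_0$ one has $P_{\rm d}\bigl(\P_0(v\cdot\xi)\P_1 h\bigr)=0$, so $\|\cdot\|_\xi=\|\cdot\|$ on the output and the weighted norm contributes nothing here. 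The extra $|\xi|$ comes entirely from the $\P_1(v\cdot\xi)\P_1$ piece of $B_5(\xi)$.
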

\begin{proof}
Since the operator $\i G_6(\xi)$ is self-adjoint on $N_0\times \mathbb{C}^3_\xi\times \mathbb{C}^3_\xi$, namely,
 \bma
  (\i G_6(\xi)U,V)_\xi =(U,\i G_6(\xi)V)_\xi,\quad U,V\in N_0\times \mathbb{C}^3_\xi\times \mathbb{C}^3_\xi.\label{symmetric}
 \ema
 we can prove \eqref{1S_2a}--\eqref{1S_2}. And by the dissipative of the operator $G_7(\xi)$ on $N_0^\bot\times \{0\}\times \{0\}$, we can prove \eqref{1S_3}--\eqref{1S_5}.
\end{proof}

With the help of  Lemmas~\ref{1Egn}--\ref{LP1}, we obtain the spectral
and resolvent sets of the operator $\hat{\AA}_3(\xi)$  by applying the similar argument as Lemma \ref{spectrum3}.

\begin{lem}
\label{spectruma}
For any  $\delta_1,\delta_2>0$, there are
 $r_1=r_1(\delta_1,\delta_2),r_2=r_2(\delta_1,\delta_2),y_1=y_1(\delta_1,\delta_2)>0$ such that
 \begin{enumerate}
\item   it holds for all $|\xi|\ne 0$ that the resolvent set of $\hat{\AA}_3(\xi)$
contains the following regions
\bq \label{1rb1}
 \rho(\hat{\AA}_3(\xi))\supset
 \left\{\bln &\{\lambda\in\mathbb{C}\,|\,
     \mathrm{Re}\lambda\ge-\mu +\delta_1,\,|\mathrm{Im}\lambda|\geq y_1\}
 \cup\{\lambda\in\mathbb{C}\,|\,\mathrm{Re}\lambda>0\},\quad |\xi|\le r_1,\\
 &\{\lambda\in\mathbb{C}\,|\,
     \mathrm{Re}\lambda\ge-\mu +\delta_1,\,\min_{j=\pm1}|\lambda-j\i|\xi||\ge \delta_2\}
 \cup\{\lambda\in\mathbb{C}\,|\,\mathrm{Re}\lambda>0\},\quad |\xi|\ge r_1;
 \eln\right.
\eq
\item  it holds for $0<|\xi|\leq r_2$ that the spectrum set of $\hat{\AA}_3(\xi)$
is located in the following domain
 \bq
 \sigma(\hat{\AA}_3(\xi))\cap\{\lambda\in\mathbb{C}\,|\,\mathrm{Re}\lambda\ge-\mu+\delta_1\}
 \subset
 \dcup_{j=-1}^7\{\lambda\in\mathbb{C}\,|\,|\lambda-\alpha_j(\xi)|\le\delta_2\},\label{1sg4a}
 \eq
where $\alpha_j(\xi)$, $-1\le j\le 7,$ are the eigenvalues of $G_6(\xi)$
defined in \eqref{eigen}.
\end{enumerate}
\end{lem}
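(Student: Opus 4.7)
The plan is to mirror the proof of Lemma \ref{spectrum3} from the two-species case, but adapted to the decomposition \eqref{1Bd3} in which the projections $P_A,P_B$ replace the macroscopic/microscopic split used before. First, since $P_A+P_B=I$ on $L^2_\xi(\R^3_v)\times\mathbb{C}^3_\xi\times\mathbb{C}^3_\xi$, I would rewrite the decomposition as
$$
\lambda-\hat{\AA}_3(\xi)=T(\xi)+G_8(\xi)+G_9(\xi),\quad T(\xi)=(\lambda P_A-G_6(\xi))+(\lambda P_B-G_7(\xi)),
$$
and note that $G_6(\xi)$ and $G_7(\xi)$ leave the invariant subspaces $\mathrm{Ran}\,P_A=N_0\times\mathbb{C}^3_\xi\times\mathbb{C}^3_\xi$ and $\mathrm{Ran}\,P_B=N_0^\perp\times\{0\}\times\{0\}$ respectively. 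Hence $T(\xi)$ is block diagonal relative to this orthogonal splitting, and Lemma \ref{LP1} tells us that for $\mathrm{Re}\lambda>-\mu$ with $\lambda\ne\alpha_j(\xi)$ ($-1\le j\le 7$), $T(\xi)$ is invertible on the whole space, with
$$
T(\xi)^{-1}=(\lambda P_A-G_6(\xi))^{-1}P_A+(\lambda P_B-G_7(\xi))^{-1}P_B.
$$

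Next I would factor $\lambda-\hat{\AA}_3(\xi)=\bigl[I+(G_8(\xi)+G_9(\xi))T(\xi)^{-1}\bigr]T(\xi)$, reducing invertibility to the Neumann-series estimate $\|(G_8(\xi)+G_9(\xi))T(\xi)^{-1}\|_\xi<1/2$. By \eqref{1S_2} and \eqref{1S_5} this follows from
$$
C|\xi|\max_{-1\le j\le 7}|\lambda-\alpha_j(\xi)|^{-1}+C(1+|\lambda|)^{-1}\bigl[(\mathrm{Re}\lambda+\mu)^{-1}+1\bigr](|\xi|+|\xi|^2)<\frac12.
$$
For part (1) with $|\xi|\le r_1$, the eigenvalues $\alpha_j(\xi)$ are bounded uniformly by $C(1+r_1)$, so choosing $y_1=y_1(\delta_1,r_1)$ large forces $|\lambda-\alpha_j(\xi)|\ge y_1/2$ whenever $|\mathrm{Im}\lambda|\ge y_1$; picking $r_1$ small then makes the perturbation bound less than $1/4$ on $\{\mathrm{Re}\lambda\ge-\mu+\delta_1,|\mathrm{Im}\lambda|\ge y_1\}$. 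The case $|\xi|\ge r_1$ of \eqref{1rb1} is exactly Lemma \ref{LP01a}, and the half plane $\mathrm{Re}\lambda>0$ is already in $\rho(\hat{\AA}_3(\xi))$ by Lemma \ref{1Egn}.

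For part (2), it suffices to control $\{\mathrm{Re}\lambda\ge-\mu+\delta_1\}\setminus\bigcup_j B_{\delta_2}(\alpha_j(\xi))$; combined with part (1), only the strip $|\mathrm{Im}\lambda|\le y_1$ requires attention. On this strip $(\mathrm{Re}\lambda+\mu)^{-1}$ and $(1+|\lambda|)^{-1}$ are uniformly bounded, so the first term in the perturbation bound is at most $Cr_2/\delta_2$ and the second is $O(r_2)$. Choosing $r_2=r_2(\delta_1,\delta_2)$ small enough makes the sum less than $1/2$, yielding \eqref{1sg4a}. The main technical obstacle I anticipate is calibrating $r_1,r_2,y_1$ coherently: the bound from \eqref{1S_2} blows up near the $\alpha_j(\xi)$ while that from \eqref{1S_5} blows up near $\mathrm{Re}\lambda=-\mu$, so the two terms dominate in different regions of $\lambda$. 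Fortunately these problematic regions are disjoint in the admissible $\lambda$-domain, and the prefactor $|\xi|$ in each bound provides the overall smallness; verifying that $G_8(\xi)$ sends $\mathrm{Ran}\,P_A$ into $\mathrm{Ran}\,P_B$ (and $G_9(\xi)$ the reverse) is what makes the off-diagonal perturbation cleanly compatible with the block-diagonal inverse of $T(\xi)$.
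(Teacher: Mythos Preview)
Your proposal is correct and follows precisely the route the paper indicates (the paper merely says to repeat the argument of Lemma~\ref{spectrum3} using Lemmas~\ref{1Egn}--\ref{LP1}): factor $\lambda-\hat{\AA}_3(\xi)$ through the block-diagonal part $T(\xi)=(\lambda P_A-G_6(\xi))+(\lambda P_B-G_7(\xi))$, then control the off-diagonal perturbation $(G_8+G_9)T^{-1}$ via the estimates \eqref{1S_2} and \eqref{1S_5}. One minor remark on part~(1): there is no need to also ``pick $r_1$ small''---once $r_1$ is fixed by Lemma~\ref{LP01a}, choosing $y_1=y_1(\delta_1,r_1)$ sufficiently large already drives both terms in your perturbation bound below $1/4$, since each carries a factor $(1+|\lambda|)^{-1}$ or $|\lambda-\alpha_j(\xi)|^{-1}\lesssim y_1^{-1}$.
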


\subsection{Asymptotics in low frequency}
We study the low frequency asymptotics of the eigenvalues and eigenvectors of the operator $\hat{\AA}_3(\xi)$ in this subsection. In terms of \eqref{B(xi)}, the eigenvalue problem $\hat{\AA}_3(\xi)U=\lambda  U$ for $U=(f,X,Y)\in L^{2}_\xi(\R^3_v)\times \mathbb{C}^3_\xi\times \mathbb{C}^3_\xi$ can be written as
 \bq
\left\{\bln     \label{1L_2}
  \lambda f
  &=(L-\i(v\cdot\xi)
   -\mbox{$\frac{\i(v\cdot\xi)}{|\xi|^2}$}P_{\rm d})f-v\sqrt M\cdot(\omega\times X),\\
  \lambda X&=-\omega\times (f,v\sqrt M)+\i\xi\times Y,\\
  \lambda Y&=-\i\xi\times X,\quad |\xi|\ne0.
  \eln\right.
 \eq
By  macro-micro decomposition, the eigenfunction $f$ of \eqref{1L_2} can be decomposed into
$
f=f_0+f_1=: \P_0f+ \P_1f.
$
Then the first equation of \eqref{1L_2} gives rise to
 \bma
 &\lambda f_0=- \P_0[\i(v\cdot\xi)(f_0+f_1)]
              -\frac{\i(v\cdot\xi)}{|\xi|^2}P_{\rm d}f_0-v\sqrt M\cdot(\omega\times X),\label{1A_2}
\\
&\lambda f_1=Lf_1- \P_1[\i(v\cdot\xi)(f_0+f_1)]\  \Longrightarrow \
  f_1= -\i(\lambda  \P_1-B_5(\xi) )^{-1} \P_1(v\cdot\xi)f_0.\label{1A_3}
 \ema
Substituting \eqref{1A_3} into \eqref{1A_2}, we obtain the eigenvalue problem  for  $(f_0,X,Y)$ as
 \bq
 \left\{\bln    \label{1A_5}
 \lambda f_0
 &=-\i \P_0(v\cdot\xi)f_0
  -\mbox{$\frac{\i(v\cdot\xi)}{|\xi|^2}$}P_{\rm d}f_0
  + \P_0[(v\cdot\xi)R^1(\lambda,\xi) \P_1(v\cdot\xi)f_0]-v\sqrt M\cdot(\omega\times X),\\
  \lambda X&=-\omega\times (f_0,v\sqrt M)+\i\xi\times Y,\\
  \lambda Y&=-\i\xi\times X,
 \eln\right.
 \eq
where
 $
 R^1(\lambda,\xi)=-(\lambda  \P_1-B_5(\xi) )^{-1}=[L-\lambda  \P_1-\i \P_1(v\cdot\xi) \P_1]^{-1}.
 $

To solve  the eigenvalue problem \eqref{1A_5}, we write $f_0\in N_0$  as
$ f_0=\sum_{j=0}^4W_j\chi_j$. Taking the inner product
of the first equation of \eqref{1A_5} and $\chi_j$ for $j=0,1,2,3,4$ respectively,
we have the equations about $\lambda$ and $(W_0,W,W_4,X,Y)$ with $W=:(W_1,W_2,W_3)$ and $X=(X_1,X_2,X_3),Y=(Y_1,Y_2,Y_3)\in \mathbb{C}^3_\xi$ for $\text{Re}\lambda>-\mu$:
 \bma
 \lambda W_0&=-\i(W\cdot\xi),  \label{1A_6}
 \\
  \lambda W_i
 & =-\i W_0\(\xi_i+\frac{\xi_i}{|\xi|^2}\)
    -\i\sqrt{\frac23}W_4\xi_i
    +\sum^3_{j=1}W_j(R^1(\lambda,\xi) \P_1(v\cdot\xi)\chi_j,(v\cdot\xi)\chi_i)
 \nnm\\
&\quad+W_4(R^1(\lambda,\xi) \P_1(v\cdot\xi)\chi_4, (v\cdot\xi)\chi_i)-(\omega\times X)_i,\label{1A_7}
 \\
 \lambda W_4
 &=-\i\sqrt{\frac23}(W\cdot\xi)
   +\sum^3_{j=1}W_j(R^1(\lambda,\xi) \P_1(v\cdot\xi)\chi_j,(v\cdot\xi)\chi_4)
   \nnm\\
   &\quad+W_4(R^1(\lambda,\xi) \P_1(v\cdot\xi)\chi_4,(v\cdot\xi)\chi_4),  \label{1A_8}\\
  \lambda X&=-\omega\times W+\i\xi\times Y, \label{A_12b}\\
  \lambda Y&=-\i\xi\times X. \label{1A_13b}
 \ema

We apply the following transform to simplify the system \eqref{1A_6}-\eqref{1A_8}.
\begin{lem}[\cite{Li2}]\label{eigen_0}
Let  $e_1=(1,0,0)$, $\xi=s\omega$ with $s\in \R$, $\omega=(\omega_1,\omega_2,\omega_3)\in \S^2$. Then, it holds for $1\le i,j\le 3$ and $\text{Re}\lambda>-\mu$ that
\bma
 (R^1(\lambda,\xi) \P_1(v\cdot\xi)\chi_j,(v\cdot\xi)\chi_i)
 =&s^2(\delta_{ij}-\omega_i\omega_j)(R^1(\lambda,se_1) \P_1(v_1\chi_2),v_1\chi_2)
\nnm\\
 & +s^2\omega_i\omega_j(R^1(\lambda,se_1) \P_1(v_1\chi_1),v_1\chi_1),\label{1T_1}
\\
 (R^1(\lambda,\xi) \P_1(v\cdot\xi)\chi_4,(v\cdot\xi)\chi_i)
=&s^2\omega_i(R^1(\lambda,se_1) \P_1(v_1\chi_4),v_1\chi_1),\label{1T_2}
\\
 (R^1(\lambda,\xi) \P_1(v\cdot\xi)\chi_i,(v\cdot\xi)\chi_4)
=&s^2\omega_i(R^1(\lambda,se_1) \P_1(v_1\chi_1),v_1\chi_4),\label{1T_3}
\\
 (R^1(\lambda,\xi) \P_1(v\cdot\xi)\chi_4,(v\cdot\xi)\chi_4)
=&s^2(R^1(\lambda,se_1) \P_1(v_1\chi_4),v_1\chi_4).\label{1T_4}
\ema
\end{lem}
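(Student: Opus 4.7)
The plan is to exploit the rotational invariance of the linearized collision operator to reduce the general direction $\xi = s\omega$ to the axial direction $se_1$, and then to use the residual $SO(2)$-symmetry around $e_1$ to kill the off-diagonal inner products.

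First, I would fix a rotation $R \in SO(3)$ with $Re_1 = \omega$, so that $\xi = sR e_1$, and introduce the unitary operator $U_R$ on $L^2(\R^3_v)$ defined by $(U_R g)(v) = g(R^T v)$. Since $L$ acts by an integral kernel depending only on $|v-v_*|$ and $|(v-v_*)\cdot\omega|$, and since $\sqrt M$ is radial, one checks that $U_R L U_R^{-1} = L$. The same invariance holds for $\P_1$ (as its kernel is spanned by the rotationally invariant family $\{\chi_0, \chi_4\}$ together with the components of the vector $v\sqrt M$). For the transport term, a change of variables gives $U_R[(v\cdot\eta)] U_R^{-1} = (v \cdot R\eta)$ for any $\eta \in \R^3$. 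Combining these,
\[
 U_R B_5(se_1) U_R^{-1} = B_5(\xi), \qquad U_R R^1(\lambda, se_1) U_R^{-1} = R^1(\lambda, \xi).
\]

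Next, I would substitute this conjugation into the left sides of \eqref{1T_1}--\eqref{1T_4}, use the unitarity of $U_R$, and compute how the test vectors transform. Since $\chi_0$ and $\chi_4$ are radial in $v$, $U_R^{-1} \chi_0 = \chi_0$ and $U_R^{-1} \chi_4 = \chi_4$; for $\chi_j = v_j \sqrt M$ with $j = 1,2,3$ one has $U_R^{-1} \chi_j = \sum_k R_{jk} \chi_k$. Using $(Rv)\cdot\xi = s v_1$, this produces
\[
 (R^1(\lambda,\xi) \P_1 (v\cdot\xi)\chi_j, (v\cdot\xi)\chi_i)
 = s^2 \sum_{k,\ell} R_{jk} R_{i\ell}\, (R^1(\lambda, se_1)\P_1 v_1 \chi_k, v_1 \chi_\ell),
\]
and analogous formulas for the cases involving $\chi_4$.

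The main step is then to identify which reduced inner products $(R^1(\lambda, se_1)\P_1 v_1 \chi_k, v_1 \chi_\ell)$ actually survive. For this I would use the fact that $B_5(se_1)$, and hence $R^1(\lambda, se_1)$ and $\P_1$, commute with every rotation that fixes $e_1$, i.e.\ the $SO(2)$ acting on the $(v_2,v_3)$-plane. Under this subgroup, $v_1\chi_1 = v_1^2\sqrt M$ and $v_1\chi_4$ are invariants, while $(v_1\chi_2, v_1\chi_3)$ transforms as a two-dimensional irreducible vector. By Schur-type averaging, this forces the cross inner products between an invariant and a vector component to vanish, and yields the identity $(R^1(\lambda, se_1)\P_1 v_1\chi_2, v_1\chi_2) = (R^1(\lambda, se_1)\P_1 v_1\chi_3, v_1\chi_3)$. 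Substituting these facts into the displayed sum, using the orthogonality $\sum_{k=2}^3 R_{jk}R_{ik} = \delta_{ij} - \omega_i\omega_j$ and $R_{j1} = \omega_j$, gives exactly \eqref{1T_1}; the cases \eqref{1T_2}--\eqref{1T_4} follow in the same way but with $\chi_4$ in place of one or both $\chi_j$'s.

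The only delicate point is the bookkeeping of how $\chi_1, \chi_2, \chi_3$ mix under $U_R$ (so that $\omega_i \omega_j$ and $\delta_{ij} - \omega_i\omega_j$ emerge with the correct coefficients) together with checking rigorously that the $SO(2)$-averaging argument applies to vectors in the range of $\P_1$ acted on by $R^1(\lambda, se_1)$; this requires noting that both $\P_1$ and $R^1(\lambda, se_1)$ genuinely commute with $U_{R_1}$ for every $R_1$ fixing $e_1$, which in turn follows from the rotational invariance of $L$ already used above.
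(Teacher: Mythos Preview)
Your proposal is correct and follows the standard rotational-invariance argument. The paper itself does not supply a proof of this lemma but cites \cite{Li2}; however, for the analogous transformation in the two-species case (the unnumbered lemma preceding \eqref{T_1}) the paper explicitly says the result comes ``by changing variable $(v\cdot\xi)\to |\xi|v_1$ and using the rotational invariance of the operator $L_1$,'' which is exactly your strategy. So your approach matches the intended one.

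One minor remark on the bookkeeping: the $SO(2)$-averaging step you describe is cleanest if you allow the full $O(2)$ stabilizing $e_1$ (i.e.\ include the reflections $v_2\mapsto -v_2$ or $v_3\mapsto -v_3$), since over $\mathbb{C}$ the two-dimensional rotation representation splits into one-dimensional pieces and Schur alone does not immediately kill the $(2,3)$ cross term. The collision operator $L$ and the multiplication by $v_1$ are invariant under these reflections as well, so this extension is free and yields the vanishing of all off-diagonal entries and the equality of the $(2,2)$ and $(3,3)$ entries directly.
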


With the help of \eqref{1T_1}--\eqref{1T_4}, the equations
\eqref{1A_6}--\eqref{1A_8} can be simplified as
 \bma
 \lambda W_0&=-\i s(W\cdot\omega),\label{1A_9}
 \\
 \lambda W_i
 &=-\i W_0\(s+\frac1s\)\omega_i
   -\i s\sqrt{\frac23}W_4\omega_i
   +s^2(W\cdot\omega)\omega_i R_{11}
   \nnm\\
&\quad+s^2(W_i-(W\cdot\omega)\omega_i)R_{22}
       +s^2W_4\omega_i R_{41}
-(\omega\times X)_i,
      \quad i=1,2,3,    \label{1A_10}
 \\
 \lambda W_4
 &=-\i s\sqrt{\frac23}(W\cdot\omega)
    +s^2(W\cdot\omega)R_{14}
   +s^2W_4 R_{44},
   \label{1A_11}
  \ema
 where
 \bq R_{ij}=R_{ij}(\lambda,s)=:(R^1(\lambda,se_1) \P_1(v_1\chi_i),v_1\chi_j).\label{rij}\eq
Multiplying \eqref{1A_10} by $\omega_i$ and making the summation of resulted equations with respect to $i=1,2,3$, we have
 \bma
\lambda (W\cdot\omega)&=-\i W_0\(s+\frac1
s\)-\i s\sqrt{\frac23}W_4+s^2(W\cdot\omega)R_{11}
+s^2 W_4R_{41}.
\label{1A_12}
 \ema
Denote by $U=(W_0,W\cdot\omega,W_4)$ a vector 
in $\R^3$. The system \eqref{1A_9}, \eqref{1A_11} and \eqref{1A_12} can be written as $\mathbb{M}U=0$ with the matrix $ \mathbb{M}$ defined by
 \bq
 \mathbb{M}=\left(\ba
  \lambda & \i s & 0\\
   \i(s+\frac1s)
  &\lambda-s^2R_{11} 
  &\i s\sqrt{\frac23}-s^2R_{41}
  \\
    0
  &\i s\sqrt{\frac23}-s^2R_{14}
  &\lambda-s^2 R_{44}
  \ea\right).\label{BM}
 \eq
The equation  $\mathbb{M}U=0$ admits a non-trivial solution $U\neq 0$ for $\text{Re}\lambda>-\mu$ if and only if it holds $\rm{det}(\mathbb{M})=0$ for $\text{Re}\lambda>-\mu$.

Furthermore, by taking $\omega\times$ to \eqref{1A_10} and using \eqref{rotat}, we have
 \bma
 (\lambda-s^2R_{22})(\omega\times W)=X.  \label{1A_12a}
 \ema
Multiplying \eqref{A_12b} by $\lambda-s^2R_{22}$, we obtain
$$ \lambda(\lambda-s^2R_{22})X+X-\i s(\lambda-s^2R_{22})(\omega\times Y)=0.$$
Then we multiply above equation by $\lambda$ and using \eqref{1A_13b} and \eqref{rotat} to have
\bq (\lambda^3-s^2R_{22}\lambda^2+(1+s^2)\lambda-s^4R_{22})X=0.\eq
Denote
 \bq
 D(\lambda,s)=\det(\mathbb{M}), \quad  D_0(\lambda,s)=:\lambda^3-s^2R_{22}\lambda^2+(1+s^2)\lambda-s^4R_{22}.   \label{D0a}
 \eq

The following two lemmas are about the solutions to the equations
$D(\lambda, s)=0$ and $D_0(\lambda, s)=0$.

\begin{lem}[\cite{Li2}]
\label{eigen_2}
There exist two small constants $r_0>0$ and $r_1>0$ so that the equation $D(\lambda,s)=0$ admits  $C^\infty$ solution $\lambda_j(s)$ $(j=-1,0,1)$ for $(s,\lambda_j)\in
[-r_0,r_0]\times B_{r_1}(j\i)$ that satisfy
 \bma
 \lambda_j(0)=&j\i,\quad \lambda'_j(0)=0, \label{T_5b} \\
 \lambda''_{\pm1}(0)
 =&(L(L+\i \P_1)^{-1} \P_1(v_1 \chi_1),(L+\i \P_1)^{-1} \P_1(v_1\chi_1))
\nnm\\
& \pm \i(\|(L+\i \P_1)^{-1} \P_1(v_1\chi_1)\|^2+\frac53),\label{T_5a}
\\
\lambda''_{0}(0)=&2(L^{-1} \P_1(v_1\chi_4),v_1\chi_4).\label{T_6a}
 \ema
Moreover, $\lambda_j(s)$ is an even function and satisfies
 \bq
 \overline{\lambda_j(s)}=\lambda_{-j}(-s)
 =\lambda_{-j}(s)\quad \text{for}\quad j=0,\pm 1.\label{L_8a}
 \eq
In particular, $\lambda_0(s)$ is a real function.
\end{lem}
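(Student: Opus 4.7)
The plan is to prove this via the implicit function theorem applied to $D(\lambda,s)=\det(\mathbb{M})$ at $s=0$. First I would compute $D(\lambda,0)$ by expanding the determinant in \eqref{BM} along the first row: the contribution from the $\i s$ entry annihilates the $1/s$ singularity, producing
\[
D(\lambda,s)=\lambda\bigl[(\lambda-s^2R_{11})(\lambda-s^2R_{44})-(\i s\sqrt{2/3}-s^2R_{41})(\i s\sqrt{2/3}-s^2R_{14})\bigr]+(s^2+1)(\lambda-s^2R_{44}).
\]
Setting $s=0$ gives $D(\lambda,0)=\lambda^3+\lambda=\lambda(\lambda-\i)(\lambda+\i)$, whose roots are the three points $j\i$ ($j=-1,0,1$). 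Since $\partial_\lambda D(j\i,0)=3(j\i)^2+1$ equals $1$ for $j=0$ and $-2$ for $j=\pm 1$, it is nonzero in each case. Using Lemma~\ref{LP}\eqref{S_3} to ensure $R_{ij}(\lambda,s)$ is $C^\infty$ in $s$ and $\lambda$ in a small ball around $(0,j\i)$, the implicit function theorem delivers unique $C^\infty$ solutions $\lambda_j(s)\in B_{r_1}(j\i)$ for $|s|\le r_0$ with $\lambda_j(0)=j\i$.

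The next step is to establish the parity $D(\lambda,-s)=D(\lambda,s)$, from which $\lambda_j(-s)=\lambda_j(s)$ (hence $\lambda'_j(0)=0$) follows by uniqueness. Let $J$ denote the involution $(Jf)(v_1,v_2,v_3)=f(-v_1,v_2,v_3)$. Since $L$ and $\P_1$ commute with $J$, and $Jv_1=-v_1 J$, one checks $JR^1(\lambda,se_1)J=R^1(\lambda,-se_1)$. Tracking how $J$ acts on each $v_1\chi_i$ (observing $v_1\chi_1=v_1^2\sqrt M$ is even while $v_1\chi_4$, $v_1\chi_2$ are odd in $v_1$), one obtains $R_{11}(\lambda,-s)=R_{11}(\lambda,s)$, $R_{44}(\lambda,-s)=R_{44}(\lambda,s)$, $R_{22}(\lambda,-s)=R_{22}(\lambda,s)$, but $R_{14}(\lambda,-s)=-R_{14}(\lambda,s)$ and $R_{41}(\lambda,-s)=-R_{41}(\lambda,s)$. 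In the product $(\i s\sqrt{2/3}-s^2R_{41})(\i s\sqrt{2/3}-s^2R_{14})$ the sign changes compensate, so $D$ is indeed even in $s$.

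For the second derivatives I would Taylor-expand $D(\lambda,s)=D(\lambda,0)+s^2 D_2(\lambda)+O(s^4)$, reading off
\[
D_2(\lambda)=\tfrac{5\lambda}{3}-\lambda^2 R_{11}(\lambda,0)-(\lambda^2+1)R_{44}(\lambda,0),
\]
since $R_{ij}(\lambda,s)=R_{ij}(\lambda,0)+O(s^2)$ by the parity above. Plugging $\lambda_j(s)=j\i+\tfrac12\lambda''_j(0)s^2+O(s^4)$ into $D(\lambda_j(s),s)=0$ yields $\lambda''_j(0)=-2D_2(j\i)/\partial_\lambda D(j\i,0)$. For $j=0$ this gives immediately $\lambda''_0(0)=2R_{44}(0,0)=2(L^{-1}\P_1(v_1\chi_4),v_1\chi_4)$. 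For $j=\pm 1$, using $(\pm\i)^2=-1$ the $R_{44}$ term drops and
\[
\lambda''_{\pm1}(0)=R_{11}(\pm\i,0)\pm\tfrac{5\i}{3}.
\]
To match the stated form, set $w=(L+\i\P_1)^{-1}\P_1(v_1\chi_1)\in N_0^\perp$, so that $\overline w=(L-\i\P_1)^{-1}\P_1(v_1\chi_1)$ by taking complex conjugate (since $L$, $\P_1$, $v_1\chi_1$ are real). A short computation using self-adjointness of $L$ gives $R_{11}(\pm\i,0)=(Lw,w)\pm\i\|w\|^2$, which produces \eqref{T_5a}. The relation \eqref{L_8a} follows from $\overline{R_{ij}(\lambda,s)}=R_{ij}(\overline\lambda,-s)$ (immediate from the definition of $R^1$) combined with the evenness of $D$ in $s$, which gives $\overline{D(\lambda,s)}=D(\overline\lambda,s)$ and hence $\overline{\lambda_j(s)}$ is a root near $\overline{j\i}=-j\i$, forcing $\overline{\lambda_j(s)}=\lambda_{-j}(s)$ by local uniqueness. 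The main technical obstacle is the identification $R_{11}(\pm\i,0)=(Lw,w)\pm\i\|w\|^2$, which requires the conjugation trick $w=\overline{u_+}$ to bridge the two inverses appearing in the natural computation and the stated formula.
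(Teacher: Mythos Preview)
Your proposal is correct and follows essentially the same route as the paper: this lemma is actually quoted from \cite{Li2} without proof here, but the adjacent Lemma~\ref{eigen_1} (for $D_0$) is proved in the paper by exactly the implicit function theorem argument you give --- compute $D(\lambda,0)=\lambda^3+\lambda$, check $\partial_\lambda D(j\i,0)\neq 0$, read off $\lambda_j''(0)$ from $\partial_s^2 D(j\i,0)$, and deduce \eqref{L_8a} from the symmetries $D(\lambda,-s)=D(\lambda,s)$ and $\overline{D(\lambda,s)}=D(\bar\lambda,-s)$. Your use of the reflection $J$ to derive the parity of the $R_{ij}$ and your identification $R_{11}(\pm\i,0)=(Lw,w)\pm\i\|w\|^2$ via $\overline{w}=(L-\i\P_1)^{-1}\P_1(v_1\chi_1)$ supply details the paper leaves implicit, but the strategy is the same.
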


\begin{lem}
\label{eigen_1}
There exist two small constants $r_0>0$ and $r_1>0$ so that the equation $D_0(\lambda,s)=0$ admits $C^\infty$ solution $\lambda_j(s)$ $(j=-1,0,1)$ for $(s,\lambda_j)\in
[-r_0,r_0]\times B_{r_1}(j\i)$ that satisfy
 \bma
 \lambda_j(0)=&j\i,\quad \lambda'_j(0)=0, \quad \lambda_0''(0)=\lambda_0'''(0)=0,\label{1T_5a} \\
 \lambda''_{\pm1}(0)
 =&(L(L+\i \P_1)^{-1} \P_1(v_1 \chi_2),(L+\i \P_1)^{-1} \P_1(v_1\chi_2))
\nnm\\
& \pm \i(\|(L+\i \P_1)^{-1} \P_1(v_1\chi_2)\|^2+1),\label{1T_5}
\\
\lambda^{(4)}_{0}(0)=&24(L^{-1} \P_1(v_1\chi_2),v_1\chi_2).\label{1T_6}
 \ema
Moreover, $\lambda_j(s)$ is an even function and satisfies
 \bq
 \overline{\lambda_j(s)}=\lambda_{-j}(-s)
 =\lambda_{-j}(s)\quad \text{for}\quad j=0,\pm 1.\label{1L_8}
 \eq
In particular, $\lambda_0(s)$ is a real function.
\end{lem}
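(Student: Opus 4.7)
The plan is to parallel the strategy of Lemma \ref{eigen_2}: apply the implicit function theorem to $D_0(\lambda, s) = 0$ near its three simple roots at $s = 0$, exploit parity symmetries of the inner product $R_{22}(\lambda, s)$ to obtain the evenness and reality of the branches, and then extract the Taylor coefficients by direct expansion. The main algebraic subtlety, which is the chief (modest) obstacle, lies in reconciling the raw formulas for $\lambda''_{\pm 1}(0)$ obtained from the expansion with the symmetric expression \eqref{1T_5} that uses a single vector $h = (L+\i\P_1)^{-1}\P_1(v_1\chi_2)$ for both $\pm$ signs.

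At $s = 0$ we have $D_0(\lambda, 0) = \lambda^3 + \lambda = \lambda(\lambda - \i)(\lambda + \i)$ with $\partial_\lambda D_0(0, 0) = 1$ and $\partial_\lambda D_0(\pm\i, 0) = -2$, all nonzero. Since $R_{22}(\lambda, s)$ is holomorphic in $\lambda$ on $\mathrm{Re}\lambda > -\mu$ (from the analogue of Lemma \ref{LP1}) and smooth in $s$, the implicit function theorem furnishes three unique $C^\infty$ branches $\lambda_j(s)$ with $\lambda_j(0) = j\i$, $j = -1, 0, 1$, for $|s|$ small. To establish evenness, let $\sigma f(v_1, v_2, v_3) = f(-v_1, v_2, v_3)$: this unitary involution commutes with $L$ and $\P_1$ by rotational invariance, while $\sigma v_1 \sigma = -v_1$, so $\sigma R^1(\lambda, se_1)\sigma = R^1(\lambda, -se_1)$; since $\sigma(v_1\chi_2) = -v_1\chi_2$ the two sign changes cancel, giving $R_{22}(\lambda, -s) = R_{22}(\lambda, s)$. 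Consequently $D_0(\lambda, -s) = D_0(\lambda, s)$, and uniqueness of the branches forces $\lambda_j(-s) = \lambda_j(s)$, hence $\lambda_j'(0) = 0$. The reality of $L$, $\P_1$, $v_1$ yields $\overline{R^1(\lambda, se_1)} = R^1(\bar\lambda, -se_1)$, so $\overline{R_{22}(\lambda, s)} = R_{22}(\bar\lambda, s)$ and $\overline{D_0(\lambda, s)} = D_0(\bar\lambda, s)$; matching roots at $s = 0$ gives $\overline{\lambda_j(s)} = \lambda_{-j}(s)$, with $\lambda_0(s) \in \R$ as the special case $j = 0$.

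For the second-derivative formulas at $j = \pm 1$, write $\lambda = \pm\i + \mu$ with $\mu = O(s^2)$: a direct expansion yields $\lambda^3 + \lambda = -2\mu + O(\mu^2)$, while $s^2\lambda - s^2\lambda^2 R_{22}(\lambda, s) - s^4 R_{22}(\lambda, s) = s^2[\pm\i + R_{22}(\pm\i, 0)] + O(s^4)$, so balancing $O(s^2)$ in $D_0 = 0$ gives $\lambda''_{\pm 1}(0) = \pm\i + R_{22}(\pm\i, 0)$. Setting $h_\pm = (L \mp \i\P_1)^{-1}\P_1(v_1\chi_2) \in N_0^\perp$, one has $R_{22}(\pm\i, 0) = (h_\pm, (L \mp \i\P_1)h_\pm) = (Lh_\pm, h_\pm) \pm \i\|h_\pm\|^2$ by self-adjointness of $L$ and $\P_1$; the conjugation symmetry gives $h_+ = \overline{h_-}$, so $(Lh_+, h_+) = (Lh_-, h_-)$ (the common value is real by self-adjointness of $L$) and $\|h_+\| = \|h_-\|$. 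Both formulas can therefore be written using the single vector $h = h_- = (L+\i\P_1)^{-1}\P_1(v_1\chi_2)$, recovering \eqref{1T_5}.

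For $j = 0$, insert the ansatz $\lambda_0(s) = c_0 s^2 + d_0 s^4 + O(s^6)$ into the expansion $D_0(\lambda, s) = \lambda(\lambda^2 + 1) + s^2[\lambda - \lambda^2 R_{22}(\lambda, 0)] + s^4[-\lambda^2 \tilde R_2(\lambda) - R_{22}(\lambda, 0)] + O(s^6)$, where $\tilde R_2(\lambda) = \tfrac12\partial^2_s R_{22}(\lambda, 0)$ is well-defined by the evenness in $s$. Matching the $s^2$ coefficient forces $c_0 + 0 \cdot R_{22}(0,0) = 0$, so $\lambda_0''(0) = 0$, and $\lambda_0'''(0) = 0$ follows from evenness; matching the $s^4$ coefficient then gives $d_0 + c_0 - R_{22}(0, 0) = 0$, whence $d_0 = R_{22}(0, 0) = (L^{-1}\P_1(v_1\chi_2), v_1\chi_2)$ and $\lambda_0^{(4)}(0) = 24 d_0$, which is \eqref{1T_6}. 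Apart from the bookkeeping noted in the first paragraph, the argument is a routine combination of the implicit function theorem with the reflection and conjugation symmetries, directly parallel to Lemma \ref{eigen_2}.
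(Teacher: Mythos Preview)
Your proof is correct and follows essentially the same approach as the paper: apply the implicit function theorem at the three simple roots $\lambda=j\i$ of $D_0(\lambda,0)=\lambda(\lambda^2+1)$, use the parity $D_0(\lambda,-s)=D_0(\lambda,s)$ and conjugation $\overline{D_0(\lambda,s)}=D_0(\bar\lambda,s)$ symmetries to obtain evenness and \eqref{1L_8}, and then extract the Taylor coefficients by expansion. The paper computes $\lambda_j''(0)$ via the implicit-function formula $-\partial_s^2 D_0/\partial_\lambda D_0$ while you expand $\lambda=\pm\i+\mu$ directly, but these are the same computation; your explicit reconciliation of $((L\mp\i\P_1)^{-1}\P_1(v_1\chi_2),v_1\chi_2)\pm\i$ with the symmetric form \eqref{1T_5} via $h_+=\overline{h_-}$ is a detail the paper leaves implicit.
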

\begin{proof}
Since $D_0(\lambda,s)=0$ has three roots of the form $(\lambda,s)=(j\i,0)$ for $j=-1,0,1$, with $\lambda=j\i$ being the solution to $D_0(\lambda,0)=\lambda(\lambda^2+1)$, and
  \bq
  {\partial_s}D_0(j\i,0)=0,     
 \quad
  {\partial_\lambda}D_0(j\i,0)=1-3j^2\neq0,  \label{1lamd1}
 \eq
the implicit function theorem implies that there exists
small constants $r_0,r_1>0$ and a unique $C^\infty$ function $\lambda_j(s)$: $[-r_0,r_0]\to B_{r_1}(j\i)$ such that $D_0(\lambda_j(s),s)=0$ for $s\in [-r_0,r_0]$, and in particular
$$
 \lambda_j(0)=j \i\quad {\rm and}\quad
 \lambda_j'(0)=-\mbox{$\frac{{\partial_s}D_0(j\i,0)}{{\partial_\lambda}D_0(j\i,0)}$}=0,
 \quad j=0,\pm1.                 
$$
Direct computation gives
\bmas
&{\partial^2_s}D_0(j\i,0) =2j^2((L-j\i \P_1)^{-1} \P_1(v_1\chi_2),v_1\chi_2)+2j\i,\\
&{\partial^2_s}D_0(0,0)={\partial^3_s}D_0(0,0)=0, \quad  {\partial^4_s}D_0(0,0)=24((L-j\i \P_1)^{-1} \P_1(v_1\chi_2),v_1\chi_2),
\emas
which together with \eqref{1lamd1}  yields
 \bq
 \left\{\begin{aligned}
  \lambda_0''(0)
  &= \lambda_0'''(0)=0,\quad \lambda_0^{(4)}(0)=-\mbox{$\frac{\partial_s^4D_0(0,0)}{\partial_\lambda D_0(0,0)}$}
   =24(L^{-1} \P_1(v_1\chi_2),v_1\chi_2),
   \\
 \lambda_{\pm1}''(0)
 &=-\mbox{$\frac{\partial_s^2 D_0(\pm \i,0)}{\partial_\lambda D_0(\pm \i,0)}$}
  =((L\mp \i \P_1)^{-1} \P_1(v_1\chi_2),v_1\chi_2)   \pm  \i.
 \end{aligned}\right.                     \label{lamd3a}
 \eq

Finally, since
$D_0(\lambda,s)=D_0(\lambda,-s)$,
$\overline{D_0(\lambda,s)}=D_0(\overline{\lambda},-s)$, we can obtain
\eqref{1L_8} by using the fact that $\lambda_{\pm1}(s)={\pm}\i+O(s^2)$ and $\lambda_0(s)=O(s^4)$ as
$s\rightarrow0.$
\end{proof}

With the help of Lemmas \ref{eigen_2}--\ref{eigen_1}, we are able to
construct the eigenvector $\Psi_j(s,\omega)$ corresponding to the
eigenvalue $\lambda_j$ at the low frequency.
Indeed, we have
\begin{thm}\label{eigen_3a}
There exists a constant $r_0>0$ so that the spectrum $\lambda\in\sigma(\hat{\AA}_3(\xi))\subset\mathbb{C}$ for $\xi=s\omega$ with $|s|\leq r_0$ and $\omega\in \mathbb{S}^2$ consists of nine points $\{\lambda_j(s),\ -1\le j\le 7\}$ in the domain $\mathrm{Re}\lambda>-\mu /2$. The spectrum $\lambda_j(s)$ and the corresponding eigenvector $ \Psi_j(s,\omega)=(\psi_j,X_j,Y_j)(s,\omega)$ are $C^\infty$ functions of $s$ for $|s|\leq r_0$. In particular, the eigenvalues admit the following asymptotic expansion for $|s|\leq r_0$
 \be                                   \label{1specr0}
 \left\{\bln
 \lambda_{\pm1}(s)=&\pm \i+(-a_1\pm\i b_1)s^2+o(s^2),\quad  \overline{\lambda_1}=\lambda_{-1},\\
 \lambda_{0}(s) =& -a_0s^2+o(s^2),\\
 \lambda_{2}(s) =& \lambda_{3}(s) =-\i+(-a_2-\i b_2)s^2+o(s^2),\quad \overline{\lambda_{2}}=\lambda_{4},\\
 \lambda_{4}(s) =& \lambda_{5}(s) =\i+(-a_2+\i b_2)s^2+o(s^2),\\
  \lambda_{6}(s) =& \lambda_{7}(s) =-a_3s^4+o(s^4),
 \eln\right.
 \ee
where $a_j>0$ $(0\le j\le3)$ and $b_j>0$ $(1\le j\le2)$ are given by
\bq\left\{\bln
a_0&=-(L^{-1} \P_1(v_1\chi_4),v_1\chi_4),\quad a_3=-(L^{-1} \P_1(v_1\chi_2),v_1\chi_2),\\
a_1&=-\mbox{$\frac12$}(L(L+\i \P_1)^{-1} \P_1(v_1 \chi_1),(L+\i \P_1)^{-1} \P_1(v_1\chi_1)),\\
a_2&=-\mbox{$\frac12$}(L(L+\i \P_1)^{-1} \P_1(v_1 \chi_2),(L+\i \P_1)^{-1} \P_1(v_1\chi_2)),\\
b_1&=\mbox{$\frac12$}(\|(L+\i \P_1)^{-1} \P_1(v_1\chi_1)\|^2+\mbox{$\frac53$}),\quad
b_2= \mbox{$\frac12$}(\|(L+\i \P_1)^{-1} \P_1(v_1\chi_2)\|^2+1).
\eln\right.
\eq
The eigenvectors $  \Psi_j=(\psi_j,X_j,Y_j)$ are   orthogonal to each other and satisfy
 \be
 \left\{\bln
 &( \Psi_i(s,\omega), \Psi^*_j(s,\omega))_\xi=(\psi_i,\overline{\psi_j})_\xi-(X_i,\overline{X_j})-(Y_i,\overline{Y_j})=\delta_{ij},
  \quad  -1\le i,j\le7,                                  \label{1eigfr0}
 \\
&(\psi_j,X_j,Y_j)(s,\omega)
 =\mbox{$\sum_{n=0}^2$} (\psi_{j,n},X_{j,n},Y_{j,n})(\omega)s^n +o(s^2), \quad |s|\leq r_0,
 \eln\right.
 \ee
where $   \Psi^*_j=(\overline{\psi_j},-\overline{X_j},-\overline{Y_j})$, and the coefficients $(\psi_{j,n},X_{j,n},Y_{j,n})$  are given by
 \bq
  \left\{\bln                      \label{1eigf1}
 &\psi_{0,0}=\chi_4,\quad
  \psi_{0,1}=\i L^{-1} \P_1(v\cdot\omega)\chi_4,\quad
  (\psi_{0,2},\sqrt{M})=-\mbox{$\sqrt{\frac23}$},\quad X_0=Y_0\equiv0,
 \\
&\psi_{\pm1,0}=\mbox{$\frac{\sqrt2}2$}(v\cdot\omega)\sqrt{M},\quad
 (\psi_{\pm1,2},\sqrt{M})=0,\quad X_{\pm1}=Y_{\pm1}\equiv0,
 \\
 &\psi_{\pm1,1}=\mp\mbox{$\frac{\sqrt2}2$}\sqrt{M}
  \mp \mbox{$\frac{\sqrt{3}}{3}$}\chi_4+\mbox{$\frac{\sqrt2}2$}\i(L\mp \i \P_1)^{-1} \P_1(v\cdot\omega)^2\sqrt{M},
 \\
&\psi_{j,0}=\mbox{$\frac{\sqrt2}2$}(v\cdot W^j)\sqrt{M},\quad (\psi_{j,n},\sqrt M)=(\psi_{j,n},\chi_4)=0\,\,\, (n\geq0),\\
  & \P_1\psi_{j,1}=\i \mbox{$\frac{\sqrt2}2$}L^{-1} \P_1[(v\cdot\omega)(v\cdot W^j)\sqrt{M}],\quad j=2,3,4,5,
  \\
  &X_{j,0}=-\i \mbox{$\frac{\sqrt2}2$}\omega\times W^j,\,\, Y_{j,0}=0,\,\, j=2,3,\quad X_{j,0}=\i \mbox{$\frac{\sqrt2}2$}\omega\times W^j,\,\,  Y_{j,0}=0,\,\, j=4,5,
  \\
  &\psi_{j,0}=0,\quad (\psi_{j,n},\sqrt M)=(\psi_{j,n},\chi_4)=0\,\,\, (n\geq0),\\
  & \psi_{j,1}= (v\cdot W^j)\sqrt{M},\quad X_{j,0}=X_{j,1}=X_{j,2}=0,\quad Y_{j,0}=\i W^j,\quad j=6,7,
  \eln\right.
  \eq
Here, $W^j$ $(j=2,3,4,5,6,7)$ are normal vectors satisfying
$W^j\cdot\omega=0,\ W^1\cdot W^2=0$, $W^2=W^4=W^6,W^3=W^5=W^7$.
\end{thm}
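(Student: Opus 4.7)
The plan is to combine the spectrum localization from Lemma \ref{spectruma} with the two characteristic equations $D(\lambda,s)=0$ and $D_0(\lambda,s)=0$ obtained from the macro--micro reduction in \eqref{1A_9}--\eqref{1A_13b}. Part (2) of Lemma \ref{spectruma} tells me that for $|s|\le r_0$ small enough, every spectral value of $\hat{\AA}_3(\xi)$ in $\mathrm{Re}\lambda>-\mu/2$ lies in a small disc centered at one of the nine points $\alpha_j(\xi)$ of \eqref{eigen}, so I only need to exhibit (and count) nine eigenvalues and their eigenvectors. To this end I distinguish two regimes in the reduced system. When the transverse part of the velocity moment vanishes, i.e.\ $X=0$, equations \eqref{A_12b}--\eqref{1A_13b} force $Y=0$ (after matching with the small discs centered at $\alpha_j$), and the longitudinal unknowns $(W_0,W\!\cdot\!\omega,W_4)$ satisfy $\mathbb{M}U=0$; so eigenvalues come from $D(\lambda,s)=\det\mathbb{M}=0$, which by Lemma \ref{eigen_2} yields the three branches $\lambda_{\pm 1}(s)$ and $\lambda_0(s)$ with the stated second-order expansions. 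When $X\ne 0$, the manipulation given right before \eqref{D0a} (multiplying \eqref{1A_12a} by Maxwell's equations and using \eqref{rotat}) reduces the problem to $D_0(\lambda,s)X=0$; since $X$ lives in the $2$-dimensional subspace $\mathbb{C}^3_\xi$, each root of $D_0$ given by Lemma \ref{eigen_1} produces a pair of eigenvalues. The three roots near $\pm\i$ and near $0$ thus yield the six further eigenvalues $\lambda_2=\lambda_3$, $\lambda_4=\lambda_5$, $\lambda_6=\lambda_7$, and inserting \eqref{1T_5}, \eqref{1T_6}, \eqref{T_5a}, \eqref{T_6a} into the Taylor expansions directly identifies the constants $a_0,a_1,a_2,a_3,b_1,b_2$ with the signs stated (positivity coming from $(Lh,h)\le 0$ and the spectral gap).

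Having fixed the eigenvalues, I would construct the eigenvectors componentwise. For $j=-1,0,1$ take $X_j=Y_j=0$, solve $\mathbb{M}U^j=0$ to get $(W_0^j,(W\!\cdot\!\omega)^j,W_4^j)$, form the macroscopic part $f_0=W_0^j\chi_0+(W\!\cdot\!\omega)^j(v\!\cdot\!\omega)\sqrt{M}+W_4^j\chi_4$, and recover the microscopic piece by \eqref{1A_3}. For $j=2,3,4,5$ pick two orthogonal vectors $W^j\perp\omega$, set $X_j=c_j\,\omega\times W^j$ (with a scalar $c_j$ to be fixed), deduce $Y_j$ from $\lambda_j Y_j=-\i\xi\times X_j$, read off the transverse part of $W$ from \eqref{1A_12a}, and close with $f_1$ through \eqref{1A_3}. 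For $j=6,7$, where $\lambda_j=O(s^4)$, the equation $\lambda_j Y_j=-\i\xi\times X_j$ forces $|Y_j|\gg|X_j|$ and in the limit $s\to 0$ the eigenvector reduces to the Maxwell kernel $(0,0,\i W^j)$; a careful Taylor expansion then produces the asymptotic profile \eqref{1eigf1} with $\psi_{j,0}=0$, $\psi_{j,1}=(v\!\cdot\!W^j)\sqrt{M}$ and $X_{j,0}=X_{j,1}=X_{j,2}=0$. Throughout, all Taylor coefficients are obtained by substituting the expansions of $\lambda_j(s)$ into \eqref{1A_9}--\eqref{1A_13b} and matching powers of $s$, using the inversion $R^1(\lambda,\xi)=-(\lambda\P_1-B_5(\xi))^{-1}$ expanded in $s$ (with leading term $-L^{-1}\P_1$ for $\lambda$ near $0$ and $(L\mp\i\P_1)^{-1}$ for $\lambda$ near $\pm\i$), exactly as in Lemmas \ref{eigen_2}--\ref{eigen_1}.

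Orthogonality and the normalization \eqref{1eigfr0} would be obtained as follows. Since $(\hat{\AA}_3(\xi) U,V)_\xi=(U,\hat{\AA}_3(-\xi) V)_\xi$ for $U,V\in L^2_\xi(\R^3_v)\times\mathbb{C}^3_\xi\times\mathbb{C}^3_\xi$, the adjoint eigenvectors $\Psi^*_j=(\overline{\psi_j},-\overline{X_j},-\overline{Y_j})$ satisfy $\hat{\AA}_3(-\xi)\Psi^*_j=\overline{\lambda_j}\Psi^*_j$. Applying $(\cdot,\cdot)_\xi$ to $\hat{\AA}_3(\xi)\Psi_j=\lambda_j\Psi_j$ against $\Psi_k^*$ yields $(\lambda_j-\lambda_k)(\Psi_j,\Psi_k^*)_\xi=0$. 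Distinct eigenvalues thus give orthogonality, and within the two-dimensional eigenspaces indexed by $\{j,j+1\}=\{2,3\},\{4,5\},\{6,7\}$ I orthogonalize by picking $W^j$ to satisfy $W^2\cdot W^3=0$, etc. Normalization is then fixed by dividing through by $\sqrt{(\Psi_j,\Psi_j^*)_\xi}$, and direct substitution of the leading profiles gives the coefficients written in \eqref{1eigf1}. Finally, counting nine eigenvectors in the nine discs confirms via Lemma \ref{spectruma} that the spectrum is exhausted.

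The principal obstacle is the degenerate pair $\lambda_{6,7}(s)=-a_3 s^4+o(s^4)$, because $\lambda_j\to 0$ as $s\to 0$ and the resolvent identity used to express $f_1$ in terms of $f_0$ in \eqref{1A_3} only gives sharp bounds for $\mathrm{Re}\lambda>-\mu$, which is fine, but the simultaneous smallness of $\lambda_j$, $X_j$, and the macroscopic part $\psi_{j,0}$ means the eigenvector emerges from the Maxwell block rather than from the Boltzmann block. To handle this one must work with a rescaled unknown: since \eqref{1A_12a} reads $(\lambda-s^2R_{22})(\omega\times W)=X$ and $\lambda Y=-\i\xi\times X$, the leading-order eigenvector at $s=0$ is the pure magnetic mode $(0,0,\i W^j)$, and the quartic scaling $\lambda\sim s^4$ comes from balancing $\lambda^3$ against $s^4 R_{22}$ in $D_0(\lambda,s)=0$. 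Verifying that $\lambda_0^{(4)}(0)/24$ in Lemma \ref{eigen_1} correctly identifies $a_3=-(L^{-1}\P_1(v_1\chi_2),v_1\chi_2)$, and that the Taylor coefficients $\psi_{j,1}$, $Y_{j,0}$ in \eqref{1eigf1} are consistent with the normalization $(\Psi_j,\Psi_j^*)_\xi=1$ under the $\xi$-weighted inner product (whose $|\xi|^{-2}P_{\rm d}$ correction is crucial precisely for this branch), constitutes the technical core of the proof.
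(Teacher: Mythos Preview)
Your proposal follows essentially the same route as the paper: the paper also splits into the longitudinal branch $D(\lambda,s)=\det\mathbb{M}=0$ (yielding $\lambda_{-1},\lambda_0,\lambda_1$ with $X=Y=0$ via Lemma~\ref{eigen_2}) and the transverse branch $D_0(\lambda,s)=0$ (yielding the six remaining eigenvalues in pairs via Lemma~\ref{eigen_1}, with $W_0=W_4=0$ and $W=W^j\perp\omega$), constructs the eigenvectors by back-substituting into \eqref{1A_3} and \eqref{1A_12a}--\eqref{1A_13b}, and obtains orthogonality from the duality $(\hat{\AA}_3(\xi)U,V)_\xi=(U,\hat{\AA}_3(-\xi)V)_\xi$ exactly as you describe. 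Your discussion of the degenerate $\lambda_{6,7}$ branch and the spectrum-exhaustion count via Lemma~\ref{spectruma} is more explicit than the paper, which simply refers back to the argument of Theorem~\ref{eigen_3} for the expansions and normalization details.
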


\begin{proof}
The eigenvalues $\lambda_j(s)$ and the eigenvectors $\Psi_j(s,\omega)=(\psi_j,X_j,Y_j)(s,\omega)$, $-1\le j\le 7$, can be constructed as follows. For $j=2,3,4,5,6,7$, we take $\lambda_2=\lambda_3=\lambda_{-1}(s),\,\lambda_4=\lambda_5=\lambda_1(s),\,\lambda_6=\lambda_7=\lambda_0(s)$ to be the solution of the equation  $D_0(\lambda,s)=0$ defined in Lemma \ref{eigen_1}, and
choose $W_0=W_4=0$, and $W=W^j$ to be the linearly independent vector
so that  $W^j\cdot\omega=0$, $W^2\cdot W^3=W^4\cdot W^5=W^6\cdot W^7=0$ and $W^2=W^4=W^6,W^3=W^5=W^7$. And the corresponding eigenvectors $ \Psi_j(s,\omega)=(\psi_j,X_j,Y_j)(s,\omega)$ are defined by
 \bq          \nnm 
 \left\{\bln
  &\psi_j(s,\omega)
 =(W^j\cdot v)\sqrt{M}
   +\i s[L-\lambda_j \P_1-\i s \P_1(v\cdot\omega) \P_1]^{-1}
         \P_1[(v\cdot\omega)(W^j\cdot v)\sqrt{M}],\\
  &X_j(s,\omega)=(\lambda_j-s^2R_{22}(\lambda_j,s))(\omega\times W^j ),\quad Y_j(s,\omega)= \frac{\i s}{\lambda_j}(\lambda_j-s^2R_{22}(\lambda_j,s))W^j,
  \eln\right.
\eq
which satisfy the orthonormal relation: $(\Psi_2,\Psi_3^*)_\xi=(\Psi_4,\Psi_5^*)_\xi=(\Psi_6,\Psi_7^*)_\xi=0.$

For $j=-1,0,1$, we choose
$\lambda_j=\lambda_j(s)$ to be a solution of $D(\lambda,s)=0$ given by Lemma \ref{eigen_2}, and choose $X=Y=0$, and denote by $\{a_j,b_j,d_j\}=:\{W^j_0,\, (W\cdot\omega)^j,\, W^j_4\}$  a solution of system \eqref{1A_9}, \eqref{1A_11}, and \eqref{1A_12} for $\lambda=\lambda_j(s)$. Then we define
$  \Psi_j(s,\omega)=(\psi_j(s,\omega),0,0)$ ($j=-1,0,1$) where
 \bq
 \left\{\bln \psi_j(s,\omega)&= \P_0\psi_j(s,\omega)+ \P_1\psi_j(s,\omega),\\
 \P_0\psi_j(s,\omega)&=a_j(s)\chi_0+b_j(s)(v\cdot\omega)\sqrt{M}+
d_j(s)\chi_4,\\
 \P_1\psi_j(s,\omega)&=\i s[L-\lambda_j
 \P_1-\i s \P_1(v\cdot\omega) \P_1]^{-1} \P_1[(v\cdot\omega) \P_0\psi_j(s,\omega)].
\eln\right.     \nnm 
\eq
Then following the similar argument as Theorem \ref{eigen_3}, we can obtain the expansion of $\Psi_j(s,\omega)$ in \eqref{1eigfr0} and \eqref{1eigf1}. Hence, we omit the detail for brevity.
\end{proof}

\subsection{Asymptotics in high frequency }

The structure of the spectrum for the
one-species VMB in high frequency region is similar to the two-species case
so that we only sketch the key points here.
Recalling
the eigenvalue problem
 \bmas
  \lambda f
  &=B_2(\xi)f-v\sqrt M\cdot(\omega\times X),
  \\
  \lambda X&=-\omega\times (f,v\sqrt M)+\i\xi\times Y,
  \\
  \lambda Y&=-\i\xi\times X,\quad |\xi|\ne0.
\emas
Similar to two species case,
 we obtain
\bq (\lambda^2-((B_2(|\xi|e_1)-\lambda)^{-1}\chi_2,\chi_2)\lambda+|\xi|^2)X=0, \quad |\xi|>R_0.\eq
Denote
\bq D(\lambda,s)=\lambda^2-((B_2(se_1)-\lambda)^{-1}\chi_2,\chi_2)\lambda+s^2,\quad s>R_0.\label{D3}\eq

As Section 2.3, we can obtain
\begin{thm}\label{eigen_4a}
There exists a constant $r_1>0$ so that the spectrum $\sigma(A_2(\xi))\subset\mathbb{C}$ for $\xi=s\omega$ with $s=|\xi|> r_1$ and $\omega\in \mathbb{S}^2$ consists of four points $\{\beta_j(s),\ j=1,2,3,4\}$ in the domain $\mathrm{Re}\beta>-\mu/2$. In particular, the eigenvalues satisfy
 \bgr
 \beta_1(s) = \beta_2(s) =-\i s+O(s^{-1/2}),\label{1specr1}\\
 \beta_3(s) = \beta_4(s) =\i s+O(s^{-1/2}),\label{1specr2}\\
c_1\frac1s\le -{\rm Re}\beta_{j}(s)\le c_2\frac1s.
\egr
The eigenvectors $ \Phi_j=(\phi_j,X_j,Y_j)$ are   orthogonal to each other and satisfy
 \be
 ( \Phi_i(s,\omega), \Phi^*_j(s,\omega))=(\psi_i,\overline{\psi_j})-(X_i,\overline{X_j})-(Y_i,\overline{Y_j})=\delta_{ij},
  \quad  1\le i\ne j\le 4,                                  \label{1eigfr2}
 \ee
where $  \Phi^*_j=(\overline{\phi_j},-\overline{X_j},-\overline{Y_j})$, and
 \be\label{1eigfr3}
 \left\{\bln
 &\|\phi_j(s,\omega)\|=O(\mbox{$\frac1{\sqrt s}$}),\quad (\phi_j(s,\omega),\chi_0)=(\phi_j(s,\omega),\chi_4)=0, \\
 &X_j(s,\omega)=O(1)\i (\omega\times W^j),\quad Y_j(s,\omega)=O(1)\i  W^j.
 \eln\right.
  \ee
 Here, $W^j$ $(j=1,2,3,4)$ are normal vectors satisfying
$W^j\cdot\omega=0,\ W^1\cdot W^2=0$,  $W^1=W^3, W^2=W^4$.
\end{thm}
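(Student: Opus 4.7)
The plan is to mirror the high-frequency analysis of the two-species case (Lemma~\ref{eigen_5} and Theorem~\ref{eigen_4}) with only cosmetic changes: the operator $L_1$ is replaced by $L$ (hence $\P_1$ replaces $P_r$), but the macro-micro structure used in the reduction is identical. Since the eigenvalue problem $\hat{\AA}_3(\xi)U=\lambda U$ reduces, for $|\xi|>R_0$, to the scalar equation $D(\lambda,s)=0$ with $D$ as in \eqref{D3}, the whole task is to study this equation and then read off the eigenvectors via \eqref{L_5}.

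First I would establish the resolvent estimates for $B_2(\xi)=L-\i(v\cdot\xi)-\frac{\i(v\cdot\xi)}{|\xi|^2}P_{\rm d}$, exactly as in Lemma~\ref{lem1}. The only point needing attention is that $K$ (not $K_1$) now enters via the decomposition $L=K-\nu(v)$, but the $L^2$ compactness and the pointwise bounds on $k(v,v_*)$ are well known, so the bounds
\[
\sup_{\mathrm{Re}\lambda\ge -\nu_0+\delta}\|(\lambda-c(\xi))^{-1}K\|\le C\delta^{-15/13}(1+|\xi|)^{-2/13},\qquad \sup_{\mathrm{Re}\lambda\ge -\nu_0+\delta}\|(\lambda-c(\xi))^{-1}v\sqrt M\|\le C\delta^{-1/2}(1+|\xi|)^{-1/2}
\]
and their consequences for $(\lambda-B_2(\xi))^{-1}v\sqrt M$ follow by the same Neumann-series argument. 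These give invertibility of $\lambda-B_2(\xi)$ on $L^2_\xi(\R^3_v)$ for $\mathrm{Re}\lambda\ge -\nu_0/2$ and $|\xi|$ large.

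Next I would solve $D(\lambda,s)=0$ by contraction. Writing $\lambda=j\i s+\beta$ for $j=\pm1$ and mimicking Lemma~\ref{eigen_5}, define
\[
G^2_j(\beta)=\tfrac12\Bigl(B_j(\beta,s)+\frac{jB_j(\beta,s)^2}{\sqrt{B_j(\beta,s)^2-4s^2}+\i s}\Bigr),\qquad B_j(\beta,s)=((B_2(se_1)-j\i s-\beta)^{-1}\chi_2,\chi_2).
\]
The estimate $\|(B_2(se_1)-j\i s-\beta)^{-1}v\sqrt M\|=O(s^{-1/2})$ makes $G^2_j$ a contraction on a small ball once $s>r_1$ is large enough, producing the unique fixed point $\beta_j(s)$ with $|\beta_j(s)|\le Cs^{-1/2}$, which yields the asymptotics \eqref{1specr1}--\eqref{1specr2} upon setting $\beta_1=\beta_2=\beta_{-1}(s)$ and $\beta_3=\beta_4=\beta_1(s)$.

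For the sharp two-sided bound $c_1/s\le -\mathrm{Re}\,\beta_j(s)\le c_2/s$, I would repeat the argument in Lemma~\ref{eigen_5}: split $\beta_j=\frac12 B_j+\frac12 jB_j^2/(\sqrt{B_j^2-4s^2}+\i s)$; the second piece is $O(s^{-2})$, and for the first piece, writing $g_j=(L-\i(v_1\pm1)s-\i v_1P_{\rm d}/s-\beta_j)^{-1}\chi_2$, one has $-\mathrm{Re}(B_j/2)=-\frac12(Lg_j,g_j)+O(s^{-1/2})\|g_j\|^2$, and the dissipativity $(Lg_j,g_j)\le -\mu\|\P_1 g_j\|^2$ combined with the explicit Gaussian estimates
\[
\tfrac{C_3}{s}\le \bigl\|(\nu+\i(v_1\pm1)s)^{-1}\chi_2\bigr\|^2\le \tfrac{C_4}{s},
\]
together with the Neumann perturbation of $g_j$ around $(\nu+\i(v_1\pm 1)s)^{-1}\chi_2$, give both sides of the bound. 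The main obstacle is this two-sided estimate: the upper bound is routine, but the lower bound requires a careful $s$-dependent change of variables to prevent the Maxwellian weight from being lost when $|v_1\pm1|s$ is small, exactly as in the two-species case.

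Finally, the eigenvectors are constructed as in Theorem~\ref{eigen_4}: set $X_j=\omega\times W^j$ with the $W^j\perp\omega$ satisfying $W^1=W^3$, $W^2=W^4$, recover $Y_j=\i s\lambda_j^{-1}W^j$ from \eqref{1L_2}, and recover $\phi_j=(B_2(\xi)-\beta_j)^{-1}v\sqrt M\cdot(\omega\times W^j)$ from \eqref{L_5}; the estimate $\|\phi_j\|=O(s^{-1/2})$ then follows from the resolvent bound above, and $(\phi_j,\chi_0)=(\phi_j,\chi_4)=0$ is forced because $\chi_0,\chi_4\in N_0\subset N_1$ so that the collision-invariant components are annihilated (this is the only genuine departure from the two-species calculation, where only $P_{\rm d}\phi_j=0$ held). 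Orthogonality $(\Phi_i,\Phi_j^*)=\delta_{ij}$ comes from the adjoint identity $\hat{\AA}_3(\xi)^*\Phi_j^*=\overline{\beta_j}\Phi_j^*$ combined with $\beta_j\ne \beta_k$ for $j\in\{1,2\}$, $k\in\{3,4\}$, after normalization through the scalars $b_j(s)$ determined by $b_j^2(D_j(s)-1+s^2/\beta_j^2)=1$ as in \eqref{C_2a}, which completes the proof.
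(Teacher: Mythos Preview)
Your approach is correct and matches the paper's own treatment, which simply says ``As Section 2.3, we can obtain'' and states the theorem without further proof; you have correctly identified that the entire argument of Lemma~\ref{eigen_5} and Theorem~\ref{eigen_4} carries over with $L_1,K_1,P_r$ replaced by $L,K,\P_1$.

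One small correction: your justification for $(\phi_j,\chi_0)=(\phi_j,\chi_4)=0$ is garbled. The inclusion goes the other way ($N_1\subset N_0$, since $N_1$ is one-dimensional and $N_0$ five-dimensional), and in any case a null-space argument alone does not kill the transport terms $is(\phi_j,(v\cdot\omega)\chi_0)$, etc. The actual reason is parity: after rotating so that $\omega=e_1$ and $W^j=e_2$, the operator $L-\beta_j-\i s v_1-\tfrac{\i v_1}{s}P_{\rm d}$ commutes with the reflection $v_2\mapsto -v_2$, while the source $v_2\sqrt M$ is odd under it, so $\phi_j$ is odd in $v_2$ and hence orthogonal to the even functions $\chi_0,\chi_4$. (The same parity argument already gave $P_{\rm d}\phi_j=0$ in the two-species case; here it yields the extra $\chi_4$-orthogonality for free.)
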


\section{The linearized system}
\label{behavior-linear}

In this section, we consider the Cauchy problems \eqref{LVMB1a} and \eqref{LVMB} for the linearized Vlasov-Maxwell-Boltzmann equations in two and one-species
 and give the optimal time decay rates of
the solution based on  on spectrum structures obtained in the previous
sections.

\subsection{Semigroup for two-species}
\setcounter{equation}{0}

Before giving the theorem on the semigroup, we firstly prepare some
lemmas on its properties.

\begin{lem}[\cite{Li3}]
 \label{SG_2}
The operator $Q(\xi)=L_1-\i  P_r(v\cdot\xi) P_r$ generates a strongly continuous contraction
semigroup on $N_1^\bot$, which satisfies for any $t>0$ and $f\in N_1^\bot\cap L^2(\R^3_v)$ that
  \bq
    \|e^{tQ(\xi)}f\|\leq e^{-\mu t}\|f\|. \label{decay_1}
 \eq
In addition, for any $x>-\mu $ and $f\in N_1^\bot\cap L^2(\R^3_v)$ it holds
\bq
 \int^{+\infty}_{-\infty}\|[(x+\i y) P_r-Q(\xi)]^{-1}f\|^2dy
 \leq
        \pi(x+\mu )^{-1}\|f\|^2.\label{S_4a}
\eq
\end{lem}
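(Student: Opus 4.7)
The plan is to establish the two conclusions separately by standard contraction-semigroup and Laplace–Plancherel techniques, exploiting the spectral gap \eqref{L_4} of $L_1$ restricted to $N_1^\bot$.

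For the contraction semigroup, I would first check that $Q(\xi)$ preserves $N_1^\bot=\mathrm{Range}(P_r)$: the operator $P_r(v\cdot\xi)P_r$ manifestly ranges in $N_1^\bot$, and $L_1$ maps $N_1^\bot$ into itself because $L_1\sqrt M=0$ together with the self-adjointness of $L_1$. Next I would verify that $Q(\xi)+\mu I$ is dissipative on $N_1^\bot$. Indeed, for $f\in N_1^\bot\cap D(Q(\xi))$ one has $P_r f=f$ and
$$\mathrm{Re}\,(Q(\xi)f,f)=(L_1f,f)-\mathrm{Re}\bigl[\i(P_r(v\cdot\xi)P_rf,f)\bigr]=(L_1f,f)\leq -\mu\|f\|^2,$$
since $\i(P_r(v\cdot\xi)P_rf,f)=\i\int(v\cdot\xi)|f|^2dv$ is purely imaginary and $\|P_rf\|=\|f\|$. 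The same computation applied to the formal adjoint $Q(\xi)^*=L_1+\i P_r(v\cdot\xi)P_r$ shows that $Q(\xi)^*+\mu I$ is likewise dissipative. Invoking the Lumer–Phillips theorem (as in Corollary 4.4 of \cite{Pazy}, used earlier in Lemma~\ref{SG_1}) then yields that $Q(\xi)+\mu I$ generates a $C_0$-contraction semigroup on $N_1^\bot$, which is exactly the decay estimate \eqref{decay_1}.

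For the resolvent bound \eqref{S_4a}, the plan is the standard Laplace–Plancherel trick. Fix $x>-\mu$ and $f\in N_1^\bot$. Since $P_r$ acts as the identity on $N_1^\bot$, the operator $(x+\i y)P_r-Q(\xi)$ coincides with the usual $(x+\i y)I-Q(\xi)$ there, and is invertible by the spectral bound obtained above. Define $g:\R\to N_1^\bot$ by $g(t)=\mathbf{1}_{\{t\ge 0\}}e^{-xt}e^{tQ(\xi)}f$. From \eqref{decay_1},
$$\int_\R\|g(t)\|^2dt=\int_0^\infty e^{-2xt}\|e^{tQ(\xi)}f\|^2dt\le \|f\|^2\int_0^\infty e^{-2(x+\mu)t}dt=\frac{\|f\|^2}{2(x+\mu)}.$$
Its Fourier transform in $t$ is, by the Laplace representation of the resolvent,
$$\hat g(y)=\int_0^\infty e^{-(x+\i y)t}e^{tQ(\xi)}f\,dt=[(x+\i y)P_r-Q(\xi)]^{-1}f,$$
and Plancherel's identity $\|\hat g\|_{L^2(\R)}^2=2\pi\|g\|_{L^2(\R)}^2$ immediately gives \eqref{S_4a}.

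Neither half presents a genuine obstacle: the dissipativity is forced by the cancellation of the transport term in the real part of the inner product, and the $L^2_y$-resolvent bound is just Plancherel applied to the semigroup's Laplace transform. The only point one should handle carefully is that the resolvent estimate needs $(x+\i y)P_r-Q(\xi)$ to be boundedly invertible on $N_1^\bot$ for every $y\in\R$ when $x>-\mu$, which is precisely the content of the spectral bound produced in the first step.
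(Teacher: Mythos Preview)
Your proposal is correct and follows exactly the standard route the paper has in mind: the lemma is quoted from \cite{Li3}, and the (suppressed) sketch in the paper amounts to checking that $Q(\xi)$ and $Q(\xi)^*=Q(-\xi)$ satisfy $\mathrm{Re}(Q(\xi)f,f)\le -\mu\|f\|^2$ on $N_1^\bot$ and then invoking the Lumer--Phillips/Plancherel argument of Lemma~3.1 in \cite{Li2}. Your dissipativity computation and the Laplace--Plancherel derivation of \eqref{S_4a} are precisely this argument written out in full.
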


\begin{lem}
 \label{SG_3}
The operator $B_3(\xi)$ generates a strongly continuous unitary
semigroup on $\mathbb{C}^6$, which satisfies for any $t>0$ and $U\in\mathbb{C}^6$ that
  \bq
    |e^{tB_3(\xi)}U|=|U|. \label{decay_2}
 \eq
In addition, for any $x\ne0$ and $U\in\mathbb{C}^6$ it holds
\bq
 \int^{+\infty}_{-\infty}|[(x+\i y) -B_3(\xi)]^{-1}U|^2dy
 \leq
        \pi|x|^{-1}|U|^2.\label{S_4b}
\eq
\end{lem}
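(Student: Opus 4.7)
The plan is to exploit the skew-Hermitian structure of the finite-dimensional matrix $B_3(\xi)$.

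First I would verify that $B_3(\xi)^*=-B_3(\xi)$ on $\mathbb{C}^6$. This follows directly from the block form of $B_3(\xi)$ together with the fact that the cross-product matrix $\xi\times$ is real and skew-symmetric, so $(\i\xi\times)^*=\i\xi\times$, and the off-diagonal blocks of $B_3(\xi)^*$ swap sign. Consequently $\i B_3(\xi)$ is Hermitian, a fact already noted in \eqref{b_1a}. By the elementary theory of matrix exponentials of skew-Hermitian operators, $\{e^{tB_3(\xi)}\}_{t\in\mathbb{R}}$ is a one-parameter group of unitary operators on $\mathbb{C}^6$, which yields strong continuity and the identity $|e^{tB_3(\xi)}U|=|U|$ for every $U\in\mathbb{C}^6$ and every $t$, establishing \eqref{decay_2}.

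For the resolvent estimate \eqref{S_4b} I would diagonalize. Since $\i B_3(\xi)$ is Hermitian on $\mathbb{C}^6$, the finite-dimensional spectral theorem provides an orthogonal decomposition
\begin{equation*}
\i B_3(\xi)=\sum_{k} \mu_k P_k,\qquad \mathbb{C}^6=\bigoplus_k \mathrm{Ran}(P_k),
\end{equation*}
where $\mu_k\in\mathbb{R}$ (by the earlier lemma these are $0,0,|\xi|,|\xi|,-|\xi|,-|\xi|$) and the $P_k$ are mutually orthogonal self-adjoint projections with $\sum_k P_k=I$. Therefore, for $x\ne 0$,
\begin{equation*}
\bigl[(x+\i y)-B_3(\xi)\bigr]^{-1}=\sum_k \frac{1}{x+\i(y-\mu_k)}\,P_k,
\end{equation*}
and, using the orthogonality $(P_jU,P_kU)=\delta_{jk}|P_kU|^2$,
\begin{equation*}
\bigl|\bigl[(x+\i y)-B_3(\xi)\bigr]^{-1}U\bigr|^2
=\sum_k \frac{|P_kU|^2}{x^2+(y-\mu_k)^2}.
\end{equation*}
Integrating in $y$ via the standard identity $\int_{\mathbb{R}}(x^2+(y-\mu_k)^2)^{-1}\,dy=\pi|x|^{-1}$ and summing with $\sum_k |P_kU|^2=|U|^2$ gives exactly $\pi|x|^{-1}|U|^2$, which is \eqref{S_4b}.

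The main (and essentially only) obstacle is bookkeeping in the spectral decomposition; there is no analytic difficulty since $B_3(\xi)$ is a normal matrix on a finite-dimensional space. Thus both assertions follow cleanly from the skew-Hermitian structure, in complete parallel to Lemma~\ref{SG_2} for $Q(\xi)$, with the simpler feature that $\i B_3(\xi)$ is genuinely Hermitian (no dissipation term is needed since $B_3(\xi)$ comes from the purely hyperbolic Maxwell part).
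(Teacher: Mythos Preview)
Your proof is correct and follows essentially the same approach as the paper: both rely on the observation \eqref{b_1a} that $\i B_3(\xi)$ is self-adjoint on $\mathbb{C}^6$, from which the unitarity of $e^{tB_3(\xi)}$ and the resolvent identity follow by elementary spectral theory. The paper merely cites an analogous argument from \cite{Li2}, whereas you have written it out explicitly via diagonalization; your computation in fact shows that \eqref{S_4b} holds with equality.
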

\begin{proof}
Since $\i B_3(\xi)$ is a self-adjoint operator on $\mathbb{C}^6$ satisfying \eqref{b_1a}, we can prove \eqref{decay_2} and \eqref{S_4b} by applying a similar argument as the one for  Lemma 3.2 in \cite{Li2}.
\end{proof}

\begin{lem}\label{F_1}Let $r_0>0$ and $b_2>0$ be given in Lemma \ref{eigen_3}. Let $\alpha=\alpha(r_0,r_1)>0$ with $r_1>r_0$ and $\alpha(r_0,r_1)$ defined in Lemma \ref{LP01}. Then
\bma
\sup_{0<|\xi|<r_0,y\in \R}\|(I-G_4(\xi)(-\frac{b_2}2+\i y-G_3(\xi))^{-1})^{-1}\|_\xi&\le C,\label{S_9}\\
\sup_{r_0<|\xi|<r_1,y\in \R}\|(I-G_2(\xi)(-\frac{\alpha}2+\i y-G_1(\xi))^{-1})^{-1}\|&\le C.\label{S_9a}
\ema
\end{lem}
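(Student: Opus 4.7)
Both bounds reduce, via the elementary identity
\[
(I - G_4(\xi)(\lambda - G_3(\xi))^{-1})^{-1} = I + G_4(\xi)(\lambda - \hat{\AA}_1(\xi))^{-1},
\]
(with a parallel identity for the $(G_1,G_2)$ decomposition), to uniform resolvent bounds for $\hat{\AA}_1(\xi)$ on the relevant vertical line; the identity itself comes from $\hat{\AA}_1(\xi) = G_3(\xi) + G_4(\xi) = G_1(\xi) + G_2(\xi)$, so that $\lambda - G_3(\xi) = (\lambda - \hat{\AA}_1(\xi)) + G_4(\xi)$. In each case I split $y \in \mathbb{R}$ into a ``tail'' regime $|y| \ge y_1$, where a Neumann series already converges, and a ``bounded'' regime $|y| \le y_1$, where the identity together with a spectral-gap argument is used.

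\textbf{Estimate \eqref{S_9}.} Set $\delta_1 = \mu - b_2/2 > 0$ and let $y_1 = y_1(\delta_1, r_1)$ come from the proof of Lemma \ref{spectrum3}. For $|y| \ge y_1$, the estimate \eqref{bound_1} shown there gives $\|G_4(\xi)(\lambda - G_3(\xi))^{-1}\|_\xi \le 1/2$ on $\lambda = -b_2/2 + \i y$, hence $\|(I - G_4(\xi)(\lambda - G_3(\xi))^{-1})^{-1}\|_\xi \le 2$ by Neumann series. For $|y| \le y_1$, I first shrink $r_0$ if necessary so that Theorem \ref{eigen_3} together with $\lambda_j(s) = -a_1 s^2 + o(s^2)$ gives $|-b_2/2 - \mathrm{Re}\lambda_j(s)| \ge b_2/4$ uniformly in $s \in (0, r_0]$; the line $\mathrm{Re}\lambda = -b_2/2$ is then uniformly separated from both eigenvalues. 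The identity above and the spectral decomposition
\[
(\lambda - \hat{\AA}_1(\xi))^{-1} = \sum_{j=1}^{2} \frac{(\cdot, \Psi_j^*(s, \omega))}{\lambda - \lambda_j(s)} \Psi_j(s, \omega) + R(\lambda, \xi),
\]
with $R(\lambda, \xi)$ the resolvent restricted to the complementary invariant subspace (whose spectrum lies in $\{\mathrm{Re}\lambda \le -b_2\}$), then reduce everything to bounding each piece uniformly in $\|\cdot\|_\xi$.

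\textbf{Main obstacle.} The delicate point is uniformity as $|\xi| \to 0$, since the ambient space $L^2_\xi(\mathbb{R}^3_v) \times \mathbb{C}^3_\xi \times \mathbb{C}^3_\xi$ and the block $Q_1(\xi) = \i P_{\rm d}(v\cdot\xi)P_r + \i P_r(v\cdot\xi)(1 + |\xi|^{-2})P_{\rm d}$ of $G_4(\xi)$ are both singular in $\xi$. The saving grace is the structural identity $P_{\rm d}\psi_j(s,\omega) \equiv 0$ recorded in \eqref{eigf1}, which annihilates the singular $|\xi|^{-2}$ term in $Q_1(\xi)$ when acting on $\Psi_j$, and combined with $\psi_j = O(s)$, $X_j = O(s)$, $Y_j = O(1)$ from \eqref{eigf1} yields $\|\Psi_j(s,\omega)\|_\xi = O(1)$ and $\|G_4(\xi)\Psi_j(s,\omega)\|_\xi = O(1)$ uniformly. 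The complement piece $\|R(\lambda,\xi)\|_\xi$ is bounded on the line via the Hille--Yosida-type bound $\|R(\lambda,\xi)\|_\xi \le 2/b_2$ on the restricted semigroup, which inherits contractivity from Lemma \ref{SG_1} and has spectrum in $\{\mathrm{Re}\lambda \le -b_2\}$.

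\textbf{Estimate \eqref{S_9a}.} Now $|\xi| \in [r_0, r_1]$ is bounded away from $0$, so $\|\cdot\|_\xi$ and $\|\cdot\|$ are equivalent. Choose $\delta_1 = \nu_0 - \alpha/2 > 0$ and let $y_1 = y_1(\delta_1, r_0, r_1)$ come from the proof of Lemma \ref{LP01}: for $|y| \ge y_1$ that proof yields $\|G_2(\xi)(\lambda - G_1(\xi))^{-1}\| \le 1/2$, hence a Neumann bound of $2$. For $|y| \le y_1$, the set $\{(\xi, y) : r_0 \le |\xi| \le r_1,\, |y| \le y_1\}$ is compact, and by Lemma \ref{LP01} the line $\mathrm{Re}\lambda = -\alpha/2$ lies in the resolvent set of $\hat{\AA}_1(\xi)$ throughout this set. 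Continuity of $(\xi, y) \mapsto (\lambda - \hat{\AA}_1(\xi))^{-1}$ together with compactness then gives a uniform bound on the resolvent (a contradiction argument analogous to the last step of the proof of Lemma \ref{LP01} also works); combined with the analog identity, this delivers \eqref{S_9a}.
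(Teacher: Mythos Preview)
Your reduction via the identity
\[
(I - G_4(\xi)(\lambda - G_3(\xi))^{-1})^{-1} = I + G_4(\xi)(\lambda - \hat{\AA}_1(\xi))^{-1}
\]
is correct, and your handling of the eigenvalue projections in \eqref{S_9} is careful: the observation that $P_{\rm d}\psi_j\equiv 0$ kills the singular $|\xi|^{-2}$ contribution, and that $\|G_4(\xi)\|_\xi$ is uniformly bounded for small $|\xi|$, are exactly the right ingredients for that piece. Your treatment of \eqref{S_9a} via compactness (or the contradiction argument you mention) is also in line with the paper.

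The gap is in your bound on the complementary resolvent $R(\lambda,\xi)$ for \eqref{S_9}. You claim a ``Hille--Yosida-type bound $\|R(\lambda,\xi)\|_\xi \le 2/b_2$'' on the line $\mathrm{Re}\lambda=-b_2/2$, justified by two facts: the restricted semigroup on $\mathrm{Ran}\,Q$ is contractive in $\|\cdot\|_\xi$, and its spectrum lies in $\{\mathrm{Re}\lambda\le -b_2\}$. But these two facts do \emph{not} combine to give what you need. Contractivity yields $\|(\lambda-\hat{\AA}_1(\xi)|_{\mathrm{Ran}\,Q})^{-1}\|_\xi\le (\mathrm{Re}\lambda)^{-1}$ only for $\mathrm{Re}\lambda>0$, which is useless on $\mathrm{Re}\lambda=-b_2/2$. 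And a spectral bound $s(A)\le -b_2$ does not in general imply an exponential decay bound $\|e^{tA}\|\le Me^{-b_2 t}$ for $C_0$-semigroups (the growth bound can strictly exceed the spectral bound); nor does it give the corresponding resolvent bound you assert. So you have not actually controlled $R(\lambda,\xi)$, let alone uniformly in $|\xi|\to 0$ --- and since the later semigroup decomposition (Theorem~\ref{semigroup1}) is precisely what would supply such a decay estimate on the complementary part, you cannot invoke it here without circularity.

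The paper avoids this difficulty entirely: rather than decomposing the resolvent, it argues \eqref{S_9} by contradiction. Assuming a sequence $(\xi_n,\lambda_n)$ with $\lambda_n=-b_2/2+\i y_n$, $|\xi_n|\le r_0$, on which the inverse blows up, it manipulates the defining equations componentwise (using the explicit block structure of $G_3,G_4$), passes to a limit along a subsequence, and produces a nonzero solution of the eigenvalue equations \eqref{D0}--\eqref{D1} at some $\lambda_0$ with $\mathrm{Re}\lambda_0=-b_2/2$ and $|\xi_0|\le r_0$. This contradicts Theorem~\ref{eigen_3}, which says the only eigenvalues in $\mathrm{Re}\lambda>-b_2$ for small $|\xi|$ are $\lambda_1(s)=\lambda_2(s)=-a_1 s^2+o(s^2)$, hence bounded away from the line $\mathrm{Re}\lambda=-b_2/2$. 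This compactness-plus-limit argument works uniformly without ever needing a growth bound on the complementary semigroup.
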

\begin{proof}
Let $\lambda=x+\i y$. We prove \eqref{S_9} first.
By \eqref{S_2} and \eqref{S_5}, there exists $R>0$ large enough  such that if ${\rm Re}\lambda\ge -\mu/2$, $|{\rm Im}\lambda|\ge R$ and $|\xi|\le r_0$, then \eqref{bound_1} holds. This yields
$$\|(I-G_4(\xi)(\lambda-G_3(\xi))^{-1})^{-1}\|_\xi\le 2.$$
Thus, it remains to prove \eqref{S_9} for $|y|\le R$. We will prove it by contradiction. Indeed, if \eqref{S_9} does not hold for $|y|\le R$, namely, there are subsequences $\{\xi_n\}$, $\{\lambda_n=b_2/2+\i y_n\}$ with $|\xi_n|\le r_0$, $|y_n|\le R$, and $U_n=(f_n,E^1_n,B^1_n),V_n=(g_n,E^2_n,B^2_n)$ with $\|U_n\|_{\xi_n}\to 0$ $(n\to \infty)$, $\|V_n\|_{\xi_n}=1$ such that
$$(I-G_4(\xi)(\lambda-G_3(\xi))^{-1})^{-1}U_n=V_n.$$
Let
$$(a_n,b_n)^T=(\lambda_n -B_1(\xi_n))^{-1}(E^2_n,B^2_n)^T \Longleftrightarrow (E^2_n,B^2_n)^T=\lambda_n (a_n,b_n)^T-(\i\xi_n\times b_n,-\i\xi_n\times a_n)^T.$$
Then
\bma
P_{\rm d}f_n=&P_{\rm d}g_n+\i P_{\rm d}(v\cdot\xi_n)P_r(\lambda_n P_r-Q(\xi_n))^{-1}P_rg_n,\label{m_2}\\
P_r f_n=&P_r g_n+\i \lambda^{-1}_n P_r(v\cdot\xi_n)(1+\frac1{|\xi_n|^2})P_{\rm d}g_n-v\sqrt{M}\cdot(\omega_n\times a_n),\label{m}\\
E^1_n=&\lambda_n a_n-\i\xi_n\times b_n-\omega_n\times ((\lambda_n P_r-Q(\xi_n))^{-1}P_rg_n,v\sqrt M),\label{m_5}\\
B^1_n=&\lambda_n b_n+\i\xi_n\times a_n.\label{m_6}
\ema
Substituting \eqref{m} into \eqref{m_2} and \eqref{m_5}, we obtain
\bma
P_{\rm d}f_n=&P_{\rm d}g_n+\i P_{\rm d}(v\cdot\xi_n)P_r(\lambda_n P_r-Q(\xi_n))^{-1}P_rf_n \nnm\\
&+\lambda_n^{-1}P_{\rm d} (v\cdot\xi_n)P_r(\lambda_n P_r-Q(\xi_n))^{-1} P_r(v\cdot\xi_n)(1+\frac1{|\xi_n|^2})P_{\rm d}g_n,\label{m_4}\\
E^1_n=&\lambda_n a_n-\i\xi_n\times b_n-\omega_n\times ((\lambda_n P_r-Q(\xi_n))^{-1}P_rf_n,v\sqrt M)\nnm\\
&-\omega_n\times ((\lambda_n P_r-Q(\xi_n))^{-1}v\sqrt{M}\cdot(\omega_n\times a_n),v\sqrt M).\label{m_7}
\ema
Since $\|f_n\|_{\xi_n}+|E^1_n|+|B^1_n|\to 0$ as $n\to \infty$, it follows from \eqref{m_4}, \eqref{m_7} and \eqref{m_6} that
\bmas
&\lim_{n \to \infty}\sqrt{1+\frac1{|\xi_n|^2}}\frac{|C_n|}{|\lambda_n|}|\lambda_n+(|\xi_n|^2+1)((\lambda_n P_r-Q(|\xi_n|e_1))^{-1}\chi_1,\chi_1)|\to0,\\
&\lim_{n \to \infty}|\lambda_n a_n-\i\xi_n\times b_n- ((\lambda_n P_r-Q(|\xi_n|e_1))^{-1}\chi_2,\chi_2)a_n|= 0,\\
&\lim_{n \to \infty}|\lambda_n b_n+\i\xi_n\times a_n|= 0,
\emas
where $C_n=(g_n,\sqrt M)$.
Since $\sqrt{1+\frac1{|\xi_n|^2}}|C_n|\le 1,|\xi_n|\le r_0, |y_n|\le R$ and $|(a_n.b_n)|\le 2b_2^{-1}|(E^2_n,B^2_n)|\le 2b_2^{-1}$, there is a subsequence $\{(\xi_{n_j},\lambda_{n_j},C_{n_j})\}$ such that $\sqrt{1+\frac1{|\xi_{n_j}|^2}}C_{n_j}\to A_0,a_{n_j}\to a_0,b_{n_j}\to b_0,\xi_{n_j}\to\xi_0,\lambda_{n_j}\to \lambda_0=b_2/2+\i y\ne0$.
Thus
\bma
&\frac{|A_0|}{|\lambda_0|}|\lambda_0+(|\xi_0|^2+1)((\lambda_0 P_r-Q(|\xi_0|e_1))^{-1}\chi_1,\chi_1)|=0,\label{m_3}\\
&\lambda_0 a_0-\i\xi_0\times b_0+ ((\lambda_0 P_r-Q(|\xi_0|e_1))^{-1}\chi_2,\chi_2)a_0= 0,\label{m_3a}\\
&\lambda_0 b_0+\i\xi_0\times a_0= 0.\label{m_3b}
\ema

It is straightforward to verify  that  $(A_0,a_0,b_0)\ne0$. Indeed, otherwise,  we have $\lim\limits_{j\to\infty}(\mbox{$\sqrt{1+\frac1{|\xi_{n_j}|^2}}$}C_{n_j},a_{n_j},b_{n_j})=0.$
This and \eqref{m} lead to $\lim\limits_{j\to\infty}\|P_r g_{n_j}\|=0$ and $\lim\limits_{j\to\infty}(E^2_{n_j},B^2_{n_j})=0.$
 Thus $\lim\limits_{j\to\infty}\|V_{n_j}\|_{\xi_{n_j}}=0$, which  contradicts to $\|V_{n}\|_{\xi_n}=1$. Therefore,  \eqref{m_3}, \eqref{m_3a} and \eqref{m_3b}  imply that $\lambda_0$ is an eigenvalue of $\hat{B}(\xi_0)$ with ${\rm Re}\lambda_0=-b_2/2$ and $|\xi_0|\le r_0$, which contradicts to Theorem \ref{eigen_3} since we can assume ${\rm Re}\lambda_j(s)\ne -b_2/2$ by taking a smaller $r_0$ if necessary.

By an  argument similar to the one
for Lemma \ref{LP01}, we can prove \eqref{S_9a}.  And this completes the proof of the lemma.
\end{proof}

With the help of Lemmas~\ref{LP03}--\ref{spectrum3} and Lemmas~\ref{SG_2}--\ref{F_1}, we have a decomposition of the semigroup $S(t,\xi)=e^{t\hat{\AA}_1(\xi)}$.

\begin{thm}\label{semigroup1}
The semigroup $S(t,\xi)=e^{t\hat{\AA}_1(\xi)}$ with $\xi=s\omega\in \R^3$ and $s=|\xi|\neq0$  has the following decomposition
 \be
 S(t,\xi)U=S_1(t,\xi)U+S_2(t,\xi)U+S_3(t,\xi)U,
     \quad U\in L^2_\xi(\R^3_v)\times \mathbb{C}^3_\xi\times \mathbb{C}^3_\xi, \ \ t>0, \label{E_3a}
 \ee
 where
 \bma
 S_1(t,\xi)U&=\sum^2_{j=1}e^{t\lambda_j(s)}
              (U, \Psi^*_j(s,\omega)\,) \Psi_j(s,\omega)
               1_{\{|\xi|\leq r_0\}}, \label{E_5a}\\
 S_2(t,\xi)U&=\sum^4_{j=1} e^{t\beta_j(s)}
              (U,\Phi^*_j(s,\omega)\,) \Phi_j(s,\omega)
               1_{\{|\xi|\ge r_1\}},   \label{E_5b}
 \ema
with $(\lambda_j(s),\Psi_j(s,\omega))$ and $(\beta_j(s),\Phi_j(s,\omega))$ being the eigenvalue and eigenvector of the operator $\hat{\AA}_1(\xi)$ given by Theorem~\ref{eigen_3} and Theorem~\ref{eigen_4} for $|\xi|\le r_0$ and $|\xi|>r_1$ respectively,
and $S_3(t,\xi)=: S(t,\xi)-S_1(t,\xi)-S_2(t,\xi)$ satisfies that there
exists a constant $\kappa_0>0$ independent of $\xi$ such that
 \bq
 \|S_3(t,\xi)U\|_\xi\leq Ce^{-\kappa_0t}\|U\|_\xi,\quad t\ge0.\label{B_3a}
 \eq
\end{thm}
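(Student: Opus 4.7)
\textbf{Proof plan for Theorem \ref{semigroup1}.} The strategy is to represent the semigroup through the inverse Laplace / Dunford--Taylor formula and then, in each of the three frequency regimes, shift the vertical contour to the left while picking up residues at any eigenvalues crossed. Concretely, for $\kappa>0$ sufficiently large (using Lemma~\ref{SG_1}) and $U\in L^2_\xi(\R^3_v)\times\mathbb{C}^3_\xi\times\mathbb{C}^3_\xi$,
\begin{equation*}
e^{t\hat{\AA}_1(\xi)}U=\frac{1}{2\pi\i}\lim_{N\to\infty}\int_{\kappa-\i N}^{\kappa+\i N} e^{\lambda t}(\lambda-\hat{\AA}_1(\xi))^{-1}U\,d\lambda,\qquad t>0.
\end{equation*}
The idea is to deform this contour to the vertical line $\mathrm{Re}\lambda=-\kappa_0$ for a small fixed $\kappa_0>0$, using the resolvent identities \eqref{B_d} and \eqref{Bd3} together with the spectral localization from Lemmas \ref{LP01}, \ref{spectrum3}, Theorem \ref{eigen_3} and Theorem \ref{eigen_4}. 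The residues produce $S_1$ and $S_2$ while the integral on the shifted line becomes $S_3$.

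In the low-frequency region $|\xi|\le r_0$, I would use the factorization $\lambda-\hat{\AA}_1(\xi)=(I-G_4(\xi)(\lambda-G_3(\xi))^{-1})(\lambda-G_3(\xi))$ and hence
\begin{equation*}
(\lambda-\hat{\AA}_1(\xi))^{-1}=(\lambda-G_3(\xi))^{-1}(I-G_4(\xi)(\lambda-G_3(\xi))^{-1})^{-1}.
\end{equation*}
By Theorem \ref{eigen_3} the only spectrum in $\{\mathrm{Re}\lambda>-b_2\}$ consists of $\lambda_1(s),\lambda_2(s)$, which are simple after accounting for the orthogonality of the $\Psi_j$; shifting the contour to $\mathrm{Re}\lambda=-b_2/2$ and computing residues via the Taylor expansions in Theorem \ref{eigen_3} gives $S_1(t,\xi)U$ in the stated form. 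For the shifted integral, I would apply Parseval in $y=\mathrm{Im}\lambda$ combined with the $L^2$-in-$y$ bounds of Lemma \ref{SG_2} on $(\lambda P_r-Q(\xi))^{-1}$ and of Lemma \ref{SG_3} on $(\lambda-B_3(\xi))^{-1}$, noting that $|(\lambda-B_3(\xi))^{-1}|$ and the $P_\mathrm{d}$/$P_r$ blocks of $(\lambda-G_3(\xi))^{-1}$ are $L^2$ in $y$ for fixed $\mathrm{Re}\lambda=-b_2/2$ and $|\xi|\le r_0$ (the pole at $\lambda=0$ is cancelled by working modulo the discrete spectrum). The uniform boundedness of $(I-G_4(\xi)(\lambda-G_3(\xi))^{-1})^{-1}$ on this line is exactly the content of \eqref{S_9} in Lemma \ref{F_1}, which is what makes the whole low-frequency analysis go through.

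In the high-frequency region $|\xi|\ge r_1$, I would use instead $(\lambda-\hat{\AA}_1(\xi))^{-1}=(\lambda-G_1(\xi))^{-1}(I-G_2(\xi)(\lambda-G_1(\xi))^{-1})^{-1}$, and shift the contour to $\mathrm{Re}\lambda=-\kappa_0$ with $\kappa_0<\nu_0/2$. By Lemma \ref{LP01}(1) and Theorem \ref{eigen_4} the only spectrum in $\{\mathrm{Re}\lambda>-\kappa_0\}$ is the cluster $\{\beta_j(s)\}_{j=1}^{4}$ around $\pm\i s$, so the residue calculation yields $S_2(t,\xi)U$. For the remaining contour integral I would combine Lemma \ref{LP03} (bounds on $K_1(\lambda-c(\xi))^{-1}$, $P_m(\lambda-c(\xi))^{-1}$, $(v\cdot\xi)|\xi|^{-2}P_\mathrm{d}(\lambda-c(\xi))^{-1}$), the norm identity \eqref{b_1(xi)}, and Lemma \ref{F_1}'s analogue for the high-frequency Neumann series to dominate $(I-G_2(\xi)(\lambda-G_1(\xi))^{-1})^{-1}$ uniformly on the shifted line; the $L^2$-in-$y$ integrability of $(\lambda-G_1(\xi))^{-1}$ follows again from Lemmas \ref{SG_2}--\ref{SG_3} (the relevant multiplication operator is scalar $\lambda-c(\xi)$ with $\mathrm{Re}(c(\xi))\le-\nu_0$). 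For the intermediate region $r_0\le|\xi|\le r_1$, there is no spectrum in $\{\mathrm{Re}\lambda>-\alpha\}$ by Lemma \ref{LP01}(2), so the same contour-shift argument, now without any residue contribution, gives the purely exponential bound.

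The main obstacle, and the reason we need the auxiliary Lemma \ref{F_1}, is proving the bound on $S_3(t,\xi)$ \emph{uniformly in $\xi$}: the resolvent $(\lambda-\hat{\AA}_1(\xi))^{-1}$ contains blocks that blow up either at $\lambda=0$ (because of the factor $\lambda^{-1}P_\mathrm{d}(v\cdot\xi)(1+|\xi|^{-2})P_\mathrm{d}$ in \eqref{Qxi}, which is why the space $L^2_\xi$ with its weighted inner product is used) or near the hyperbolic eigenvalues $\pm\i|\xi|$ of $B_3(\xi)$, and both phenomena threaten uniform control on the Neumann series $(I-G_4(\xi)(\lambda-G_3(\xi))^{-1})^{-1}$ and $(I-G_2(\xi)(\lambda-G_1(\xi))^{-1})^{-1}$. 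Once Lemma \ref{F_1} supplies this uniform bound on the two shifted contours, the $L^2$-isometric property of the Laplace transform together with the contraction estimate from Lemma \ref{SG_1} yields $\|S_3(t,\xi)U\|_\xi\le Ce^{-\kappa_0 t}\|U\|_\xi$ with $\kappa_0=\min\{b_2/2,\alpha/2,\nu_0/4\}$ independent of $\xi$, which completes the decomposition \eqref{E_3a}--\eqref{B_3a}.
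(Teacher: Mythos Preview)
Your plan is correct and follows essentially the same route as the paper: inverse Laplace representation, the two resolvent factorizations \eqref{B_d} and \eqref{Bd3} in the respective frequency regimes, contour shift with residue extraction, and the $L^2$-in-$y$ bounds of Lemmas~\ref{SG_2}--\ref{SG_3} combined with the uniform inverse bound of Lemma~\ref{F_1} to control the shifted integral. One technical point to sharpen: the poles of $(\lambda-G_3(\xi))^{-1}$ at $\lambda=0,\pm\i|\xi|$ are \emph{not} in $\sigma(\hat{\AA}_1(\xi))$, so they are not ``cancelled by working modulo the discrete spectrum''---the paper handles them by subtracting the block $Y_2(\lambda,\xi)=\mathrm{diag}(\lambda^{-1}P_{\rm d},(\lambda-B_3(\xi))^{-1})$, observing that $\int_{-b_2/2-\i\infty}^{-b_2/2+\i\infty}e^{\lambda t}Y_2\,d\lambda=0$ (close to the left for $t>0$), and then bounding only $Z_1$ on the shifted line; also note that Lemma~\ref{F_1} covers the low and intermediate regions only, while for $|\xi|\ge r_1$ the Neumann-series bound comes directly from the smallness in Lemma~\ref{LP03}.
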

\begin{proof}
Since $D(\hat{B}_1(\xi)^2)$ is dense in $L^2_\xi(\R^3_v)$, by Theorem 2.7
in \cite{Pazy}, it is sufficient to prove the above decomposition for $U=(f,E,B)\in D(\hat{B}_1(\xi)^2)\times \mathbb{C}^3_\xi\times \mathbb{C}^3_\xi$. By Corollary 7.5 in \cite{Pazy},
the semigroup $e^{t\hat{\AA}_1(\xi)}$ can be represented by
 \bq
  e^{t\hat{\AA}_1(\xi)}U
  =\frac1{2\pi \i}\int^{\kappa+\i\infty}_{\kappa-\i\infty}
   e^{\lambda t}(\lambda-\hat{\AA}_1(\xi))^{-1}Ud\lambda,
   \quad  U\in D(\hat{B}_1(\xi)^2)\times \mathbb{C}^3_\xi\times \mathbb{C}^3_\xi,\,\, \kappa>0.          \label{V_3c}
 \eq
It remains to analyze the resolvent $(\lambda-\hat{\AA}_1(\xi))^{-1}$ for $\xi\in\R^3$ in order to obtain the decomposition \eqref{E_3a} for the semigroup $e^{t\hat{\AA}_1(\xi)}$.

By \eqref{S_8}, we rewrite  $(\lambda-\hat{\AA}_1(\xi))^{-1}$ for $|\xi|\le r_0$ as
 \bq
  (\lambda-\hat{\AA}_1(\xi))^{-1}
 =(\lambda-G_3(\xi))^{-1}+Z_1(\lambda,\xi)\label{V_1c},
 \eq
with
 \bmas
 Z_1(\lambda,\xi)
 &=(\lambda-G_3(\xi))^{-1}
   [I-Y_1(\lambda,\xi)]^{-1} Y_1(\lambda,\xi),  
\\
 Y_1(\lambda,\xi)
 &= G_4(\xi)(\lambda-G_3(\xi))^{-1}. 
 \emas
Substituting \eqref{V_1c} into \eqref{V_3c}, we have the following decomposition of the semigroup $e^{t\hat{\AA}_1(\xi)}$
 \be
 e^{t\hat{\AA}_1(\xi)}U
 =(e^{tQ(\xi)}P_rf,0,0)
  -\frac1{2\pi \i}\int^{\kappa+\i\infty}_{\kappa-\i\infty}
    e^{\lambda t}Z_2(\lambda,\xi)Ud\lambda,  \quad |\xi|\le r_0,  \label{V_3a}
  \ee
with
$$
Z_2(\lambda,\xi)=Z_1(\lambda,\xi)-Y_2(\lambda,\xi),\quad {\rm with}\quad
Y_2(\lambda,\xi)=\left(\ba \lambda^{-1}P_{\rm d} & 0 \\  0 & (\lambda -B_3(\xi))^{-1} \ea\right).
$$

To estimate the last term on the right hand side of \eqref{V_3a}, let us denote
 \bq
 X_{\kappa,N}=\frac1{2\pi \i}\int^{N}_{-N}
   e^{(\kappa+\i y)t}Z_2(\kappa+iy,\xi)U1_{|\xi|\le r_0}dy,    \label{UsN}
 \eq
where the constant $N>0$ is chosen large enough so that $N>y_1$ with $y_1$ defined in Lemma \ref{spectrum3}. Since $Z_2(\lambda,\xi)$ is analytic in the domain ${\rm Re}\lambda>-b_2/2$ with only finite singularities at
$\lambda=\lambda_j(s)\in \sigma(\hat{\AA}_1(\xi))$ for $j=1,2$, we can shift the integration
\eqref{UsN} from the line ${\rm Re}\lambda=\kappa>0$ to Re
$\lambda=-b_2/2$. Then
 \bq
 X_{\kappa,N}
 =X_{-\frac{b_2 }2,N}+H_N+2\pi\i\sum_{j=1}^2{\rm Res}
  \lt\{e^{\lambda t}Z_2(\lambda,\xi)U;\lambda_j(s)\rt\}1_{|\xi|\le r_0},    \label{UsN2}
 \eq
where Res$\{f(\lambda);\lambda_j\}$ is the residue of $f$ at $\lambda=\lambda_j$ and
 \bmas
  H_N=\frac1{2\pi \i}\(\int^{\kappa+\i N}_{-\frac{b_2}2+\i N}
      -\int^{\kappa-\i N}_{-\frac{b_2 }2-\i N}\)
        e^{\lambda t}Z_2(\lambda,\xi)U1_{|\xi|\le r_0}d\lambda.
 \emas
The right hand side of \eqref{UsN2} is estimated as follows.
By Lemma \ref{LP}, we have
 \be
 \|H_N\|_\xi\rightarrow0, \quad\mbox{as}\quad N\rightarrow\infty.  \label{UsN2a}
 \ee

By Cauchy Theorem, we obtain
 \bma
 \lim_{N\to\infty}\left|\int^{-\frac{b_2}2+\i N}_{-\frac{b_2}2-\i N}
 e^{\lambda t}\lambda^{-1}d\lambda\right|
  =\lim_{N\to\infty}\left\|\int^{-\frac{b_2}2+\i N}_{-\frac{b_2}2-\i N}
 e^{\lambda t}(\lambda-B_3(\xi))^{-1}d\lambda\right\|
   =0,\label{A_1}
  \ema
which leads to
$$
 \int^{-\frac{b_2}2+\i\infty}_{-\frac{b_2}2-\i\infty}
      e^{\lambda t}Y_2(\lambda,\xi)d\lambda
 =\lim_{N\to\infty}\int^{-\frac{b_2}2+\i N}_{-\frac{b_2}2-\i N}
      e^{\lambda t}Y_2(\lambda,\xi)d\lambda=0.
$$
Thus
 \be
 \lim_{N\to\infty} X_{-\frac{b_2}2,N}(t)
 =X_{-\frac{b_2}2,\infty}(t)
 =:\int^{-\frac{b_2}2+\i \infty}_{-\frac{b_2}2-\i \infty}
  e^{\lambda t}Z_1(\lambda,\xi)Ud\lambda.            \label{UsN3}
\ee

By Lemma \ref{F_1}, it holds that
$
\sup\limits_{|\xi|\le r_0, y\in\R}
\|[I-Y_1(-\frac{b_2 }2+\i y,\xi)]^{-1}\|_\xi\le 2.
$
Thus, we have for any $U,V\in L^2_\xi(\R^3_v)\times \mathbb{C}^3_\xi\times \mathbb{C}^3_\xi$,
  \bmas
 |(X_{-\frac{b_2 }2,\infty}(t)U,V)_\xi|
 \leq
 C e^{-\frac{b_2 t}2}\int^{+\infty}_{-\infty}
   \|[\lambda-G_3(\xi)]^{-1}U\|_\xi\|[\overline{\lambda}-G_3(-\xi)]^{-1}V\|_\xi dy,\quad \lambda=-\frac{b_2}2+\i y.
 \emas
By \eqref{S_4a} and \eqref{S_4b}, we have
 \bmas
 &\int^{+\infty}_{-\infty}
   \|[\lambda-G_3(\xi)]^{-1}U\|^2_\xi dy\\
=&\int^{+\infty}_{-\infty}
   \|[\lambda-Q(\xi)]^{-1}P_rf\|^2+|\lambda|^{-1}\|P_{\rm d}f\|^2_\xi dy+\int^{+\infty}_{-\infty}
   \|[\lambda-B_3(\xi)]^{-1}(E,B)^{T}\|^2dy\\
\le &C(\|f\|^2_\xi+|(E,B)|^2),\quad \lambda=-\frac{b_2}2+\i y,
 \emas
which yields
 $
 |(X_{-\frac{b_2}2,\infty}(t)U,V)_\xi|
  \leq
 C e^{-\frac{b_2 t}2}\|U\|_\xi\|V\|_\xi,
 $
and
 \bq
\|X_{-\frac{b_2}2,\infty}(t)\|_\xi
   \le C  e^{-\frac{b_2 t}2}. \label{UsN4}
 \eq

Since $\lambda_j(s)\in \rho(Q(\xi))$  and $Z_2(\lambda,\xi)=(\lambda-\hat{\AA}_1(\xi))^{-1}-\left(\ba (\lambda P_r-Q(\xi))^{-1}P_r & 0 \\  0 & 0 \ea\right)$,
 by a similar argument as Theorem 3.4 in \cite{Li2}, we can prove 
 \bma
 {\rm Res}\{e^{\lambda t}Z_2(\lambda,\xi)U;\lambda_j(s)\}
  ={\rm Res}\{e^{\lambda t}(\lambda-\hat{\AA}_1(\xi))^{-1}U;\lambda_j(s)\}
  =e^{\lambda_j(s)t}
    (U,\Psi_j^*(s,\omega))\Psi_j(s,\omega).\label{projection1}
 \ema

Therefore, we conclude from \eqref{V_3a}--\eqref{projection1} that
 \bq
   e^{t\hat{\AA}_1(\xi)}U
 = (e^{tQ(\xi)}P_rf,0,0)
  +X_{-\frac{b_2}2,\infty}(t)
  +\sum^2_{j=1}e^{t\lambda_j(s)}
   (U,\Psi_j^*(s,\omega))\Psi_j(s,\omega)1_{\{|\xi|\leq r_0\}},\quad |\xi|\leq r_0. \label{low}
 \eq

By \eqref{E_6}, we have for $|\xi|> r_0$ that
 \bq
(\lambda-\hat{\AA}_1(\xi))^{-1}=(\lambda-G_1(\xi))^{-1}+Z_3(\lambda,\xi),\label{V_2a}
 \eq
with the operator $ Z_3(\lambda,\xi)$ defined by
 \bmas
 Z_3(\lambda,\xi)
 &=(\lambda-G_1(\xi))^{-1}[I-Y_3(\lambda,\xi)]^{-1}Y_3(\lambda,\xi),
 \\
 Y_3(\lambda,\xi)
  &=:G_2(\xi)(\lambda-G_1(\xi))^{-1}.
 \emas
Substituting \eqref{V_2a} into \eqref{V_3c} yields
 \bq
 e^{t\hat{\AA}_1(\xi)}U
 =( e^{tc(\xi)}f,0,0)
  +\frac1{2\pi \i}\int^{\kappa+\i\infty}_{\kappa-\i\infty}
    e^{\lambda t}Z_4(\lambda,\xi)Ud\lambda,\quad |\xi|> r_1.   \label{V_3b}
 \eq
Here,
$$
Z_4(\lambda,\xi)=Z_3(\lambda,\xi)-Y_4(\lambda,\xi),\quad {\rm with}\quad
Y_4(\lambda,\xi)=\left(\ba 0 & 0 \\  0 & (\lambda -B_3(\xi))^{-1} \ea\right).
$$
Similarly, in order to estimate the last term on the right hand side of \eqref{V_3b},
let us denote
 \bq
Y_{\kappa,N}
 =\frac1{2\pi \i}\int^{N}_{-N}e^{(\kappa+\i y)t}Z_4(\kappa+\i y,\xi)U1_{\{|\xi|> r_0\}}dy,    \label{VsN}
 \eq
where the constant $N>y_1$ for $|\xi|\le r_1$ and $N>2|\xi|$ for $|\xi|\ge r_1$ with $y_1,r_1$ defined in Lemma \ref{spectrum3}.
Since the operator $Z_4(\lambda,\xi)$ is analytic in the domain ${\rm Re}\lambda\ge -\kappa_0=:-\alpha(r_0,r_1)/2>0$ and  $r_0<|\xi|< r_1$ with the constant $\alpha(r_0,r_1)>0$ defined in Lemma \ref{LP01}, and is analytic
except only finite singularities at
$\lambda=\lambda_j(s)\in \sigma(\hat{\AA}_1(\xi))$ for $j=1,2,3,4$, in the domain ${\rm Re}\lambda\ge -\mu/2$ and $|\xi|\ge r_1$, we can  shift the integration of \eqref{VsN} for $r_0<|\xi|<r_1$ from the line ${\rm Re}\lambda=\kappa>0$ to $\mbox{Re}\lambda=-\kappa_0$, and shift the integration of \eqref{VsN} for $|\xi|\ge r_1$ from the line ${\rm Re}\lambda=\kappa>0$ to $\mbox{Re}\lambda=-\mu/2$ to obtain
\bma
 &Y_{\kappa,N}1_{\{r_0<|\xi|< r_1\}}=Y_{-\kappa_0,N}1_{\{r_0<|\xi|< r_1\}}+I_N,   \label{VsN1}\\
  &Y_{\kappa,N}1_{\{|\xi|\ge r_1\}}=Y_{-\frac{\mu}2,N}1_{\{|\xi|\ge r_1\}}+J_N+2\pi\i\sum^4_{j=1}{\rm Res}
  \lt\{e^{\lambda t}Z_4(\lambda,\xi)U;\beta_j(s)\rt\}1_{\{|\xi|\ge r_1\}},
 \ema
with
\bmas
 I_N&=\frac1{2\pi \i}\(\int^{\kappa+\i N}_{-\kappa_0+\i N}-\int^{\kappa-\i N}_{-\kappa_0-\i N}\)
     e^{\lambda t}Z_4(\lambda,\xi)U1_{\{r_0<|\xi|< r_1\}}d\lambda,\\
 J_N&=\frac1{2\pi \i}\(\int^{\kappa+\i N}_{-\frac{\mu}2+\i N}-\int^{\kappa-\i N}_{-\frac{\mu}2-\i N}\)
     e^{\lambda t}Z_4(\lambda,\xi)U1_{\{|\xi|\ge r_1\}}d\lambda.
\emas
By Lemma~\ref{LP03} and Lemma \ref{F_1}, it is straightforward to verify
 \bgr
 \|I_N\|\rightarrow0,\quad \|J_N\|\rightarrow0 \ \ \mbox{as}\ \  N\rightarrow\infty,    \label{VsNa}\\
 \sup_{r_0<|\xi|<r_1, y\in\R} \|[I-Y_4(-\frac{b_2}2+\i y,\xi)]^{-1}\| \le C,\quad \sup_{|\xi|>r_1, y\in\R} \|[I-Y_4(-\frac{\mu}2+\i y,\xi)]^{-1}\| \le 2.
  \egr
By \eqref{VsN} and \eqref{A_1},  it holds that
 \bmas
  &Y_{-\kappa_0,\infty}(t)
 =\lim_{N\to \infty}Y_{-\kappa_0,N}(t)
 =\int^{+\infty}_{-\infty}e^{(-\kappa_0+\i y)t}Z_3(-\kappa_0+\i y,\xi)Udy,
 \\
  &Y_{-\frac{\mu}2,\infty}(t)
 =\lim_{N\to \infty}Y_{-\frac{\mu}2,N}(t)
 =\int^{+\infty}_{-\infty}e^{(-\frac{\mu}2+\i y)t}Z_3(-\frac{\mu}2+\i y,\xi)Udy.
 \emas
Then we have for any $U,V\in L^2(\R^3_v)\times \mathbb{C}^3_\xi\times \mathbb{C}^3_\xi$,
 \bma
 |(Y_{-\kappa_0,\infty}(t)U1_{\{r_0<|\xi|< r_1\}},V)|
&\leq
C e^{-\kappa_0 t}\int^{+\infty}_{-\infty}\|(\lambda-G_1(\xi))^{-1}U\|\|(\overline{\lambda}-G_1(-\xi))^{-1}V\|dy
\nnm\\
&\leq
 C(\nu_0-\kappa_0)^{-1} e^{-\frac{\mu}2 t}\|U\|\|V\|, \quad \lambda=-\kappa_0+\i y, \label{VsNb1}
 \\
|(Y_{-\frac{\mu}2,\infty}(t)U1_{\{|\xi|\ge r_1\}},V)|
&\leq
C e^{-\frac{\mu}2 t}\int^{+\infty}_{-\infty}\|(\lambda-G_1(\xi))^{-1}U\|\|(\overline{\lambda}-G_1(-\xi))^{-1}V\|dy
\nnm\\
&\leq
 C(\nu_0-\frac{\mu}2)^{-1} e^{-\frac{\mu}2 t}\|U\|\|V\|, \quad \lambda=-\frac{\mu}2+\i y,  \label{VsNb1a}
 \ema
where we have used \eqref{S_4b} and the fact (cf. Lemma 2.2.13 of \cite{Ukai3})
that
 \bq
\int^{+\infty}_{-\infty}\|(x+\i y-c(\xi))^{-1}f\|^2dy\leq \pi(x+\nu_0)^{-1}\|f\|^2,\quad x>-\nu_0. \nnm
 \eq
From \eqref{VsNb1}, \eqref{VsNb1a} and the fact that $\|f\|^2\leq\|f\|_\xi^2\leq(1+r_0^{-2})\|f\|^2$ for $|\xi|> r_0$, we have
  \bq
  \|Y_{-\kappa_0,\infty}(t)1_{\{r_0<|\xi|< r_1\}}\|_\xi\le C e^{-\kappa_0t},\quad \|Y_{-\frac{\mu}2,\infty}(t)1_{\{|\xi|\ge r_1\}}\|_\xi
   \leq C e^{-\frac{\mu}2t}. \label{VsN2}
 \eq

By $\lambda_j(s)\in \rho(c(\xi))$ and $Z_4(\lambda,\xi)=(\lambda-\hat{\AA}_1(\xi))^{-1}-\left(\ba (\lambda -c(\xi))^{-1} & 0 \\  0 & 0 \ea\right)$, we can prove 
 \be
 {\rm Res}\{e^{\lambda t}Z_4(\lambda,\xi)U;\beta_j(s)\}
  ={\rm Res}\{e^{\lambda t}(\lambda-\hat{\AA}_1(\xi))^{-1}U;\beta_j(s)\}
  =e^{\beta_j(s)t}
    (U,\Phi_j^*(s,\omega))\Phi_j(s,\omega).   \label{projection}
\ee

Therefore, we conclude from \eqref{V_3b}--\eqref{projection} that
 \bma
   e^{t\hat{\AA}_1(\xi)}U
 =&(e^{tc(\xi)}f,0,0) +Y_{-\kappa_0,\infty}(t)1_{\{r_0<|\xi|< r_1\}}+Y_{-\frac{\mu}2,\infty}(t)1_{\{|\xi|\ge r_1\}}
 \nnm\\
 &+\sum^4_{j=1}e^{t\beta_j(s)}
   (U,\Phi_j^*(s,\omega))\Phi_j(s,\omega)1_{\{|\xi|> r_1\}}, \quad |\xi|> r_0.  \label{high}
 \ema

Combining \eqref{low} and \eqref{high} gives  \eqref{E_3a} with $S_1(t,\xi)f$, $S_2(t,\xi)f$ and $ S_3(t,\xi)f$ defined by
 \bmas
 S_1(t,\xi)U&=\sum^2_{j=1}e^{t\lambda_j(s)}
              (U,\Psi_j^*(s,\omega))\Psi_j(s,\omega)1_{\{|\xi|\leq r_0\}}, \\
 S_2(t,\xi)U&=\sum^4_{j=1}e^{t\beta_j(s)}
              (U,\Phi_j^*(s,\omega))\Phi_j(s,\omega)1_{\{|\xi|\ge r_1\}}, \\
 S_3(t,\xi)U&=(e^{tQ(\xi)}P_rf,0,0)1_{\{|\xi|\leq r_0\}}+X_{-\frac{b_2}2,\infty}(t)1_{\{|\xi|\leq r_0\}}\\
            &\quad       +(e^{tc(\xi)}f,0,0)1_{\{|\xi|\ge r_1\}}+Y_{-\kappa_0,\infty}(t)1_{\{r_0<|\xi|< r_1\}}+Y_{-\frac{\mu}2,\infty}(t)1_{\{|\xi|\ge r_1\}}.
\emas
In particular, $S_3(t,\xi)U$  satisfies \eqref{B_3a} because of \eqref{decay_1}, \eqref{UsN4},  \eqref{VsN2} and the estimate $\| e^{tc(\xi)}1_{\{|\xi|> r_0\}}\|_\xi \le  Ce^{-\nu_0t}$ coming from \eqref{Cxi} and \eqref{nuv}. This completes the proof of the theorem.
\end{proof}

\subsection{Optimal convergence rates for two-species}

Based on the decomposition of the semigroup given in the previous subsection,
we now study the optimal convergence rates of the solution
of the linearized system to the equilibrium.

Set $U=(f,E,B)$ with $f=f(x,v)$, $E=E(x)$ and $B=B(x)$ and denote $ D^l=\{U\in L^2(\R^3_{x}\times \R^3_{v})\times L^2(\R^3_{x})\times L^2(\R^3_{x}) \,|\, \|U\|_{D^l}<\infty\,\}\, (Z^2=D^0)$
with the norm $\|\cdot\|_{D^l}$ defined by
 \bmas
 \|U\|_{D^l}
 &=\(\intr (1+|\xi|^2)^l \|\hat{U}\|^2 d\xi \)^{1/2}\\
 &=\(\intr (1+|\xi|^2)^l
     \(\intr |\hat{f}|^2dv+ |\hat E|^2+| \hat B|^2\)d\xi
    \)^{1/2},
 \emas
where $\hat{f}=\hat{f}(\xi,v)$, $\hat{E}=\hat{E}(\xi)$ and $\hat{B}=\hat{B}(\xi)$.

For any $U_0=(f_0,E_0,B_0)\in L^2(\R^3_v;H^l(\R^3_{x}))\times H^l(\R^3_{x})\times H^l(\R^3_{x})$,
set
 \bq
 e^{t\hat \AA_0(\xi)}\hat U_0=( (e^{t\hat{\AA}_1(\xi)}\hat V_0)_1 ,-\frac{\i\xi}{|\xi|^2}((e^{t\hat{\AA}_1(\xi)}\hat V_0)_1,\sqrt M)-\frac{\xi}{|\xi|}\times(e^{t\hat{\AA}_1(\xi)}\hat V_0)_2,-\frac{\xi}{|\xi|}\times(e^{t\hat{\AA}_1(\xi)}\hat V_0)_3),
\label{solution1}
  \eq
  with
\bgrs
e^{t\hat{\AA}_1(\xi)}\hat V_0
 =( (e^{t\hat{\AA}_1(\xi)}\hat V_0)_1,(e^{t\hat{\AA}_1(\xi)}\hat V_0)_2,(e^{t\hat{\AA}_1(\xi)}\hat V_0)_3)\in L^2_\xi(\R^3_v)\times \mathbb{C}^3_\xi\times \mathbb{C}^3_\xi,\\
\hat V_0=(\hat f_0,\frac{\xi}{|\xi|}\times \hat E_0,\frac{\xi}{|\xi|}\times \hat B_0).
\egrs

Then $e^{t\AA_0}U_0$ is the solution of the system \eqref{LVMB1a}. By Lemma \ref{SG_1}, it holds that
 $$
 \|e^{t \AA_0} U_0\|_{D^l}=\intr (1+|\xi|^2)^l\|e^{t\hat{\AA}_1(\xi)}\hat V_0\|^2_\xi d\xi\le \intr (1+|\xi|^2)^l\|\hat V_0\|^2_\xi d\xi
=\|U_0\|_{D^l}.
 $$
This means that the linear operator $\AA_0$ generates a strongly continuous contraction semigroup $e^{t\AA_0}$ on $D^l$, and therefore, $U(t)=e^{t\AA_0}U_0$ is a global solution to \eqref{LVMB1a} for the linearized Vlasov-Maxwell-Boltzmann equations
with initial data $U_0\in D^l$.

First of all, we have the upper bounds of the time decay rates given in
\begin{thm}
 \label{rate1}
 Let $(f_2(t),E(t),B(t))$ be a solution of the system \eqref{LVMB1a}. If the initial data $U_0=(f_0,E_0,B_0)\in L^2(\R^3_{v};H^l(\R^3_{x}) \cap L^1(\R^3_{x}))\times H^l(\R^3_x)\cap L^1(\R^3_{x})\times H^l(\R^3_x)\cap L^1(\R^3_{x})$, then it holds for any $\alpha,\alpha'\in\N^3$ with $\alpha'\le \alpha$ that
 \bma
\|\da_x P_{\rm d}f(t)\|_{L^2_{x,v}}
\leq
&Ce^{-\kappa_0t} \|\da_x U_0\|_{Z^2},\label{D_2z}
\\
\|\da_x P_rf(t)\|_{L^2_{x,v}}
\leq
&C(1+t)^{-(\frac54+\frac{k}2)}
  (\|\da_x U_0\|_{Z^2}+\|\dx^{\alpha'}U_0\|_{Z^1})+C(1+t)^{-(m+\frac12)}\|\Tdx^{m}\da_x U_0\|_{Z^2}, \label{D_3z}
\\
\|\da_x E(t)\|_{L^2_x}
\leq
&C(1+t)^{-(\frac54+\frac{k}2)}
  (\|\da_x U_0\|_{Z^2}+\|\dx^{\alpha'}U_0\|_{Z^1})+C(1+t)^{-m}\|\Tdx^{m}\da_x U_0\|_{Z^2}, \label{D_4z}
\\
\|\dxa B(t)\|_{L^2_x}
\leq
&C(1+t)^{-(\frac34+\frac{k}2)}
  (\|\da_x U_0\|_{Z^2}+\|\dx^{\alpha'}U_0\|_{Z^1})+C(1+t)^{-m}\|\Tdx^{m}\da_x U_0\|_{Z^2},\label{D_0z}
 \ema
 where $k=|\alpha-\alpha'|$ and $m\ge 0$.
\end{thm}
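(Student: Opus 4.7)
The plan is to combine the semigroup decomposition $S(t,\xi) = S_1(t,\xi) + S_2(t,\xi) + S_3(t,\xi)$ from Theorem~\ref{semigroup1} with the reconstruction formula \eqref{solution1}, which expresses $(\hat f_2,\hat E,\hat B)$ through $(e^{t\hat{\AA}_1(\xi)}\hat V_0)$ acting on $\hat V_0 = (\hat f_0,\frac{\xi}{|\xi|}\times\hat E_0,\frac{\xi}{|\xi|}\times\hat B_0)$. Applying Plancherel, every norm on the left of \eqref{D_2z}--\eqref{D_0z} becomes a weighted $L^2_\xi$ integral of this kernel against $\hat U_0$. The strategy is to split each integral into three frequency regimes---$|\xi|\le r_0$, $r_0<|\xi|<r_1$, and $|\xi|\ge r_1$---estimate each piece with the appropriate tool ($S_1$, $S_3$, $S_2$ respectively), then sum. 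Throughout, the derivative splitting
\[
 |\widehat{\dx^{\alpha}U_0}(\xi)|=|\xi|^{k}|\widehat{\dx^{\alpha'}U_0}(\xi)|\le |\xi|^{k}\|\dx^{\alpha'}U_0\|_{Z^1},\qquad k=|\alpha-\alpha'|,
\]
is used to trade $k$ spatial derivatives for a factor $|\xi|^{k}$ together with the weaker $Z^1$-norm of the data.

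For the low-frequency part, the key input is the eigenvector structure from Theorem~\ref{eigen_3}: one has $\psi_{j,0}=0$, $X_{j,0}=0$, and $Y_{j,0}=\i W^{j}\ne0$, so $\psi_{j}$ and $X_{j}$ vanish to order $|\xi|$ at the origin while $Y_{j}$ is $O(1)$. Paired with $|e^{t\lambda_{j}(s)}|\le e^{-c|\xi|^{2}t}$ (from Theorem~\ref{eigen_3}), the Gaussian integrals
\[
\int_{|\xi|\le r_{0}}|\xi|^{2(|\alpha|+j)}e^{-2c|\xi|^{2}t}\,d\xi\le C(1+t)^{-3/2-|\alpha|-j},\qquad j=0,1,
\]
yield the $(1+t)^{-5/4-k/2}$ rate for $P_{r}f_{2}$ and for the rotational part of $E$ (where $j=1$ is gained from $X_{j}=O(|\xi|)$), and the $(1+t)^{-3/4-k/2}$ rate for $B$ (where $j=0$ because $Y_{j}=O(1)$). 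The macroscopic output $\da_x P_{\rm d}f_{2}$ is special: since $P_{\rm d}\psi_{j}=0$ by \eqref{eigf1} \emph{and} $P_{\rm d}\phi_{j}=0$ by \eqref{eigfr3}, neither $S_{1}$ nor $S_{2}$ contributes to it, leaving only $S_{3}$ and hence the exponential bound \eqref{D_2z}.

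For the high-frequency part, the essential obstacle is the slow decay $-\mathrm{Re}\,\beta_{j}(s)\sim s^{-1}$ from Theorem~\ref{eigen_4}, so $e^{t\beta_{j}(s)}\sim e^{-ct/|\xi|}$ does not by itself produce algebraic decay. The remedy is the elementary bound $e^{-ct/|\xi|}\le C_{m}(|\xi|/t)^{m+\theta}$ valid for any $m,\theta\ge0$, which converts an inverse power of $t$ into a positive power of $|\xi|$ that must be absorbed against the Sobolev regularity of the data. Using $\|\phi_{j}\|=O(|\xi|^{-1/2})$ with $\theta=1/2$ gives
\[
 \int_{|\xi|\ge r_{1}}|\xi|^{2|\alpha|}|\phi_{j}|^{2}e^{-2ct/|\xi|}|\hat U_{0}|^{2}d\xi
 \le C_{m}t^{-2m-1}\|\Tdx^{m}\da_x U_{0}\|_{L^{2}}^{2},
\]
producing the $(1+t)^{-m-1/2}\|\Tdx^{m}\da_x U_{0}\|_{Z^{2}}$ error in \eqref{D_3z}, while $X_{j},Y_{j}=O(1)$ together with $\theta=0$ yield the $(1+t)^{-m}$ error in \eqref{D_4z} and \eqref{D_0z}. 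The intermediate region $r_{0}<|\xi|<r_{1}$ is absorbed into $S_{3}$ by the spectral gap from Lemma~\ref{LP01}, producing purely exponential terms.

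Finally, to assemble $\hat E$ one writes $\hat E = \hat E_{\mathrm{rot}}+\hat E_{\mathrm{div}}$ with $\hat E_{\mathrm{div}}=-\i\xi|\xi|^{-2}(\hat f_{2},\sqrt M)$ and $\hat E_{\mathrm{rot}}=-\frac{\xi}{|\xi|}\times (e^{t\hat{\AA}_{1}(\xi)}\hat V_{0})_{2}$; since $(\hat f_{2},\sqrt M)\sqrt M=\widehat{P_{\rm d}f_{2}}$ and $P_{\rm d}f_{2}$ has just been shown to decay exponentially, $\hat E_{\mathrm{div}}$ is dominated by the rotational part, and \eqref{D_4z} is obtained from the bound on $(e^{t\hat{\AA}_{1}(\xi)}\hat V_{0})_{2}$ established above. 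The main bookkeeping obstacle will be choosing the correct $\theta$ component by component so that the $|\phi_{j}|$-loss at high frequency is exactly compensated, and verifying that the low--high splitting is compatible with the reconstruction map on the constrained subspace $\mathbb{C}^{3}_{\xi}\times\mathbb{C}^{3}_{\xi}$; all other steps are standard consequences of Theorems~\ref{eigen_3}, \ref{eigen_4}, and \ref{semigroup1}.
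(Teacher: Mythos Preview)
Your proposal is correct and follows essentially the same route as the paper's proof: both use the semigroup decomposition $S_1+S_2+S_3$ from Theorem~\ref{semigroup1}, exploit $P_{\rm d}\psi_j=P_{\rm d}\phi_j=0$ to reduce $P_{\rm d}f_2$ to the exponentially decaying $S_3$ piece, handle the low-frequency part via the eigenvector expansions \eqref{eigf1} together with Gaussian integrals, and treat the high-frequency part by trading $e^{-ct/|\xi|}$ for powers of $|\xi|/t$ absorbed into the $H^m$-regularity of the data. Your divergence/rotation split of $\hat E$ is exactly the identity $|\hat E|^2=|\xi|^{-2}|(\hat f_2,\sqrt M)|^2+|\omega\times\hat E|^2$ that the paper invokes (using the constraint in \eqref{LVMB1a}), so there is no substantive difference.
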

 \begin{proof}
By  \eqref{solution1} and Theorem \ref{semigroup1}, we have for $\omega=\xi/|\xi|$ that
\bma
(\hat f_2(t), \omega\times\hat E(t),\omega\times\hat B(t))&=e^{t\hat{\AA}_1(\xi)}\hat V_0=S_1(t,\xi)\hat V_0+S_2(t,\xi)\hat V_0+S_3(t,\xi)\hat V_0\nnm\\
&=\sum^3_{k=1}(h_k(t),H_k(t),J_k(t)),
\ema
where
$
S_k(t,\xi)\hat V_0=(h_k(t),H_k(t),J_k(t))\in L^2_\xi(\R^3_v)\times \mathbb{C}^3_\xi\times \mathbb{C}^3_\xi
,\ k=1,2,3,$ and $
\hat V_0=(\hat f_0,\omega\times \hat E_0,\omega\times \hat B_0).
$ 
Then
 \bma
   \|\dxa P_rf_2(t)\|_{L^2_{x,v}}
 &=\|\xi^\alpha P_r\hat f_2(t)\|_{L^2_{\xi,v}}
 \nnm\\
&\le
  \|\xi^\alpha P_rh_1(t)\|_{L^2_{\xi,v}}
 +\|\xi^\alpha P_rh_2(t)\|_{L^2_{\xi,v}}+\|\xi^\alpha P_rh_3(t)\|_{L^2_{\xi,v}}, \label{D1a}
 \\
 \|\dxa E(t)\|_{L^2_x}&=\|\xi^\alpha \hat E(t)\|_{L^2_\xi}=(\|\xi^\alpha|\xi|^{-1}(\hat f(t),\chi_0)\|^2_{L^2_\xi}+\|\xi^\alpha  (\omega\times \hat E)(t)\|^2_{L^2_\xi})^{1/2}\nnm\\
 &\le
   \|\frac{\xi^\alpha}{|\xi|}(h_1(t),\chi_0)\|_{L^2_\xi}
 +\|\frac{\xi^\alpha}{|\xi|} (h_2(t),\chi_0)\|_{L^2_\xi}+\|\frac{\xi^\alpha}{|\xi|} (h_3(t),\chi_0)\|_{L^2_\xi}\nnm\\
 &\quad+\|\xi^\alpha H_1(t)\|_{L^2_\xi}
 +\|\xi^\alpha H_2(t)\|_{L^2_\xi}+\|\xi^\alpha  H_3(t)\|_{L^2_\xi}, \label{D1b}
 \\
 \|\dxa B(t)\|_{L^2_x}&=\|\xi^\alpha \hat B(t)\|_{L^2_\xi}=\|\xi^\alpha  (\omega\times \hat B)(t)\|_{L^2_\xi}\nnm\\
 &\le
  \|\xi^\alpha J_1(t)\|_{L^2_\xi}
 +\|\xi^\alpha J_2(t)\|_{L^2_\xi}+\|\xi^\alpha  J_3(t)\|_{L^2_\xi}. \label{D1c}
 \ema
 By \eqref{B_3a} and $|\hat E_0|^2=\frac1{|\xi|^2}|(\hat f_0,\sqrt M)|^2+|\omega\times E_0|^2$, $|\hat B_0|^2=|\omega\times B_0|^2$,
we can estimate the terms on the right hand side of \eqref{D1a}--\eqref{D1c} as follows:
 \bma
&\intr (\xi^{\alpha})^2(\| h_3(t)\|^2_{L^2_v}
 +\frac1{|\xi|^2}|(h_3(t),\sqrt M)|^2 +|H_3(t)|^2+|J_3(t)|^2)d\xi
 \nnm\\
\leq
& C\intr e^{-2\kappa_0 t}(\xi^{\alpha})^2
  (\|\hat f_0\|^2_{L^2_v}+\frac1{|\xi|^2}|(\hat{f}_0,\sqrt{M})|^2+|\omega\times \hat E_0|^2+|\omega\times \hat B_0|^2)d\xi
 \nnm\\
\leq
& C e^{-2\kappa_0 t}(\|\da_x f_0\|^2_{L^2_{x,v}}+\|\dxa E_0\|_{L^2_x}+\|\dxa B_0\|_{L^2_x}).\label{D_1}
 \ema

In the low frequency region, by \eqref{E_5a}, we have
\bmas
  S_1(t,\xi)\hat{V}_0&=\sum^2_{j=1}e^{t\lambda_j(|\xi|)}\big\{[(\hat{f}_0,\overline{\psi_{j,0}}\,) -(\omega\times\hat{E}_0,\overline{X_{j,0}})-(\omega\times\hat{B}_0,\overline{Y_{j,0}})] (\psi_{j,0},X_{j,0},Y_{j,0})\\
  &\quad+|\xi|((T_j(\xi)\hat{V}_0)_1,(T_j(\xi)\hat{V}_0)_2,(T_j(\xi)\hat{V}_0)_3)\big\}1_{\{|\xi|\leq r_0\}},
\emas
where $T_j(\xi), \ j=1,2,$ are the  linear operators with the norm $\|T_j(\xi)\|$ being uniformly bounded for $|\xi|\leq r_0$.

By \eqref{eigfr0} and \eqref{eigf1}, we have
\bma
 (h_1(t),\sqrt M)=&0,
 \quad
P_rh_1(t)
=|\xi|\sum_{j=1}^2e^{\lambda_j(|\xi|)t}(T_j(\xi)\hat{V}_0)_1,\label{F_2}
  \\
 H_1(t)
 =&|\xi|\sum_{j=1}^2e^{\lambda_j(|\xi|)t}(T_j(\xi)\hat{V}_0)_2,\label{F_3}
 \\
J_1(t)
 =&\sum_{j=1}^2 e^{\lambda_j(|\xi|)t}(\omega\times\hat B_0,W^j)W^j+|\xi|\sum_{j=1}^2e^{\lambda_j(|\xi|)t}(T_j(\xi)\hat{V}_0)_3, \label{F_4}
 \ema
where
$W^j, \ j=1,2,$ is given by \eqref{eigf1}.
Since
 \bq
 \mathrm{Re}\lambda_j(|\xi|)
 =a_j|\xi|^2(1+O(|\xi|))
 \le - \eta_1 |\xi|^2,\quad |\xi|\leq r_0,     \label{ee}
 \eq
where $\eta_1>0$ denotes a generic constant that will also be used later, we obtain by \eqref{F_2}--\eqref{F_4} that
 \bma
 \|\xi^\alpha P_rh_1(t)\|^2_{L^2_{\xi,v}}
 &\leq
 C\int_{|\xi|\leq r_0}(\xi^\alpha)^{2}|\xi|^2e^{-2\eta_1 |\xi|^2t}(\|\hat{f}_0\|^2_{L^2_v}+|\hat E_0|^2+|\hat B_0|^2)d\xi\nnm\\
&\leq
  C(1+t)^{-(5/2+k)}(\|\dx^{\alpha'}f_0\|^2_{L^{2,1}}+\|\dx^{\alpha'}E_0\|^2_{L^1_x}+\|\dx^{\alpha'}B_0\|^2_{L^1_x}),\label{V_6}
  \\
  \|\xi^\alpha H_1(t)\|^2_{L^2_\xi}
 &\leq
 C\int_{|\xi|\leq r_0}(\xi^\alpha)^{2}|\xi|^2e^{-2\eta_1 |\xi|^2t}(\|\hat{f}_0\|^2_{L^2_v}+|\hat E_0|^2+|\hat B_0|^2)d\xi\nnm\\
&\leq
  C(1+t)^{-(5/2+k)}(\|\dx^{\alpha'}f_0\|^2_{L^{2,1}}+\|\dx^{\alpha'}E_0\|^2_{L^1_x}+\|\dx^{\alpha'}B_0\|^2_{L^1_x}),\label{V_6a}
\\
 \|\xi^\alpha J_1(t)\|^2_{L^2_\xi}
&\leq
 C\int_{|\xi|\leq r_0}(\xi^\alpha)^{2}e^{-2\eta_1 |\xi|^2t}(|\xi|^2\|\hat{f}_0\|^2_{L^2_v}+|\xi|^2|\hat E_0|^2+|\hat B_0|^2)d\xi\nnm\\
&\leq
  C(1+t)^{-(3/2+k)}(\|\dx^{\alpha'}f_0\|^2_{L^{2,1}}+\|\dx^{\alpha'}E_0\|^2_{L^1_x}+\|\dx^{\alpha'}B_0\|^2_{L^1_x}),\label{V_5}
\ema
with $k=|\alpha-\alpha'|$.

In the high frequency region, by \eqref{E_5b}, we have
\bma
  S_2(t,\xi)\hat{V}_0&=\sum^4_{j=1}e^{t\beta_j(|\xi|)}[(\hat{f}_0,\overline{\phi_j}\,)-(\omega\times\hat{E}_0,\overline{X_j})-(\omega\times\hat{B}_0,\overline{Y_j})] (\phi_j,X_j,Y_j)(s,\omega)1_{\{|\xi|\ge r_1\}}, \label{high1}
\ema and in particular $(h_2(t),\sqrt M)=0.$
Since
 \bq
 \mathrm{Re}\beta_j(|\xi|) \le -c_1 |\xi|^{-1},\quad |\xi|\ge r_1,     \label{ee1}
 \eq
we obtain by \eqref{high1} and \eqref{eigfr3} that
 \bma
 \|\xi^\alpha P_rh_2(t)\|^2_{L^2_{\xi,v}}
 &\leq
 C\int_{|\xi|\ge r_1}(\xi^\alpha)^{2}\frac1{|\xi|}e^{-\frac{2c_1 t}{|\xi|}}(\|\hat{f}_0\|^2_{L^2_v}+|\hat E_0|^2+|\hat B_0|^2)d\xi\nnm\\
 &\leq
  C\sup_{|\xi|\ge r_1}\frac1{|\xi|^{2m+1}}e^{-\frac{2c_1 t}{|\xi|}}\int_{|\xi|\ge r_1}(\xi^\alpha)^{2}|\xi|^{2m}(\|\hat{f}_0\|^2_{L^2_v}+|\hat E_0|^2+|\hat B_0|^2)d\xi,\label{V_4a}
  \\
 \|\xi^\alpha H_2(t)\|^2_{L^2_\xi}
&\leq
 C\int_{|\xi|\ge r_1}(\xi^\alpha)^{2}e^{-\frac{2c_1 t}{|\xi|}}(\|\hat{f}_0\|^2_{L^2_v}+|\hat E_0|^2+|\hat B_0|^2)d\xi\nnm\\
&\leq
  C\sup_{|\xi|\ge r_1}\frac1{|\xi|^{2m}}e^{-\frac{2c_1 t}{|\xi|}}\int_{|\xi|\ge r_1}(\xi^\alpha)^{2}|\xi|^{2m}(\|\hat{f}_0\|^2_{L^2_v}+|\hat E_0|^2+|\hat B_0|^2)d\xi,\label{V_5a}
\\
\|\xi^\alpha J_2(t)\|^2_{L^2_\xi}
&\leq
 C\int_{|\xi|\ge r_1}(\xi^\alpha)^{2}e^{-\frac{2c_1 t}{|\xi|}}(\|\hat{f}_0\|^2_{L^2_v}+|\hat E_0|^2+|\hat B_0|^2)d\xi\nnm\\
&\leq
  C\sup_{|\xi|\ge r_1}\frac1{|\xi|^{2m}}e^{-\frac{2c_1 t}{|\xi|}}\int_{|\xi|\ge r_1}(\xi^\alpha)^{2}|\xi|^{2m}(\|\hat{f}_0\|^2_{L^2_v}+|\hat E_0|^2+|\hat B_0|^2)d\xi.\label{V_6b}
\ema
Since
$$\sup_{|\xi|\ge r_1}\frac1{|\xi|^{2m}}e^{-\frac{2c_1 t}{|\xi|}}\le C(1+t)^{-2m},$$
it follows from \eqref{V_4a}--\eqref{V_6b} that
\bma
 \|\xi^\alpha P_rh_2(t)\|^2_{L^2_{\xi,v}}&\le
 C(1+t)^{-(2m+1)}(\|\Tdx^m\dxa f_0\|^2_{L^{2}_{x,v}}+\|\Tdx^m\dxa E_0\|^2_{L^2_x}+\|\Tdx^m\dxa B_0\|^2_{L^2_x}),\label{V_4b}
\\
 \|\xi^\alpha H_2(t)\|^2_{L^2_\xi}&\le
C(1+t)^{-2m}(\|\Tdx^m\dxa f_0\|^2_{L^{2}_{x,v}}+\|\Tdx^m\dxa E_0\|^2_{L^2_x}+\|\Tdx^m\dxa B_0\|^2_{L^2_x}),\label{V_4c}
\\
  \|\xi^\alpha J_2(t)\|^2_{L^2_\xi}&\le
C(1+t)^{-2m}(\|\Tdx^m\dxa f_0\|^2_{L^{2}_{x,v}}+\|\Tdx^m\dxa E_0\|^2_{L^2_x}+\|\Tdx^m\dxa B_0\|^2_{L^2_x}).\label{V_4d}
\ema
The combination of \eqref{D1a}--\eqref{D_1}, \eqref{V_6}--\eqref{V_5} and \eqref{V_4b}--\eqref{V_4d} leads to \eqref{D_2z}--\eqref{D_0z}. And this completes the proof of the theorem.
\end{proof}

In fact, the above time decay rates are optimal as shown in

\begin{thm}\label{rate2}
 Let $(f_2(t),E(t),B(t))$ be a solution of the system \eqref{LVMB1a}. Assume that the initial data $U_0=(f_0,E_0,B_0)\in L^2(\R^3_{v};H^l(\R^3_{x}) \cap L^1(\R^3_{x}))\times H^l(\R^3_x)\cap L^1(\R^3_{x})\times H^l(\R^3_x)\cap L^1(\R^3_{x})$ for $l\ge 2$ and $\hat U_0=(\hat f_0,\hat E_0,\hat B_0)$  satisfies that
$\inf_{|\xi|\le r_0}|\frac{\xi}{|\xi|}\times\hat{B}_0|\geq d_0>0$, then
  \bgr
 C_1(1+t)^{-\frac54}
  \leq\| P_rf_2(t)\|_{L^2_{x,v}}\leq C_2(1+t)^{-\frac54},\label{H_1bz}
\\
 C_1(1+t)^{-\frac54}
  \leq\|E(t)\|_{L^2_x}\leq C_2(1+t)^{-\frac54},\label{H_2bz}
\\
 C_1(1+t)^{-\frac34}
  \leq\|B(t)\|_{L^2_x}\leq C_2(1+t)^{-\frac34}, \label{H_3z}
 \egr
for $t>0$  large enough with $C_2\ge C_1>0$ as two generic constants.
\end{thm}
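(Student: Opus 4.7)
The upper bounds in \eqref{H_1bz}--\eqref{H_3z} are immediate consequences of Theorem~\ref{rate1} (take $\alpha=\alpha'=0$ and $m=l\ge 2$), so the task is to establish the matching lower bounds under the spectral gap assumption $\inf_{|\xi|\le r_0}|\tfrac{\xi}{|\xi|}\times\hat{B}_0(\xi)|\ge d_0$. The plan is to use the decomposition of the semigroup $S(t,\xi)=e^{t\hat{\AA}_1(\xi)}$ from Theorem~\ref{semigroup1} acting on the transformed data $\hat V_0=(\hat f_0,\omega\times\hat E_0,\omega\times\hat B_0)$ and to isolate the contribution from the low-frequency eigen-projection $S_1(t,\xi)$, which carries the slowest decay. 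Writing $S_k(t,\xi)\hat V_0=(h_k,H_k,J_k)(t,\xi)$ for $k=1,2,3$, we have by the reconstruction formula \eqref{solution1} that $\hat f_2=\sum_k h_k$, $\omega\times\hat E=\sum_k H_k$, $\omega\times\hat B=\sum_k J_k$, together with the identity $|\hat E|^2=|\xi|^{-2}|(\hat f_2,\sqrt M)|^2+|\omega\times\hat E|^2$ to recover the full $L^2_x$-norms.

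The first step is to dispose of the non-low-frequency pieces. By \eqref{B_3a} the remainder $S_3$ contributes at most $e^{-\kappa_0 t}$, while the high-frequency piece $S_2$ is supported on $|\xi|\ge r_1$ and, as in \eqref{V_4b}--\eqref{V_4d}, can be bounded by $C(1+t)^{-m}$ for any $m\ge 0$ at the cost of $H^m_x$-norms of the data; here the hypothesis $l\ge 2$ lets us choose $m$ large enough so that these contributions are negligible compared to the target rates $(1+t)^{-5/4}$ and $(1+t)^{-3/4}$.

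The heart of the argument is the low-frequency analysis. By Theorem~\ref{eigen_3}, in the region $|\xi|\le r_0$ we have
\begin{equation*}
S_1(t,\xi)\hat V_0=\sum_{j=1}^{2}e^{t\lambda_j(|\xi|)}(\hat V_0,\Psi^*_j(s,\omega))\Psi_j(s,\omega),
\end{equation*}
with $\lambda_j(s)=-a_1 s^2+o(s^2)$, $\Psi_j=(\psi_j,X_j,Y_j)$, and the crucial Taylor expansions
$\psi_j=s\psi_{j,1}+O(s^2)$, $X_j=sX_{j,1}+O(s^2)$, $Y_j=\mathrm{i} W^j+O(s^2)$. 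Substituting into the pairing $(\hat V_0,\Psi^*_j)=(\hat f_0,\overline{\psi_j})-(\omega\times\hat E_0,\overline{X_j})-(\omega\times\hat B_0,\overline{Y_j})$ shows that the leading order is $\mathrm{i}(\omega\times\hat B_0,W^j)+O(s)$. Therefore
\begin{equation*}
J_1(t,\xi)=-\sum_{j=1}^{2}e^{\lambda_j(|\xi|)t}(\omega\times\hat B_0,W^j)W^j+O(|\xi|^2)\cdot\mathcal{R}(t,\xi),
\end{equation*}
while $h_1,H_1$ carry an extra factor $|\xi|$ due to the vanishing of $\psi_{j,0}$ and $X_{j,0}$ (cf.\ \eqref{F_2}--\eqref{F_3}). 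Since $\{W^1,W^2\}$ is an orthonormal basis of $\mathbb{C}^3_\xi$ and $\omega\times\hat B_0\in\mathbb{C}^3_\xi$, the Parseval identity gives $\sum_{j=1}^2|(\omega\times\hat B_0,W^j)|^2=|\omega\times\hat B_0|^2$, and using $\mathrm{Re}\lambda_j(s)\ge -2a_1 s^2$ for $s$ small, we extract
\begin{equation*}
\|J_1(t)\|_{L^2_\xi}^2\ge \tfrac12 d_0^2\int_{|\xi|\le r_0}e^{-4a_1|\xi|^2 t}d\xi-C\int_{|\xi|\le r_0}|\xi|^2 e^{-\eta_1|\xi|^2 t}\|\hat V_0\|^2 d\xi,
\end{equation*}
with the first term of order $(1+t)^{-3/2}$ and the second term, absorbable for $t$ large, of order $(1+t)^{-5/2}$. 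A parallel extraction for $f_2$ and $E$ uses the fact that the leading prefactor $(\hat V_0,\Psi^*_j)$ is bounded below by $d_0$ while $\psi_j,X_j$ are of size $|\xi|$, yielding $\int_{|\xi|\le r_0}|\xi|^2 e^{-4a_1|\xi|^2 t}d\xi\sim (1+t)^{-5/2}$.

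The main obstacle is the bookkeeping required to show that, after the $|\xi|$ prefactor is extracted, the surviving coefficient is bounded below. For $B$ this is immediate from the assumption on $\omega\times\hat B_0$. For $f_2$ and $E$, one must verify that the $O(|\xi|)$-leading contributions are not cancelled by cross-terms from the other components of $\hat V_0$; this requires computing the explicit leading-order form $(\hat V_0,\Psi^*_j)=\mathrm{i}(\omega\times\hat B_0,W^j)+O(|\xi|)$ and noting that the dominant product $e^{\lambda_j t}(\hat V_0,\Psi^*_j)(\psi_{j,1},X_{j,1})|\xi|$ depends linearly on the nonzero quantity $\omega\times\hat B_0$, so the integrated lower bound survives. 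Combining these low-frequency lower bounds with the negligible upper bounds on $S_2,S_3$ and the absorbable higher-order remainders yields \eqref{H_1bz}--\eqref{H_3z} for $t$ sufficiently large.
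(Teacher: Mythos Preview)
Your proposal is correct and follows essentially the same route as the paper: upper bounds from Theorem~\ref{rate1}, semigroup decomposition $S=S_1+S_2+S_3$, discard $S_2,S_3$ via \eqref{B_3a} and \eqref{V_4b}--\eqref{V_4d} (here the hypothesis $l\ge2$ is used with $m=2$), and extract the low-frequency leading term from the Taylor expansions \eqref{eigf1}. Two small points of precision: (i) the remainder in your formula for $J_1$ is actually $O(|\xi|)$, not $O(|\xi|^2)$ (the pairing $(\hat V_0,\Psi_j^*)$ has an $O(s)$ correction coming from $\psi_{j,1},X_{j,1}$, and $Y_{j,1}=0$ does not kill it); this does not affect the conclusion since the resulting error $\int_{|\xi|\le r_0}|\xi|^2e^{-\eta|\xi|^2t}\,d\xi\sim(1+t)^{-5/2}$ is still absorbable; (ii) for the $P_rf_2$ and $E$ lower bounds the paper makes explicit use of the orthogonality in $L^2_v$ of $L_1^{-1}(v\cdot W^1)\sqrt M$ and $L_1^{-1}(v\cdot W^2)\sqrt M$ (respectively of $\omega\times W^1$ and $\omega\times W^2$ in $\mathbb{C}^3_\xi$) to turn $\bigl\|\sum_j c_j\psi_{j,1}\bigr\|_{L^2_v}^2$ into $\sum_j|c_j|^2\|\psi_{j,1}\|^2$, which is where the factor $|\omega\times\hat B_0|^2$ reappears cleanly; your appeal to ``linear dependence on the nonzero quantity $\omega\times\hat B_0$'' is correct but this orthogonality is the concrete mechanism.
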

\begin{proof}
By Theorem \ref{rate1}, we only need to show the lower bounds of the time decay rates for the solution $(f(t),E(t),B(t))$  under the assumptions of Theorem \ref{rate2}.  Indeed, in terms of Theorem~\ref{rate1}, we have
 \bma
   \| P_r f_2(t)\|_{L^2_{x,v}}
&\ge
  \|P_rh_1(t)\|_{L^2_{\xi,v}}
 -\| P_rh_2(t)\|_{L^2_{\xi,v}}-\| P_rh_3(t)\|_{L^2_{\xi,v}}\nnm\\
 &\ge
  \|P_rh_1(t)\|_{L^2_\xi}-C(1+t)^{-2}-Ce^{-Ct}, \label{H_4}
 \\
 \| E(t)\|_{L^2_x}
 &\ge
   \frac{\sqrt2}2(\| \frac1{|\xi|}(h_1(t),\chi_0)\|_{L^2_\xi}
 -\|\frac1{|\xi|}(h_2(t),\chi_0)\|_{L^2_\xi}-\|\frac1{|\xi|} (h_3(t),\chi_0)\|_{L^2_\xi})\nnm\\
 &\quad+\frac{\sqrt2}2(\|H_1(t)\|_{L^2_\xi}
 -\| H_2(t)\|_{L^2_\xi}-\|H_3(t)\|_{L^2_\xi})\nnm\\
 &\ge
  \frac{\sqrt2}2(\| \frac1{|\xi|}(h_1(t),\chi_0)\|_{L^2_\xi}+\|H_1(t)\|_{L^2_\xi})-C(1+t)^{-2}-Ce^{-Ct}, \label{H_4a}
 \\
 \|B(t)\|_{L^2_x}
 &\ge
  \| J_1(t)\|_{L^2_\xi}
 -\| J_2(t)\|_{L^2_\xi}-\| J_3(t)\|_{L^2_\xi}\nnm\\
 &\ge
  \| J_1(t)\|_{L^2_\xi}-C(1+t)^{-2}-Ce^{-Ct}, \label{Q_4}
 \ema
where we have used \eqref{D_1} and \eqref{V_4b}--\eqref{V_4d} for $|\alpha|=0$.

By \eqref{F_2} and $\lambda_1(|\xi|)=\lambda_2(|\xi|)$, we have
\bma
P_rh_1(t)=\i a_1|\xi|e^{\lambda_1(|\xi|)t}\sum_{j=1,2}(\omega\times\hat B_0,W^j)L^{-1}_1(v\cdot W^j)\chi_0+|\xi|^2T_3(t,\xi)\hat U_0,\label{x}
\ema
where $T_3(t,\xi)\hat U_0$ is a linear operator satisfying $\|T_3(t,\xi)\hat U_0\|^2\le Ce^{-2\eta_1 |\xi|^2t}\|\hat U_0\|^2$. Since the terms $L^{-1}_1(v\cdot W^1)\sqrt M$ and $L^{-1}_1(v\cdot W^2)\sqrt M$ are orthogonal, 
it follows from \eqref{x} that
 \bma
\|P_rh_1(t)\|^2_{L^2_v}
\geq
   \frac12|\xi|^2a_1^2\|L^{-1}_1\chi_1\|^2_{L^2_v}e^{2\mathrm{Re}\lambda_1(|\xi|)t}|\omega\times\hat B_0|^2
  -C|\xi|^4e^{-2\eta_1 |\xi|^2t}\|\hat U_0\|^2.\label{B_1b}
 \ema
Since
$$\mathrm{Re}\lambda_j(|\xi|)=a_j|\xi|^2(1+O(|\xi|))\ge -\eta_2 |\xi|^2, \quad |\xi|\leq r_0,
$$
for some constant $\eta_2>0$, we obtain by \eqref{B_1b} that \bma
\|P_rh_1(t)\|^2_{L^2_{\xi,v}}
\geq&
 \frac12d_0^2a_1^2\|L^{-1}_1\chi_1\|^2_{L^2_v}\int_{|\xi|\leq r_0} |\xi|^2e^{-2\eta_2 |\xi|^2t}d\xi -C\int_{|\xi|\leq r_0} e^{-2\eta_1 |\xi|^2t} |\xi|^4\|\hat{U}_0\|^2d\xi\nnm\\
=:&\frac12d_0^2a_1^2\|L^{-1}_1\chi_1\|^2_{L^2_v}I_1- C\|U_0\|^2_{Z^1}(1+t)^{-7/2}.    \label{E_7a}
 \ema
Moreover,  for time $t \geq t_0=\frac{1}{r_0^2}$, we have
 \bma
 I_1= \int_{|\xi|\leq r_0}|\xi|^2e^{-2\eta_2 |\xi|^2t} d\xi = 4\pi t^{-5/2}\int^{ r_0\sqrt{t}}_0
   e^{-2\eta_2 r^2}r^4 dr \geq C_0 (1+t)^{-5/2},\label{E_9b}
 \ema
where $C_0> 0$ denotes a generic positive constant. We can  substitute \eqref{E_7a} and \eqref{E_9b} into  \eqref{H_4}  to obtain \eqref{H_1bz}.

By \eqref{F_3}, we have
$$
H_1(t)=\i a_1|\xi|e^{\lambda_1(|\xi|)t}\sum_{j=1,2}(\omega\times\hat B_0,W^j)(\omega\times W^j)+|\xi|^2T_4(t,\xi)\hat U_0, \quad (h_1(t),\sqrt M)=0,
 $$
where $T_4(t,\xi)$ is a linear operator satisfying $\|T_3(t,\xi)\hat U_0\|^2\le Ce^{-2\eta_1 |\xi|^2t}\|\hat U_0\|^2$. Then
 $$
 |H_1(t)|^2
 \geq
\frac12|\xi|^2a_1^2e^{2\mathrm{Re}\lambda_1(|\xi|)t}|\omega\times\hat B_0|^2
    -Ce^{-2\eta_1 |\xi|^2t}|\hat{U}_0|^2.
 $$
Similar to  \eqref{E_9b}, we get
 \bmas
 \|H_1(t)\|^2_{L^2_\xi}
\geq &
 \frac12d_0^2a_1^2\int_{|\xi|\le r_0}|\xi|^2
   e^{-2\eta_2|\xi|^2t}d\xi
-C\int_{|\xi|\leq r_0} |\xi|^4e^{-2\eta_1  |\xi|^2 t}
 \|\hat{U}_0\|^2d\xi
\nnm\\
\geq &  C_3(1+t)^{-5/2}- C(1+t)^{-7/2},
\emas
which together with \eqref{H_4a}  lead to \eqref{H_2bz} for $t>0$ being large enough.

By \eqref{F_4}, we have
 $$
J_1(t)=-e^{\lambda_1(|\xi|)t}\sum_{j=1,2}(\omega\times\hat B_0,W^j) W^j+|\xi|T_5(t,\xi)\hat U_0,
 $$
where $T_5(t,\xi)$ is a linear operator satisfying $\|T_5(t,\xi)\hat U_0\|^2\le Ce^{-2\eta_1 |\xi|^2t}\|\hat U_0\|^2$. Then
 \bmas
 \|J_1(t)\|^2_{L^2_\xi}
 &\geq
 \frac12d_0^2\int_{|\xi|\le r_0}e^{-2\eta_2 |\xi|^2t}d\xi
  -C\int_{|\xi|\leq r_0}|\xi|^2e^{-2\eta_1 |\xi|^2 t}
   \|\hat{U}_0\|^2d\xi
\\
 &\ge C_3(1+t)^{-3/2}  -C(1+t)^{-5/2}.
 \emas
This and \eqref{Q_4}  give  \eqref{H_3z} for $t>0$  being
large enough.
The proof is then completed.
\end{proof}

For  comparison, we include the known time decay rates of the global solution to the linearized Boltzmann equation~\eqref{VMB4a} as follows,
ref. \cite{Ukai1,Zhong2012Sci} and references therein.

\begin{thm}\label{time1b} Assume that $f_0\in L^2_v(H^N_x)\cap L^{2,q}$ for $N\ge 1$ and $q\in[1,2]$. Then there is a globally unique solution $f(x,v,t)=e^{t\BB_0}f_0(x,v)$ to  the linearized Boltzmann equation~\eqref{VMB4a}, which satisfies for any $\alpha,\alpha'\in\N^3$ with  $|\alpha|\le N$, $\alpha'\le \alpha$  and $k=|\alpha-\alpha'|$ that
 \bma
 \|(\da_x e^{t\BB_0}f_0,\chi_j)\|_{L^2_x}
&\leq C(1+t)^{-\frac 32\(\frac1q-\frac12\)-\frac k2}(\|\da_x
f_0\|_{L^2_{x,v}}+\|\dx^{\alpha'}f_0\|_{L^{2,q}}),\quad j=0,1,2,3,4, \label{V_1}
\\
\|  \P_1 (\da_x e^{t\BB_0}f_0)\|_{L^2_{x,v}}
&\leq
 C(1+t)^{-\frac 32\(\frac1q-\frac12\)-\frac{k+1}{2}}
  (\|\da_x f_0\|_{L^2_{x,v}}+\|\dx^{\alpha'}f_0\|_{L^{2,q}}). \label{V_2}
 \ema
In addition, assume that $f_0\in L^2_v(H^N_x)\cap L^{2,1}$ for $N\ge 1$ and there exist positive constants $d_0,d_1>0$ and a small constant $r_0>0$ so that the Fourier transform $\hat{f}_0(\xi,v)$ of the initial data $f_0$ satisfies that $\inf_{|\xi|\le r_0}|(\hat f_0,\sqrt M)|\ge d_0$, $\inf_{|\xi|\le r_0}|(\hat f_0,\chi_4)|\ge d_1\sup_{|\xi|\le r_0}|(\hat f_0,\sqrt M)|$ and $\sup_{|\xi|\le r_0}|(\hat f_0,v\sqrt M)|=0$. Then there exist
 two positive constants $C_2\ge C_1$ such that
the  global solution $f(x,v,t)=e^{t\BB_0}f_0(x,v)$  satisfies
\bma
  C_1(1+t)^{-\frac34}
 &\leq \|(e^{t\BB_0}f_0,\chi_j)\|_{L^2_x} \leq C_2(1+t)^{-\frac34},\quad j=0,1,2,3,4, \label{H_1}
\\
 C_1(1+t)^{-\frac54}
 &\leq \|  \P_1 (e^{t\BB_0}f_0)\|_{L^2_{x,v}}\leq C_2(1+t)^{-\frac54}, \label{H_2}
 \ema
for $t>0$ sufficiently large.
\end{thm}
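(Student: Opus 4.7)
The plan is to mirror the strategy used for $\hat{\AA}_1(\xi)$ in Section~\ref{spectrum-two} (and its semigroup decomposition in Theorem~\ref{semigroup1}) but applied to the simpler linearized Boltzmann operator $\hat{\BB}_0(\xi) = L - \i(v\cdot\xi)$, for which the spectral picture is classical. In the low-frequency region $|\xi|\le r_0$, the five-dimensional kernel $N_0$ perturbs into five smooth eigenvalues $\{\lambda_j(|\xi|)\}_{j=-1}^3$ of the form $\lambda_{\pm1}(s)=\pm \i c_s s - a_{\pm1}s^2+O(s^3)$ (acoustic modes) and $\lambda_j(s)=-a_j s^2+O(s^3)$ for $j=0,2,3$ (entropy and shear modes) with $a_j>0$, obtained from the Chapman--Enskog expansion around $L$ exactly as in the proofs of Lemmas~\ref{eigen_1a}--\ref{eigen_2a}. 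Outside a fixed low-frequency ball the spectral gap $(Lf,f)\le -\mu\|\P_1 f\|^2$ combined with the skew-adjointness of $\i(v\cdot\xi)$ yields $\sigma(\hat{\BB}_0(\xi))\subset\{\mathrm{Re}\lambda\le-\kappa_0\}$ for $|\xi|\ge r_0$, via the compact-perturbation argument of Lemma~\ref{LP01}.

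Next I would decompose the Fourier-side semigroup as
\begin{equation*}
e^{t\hat{\BB}_0(\xi)}\hat f_0 = \sum_{j=-1}^{3} e^{t\lambda_j(|\xi|)}(\hat f_0,\Psi^*_j(|\xi|,\omega))\Psi_j(|\xi|,\omega)\,\mathbf{1}_{\{|\xi|\le r_0\}} + R(t,\xi)\hat f_0,
\end{equation*}
where the remainder $R(t,\xi)$ satisfies $\|R(t,\xi)\hat f_0\|\le Ce^{-\kappa_0 t}\|\hat f_0\|$ by the resolvent contour-shift argument used for $S_3(t,\xi)$ in Theorem~\ref{semigroup1}. Crucially, since $\P_1\Psi_j(0,\omega)=0$ for each macroscopic eigenvector, the Taylor expansion of $\Psi_j(|\xi|,\omega)$ around $|\xi|=0$ produces an extra factor $|\xi|$ whenever we project by $\P_1$. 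This is what separates the decay rates for $(f,\chi_j)$ and for $\P_1 f$ by precisely one factor of $(1+t)^{-1/2}$ in the final bounds.

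To obtain the $L^{2,q}$--$L^2_{x,v}$ estimates for $q\in[1,2]$, I would apply Hausdorff--Young $\|\hat f_0\|_{L^{q'}_\xi L^2_v}\le C\|f_0\|_{L^{2,q}}$ with $1/q+1/q'=1$ to the low-frequency contribution, together with Hölder's inequality in $\xi$: with $1/p=2/q-1$, one controls
\begin{equation*}
\int_{|\xi|\le r_0}|\xi|^{2k}e^{-2\eta_1|\xi|^2 t}\|\hat f_0\|_{L^2_v}^2\,d\xi
\le \||\xi|^{2k}e^{-2\eta_1|\xi|^2 t}\|_{L^p_\xi}\|\hat f_0\|_{L^{q'}_\xi L^2_v}^2
\le C(1+t)^{-3(\frac1q-\frac12)-k}\|f_0\|_{L^{2,q}}^2,
\end{equation*}
where the gaussian $L^p$ norm is computed by the rescaling $\xi\mapsto \xi/\sqrt{t}$. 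Taking square roots gives the announced rate $(1+t)^{-\frac32(\frac1q-\frac12)-\frac{k}{2}}$ for the macroscopic projections (from the $\Psi_{j,0}$ term of the eigenvector) and the extra $(1+t)^{-1/2}$ for $\P_1 f$ (from the forced $|\xi|\Psi_{j,1}$ correction), while the remainder $R(t,\xi)$ contributes $e^{-\kappa_0 t}\|\da_x f_0\|_{L^2_{x,v}}$. The intermediate-frequency piece decays exponentially by the spectral gap, so combining these three pieces gives \eqref{V_1}--\eqref{V_2}.

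For the matching lower bounds under the non-degeneracy hypotheses $|(\hat f_0,\sqrt M)|\ge d_0$, $|(\hat f_0,\chi_4)|\ge d_1|(\hat f_0,\sqrt M)|$, and $(\hat f_0,v\sqrt M)=0$ on $|\xi|\le r_0$, I would isolate the principal eigenvector contribution to $(e^{t\hat\BB_0}\hat f_0,\chi_j)$ and $\P_1 e^{t\hat\BB_0}\hat f_0$, reduce to lower-bounding integrals of the form $\int_{|\xi|\le r_0}|\xi|^{2k}e^{-2\eta_2|\xi|^2 t}d\xi\ge C(1+t)^{-3/2-k}$ (via the rescaling argument already used in \eqref{E_9b}), and then absorb the higher-order remainders by the upper-bound rates just established. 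The main technical subtlety, and the place where the hypothesis $(\hat f_0,v\sqrt M)=0$ plays a decisive role, is in verifying that the principal projection onto $\chi_0$ and $\chi_4$ does not vanish at leading order: without this the acoustic eigenvectors could interfere destructively and the lower bound on the density and energy moments would fail. Once non-degeneracy at the leading eigenvector is confirmed, the lower bounds \eqref{H_1} and \eqref{H_2} follow by the same splitting strategy used for Theorem~\ref{rate2}.
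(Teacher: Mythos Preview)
The paper does not actually prove this theorem: it is stated as a known result for the linearized Boltzmann equation, with the explicit attribution ``ref.\ \cite{Ukai1,Zhong2012Sci} and references therein'' in the sentence immediately preceding the statement. There is no proof in the paper to compare against.

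That said, your outline is essentially the classical argument one finds in those references (Ellis--Pinsky/Ukai spectral analysis of $L-\i(v\cdot\xi)$, five low-frequency eigenvalues with $\mathrm{Re}\,\lambda_j\sim -a_j|\xi|^2$, uniform spectral gap at high frequency, semigroup splitting, and the $\P_1\Psi_j(0,\omega)=0$ observation giving the extra $|\xi|$ for the microscopic part). One small point worth tightening: to go from Hausdorff--Young in $x$ to the bound on $\int_{|\xi|\le r_0}|\xi|^{2k}e^{-2\eta_1|\xi|^2 t}\|\hat f_0(\xi,\cdot)\|_{L^2_v}^2\,d\xi$, you need $\|\hat f_0\|_{L^{q'}_\xi L^2_v}\le \|\hat f_0\|_{L^2_v L^{q'}_\xi}$, which is Minkowski's integral inequality and uses $q'\ge 2$ (i.e.\ $q\le 2$); this is implicit in your write-up but should be made explicit. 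The lower-bound part of your sketch, modelled on the proof of Theorem~\ref{rate2}, is also the standard route and is correct in spirit.
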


\subsection{Corresponding results  for one-species}


Corresponding to the linearized  two-species VMB, the decomposition
and estimation on the semigroup together with the optimal convergence
rates can be obtained for the one-species VMB based on its
spectrum structure. However, in the following, we can see
that the estimates are very different from the case
of two-species  mainly due to the lack of
cancellation in the one-species model.

\begin{lem}[\cite{Li2}]
 \label{SG_2a}
The operator $B_5(\xi)=L-\i  \P_1(v\cdot\xi) \P_1$ generates a strongly continuous contraction
semigroup on $N_0^\bot$ for any fixed $|\xi|\neq0$, which satisfies for any $t>0$ and $f\in N_0^\bot\cap L^2(\R^3_v)$ that
  \bq
    \|e^{tB_5(\xi)}f\|\leq e^{-\mu t}\|f\|. \label{decay_1a}
 \eq
In addition, for any $x>-\mu $ and $f\in N_0^\bot\cap L^2(\R^3_v)$, it holds
\bq
 \int^{+\infty}_{-\infty}\|[(x+\i y) \P_1-B_5(\xi)]^{-1}f\|^2dy
 \leq
        \pi(x+\mu )^{-1}\|f\|^2.\label{1S_4a}
\eq
\end{lem}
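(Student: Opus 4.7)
The plan is to mirror the structure of Lemma~\ref{SG_2} (and its reference, Lemma~3.1 of~\cite{Li2}) by verifying three ingredients: dissipativity of $B_5(\xi)$ and its adjoint on $N_0^\bot$, generation of a contraction semigroup via Lumer--Phillips, and a Plancherel-type identity reducing the resolvent integral to a semigroup time integral.

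For generation and the rate \eqref{decay_1a}, I would first observe that on $N_0^\bot$ the identity acts as $\P_1$, so the operator $\i\P_1(v\cdot\xi)\P_1$ is formally skew-adjoint on $N_0^\bot$. Hence for any $f\in D(B_5(\xi))\cap N_0^\bot$,
\[
 \mathrm{Re}(B_5(\xi)f,f)=(Lf,f)\le -\mu\|\P_1 f\|^2=-\mu\|f\|^2,
\]
by the spectral gap \eqref{L_4}. The adjoint satisfies $B_5(\xi)^*=B_5(-\xi)$, and the same computation shows $\mathrm{Re}(B_5(\xi)^*f,f)\le -\mu\|f\|^2$. Since $B_5(\xi)$ is densely defined and closed (it differs from the bounded skew-symmetric perturbation $-\i\P_1(v\cdot\xi)\P_1$ by $L$, whose domain is $\{f\in L^2(\R^3)\,|\,\nu f\in L^2\}\cap N_0^\bot$), the Lumer--Phillips theorem (Corollary~4.4 on p.~15 of~\cite{Pazy}) applies to produce a $C_0$-contraction semigroup. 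The dissipation inequality $\frac{d}{dt}\|e^{tB_5(\xi)}f\|^2=2(Le^{tB_5(\xi)}f,e^{tB_5(\xi)}f)\le -2\mu\|e^{tB_5(\xi)}f\|^2$ combined with Gronwall's inequality yields \eqref{decay_1a}.

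For \eqref{1S_4a}, I would use the Laplace representation of the resolvent valid for $x>-\mu$:
\[
 [(x+\i y)\P_1-B_5(\xi)]^{-1}f=\int_0^\infty e^{-(x+\i y)t}e^{tB_5(\xi)}f\,dt,
\]
so that $y\mapsto [(x+\i y)\P_1-B_5(\xi)]^{-1}f$ is the Fourier transform in $t$ of $\mathbf{1}_{[0,\infty)}(t)e^{-xt}e^{tB_5(\xi)}f$. Plancherel's theorem in $L^2(\R_y;L^2(\R^3_v))$ then gives
\[
 \int_{-\infty}^{+\infty}\|[(x+\i y)\P_1-B_5(\xi)]^{-1}f\|^2dy
 =2\pi\int_0^\infty e^{-2xt}\|e^{tB_5(\xi)}f\|^2dt.
\]
Substituting \eqref{decay_1a} into the right-hand side yields
\[
 2\pi\|f\|^2\int_0^\infty e^{-2(x+\mu)t}dt=\frac{\pi}{x+\mu}\|f\|^2,
\]
which is precisely \eqref{1S_4a}.

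The only delicate point is checking that the semigroup dissipation bound actually holds for all $f\in N_0^\bot\cap L^2(\R^3_v)$ and not merely on $D(B_5(\xi))$; this follows by a standard density argument since $D(B_5(\xi))$ is dense in $N_0^\bot$. No genuinely new ideas are needed beyond the template of Lemma~\ref{SG_2}, and the only place where one must be careful is in confirming that the range of $\lambda\P_1-B_5(\xi)$ is all of $N_0^\bot$ for some $\lambda$ with $\mathrm{Re}\lambda>-\mu$, which is what makes Lumer--Phillips applicable; this is verified by the compact-perturbation argument of Lemma~\ref{LP01a} (alternatively, by noting that $L$ is a bounded-from-above self-adjoint operator on $N_0^\bot$ with spectral gap, so $\lambda-L$ is invertible for $\mathrm{Re}\lambda>-\mu$, and the skew-symmetric piece $-\i\P_1(v\cdot\xi)\P_1$ is a relatively bounded perturbation with zero real part).
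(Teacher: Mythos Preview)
Your proof is correct and follows essentially the same approach the paper relies on: the paper does not prove this lemma but cites \cite{Li2}, and the commented-out proof sketch for the analogous Lemma~\ref{SG_2} confirms that the intended argument is precisely dissipativity of $B_5(\xi)$ and $B_5(\xi)^*=B_5(-\xi)$ on $N_0^\bot$ followed by Lumer--Phillips, Gronwall, and the Plancherel/Laplace identity for the resolvent integral. One small correction: you call $-\i\P_1(v\cdot\xi)\P_1$ a ``bounded skew-symmetric perturbation'' at first, but multiplication by $v\cdot\xi$ is unbounded; your later remark that it is relatively bounded with respect to $L$ (via $|v\cdot\xi|\le C|\xi|\nu(v)$) is the right justification for closedness and the domain identification.
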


\begin{lem}\label{SG_3a}
The operator $G_6(\xi)$ generates a strongly continuous unitary group on $N_0\times \mathbb{C}^3_\xi\times \mathbb{C}^3_\xi$ for any fixed $|\xi|\neq0$, which satisfies for $t>0$ and $U\in N_0\cap L^2_\xi(\R^3_v)\times \mathbb{C}^3_\xi\times \mathbb{C}^3_\xi$ that
  \bq
   \|e^{tG_6(\xi)}U\|_\xi= \|U\|_\xi.\label{decay_2a}
 \eq
In addition, for any $x\neq 0$ and $f\in N_0\cap L^2_\xi(\R^3_v)$, it holds
\bq
 \int^{+\infty}_{-\infty}\|[(x+\i y) P_A-G_6(\xi)]^{-1}U\|^2_\xi dy
 = \pi|x|^{-1}\|U\|^2_\xi.  \label{1S_6a}
 \eq
\end{lem}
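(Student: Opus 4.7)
The plan is to reduce everything to a finite-dimensional spectral calculation using the already-established self-adjointness of $\i G_6(\xi)$. Note that $N_0\times \mathbb{C}^3_\xi\times \mathbb{C}^3_\xi$ is a $9$-dimensional invariant subspace of $G_6(\xi)$: the block $B_4(\xi)=\i\P_0(v\cdot\xi)\P_0-\frac{\i(v\cdot\xi)}{|\xi|^2}P_{\rm d}$ maps $N_0$ into $N_0$, the coupling term $v\sqrt M$ lies in $N_0$, and the operators $\omega\times$ and $\omega\times P_m$ send $\mathbb{C}^3$ into $\mathbb{C}^3_\xi$ (since $(\omega\times a)\cdot\omega=0$). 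I would first verify these invariances explicitly to justify restricting $G_6(\xi)$ to this 9-dimensional Hilbert space equipped with the $(\cdot,\cdot)_\xi$ inner product.

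Next I would invoke the identity $(\i G_6(\xi)U,V)_\xi=(U,\i G_6(\xi)V)_\xi$ from \eqref{symmetric}, which shows that $\i G_6(\xi)$ is symmetric, hence self-adjoint on this finite-dimensional space. Equivalently, $G_6(\xi)$ is skew-adjoint with respect to $(\cdot,\cdot)_\xi$, and the spectral theorem (or Stone's theorem, which is immediate here since the space is finite-dimensional) produces a strongly continuous unitary group $e^{tG_6(\xi)}$ satisfying $\|e^{tG_6(\xi)}U\|_\xi=\|U\|_\xi$, giving \eqref{decay_2a}.

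For the resolvent identity \eqref{1S_6a}, I would write $\alpha_j(\xi)=\i\gamma_j(\xi)$ with $\gamma_j(\xi)\in\R$ using the explicit formulas \eqref{eigen}, and choose a $(\cdot,\cdot)_\xi$-orthonormal basis of eigenvectors $\{e_j(\xi)\}_{j=-1}^{7}$ guaranteed by the spectral theorem. Expanding $U=\sum_j c_j\,e_j(\xi)$ so that $\|U\|_\xi^2=\sum_j|c_j|^2$, the resolvent acts diagonally:
\begin{equation*}
[(x+\i y)P_A-G_6(\xi)]^{-1}U
=\sum_{j=-1}^{7}\frac{c_j}{x+\i(y-\gamma_j(\xi))}\,e_j(\xi).
\end{equation*}
Taking the squared $(\cdot,\cdot)_\xi$-norm and integrating in $y$ reduces the claim to the elementary identity $\int_\R \frac{dy}{x^2+(y-\gamma_j(\xi))^2}=\pi|x|^{-1}$, which produces the exact equality $\pi|x|^{-1}\|U\|_\xi^2$.

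There is essentially no obstacle beyond the initial verification of invariance and the reuse of \eqref{symmetric}; the main thing to be careful about is not to confuse the two inner products $(\cdot,\cdot)$ and $(\cdot,\cdot)_\xi$, since the skew-adjointness and orthonormality only hold for the weighted one. Compared with the two-species analogue (Lemma \ref{SG_3}) where $B_3(\xi)$ acts on the standard $\mathbb{C}^6$ and one uses $\|\cdot\|$, here the presence of $P_{\rm d}/|\xi|^2$ in $B_4(\xi)$ is precisely what makes $\i G_6(\xi)$ self-adjoint in the weighted inner product rather than in the unweighted one.
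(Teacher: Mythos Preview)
Your argument is correct and follows exactly the paper's approach: both reduce to the self-adjointness of $\i G_6(\xi)$ with respect to $(\cdot,\cdot)_\xi$ established in \eqref{symmetric}, and then invoke the spectral theorem on the finite-dimensional space $N_0\times\mathbb{C}^3_\xi\times\mathbb{C}^3_\xi$. You have simply written out in detail what the paper compresses into a one-line reference to Lemma~3.2 of \cite{Li2}.
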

\begin{proof}
Since the operator $\i G_6(\xi)$ is self-adjoint on $N_0\times \mathbb{C}^3_\xi\times \mathbb{C}^3_\xi$ with respect to the inner product $(\cdot,\cdot)_\xi$ defined by \eqref{symmetric}, we can prove \eqref{decay_2a} and \eqref{1S_6a} by applying a similar argument as Lemma 3.2 in \cite{Li2}.
\end{proof}

By a similar argument as for Lemma \ref{F_1}, we have

\begin{lem}\label{F_1a}Given any constant $r_0>0$. Let $\alpha=\alpha(r_0,r_1)>0$ with $r_1>r_0$ and $\alpha(r_0,r_1)$ defined in Lemma \ref{LP01a}. Then
\bma
\sup_{r_0<|\xi|<r_1,y\in \R}\|[I-G_5(\xi)(-\frac{\alpha}2+\i y-G_1(\xi))^{-1}]^{-1}\|&\le C.\label{1S_9a}
\ema
\end{lem}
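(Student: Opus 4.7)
The plan is to adapt the two-part argument used for Lemma~\ref{F_1} to the one-species setting on the intermediate annulus $r_0 < |\xi| < r_1$, using the decomposition \eqref{1B_d} and the compactness of the perturbation $G_5(\xi)$. First I would split the supremum into a ``high Im~$\lambda$'' part and a ``bounded Im~$\lambda$'' part: fix $\lambda = -\frac{\alpha}{2} + \mathrm{i}y$. For $|y|$ large, Lemma~\ref{LP03} (estimates \eqref{T_8}, \eqref{T_8a}, \eqref{L_9}) together with \eqref{b_1(xi)} applied to the Maxwell block of $G_1(\xi)$ give
\[
\|(K-\tfrac{\mathrm{i}(v\cdot\xi)}{|\xi|^2}P_{\rm d})(\lambda-c(\xi))^{-1}\|_\xi,\ \|P_m(\lambda-c(\xi))^{-1}\|,\ \|(\lambda-B_3(\xi))^{-1}\| \longrightarrow 0
\]
as $|y|\to\infty$, uniformly for $r_0\le|\xi|\le r_1$ (note that $|\xi|\ge r_0>0$ keeps the factor $1/|\xi|^2$ bounded, and $\mathrm{Re}\lambda = -\alpha/2 \ne 0 = \mathrm{Re}(\pm\mathrm{i}|\xi|)$ keeps the Maxwell resolvent bounded). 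Hence there exists $Y_0>0$ with $\|G_5(\xi)(\lambda-G_1(\xi))^{-1}\|\le 1/2$ whenever $|y|\ge Y_0$, giving the Neumann series bound $\|[I-G_5(\xi)(\lambda-G_1(\xi))^{-1}]^{-1}\|\le 2$.

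The core of the proof is the compact range $|y|\le Y_0$, and here I would argue by contradiction exactly as in Lemma~\ref{F_1}. Suppose sequences $\xi_n$ with $r_0\le|\xi_n|\le r_1$, $y_n$ with $|y_n|\le Y_0$, and $U_n,V_n$ with $\|U_n\|\to 0$, $\|V_n\|=1$ satisfy
\[
[I-G_5(\xi_n)(\lambda_n-G_1(\xi_n))^{-1}]V_n = U_n, \qquad \lambda_n=-\tfrac{\alpha}{2}+\mathrm{i}y_n.
\]
Writing $V_n = (g_n,E_n^2,B_n^2)$ and setting $(a_n,b_n) = (\lambda_n-B_3(\xi_n))^{-1}(E_n^2,B_n^2)$, the componentwise form of the equation expresses the macro part, micro part, and electromagnetic components of $V_n$ in terms of $g_n$, $a_n$, $b_n$ modulo $U_n$. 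I would then extract a subsequence along which $\xi_n\to\xi_0$ (compactness of the annulus), $\lambda_n\to\lambda_0 = -\alpha/2+\mathrm{i}y_0$, $(a_n,b_n)\to(a_0,b_0)$, and $K g_n\to h$ in $L^2_v$ (compactness of $K$). Passing to the limit in the component equations yields a nontrivial triple $(f_0,E_0,B_0)$ solving $\hat{\AA}_3(\xi_0)(f_0,E_0,B_0) = \lambda_0(f_0,E_0,B_0)$.

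The nontriviality check is the one subtle point: if $(C_0,a_0,b_0)=0$ where $C_0=\lim(g_n,\sqrt M)$, then the micro equation forces $\|P_r g_n\|\to 0$ and $(E_n^2,B_n^2)\to 0$, whence $\|V_n\|\to 0$, contradicting $\|V_n\|=1$. Therefore $\lambda_0$ is an eigenvalue of $\hat{\AA}_3(\xi_0)$ with $\mathrm{Re}\lambda_0 = -\alpha/2 > -\alpha$, contradicting the inclusion $\sigma(\hat{\AA}_3(\xi_0))\subset\{\mathrm{Re}\lambda\le-\alpha\}$ from Lemma~\ref{LP01a} (we may shrink $\alpha$ slightly to avoid equality on the boundary, exactly as in the proof of Lemma~\ref{F_1}).

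The main obstacle is the limiting argument in the contradiction step: $G_5(\xi)$ is only \emph{partially} compact (the $K$-block is compact, but the $\mathrm{i}(v\cdot\xi)|\xi|^{-2}P_{\rm d}$, $P_m$, and $v\sqrt M\cdot\omega\times$ blocks are merely bounded), so extracting strong limits requires treating each block separately, using compactness of $K$ for $g_n$, compactness of the annulus $\{r_0\le|\xi|\le r_1\}$ for $\xi_n$, finite-dimensionality of $\mathbb{C}^3_\xi\times\mathbb{C}^3_\xi$ for $(a_n,b_n)$, and the explicit algebraic closure provided by the Maxwell system. Once these limits are in hand, the rest follows verbatim from the two-species scheme.
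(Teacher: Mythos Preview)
Your proposal is correct and follows essentially the same approach as the paper, which simply states that Lemma~\ref{F_1a} follows ``by a similar argument as for Lemma~\ref{F_1}.'' Your two-part scheme (Neumann series for $|y|\ge Y_0$, compactness contradiction for $|y|\le Y_0$, landing on an eigenvalue with $\mathrm{Re}\lambda_0=-\alpha/2$ that violates Lemma~\ref{LP01a}) is exactly what the paper has in mind.

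One small remark: your nontriviality check is phrased in the language of the low-frequency estimate \eqref{S_9} (tracking $C_0=\lim(g_n,\sqrt M)$ and the ``micro equation'' through $Q(\xi)$). In the intermediate-frequency setting the relevant decomposition is $G_1,G_5$ with the scalar block $c(\xi)=-\nu-\mathrm{i}(v\cdot\xi)$ rather than $Q(\xi)$, so the paper's proof of the two-species analog \eqref{S_9a} actually points to the compactness argument of Lemma~\ref{LP01} (write $(\lambda_n-c(\xi_n))h_n = Kh_n - \frac{\mathrm{i}(v\cdot\xi_n)}{|\xi_n|^2}P_{\rm d}h_n - v\sqrt M\cdot(\omega_n\times a_n)$, extract a strong $L^2_v$ limit via compactness of $K$ and finite-dimensionality of the remaining pieces). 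Your version still works, but the bookkeeping is a little cleaner if you follow the \eqref{S_9a}/Lemma~\ref{LP01} template rather than the \eqref{S_9} template.
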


With the help of Lemmas~\ref{LP01a}--\ref{spectruma} and Lemmas~\ref{SG_2a}--\ref{F_1a}, we have the decomposition of the semigroup $S(t,\xi)=e^{t\hat{\AA}_3(\xi)}$ as following, the detail of the proof is omitted for brevity.

\begin{thm}\label{semigroup}
The semigroup $S(t,\xi)=e^{t\hat{\AA}_3(\xi)}$ with $\xi=s\omega\in \R^3$ and $s=|\xi|\neq0$  has the following decomposition
 \be
 S(t,\xi)U=S_1(t,\xi)U+S_2(t,\xi)U+S_3(t,\xi)U,
     \quad U\in L^2_\xi(\R^3_v)\times \mathbb{C}^3_\xi\times \mathbb{C}^3_\xi, \ \ t>0, \label{E_3a1}
 \ee
 where
 \bma
 S_1(t,\xi)U&=\sum^7_{j=-1}e^{t\lambda_j(s)}
              (U, \Psi^*_j(s,\omega)\,)_\xi \Psi_j(s,\omega)
               1_{\{|\xi|\leq r_0\}}, \label{E_5a1}\\
 S_2(t,\xi)U&=\sum^4_{j=1} e^{t\beta_j(s)}
              (U,\Phi^*_j(s,\omega)\,) \Phi_j(s,\omega)
               1_{\{|\xi|\ge r_1\}},   \label{E_5b1}
 \ema
with $(\lambda_j(s),\Psi_j(s,\omega))$ and $(\beta_j(s),\Phi_j(s,\omega))$ being the eigenvalue and eigenvector of the operator $\hat{\AA}_3(\xi)$ given by Theorem~\ref{eigen_3a} and Theorem~\ref{eigen_4a} for $|\xi|\le r_0$ and $|\xi|>r_1$ respectively,
and $S_3(t,\xi)=: S(t,\xi)-S_1(t,\xi)-S_2(t,\xi)$ satisfies that there
exists a constant $\kappa_0>0$ independent of $\xi$ such that
 \bq
 \|S_3(t,\xi)U\|_\xi\leq Ce^{-\kappa_0t}\|U\|_\xi,\quad t\ge0.\label{B_3a1}
 \eq
\end{thm}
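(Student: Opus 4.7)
The plan is to mirror the proof of Theorem~\ref{semigroup1} for the two-species case, with modifications that account for the richer low-frequency spectral structure (nine eigenvalues, clustered near $0$ and $\pm\i$) of $\hat{\AA}_3(\xi)$ described in Theorem~\ref{eigen_3a}. As in the two-species situation, by density of $D(\hat{B}_2(\xi)^2)$ in $L^2_\xi(\R^3_v)$ and Corollary~7.5 of \cite{Pazy}, it suffices to work with the Dunford integral
$$
  e^{t\hat{\AA}_3(\xi)}U=\frac{1}{2\pi\i}\int_{\kappa-\i\infty}^{\kappa+\i\infty}e^{\lambda t}(\lambda-\hat{\AA}_3(\xi))^{-1}U\,d\lambda,\quad \kappa>0,
$$
and then to shift the contour into the left half-plane, collecting residues at the eigenvalues catalogued in Theorems~\ref{eigen_3a}--\ref{eigen_4a}.

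First, for the low-frequency regime $|\xi|\le r_0$, I would employ the resolvent splitting already set up in \eqref{1Bd3} and the bounds of Lemma~\ref{LP1} together with Lemma~\ref{spectruma} to write
$(\lambda-\hat{\AA}_3(\xi))^{-1}$ as the leading piece $(\lambda P_A-G_6(\xi))^{-1}P_A+(\lambda P_B-G_7(\xi))^{-1}P_B$ plus a Neumann-type perturbation in $G_8(\xi)+G_9(\xi)$, valid on the resolvent region of Lemma~\ref{spectruma}. Shifting the vertical contour from $\mathrm{Re}\lambda=\kappa$ to $\mathrm{Re}\lambda=-\kappa_0$ for some $\kappa_0\in(0,\mu/2)$ smaller than $\min_j|\mathrm{Re}\lambda_j(s)|$ (using the expansions in Theorem~\ref{eigen_3a}), one sweeps past exactly the nine simple poles $\lambda_j(s)$ ($-1\le j\le 7$); the residue at each is $e^{t\lambda_j(s)}(U,\Psi^*_j(s,\omega))_\xi\Psi_j(s,\omega)$ by the standard biorthogonality argument identical to \eqref{projection1}, which produces $S_1(t,\xi)U$.

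For the high-frequency regime $|\xi|\ge r_1$, I would use the decomposition \eqref{1B_d} exactly as in Theorem~\ref{semigroup1}, writing
$\lambda-\hat{\AA}_3(\xi)=(I-G_5(\xi)(\lambda-G_1(\xi))^{-1})(\lambda-G_1(\xi))$,
and invoking Lemma~\ref{LP03} (which treats the $K$-kernel just as it did $K_1$) to invert the Neumann factor off a shrinking neighborhood of $\pm\i|\xi|$. Shifting the contour to $\mathrm{Re}\lambda=-\mu/2$ produces the four residues at the eigenvalues $\beta_j(s)$ of Theorem~\ref{eigen_4a}, giving $S_2(t,\xi)U$. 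For the intermediate regime $r_0\le|\xi|\le r_1$, by Lemma~\ref{LP01a}(ii) the spectrum lies strictly left of $\mathrm{Re}\lambda=-\alpha$, so the contour can be shifted to $\mathrm{Re}\lambda=-\kappa_0$ without picking up any residue. The remainder $S_3(t,\xi)U$ then consists of (i) the semigroup $e^{tG_7(\xi)}P_B$ piece (bounded by $e^{-\mu t}$ via Lemma~\ref{SG_2a}), (ii) the $e^{tc(\xi)}$ streaming piece on $|\xi|\ge r_1$ (bounded by $e^{-\nu_0 t}$), and (iii) the three shifted integrals $X_{-\kappa_0,\infty},Y_{-\kappa_0,\infty},Y_{-\mu/2,\infty}$; each of the latter is estimated in operator norm by pairing with test vectors, applying the square-integrability bounds \eqref{1S_4a}, \eqref{1S_6a} of Lemmas~\ref{SG_2a}--\ref{SG_3a}, and using the uniform invertibility of the Neumann factors from Lemma~\ref{F_1a} (together with a direct analog of Lemma~\ref{F_1} at low frequency to be established, as discussed below).

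The main obstacle, and the step that requires the most care beyond the two-species template, is proving the low-frequency analog of Lemma~\ref{F_1}, namely the uniform bound
$\sup_{0<|\xi|<r_0,\,y\in\R}\|(I-G_9(\xi)(\text{perturbation inverse}))^{-1}\|_\xi\le C$
on the shifted line $\mathrm{Re}\lambda=-\kappa_0$. In the two-species case the contour avoided only the two curves $\lambda_{1,2}(s)$ clustering near the origin, but here the contour must simultaneously avoid the three clusters emanating from $\{0,\pm\i\}$, and in particular must be kept at positive distance from the sound-speed curves $\lambda_{\pm1}(s)=\pm\i+O(s^2)$ and the transverse electromagnetic curves $\lambda_{2,3,4,5}(s)=\pm\i+O(s^2)$. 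The contradiction argument used in Lemma~\ref{F_1} adapts, but one must check that any limit point of a hypothetical blow-up sequence $\{\lambda_n\}$ with $|\mathrm{Im}\lambda_n|\le R$ produces an eigenvalue of $\hat{\AA}_3(\xi_0)$ lying on the line $\mathrm{Re}\lambda=-\kappa_0$; choosing $\kappa_0$ smaller than $\min\{a_1,a_2,a_3\}r_0^2$ (respectively $a_3 r_0^4$ for the $\lambda_{6,7}$ branch) rules this out by the asymptotic expansions in Theorem~\ref{eigen_3a}. Once this uniform bound is in place, the rest of the estimate \eqref{B_3a1} for $S_3$ is routine and essentially identical to \eqref{UsN4}, \eqref{VsN2}, completing the decomposition.
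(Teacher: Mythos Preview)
Your overall strategy is correct and matches the paper's own (omitted) proof, which the authors indicate follows the template of Theorem~\ref{semigroup1} using Lemmas~\ref{SG_2a}--\ref{F_1a}. You correctly identify the low-frequency resolvent splitting via $G_6,G_7,G_8,G_9$, the high-frequency splitting via $G_1,G_5$, and the need for a low-frequency analog of the uniform Neumann bound in Lemma~\ref{F_1} (which the paper's Lemma~\ref{F_1a} indeed omits).

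There is, however, a concrete error in your choice of $\kappa_0$. You propose taking $\kappa_0$ \emph{smaller} than $\min\{a_1,a_2,a_3\}r_0^2$ so as to avoid the eigenvalue curves. This is backwards: for each branch $\lambda_j$, the real part $\mathrm{Re}\lambda_j(s)$ moves continuously from $0$ at $s=0$ to approximately $-a_jr_0^2$ (or $-a_3r_0^4$) at $s=r_0$. If $\kappa_0<a_jr_0^2$ then by the intermediate value theorem there is some $s\in(0,r_0)$ with $\mathrm{Re}\lambda_j(s)=-\kappa_0$, so your shifted contour passes \emph{through} an eigenvalue and the residue calculus and the contradiction argument both break down. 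The fix is the one used in the two-species proof: fix $\kappa_0\in(0,\mu/2)$ once and for all (say $\kappa_0=\mu/4$) and then shrink $r_0$ so that $\sup_{|s|\le r_0}|\mathrm{Re}\lambda_j(s)|<\kappa_0$ for every $j=-1,\dots,7$; this is possible precisely because all nine real parts are $O(s^2)$ or $O(s^4)$. With this correction the contradiction argument for the low-frequency Neumann bound goes through exactly as in Lemma~\ref{F_1}, and the rest of your outline is sound.
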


Based on the decomposition of the semigroup given in Theorem \ref{semigroup},
we now study the optimal convergence rates of the solutions
to the linearized system around an equilibrium.

For any $U_0=(f_0,E_0,B_0)\in L^2(\R^3_v;H^l(\R^3_{x}))\times H^l(\R^3_{x})\times H^l(\R^3_{x})$,
define
 \bq
  e^{t\hat{\AA}_2(\xi)}\hat U_0=((e^{t\hat{\AA}_3(\xi)}\hat V_0)_1,-\frac{\i\xi}{|\xi|^2}((e^{t\hat{\AA}_3(\xi)}\hat V_0)_1,\sqrt M)-\frac{\xi}{|\xi|}\times(e^{t\hat{\AA}_3(\xi)}\hat V_0)_2,-\frac{\xi}{|\xi|}\times(e^{t\hat{\AA}_3(\xi)}\hat V_0)_3), \label{solution}
  \eq
  with
\bgrs
e^{t\hat{\AA}_3(\xi)}\hat V_0=((e^{t\hat{\AA}_3(\xi)}\hat V_0)_1,(e^{t\hat{\AA}_3(\xi)}\hat V_0)_2,(e^{t\hat{\AA}_3(\xi)}\hat V_0)_3)\in L^2_\xi(\R^3_v)\times \mathbb{C}^3_\xi\times \mathbb{C}^3_\xi,\\
\hat V_0=(\hat f_0,\frac{\xi}{|\xi|}\times \hat E_0,\frac{\xi}{|\xi|}\times \hat B_0).
\egrs

Then $e^{t\AA_2}U_0$ is the solution of the system \eqref{LVMB}. By Lemma \ref{1SG_1}, it holds that
 $$
 \|e^{t \AA_2} U_0\|_{D^l}=\intr (1+|\xi|^2)^l\|e^{t\hat{\AA}_3(\xi)}\hat V_0\|^2_\xi d\xi\le \intr (1+|\xi|^2)^l\|\hat V_0\|^2_\xi d\xi
=\|U_0\|_{D^l}.
 $$
This implies that the linear operator $\AA_2$ generates a strongly continuous contraction semigroup $e^{t\AA_2}$ in $D^l$, and therefore, $U(t)=e^{t\AA_2}U_0$ is a global solution to the IVP~\eqref{LVMB} for the linearized
one-species Vlasov-Maxwell-Boltzmann equation for $U_0\in D^l$.

 \begin{proof}[\underline{Proof of Theorem \ref{time1a}}]
By  \eqref{solution} and Theorem \ref{semigroup}, we have for $\omega=\xi/|\xi|$ that
\bmas
(\hat f(t), \omega\times\hat E(t),\omega\times\hat B(t))&=e^{t\hat{\AA}_3(\xi)}\hat V_0=S_1(t,\xi)\hat V_0+S_2(t,\xi)\hat V_0+S_3(t,\xi)\hat V_0
\nnm\\
&=\sum^3_{k=1}(h_k(t),H_k(t),J_k(t)),
\emas
with
$\hat V_0=(\hat f_0,\omega\times \hat E_0,\omega\times \hat B_0).$
Note that
 \bma
   \|\dxa (f(t),\chi_j)\|_{L^2_x}
&\le
  \|\xi^\alpha (h_1(t),\chi_j)\|_{L^2_\xi}
 +\|\xi^\alpha (h_2(t),\chi_j)\|_{L^2_\xi}+\|\xi^\alpha (h_3(t),\chi_j)\|_{L^2_\xi}, \label{1D1a}
 \\
 \|\dxa E(t)\|_{L^2_x}
  &\le
    \|\frac{\xi^\alpha}{|\xi|}(h_1(t),\chi_0)\|_{L^2_\xi}
 +\|\frac{\xi^\alpha}{|\xi|} (h_2(t),\chi_0)\|_{L^2_\xi}+\|\frac{\xi^\alpha}{|\xi|} (h_3(t),\chi_0)\|_{L^2_\xi}\nnm\\
 &\quad+\|\xi^\alpha H_1(t)\|_{L^2_\xi}
 +\|\xi^\alpha H_2(t)\|_{L^2_\xi}+\|\xi^\alpha  H_3(t)\|_{L^2_\xi}, \label{1D1b}
 \\
 \|\dxa B(t)\|_{L^2_x}
 &\le
  \|\xi^\alpha J_1(t)\|_{L^2_\xi}
 +\|\xi^\alpha J_2(t)\|_{L^2_\xi}+\|\xi^\alpha  J_3(t)\|_{L^2_\xi}. \label{1D1c}
 \ema
 By \eqref{B_3a1}, 
we can estimate the last term on the right hand side of \eqref{1D1a}--\eqref{1D1c} as follows:
 \bma
&\intr (\xi^{\alpha})^2(\| h_3(t)\|^2_{L^2_v}
 +\frac1{|\xi|^2}|(h_3(t),\sqrt M)|^2 +|H_3(t)|^2+|J_3(t)|^2)d\xi
 \nnm\\
&\leq
 C e^{-2\kappa_0 t}(\|\da_x f_0\|^2_{L^2_{x,v}}+\|\dxa E_0\|_{L^2_x}+\|\dxa B_0\|_{L^2_x}).\label{1D_1}
 \ema

In the low frequency region, by \eqref{E_5a1}, we have
\bmas
S_1(t,\xi)\hat{V}_0=&\sum^7_{j=-1}e^{t\lambda_j(|\xi|)} [(\hat{f}_0,\overline{\psi_j}\,)_\xi-(\omega\times \hat E_0,\overline{X_j})-(\omega\times \hat B_0,\overline{Y_j})](\psi_j,X_j,Y_j) 1_{\{|\xi|\leq r_0\}},
\emas
From \eqref{1eigfr0} and \eqref{1eigf1},
 the macroscopic part and microscopic part of $ h_1(t)$ and $H_1(t),J_1(t)$ satisfy
 \bma
 (h_1(t),\sqrt{M})
=&\frac12\sum_{j=\pm1}e^{\lambda_j(|\xi|)t}\hat{n}_0
  + |\xi|\sum_{j=-1}^1e^{\lambda_j(|\xi|)t}((T_j(\xi)\hat{V}_0)_1,\sqrt{M}),\label{2F_2}
 \\
(h_1(t),v\sqrt{M})
=&\frac12\sum_{j=\pm1}e^{\lambda_j(|\xi|)t}\Big[(\hat{m}_0\cdot\omega)
  -\frac{j}{|\xi|}\hat n_0\Big]\omega+\frac12\sum_{j=2,3}e^{\lambda_j(|\xi|)t}[(\hat{m}_0\cdot W^j)+\i(\hat E_0,W^j)]W^j
  \nnm\\
 &+\frac12\sum_{j=4,5}e^{\lambda_j(|\xi|)t}[(\hat{m}_0\cdot W^j)-\i(\hat E_0,W^j)]W^j+\i|\xi|\sum_{j=6,7}e^{\lambda_j(|\xi|)t}(\omega\times \hat B_0,W^j)W^j\nnm\\
 &+|\xi|\sum_{j=-1}^5e^{\lambda_j(|\xi|)t}((T_j(\xi)\hat{V}_0)_1,v\sqrt{M}) +|\xi|^2\sum_{j=6,7}e^{\lambda_j(|\xi|)t}((T_j(\xi)\hat{V}_0)_1,v\sqrt{M}),\label{2F_3}
  \\
(h_1(t),\chi_4)
 =&\sqrt{\frac16}\sum_{j=\pm1}e^{\lambda_j(|\xi|)t}\hat{n}_0
   +e^{\lambda_0(|\xi|)t}\Big(\hat{q}_0-\sqrt{\frac23}\hat{n}_0\Big)
   +|\xi|\sum_{j=-1}^1 e^{\lambda_j(|\xi|)t}((T_j(\xi)\hat{V}_0)_1,\chi_4),\label{2F_4}
   \\
\P_1h_1(t)=&-\frac12\dsum_{j=\pm1}e^{t\lambda_j(|\xi|)}
    \hat{n}_0j\i(L-j\i \P_1)^{-1}\P_1(v\cdot\omega)^2\sqrt{M}\nnm\\
    &+ |\xi|\sum^5_{j=-1}e^{t\lambda_j(|\xi|)}\P_1(T_j(\xi)\hat{V}_0)_1+|\xi|^2\sum_{j=6,7}e^{t\lambda_j(|\xi|)}\P_1(T_j(\xi)\hat{V}_0)_1,  \label{2B_2}\\
H_1(t)
 =&-\frac12e^{\lambda_2(|\xi|)t}[\i(\omega\times\hat{m}_0)-(\omega\times \hat E_0)]
 +\frac12e^{\lambda_4(|\xi|)t}[\i(\omega\times\hat{m}_0)-(\omega\times \hat E_0)]\nnm\\
 &+|\xi|\sum^5_{j=2}e^{t\lambda_j(|\xi|)}(T_j(\xi)\hat{V}_0)_2+|\xi|^3\sum_{j=6,7}e^{t\lambda_j(|\xi|)}(T_j(\xi)\hat{V}_0)_2,\label{1F_3a}
 \\
J_1(t)
 =&\sum_{j=6,7}e^{\lambda_j(|\xi|)t}(\omega\times\hat B_0,W^j) W^j+\i|\xi|\sum_{j=6,7}e^{\lambda_j(|\xi|)t}(\hat m_0\cdot W^j) W^j\nnm\\
 &+|\xi|\sum_{j=2}^5e^{\lambda_j(|\xi|)t}(T_j(\xi)\hat{V}_0)_3+|\xi|^2\sum_{j=6,7}e^{\lambda_j(|\xi|)t}(T_j(\xi)\hat{V}_0)_3, \label{1F_4a}
 \ema
where $(\hat{n}_0,\hat{m}_0,\hat{q}_0)$ is the Fourier transform of
the macroscopic density, momentum and energy $(n_0,m_0,q_0)$ of the initial data $f_0$ defined by
$
  (n_0,m_0,q_0)=:((f_0,\chi_0),(f_0,v\sqrt{M}),(f_0,\chi_4)),
$
$W^j,\ 2\le j\le 7$ is given by \eqref{1eigf1},  and $T_j(\xi)=((T_j(\xi)\hat{V}_0)_1,(T_j(\xi)\hat{V}_0)_2,(T_j(\xi)\hat{V}_0)_3), \ -1\leq j\leq 7,$ are the  linear operators with the norm $\|T_j(\xi)\|$ being uniformly bounded for $|\xi|\leq r_0$.

Since
 \bmas
 \mathrm{Re}\lambda_j(|\xi|)
 &=a_j|\xi|^2(1+O(|\xi|))
 \le -\eta_1 |\xi|^2,\quad |\xi|\leq r_0,\,\, j=-1,0,1,2,3,4,5,     
 \\
  \mathrm{Re}\lambda_k(|\xi|)
 &=a_k|\xi|^2(1+O(|\xi|))
 \le -\eta_1 |\xi|^4,\quad |\xi|\leq r_0,\,\, k=6,7,
 \emas
where $\eta_1>0$ denotes a generic constant, we obtain by \eqref{1F_2}--\eqref{1F_4a} that
 \bma
 \|\xi^\alpha(h_1(t),\sqrt{M})\|^2_{L^2_\xi}
 &\leq
  C(1+t)^{-(3/2+k)}\|\dx^{\alpha'}U_0\|^2_{Z^1},\label{2D_8}
\\
 \|\xi^\alpha(h_1(t),v\sqrt{M})\|^2_{L^2_\xi}
&\leq
 C(1+t)^{-(1/2+k)}
 \|\dx^{\alpha'}U_0\|^2_{Z^1}+C(1+t)^{-(5/4+k/2)}  \|\dx^{\alpha'}U_0\|^2_{Z^1},\label{2D_9}
 \\
  \|\xi^\alpha(h_1(t),\chi_4)\|^2_{L^2_\xi}
&\leq
 C(1+t)^{-(3/2+k)}
 \|\dx^{\alpha'}U_0\|^2_{Z^1},\label{2D_10}
  \\
    \|\xi^\alpha \P_1h_1(t)\|^2_{L^2_{\xi,v}}
&\leq
 C(1+t)^{-(3/2+k)}\|\dx^{\alpha'}U_0\|^2_{Z^1}+C(1+t)^{-(7/4+k/2)} \|\dx^{\alpha'}U_0\|^2_{Z^1},\label{2D_11}
 \\
  \|\xi^\alpha H_1(t)\|^2_{L^2_\xi}
 &\leq
  C(1+t)^{-(3/2+k)}\|\dx^{\alpha'}U_0\|^2_{Z^1}
+C(1+t)^{-(9/4+k/2)} \|\dx^{\alpha'}U_0\|^2_{Z^1},\label{2D_12}
\\
 \|\xi^\alpha J_1(t)\|^2_{L^2_\xi}
&\leq
  C(1+t)^{-(3/4+k/2)}\|\dx^{\alpha'}U_0\|^2_{Z^1}+C(1+t)^{-(5/2+k)}\|\dx^{\alpha'}U_0\|^2_{Z^1},\label{2D_13}
\ema
with $k=|\alpha-\alpha'|$.

In the high frequency region, by \eqref{E_5b1}, we have
\bma
  S_2(t,\xi)\hat{V}_0=\sum^4_{j=1}e^{t\lambda_j(|\xi|)}[(\hat{f}_0,\overline{\phi_j})-(\omega\times\hat{E}_0,\overline{X_j})-(\omega\times\hat{B}_0,\overline{Y_j})] (\phi_j,X_j,Y_j)1_{\{|\xi|\ge r_1\}}, \label{h1}
\ema and in particular $(h_2(t),\sqrt M)=(h_2(t),\chi_4)=0.$
Since
 $$
 \mathrm{Re}\lambda_j(|\xi|)
  \le -c_1 |\xi|^{-1},\quad |\xi|\ge r_1,
 $$
we obtain by \eqref{h1} that\bma
 \|\xi^\alpha (h_2(t),v\sqrt M)\|^2_{L^2_{\xi}}&\le
 C(1+t)^{-(2m+1)}(\|\Tdx^m\dxa f_0\|^2_{L^{2}_{x,v}}+\|\Tdx^m\dxa E_0\|^2_{L^2_x}+\|\Tdx^m\dxa B_0\|^2_{L^2_x}),\label{1V_1}
\\
 \|\xi^\alpha \P_1h_2(t)\|^2_{L^2_{\xi,v}}&\le
 C(1+t)^{-(2m+1)}(\|\Tdx^m\dxa f_0\|^2_{L^{2}_{x,v}}+\|\Tdx^m\dxa E_0\|^2_{L^2_x}+\|\Tdx^m\dxa B_0\|^2_{L^2_x}),\label{1V_2}
 \\
 \|\xi^\alpha H_2(t)\|^2_{L^2_\xi}&\le
C(1+t)^{-2m}(\|\Tdx^m\dxa f_0\|^2_{L^{2}_{x,v}}+\|\Tdx^m\dxa E_0\|^2_{L^2_x}+\|\Tdx^m\dxa B_0\|^2_{L^2_x}),\label{1V_3}
\\
  \|\xi^\alpha J_2(t)\|^2_{L^2_\xi}&\le
C(1+t)^{-2m}(\|\Tdx^m\dxa f_0\|^2_{L^{2}_{x,v}}+\|\Tdx^m\dxa E_0\|^2_{L^2_x}+\|\Tdx^m\dxa B_0\|^2_{L^2_x}).\label{1V_4}
\ema
Combining  \eqref{1D1a}--\eqref{1D_1}, \eqref{2D_8}--\eqref{2D_13} and \eqref{1V_1}--\eqref{1V_4} leads to \eqref{2D_3a}--\eqref{2D_5}.

Now we turn to show the lower bound of time decay rates for the global solution under the assumptions of Theorem \ref{time1a}. Note that
\bma
   \| ( f(t),\chi_j)\|_{L^2_{x}}
 &\ge
  \|(h_1(t),\chi_j)\|_{L^2_{\xi}}-C(1+t)^{-1}-Ce^{-Ct}, \label{1H_4}
 \\
 \| E(t)\|_{L^2_x}
 &\ge
  \frac{\sqrt2}2(\| \frac1{|\xi|}(h_1(t),\sqrt M)\|_{L^2_\xi}+\|H_1(t)\|_{L^2_\xi})-C(1+t)^{-1}-Ce^{-Ct}, \label{1H_4a}
 \\
 \|B(t)\|_{L^2_x}
 &\ge
  \| J_1(t)\|_{L^2_\xi}-C(1+t)^{-1}-Ce^{-Ct}, \label{1Q_4}
 \ema
where we have used \eqref{1D_1} and \eqref{1V_1}--\eqref{1V_4} for $|\alpha|=0$.

By \eqref{2F_2} and
that fact that $\lambda_{-1}(|\xi|)=\overline{\lambda_1(|\xi|)}$, we have
$$
(h_1(t),\sqrt{M})=e^{\mathrm{Re}\lambda_1(|\xi|)t}\cos(\mathrm{Im}\lambda_1(|\xi|)t)\hat{n}_0+|\xi|T_8(t,\xi)\hat V_0,
$$
where $ T_8(t,\xi)\hat V_0$ is the remainder term satisfying $\|T_8(t,\xi)\hat V_0\|\le Ce^{-\eta_1 |\xi|^2t}\|\hat V_0\|$. This leads to
 \bma
|(h_1(t),\sqrt{M})|^2
 \geq
   \frac12 e^{2\mathrm{Re}\lambda_1(|\xi|)t}\cos^2(\mathrm{Im}\lambda_1(|\xi|)t)|\hat{n}_0 |^2-C|\xi|^2e^{-2\eta_1 |\xi|^2t}\|\hat{V}_0\|^2.\label{1B_1}
 \ema
Since
$$ \cos^2(\mathrm{Im}\lambda_1(|\xi|)t)\geq
\frac12\cos^2[(1+b_1|\xi|^2)t]-O([|\xi|^3t]^2),
$$
and
$$\mathrm{Re}\lambda_j(|\xi|)=a_j|\xi|^2(1+O(|\xi|))\ge -\eta_2 |\xi|^2, \quad |\xi|\leq r_0,\,\, j=-1,0,1,2,3,4,5,
$$
for some constant $\eta>0$, we obtain by \eqref{1B_1} that \bma
\| (h_1(t),\sqrt{M})\|^2_{L^2_{\xi}}
&\geq
 \frac{d_0^2}4\int_{|\xi|\leq r_0} e^{-2\eta_2 |\xi|^2t}\cos^2(t+b_1|\xi|^2t)d\xi
 -C (1+t)^{-5/2}\nnm\\
&=:I_1- C(1+t)^{-5/2}.    \label{2E_7a}
 \ema
Since it holds for $t \geq t_0=:\frac{L^2}{r_0^2}$ with the constant $L\ge\sqrt{\frac{4\pi}{b_1}}$ that
 \bma
 I_1
  &=\frac{d_0^2}4t^{-3/2}\int_{|\zeta|\leq r_0\sqrt{t}}
   e^{-2\eta|\zeta|^2}\cos^2(t+b_1|\zeta|^2)d\zeta
 \geq \pi d_0^2(1+t)^{-3/2}\int^L_0 r^{2}e^{-2\eta_2 r^2}\cos^2(t+b_1r^2)dr
 \nnm\\&
  \geq
   (1+t)^{-3/2}\frac{\pi d_0^2L}{2} e^{-2\eta L^2 }\int^{L}_{L/2}r\cos^2(t+b_1r^2)dr
  =  (1+t)^{-3/2}\frac{\pi d_0^2L}{4b_1}e^{-2\eta L^2 }
     \int^{t+b_1L^2}_{t+\frac {b_1L^2}4}\cos^2ydy\nnm\\
 &\geq
    (1+t)^{-3/2}\frac{\pi d_0^2L}{4b_1}e^{-2\eta L^2 }\int^\pi_0 \cos^2ydy
 \geq C_3(1+t)^{-3/2},   \label{1E_9b}
 \ema
where $C_3> 0$ denotes a generic positive constant. We can  substitute \eqref{2E_7a} and \eqref{1E_9b} into  \eqref{1H_4} with $j=0$  to obtain \eqref{2H_1}.

By \eqref{2F_3},  we have
 \bmas
 |(h_1(t),v\sqrt{M})|^2 \geq
 \frac1{2|\xi|^2}e^{2\mathrm{Re}\lambda_1(|\xi|)t}
  \sin^2(\mathrm{Im}\lambda_1(|\xi|)t)|\hat{n}_0|^2
 -Ce^{-2\eta_1 |\xi|^2t}\|\hat{V}_0\|^2 -C|\xi|^2e^{-2\eta_1 |\xi|^4t}\|\hat{V}_0\|^2.
 \emas
Then
 \bmas
 \|(h_1(t),v\sqrt{M})\|^2_{L^2_\xi}
\geq &
 \frac{d_0^2}4\int_{|\xi|\le r_0}\frac1{|\xi|^2}
   e^{-2\eta_2|\xi|^2t}\sin^2(t+b_1|\xi|^2t)d\xi
\nnm\\
&-C\int_{|\xi|\leq r_0} e^{-2\eta_1 |\xi|^2 t}\|\hat{V}_0\|^2d\xi-C\int_{|\xi|\leq r_0} |\xi|^2e^{-2\eta_1 |\xi|^4 t}
  \|\hat{V}_0\|^2d\xi
\nnm\\
\geq &  C_3(1+t)^{-1/2}- C(1+t)^{-3/2}- C(1+t)^{-5/4},
\emas
which together with \eqref{1H_4} for $j=1,2,3$ lead to \eqref{2H_2} for $t>0$ being large enough.

By \eqref{2F_4} and   the fact that
$\lambda_0(|\xi|)$ is real, we have
 \bmas
 |(h_1(t),\chi_4)|^2\geq \frac12e^{2\lambda_0(|\xi|)t}|\hat{q}_0|^2
 -Ce^{-2\eta_1 |\xi|^2t}(|\hat{n}_0|^2+|\xi|^2\|\hat{V}_0\|^2),
\emas
which leads to
 \bmas
 \|(h_1(t),\chi_4)\|^2_{L^2_\xi} &\geq
 \frac12\int_{|\xi|\le r_0}e^{-2\eta_2 |\xi|^2t}|\hat{q}_0|^2d\xi
  -C\int_{|\xi|\leq r_0}e^{-2\eta_1 |\xi|^2 t}
   (|\hat{n}_0|^2+|\xi|^2\|\hat{V}_0\|^2)d\xi\\
   &\ge C_3\big[
     \inf_{ |\xi|\le r_0} |\hat{q}_0(\xi)|^2(1+t)^{-3/2}
    -d_1\sup_{ |\xi|\le r_0} |\hat{n}_0(\xi)|(1+t)^{-3/2}\big]
   -C(1+t)^{-5/2}.
 \emas
This and \eqref{1H_4} with $j=4$  lead to  \eqref{2H_3} for $t>0$  being
large enough.

By \eqref{2B_2}, we have
\bmas
 \|\P_1h_1(t)\|^2_{L^2_v}
 \ge &
  \frac12|\hat n_0|^2 e^{2{\rm Re}\lambda_1(|\xi|)t}
   \| \sin({\rm Im}\lambda_1(|\xi|)t)L\Psi
     +\cos({\rm Im}\lambda_1(|\xi|)t)\Psi\|^2_{L^2_{v}} \\
  & -C|\xi|^2e^{-2\eta_1 |\xi|^2t}\|\hat V_0\|^2-C|\xi|^4e^{-2\eta_1 |\xi|^4t}\|\hat V_0\|^2,
\emas
where $\Psi\in N_0^\bot$ is a non-zero real function given by
$$
\Psi=(L-\i\P_1)^{-1}(L+\i\P_1)^{-1}\P_1(v\cdot\omega)^2\sqrt M\neq 0.
$$
Then
 \bma
 \| \P_1h_1(t)\|^2_{L^2_{\xi,v}}
\ge &
  \frac{d_0^2}4 \int_{|\xi|\le r_0}e^{-2\eta_2 |\xi|^2t}\|\sin(t+b_1|\xi|^2t)L\Psi+\cos(t+b_1|\xi|^2t)\Psi\|^2_{L^2_{v}}d \xi
  \nnm\\
 & -C (1+t)^{-5/2}-C (1+t)^{-7/4}\nnm\\
=&:I_2 -C (1+t)^{-5/2}-C (1+t)^{-7/4}.  \label{I6}
\ema
Since it holds for time $t \geq t_0=:\frac{L^2}{r_0^2}$ with  $L\ge\sqrt{\frac{4\pi}{b_1}}$ that
\bma
I_2&=\frac{d_0^2}4t^{-3/2}\int_{|\zeta|\le r_0\sqrt t}e^{-2\eta |\zeta|^2}\|\sin(t+b_1|\zeta|^2)L\Psi+\cos(t+b_1|\zeta|^2)\Psi\|^2_{L^2(\R^3_{v})}d\zeta
 \nnm\\
&\ge \pi d_0^2(1+t)^{-3/2}\int^L_0 r^2e^{-2\eta r^2}
  \|\sin(t+b_1r^2)L\Psi+\cos(t+b_1r^2)\Psi\|^2_{L^2(\R^3_{v})}dr
 \nnm\\
 &\ge
    \frac{L\pi d_0^2}{4b_1}(1+t)^{-3/2}  e^{-2\eta L^2}\|\Psi\|^2_{L^2_v}
    \int^\pi_{\frac\pi2}\cos^2 y d y
    \ge C_3 (1+t)^{-3/2} ,       \label{I7}
\ema
we can substitute \eqref{I7} and \eqref{I6} into  \eqref{1H_4} imply \eqref{2Q_2a} for sufficiently large $t>0$.

By \eqref{1F_3a}, we obtain
\bmas
\frac1{|\xi|^2}|(h_1(t),\sqrt{M})|^2+|H_1(t)|^2
 \geq
   \frac1{2|\xi|^2} e^{2\mathrm{Re}\lambda_1(|\xi|)t}\cos^2(\mathrm{Im}\lambda_1(|\xi|)t)|\hat{n}_0 |^2-C e^{-2\eta_1 |\xi|^2t}\|\hat{V}_0\|^2-C|\xi|^6e^{-2\eta_1 |\xi|^4t}\|\hat{V}_0\|^2,
 \emas
which leads to
\bmas
\|\frac1{|\xi|}(h_1(t),\sqrt{M})\|^2_{L^2_\xi}+\| H_1(t)\|^2_{L^2_\xi}
\ge  C_3(1+t)^{-1/2} -C(1+t)^{-3/2}-C(1+t)^{-9/4}.
\emas
This together with \eqref{1H_4a} lead to \eqref{2H_2a} for sufficiently large $t>0$.

Finally, by  \eqref{1F_4a} and
the fact that $\lambda_6(|\xi|)=\lambda_7(|\xi|)$ are real, we obtain
\bmas
|J_1(t)|^2
 \ge\frac12e^{2 \lambda_6(|\xi|)t} |\omega\times\hat{B}_0|^2-C|\xi|^4e^{-2\eta_1 |\xi|^4t}\|\hat V_0\|^2 -C|\xi|^2e^{-2\eta_1 |\xi|^2t}\|\hat V_0\|^2.
 \emas
 Since
$$\mathrm{Re}\lambda_j(|\xi|)=a_j|\xi|^4(1+O(|\xi|))\ge -\eta_2 |\xi|^4, \quad |\xi|\leq r_0,\,\, j=6,7,
$$
for some constant $\eta_2>0$, we have
\bmas
 \| J_1(t)\|^2_{L^2_\xi}\geq &
 \frac{d_0^2}2\int_{|\xi|\le r_0}|\xi|^2  e^{-2\eta_2|\xi|^4t}d\xi-C\int_{|\xi|\le r_0}|\xi|^4  e^{-2\eta_1 |\xi|^4t}\|\hat{V}_0\|^2d\xi
\nnm\\
&-C\int_{|\xi|\leq r_0}|\xi|^2 e^{-2\eta_1  |\xi|^2 t} \|\hat{V}_0\|^2d\xi
\nnm\\
\ge&  C_3(1+t)^{-3/4} -C(1+t)^{-7/4}-C(1+t)^{-5/2}.
\emas
This together with  \eqref{1Q_4} lead to  \eqref{2H_3a}.  The proof is then completed.
\end{proof}

 \begin{proof}[\underline{Proof of Theorem \ref{time2a}}]

In the case of $\Tdx\cdot E_0=(f_0,\sqrt M)$, the high frequency term $S_2(t,\xi)\hat V_0$  are same as those in the case of $\Tdx\cdot E_0\ne (f_0,\sqrt M)$. The different part lie in the expansions of the low frequency term $S_1(t,\xi)\hat V_0$. That is, in the low frequency region, by \eqref{E_5a1}, we have
\bmas
S_1(t,\xi)\hat{V}_0=&\sum^7_{j=-1}e^{t\lambda_j(|\xi|)} [(\hat{f}_0,\overline{\psi_j}\,)+\frac1{|\xi|^2}\i(\hat E_0\cdot\xi)(\psi_j,\sqrt M)-(\omega\times \hat E_0,\overline{X_j})-(\omega\times \hat B_0,\overline{Y_j})]\\
&\qquad\quad\times(\psi_j,X_j,Y_j) 1_{\{|\xi|\leq r_0\}},
\emas
 where  we have used $(\hat f_0,\sqrt M)=\i(\hat E_0\cdot \xi)$ to replace $(\hat f_0,\sqrt M)$.
From \eqref{1eigfr0} and \eqref{1eigf1},
 the macroscopic part and microscopic part of $ h_1(t)$ and $h_2(t),h_3(t)$ satisfy
 \bma
 (h_1(t),\sqrt{M})
=&|\xi|\sum_{j=-1}^1e^{\lambda_j(|\xi|)t}((T_j(\xi)\hat{V}_0)_1,\sqrt{M}),\label{1F_2}
 \\
(h_1(t),v\sqrt{M})
=&\frac12\sum_{j=\pm1}e^{\lambda_j(|\xi|)t}[(\hat{m}_0\cdot\omega)
  -j\i(\hat{E}_0\cdot\omega)]\omega+\frac12\sum_{j=2,3}e^{\lambda_j(|\xi|)t}[(\hat{m}_0\cdot W^j)+\i(\hat E_0,W^j)]W^j
  \nnm\\
 &+\frac12\sum_{j=4,5}e^{\lambda_j(|\xi|)t}[(\hat{m}_0\cdot W^j)-\i(\hat E_0,W^j)]W^j+\i|\xi|\sum_{j=6,7}e^{\lambda_j(|\xi|)t}(\omega\times \hat B_0,W^j)W^j\nnm\\
 &+|\xi|\sum_{j=-1}^5e^{\lambda_j(|\xi|)t}((T_j(\xi)\hat{V}_0)_1,v\sqrt{M}) +|\xi|^2\sum_{j=6,7}e^{\lambda_j(|\xi|)t}((T_j(\xi)\hat{V}_0)_1,v\sqrt{M}),\label{1F_3}
  \\
(h_1(t),\chi_4)
 =&e^{\lambda_0(|\xi|)t}\hat{q}_0
   +|\xi|\sum_{j=-1}^1 e^{\lambda_j(|\xi|)t}((T_j(\xi)\hat{V}_0)_1,\chi_4),\label{1F_4}
   \\
\P_1h_1(t)=& |\xi|\sum^5_{j=-1}e^{t\lambda_j(|\xi|)}\P_1(T_j(\xi)\hat{V}_0)_1+|\xi|^2\sum_{j=6,7}e^{t\lambda_j(|\xi|)}\P_1(T_j(\xi)\hat{V}_0)_1,  \label{1B_2}
 \ema
and $H_1(t),J_1(t)$ are same as \eqref{1F_3a} and \eqref{1F_4a}.
Thus we obtain by \eqref{1F_2}--\eqref{1F_4a} that
 \bma
 \|\xi^\alpha(h_1(t),\sqrt{M})\|^2_{L^2_\xi}
 &\leq
  C(1+t)^{-(5/2+k)}\|\dx^{\alpha'}U_0\|^2_{Z^1},\label{D_8}
\\
 \|\xi^\alpha(h_1(t),v\sqrt{M})\|^2_{L^2_\xi}
&\leq
 C(1+t)^{-(3/2+k)}
 \|\dx^{\alpha'}U_0\|^2_{Z^1})
+C(1+t)^{-(5/4+k/2)} \|\dx^{\alpha'}U_0\|^2_{Z^1},\label{D_9}
 \\
  \|\xi^\alpha(h_1(t),\chi_4)\|^2_{L^2_\xi}
&\leq
 C(1+t)^{-(3/2+k)}
\|\dx^{\alpha'}U_0\|^2_{Z^1},\label{D_10}
  \\
    \|\xi^\alpha \P_1h_1(t)\|^2_{L^2_{\xi,v}}
&\leq
 C(1+t)^{-(5/2+k)}\|\dx^{\alpha'}U_0\|^2_{Z^1}+C(1+t)^{-(7/4+k/2)} \|\dx^{\alpha'}U_0\|^2_{Z^1},\label{D_11}
 \\
  \|\xi^\alpha H_1(t)\|^2_{L^2_\xi}
 &\leq
  C(1+t)^{-(3/2+k)}\|\dx^{\alpha'}U_0\|^2_{Z^1}
+C(1+t)^{-(9/4+k/2)} \|\dx^{\alpha'}U_0\|^2_{Z^1},\label{D_12}
\\
 \|\xi^\alpha J_1(t)\|^2_{L^2_\xi}
&\leq
  C(1+t)^{-(3/4+k/2)}\|\dx^{\alpha'}U_0\|^2_{Z^1} +C(1+t)^{-(5/2+k)}\|\dx^{\alpha'}U_0\|^2_{Z^1},\label{D_13}
\ema
with $k=|\alpha-\alpha'|$.
Thus by \eqref{D_8}--\eqref{D_13} and \eqref{1V_1}--\eqref{1V_4}, we can prove \eqref{1D_2}--\eqref{1D_0}.

Now we turn to show the lower bound of time decay rates for the global solution under the assumptions of Theorem \ref{time1a}.
By \eqref{1F_2}, we have
$$
(h_1(t),\sqrt{M})=- |\xi|e^{{\rm Re}\lambda_1(|\xi|)t}\sin({\rm Im}\lambda_1(|\xi|)t)(\hat E_0\cdot\omega)+|\xi|^2T_8(t,\xi)\hat V_0,
$$
where $ T_8(t,\xi)\hat V_0$ is the remainder term satisfying $\|T_8(t,\xi)\hat V_0\|\le Ce^{-\eta_1 |\xi|^2t}\|\hat V_0\|$. This leads to
 $$
|(h_1(t),\sqrt{M})|^2
 \geq
   \frac12|\xi|^2e^{2\mathrm{Re}\lambda_1(|\xi|)t}\sin^2(\mathrm{Im}\lambda_1(|\xi|)t)|\hat{E}_0\cdot\omega|^2-C|\xi|^4e^{-2\eta_1 |\xi|^2t}\|\hat{V}_0\|^2.
 $$
It follows that \bma
\| (h_1(t),\sqrt{M})\|^2_{L^2_{\xi}}
&\geq
 \frac{d_0^2}4\int_{|\xi|\leq r_0}|\xi|^2 e^{-2\eta_2 |\xi|^2t}\sin^2(t+b_1|\xi|^2t)d\xi
 -C (1+t)^{-7/2}\nnm\\
&\ge C_3(1+t)^{-5/2}- C(1+t)^{-7/2}.    \label{1E_7a}
 \ema
We can  substitute \eqref{1E_7a}  into  \eqref{1H_4} with $j=0$  to obtain \eqref{1H_1}.

By \eqref{1F_3} and \eqref{1F_4},  we have
 \bmas
 |(h_1(t),v\sqrt{M})|^2 &\geq
\frac12|\xi|^2e^{2\lambda_6(|\xi|)t}|\omega\times\hat{B}_0|^2
  -Ce^{-2\eta_1 |\xi|^2t}(|\hat{E}_0|^2+|\xi|^2\|\hat{V}_0\|^2)-C|\xi|^4e^{-2\eta_1 |\xi|^4t}\|\hat{V}_0\|^2,\\
   |(h_1(t),\chi_4)|^2&\geq  \frac12e^{2\lambda_0(|\xi|)t}|\hat{q}_0|^2-C|\xi|^2e^{-2\eta_1 |\xi|^2t}\|\hat{V}_0\|^2,
 \emas
which leads to
 \bmas
 \|(h_1(t),v\sqrt{M})\|^2_{L^2_\xi}
 &\geq  C_3(1+t)^{-5/4}-C(1+t)^{-7/4}- C(1+t)^{-3/2},\\
\|(h_1(t),\chi_4)\|^2_{L^2_\xi}
   &\ge C_3(1+t)^{-3/2}-C(1+t)^{-5/2}.
\emas
This together with \eqref{1H_4} lead to \eqref{1H_2} and \eqref{1H_3} for $t>0$ being large enough.

By \eqref{1B_2}, we have
\bmas
    \P_1h_1(t)
 = &|\xi|^2\sum_{j=6,7}e^{\lambda_j(|\xi|)t}(\omega\times \hat B_0,W^j)L^{-1}\P_1(v\cdot\omega)(v\cdot W^j)\sqrt M\nnm\\
   &+|\xi|^3T_9(t,\xi)\hat V_0+|\xi|T_{10}(t,\xi)\hat V_0,
\emas
where $ T_9(t,\xi)$ and $ T_{10}(t,\xi)$ are the remainder terms satisfying $\|T_9(t,\xi)\hat V_0\|\le Ce^{-\eta_1 |\xi|^4t}\|\hat V_0\|$ and $\|T_{10}(t,\xi)\hat V_0\|\le Ce^{-\eta_1 |\xi|^2t}\|\hat V_0\|$.
Then
 \bmas
 \| \P_1h_1(t)\|^2_{L^2_v}
 \ge &
  \frac12\| L^{-1}\P_1(v_1\chi_2)\|^2_{L^2_v} |\xi|^4e^{2 \lambda_6(|\xi|)t}|\omega\times \hat B_0|^2 -C|\xi|^6e^{-2\eta_1 |\xi|^4t}\|\hat V_0\|^2
\nnm\\
  & -C|\xi|^2e^{-2\eta_1 |\xi|^2t}\|\hat V_0\|^2.
\emas
This leads to
\bmas
  \| \P_1h_1(t)\|^2_{L^2_{\xi,v}}
\ge  C_3(1+t)^{-7/4} -C (1+t)^{-9/4} -C (1+t)^{-5/2},  
\emas
which together with  \eqref{1H_4} imply \eqref{1Q_2a} for sufficiently large $t>0$.

Finally, by \eqref{1F_3a} and \eqref{1F_4a} we obtain
\bmas
\frac1{|\xi|^2}|(h_1(t),\sqrt{M})|^2+|H_1(t)|^2
 &\ge \frac12e^{2\mathrm{Re}\lambda_1(|\xi|)t}\sin^2(\mathrm{Im}\lambda_1(|\xi|)t)|\hat{E}_0\cdot\omega|^2 +\frac12e^{2\mathrm{Re}\lambda_4(|\xi|)t}\cos^2(\mathrm{Im}\lambda_4(|\xi|)t)|\omega\times \hat E_0|^2\nnm\\
 &\quad-C|\xi|^2e^{-2\eta_1 |\xi|^2t}\|\hat V_0\|^2-C|\xi|^6e^{-2\eta_1 |\xi|^4t}\|\hat V_0\|^2,\\
|J_1(t)|^2
 &\ge\frac12e^{2 \lambda_6(|\xi|)t} |\omega\times\hat{B}_0|^2-C|\xi|^4e^{-2\eta_1 |\xi|^4t}\|\hat V_0\|^2-C|\xi|^2e^{-2\eta_1 |\xi|^2t}\|\hat V_0\|^2,
 \emas
which lead to
\bmas
  \|\frac1{|\xi|}(h_1(t),\sqrt{M})\|^2_{L^2_\xi}+\| H_1(t)\|^2_{L^2_\xi}
&\ge  C(1+t)^{-3/2} -C(1+t)^{-5/2}-C(1+t)^{-9/4},
\\
 \| J_1(t)\|^2_{L^2_\xi}
&\ge  C(1+t)^{-3/4} -C(1+t)^{-7/4}-C(1+t)^{-5/2}.
\emas
This together with \eqref{1H_4a} and \eqref{1Q_4} lead to \eqref{1H_2a} and \eqref{1H_3a}.  The proof is then completed.
\end{proof}

\section{The  nonlinear system}
\label{behavior-nonlinear}
 \setcounter{equation}{0}
In this section, we prove the large time decay rates of the solution to the Cauchy problem for Vlasov-Maxwell-Boltzmann systems with the estimates on the linearized problem obtained in Section~\ref{behavior-linear}.

\subsection{Energy estimates for two species}
We first obtain some energy estimates.
Let $N$ be a positive integer and $U=(f_1,f_2,E,B)$, and
\bma
E_{N,k}(U)&=\sum_{|\alpha|+|\beta|\le N}\|w^k\dxa\dvb (f_1,f_2)\|^2_{L^2_{x,v}}+\sum_{|\alpha|\le N}\|\dxa(E,B)\|^2_{L^2_x},\label{energy3}\\
H_{N,k}(U)&= \sum_{|\alpha|+|\beta|\le N}\|w^k\dxa\dvb
(\P_1f_1,P_rf_2)\|^2_{L^2_{x,v}}+\sum_{1\le|\alpha|\le N}\|\dxa (E,B)\|^2_{L^2_x}+\|E\|^2_{L^2_x}\nnm\\
&\quad +\sum_{|\alpha|\le N-1}\|\dxa\Tdx  (\P_0f_1,P_{\rm d}f_2)\|^2_{L^2_{x,v}}+\| P_{\rm d}f_2\|^2_{L^2_{x,v}},\\
D_{N,k}(U)&=\sum_{|\alpha|+|\beta|\le N}\|w^{\frac12+k}\dxa\dvb  (\P_1f_1,P_rf_2)\|^2_{L^2_{x,v}}+\sum_{1\le |\alpha|\le N-1}\|\dxa  B\|^2_{L^2_x}\nnm\\
&\quad+\sum_{|\alpha|\le N-1}(\|\dxa\Tdx  (\P_0f_1,P_{\rm d}f_2)\|^2_{L^2_{x,v}}+\|\dxa E\|^2_{L^2_x})+\| P_{\rm d}f_2\|^2_{L^2_{x,v}},
\ema
for $k\ge 0$. For brevity, we write $E_N(U)=E_{N,0}(U)$, $H_N(U)=H_{N,0}(U)$ and $D_N(U)=D_{N,0}(U)$ for $k=0$.

Firstly, by taking the inner product between $\chi_j\ (j=0,1,2,3,4)$ and \eqref{VMB3}, we obtain  a  compressible Euler-Maxwell type system
\bma \dt n_1+\divx  m_1&=0,\label{G_3}\\
\dt  m_1+\Tdx n_1+\sqrt{\frac23}\Tdx q_1&=n_2 E+m_2\times B-\intr v\cdot\Tdx(  \P_1f_1) v\sqrt Mdv,\label{G_5}\\
\dt q_1+\sqrt{\frac23}\divx m_1&=\sqrt{\frac23} E\cdot m_2-\intr v\cdot\Tdx(  \P_1f_1) \chi_4 dv, \label{G_6}
\ema
where
$$(n_1,m_1,q_1)=((f_1,\sqrt M),(f_1,v\sqrt M),(f_1,\chi_4)),\quad (n_2,m_2)=((f_2,\sqrt M),(f_2,v\sqrt M)).$$
Taking the microscopic projection $  \P_1$ on \eqref{VMB3}, we have
\bma
\dt(  \P_1f_1)+  \P_1(v\cdot\Tdx   \P_1f_1)-L(  \P_1f_1)&=-  \P_1(v\cdot\Tdx  \P_0f_1)+  \P_1 G_1 ,\label{GG1}
\ema
where the nonlinear term $G_1$ is denoted by
\bq G_1=\frac12 (v\cdot E)f_2-(E+v\times B)\cdot\Tdv f_2+\Gamma(f_1,f_1).\label{G1}\eq
By \eqref{GG1}, we can express the microscopic part $  \P_1f_1$ as
\bq   \P_1f_1=L^{-1}[\dt(  \P_1f_1)+  \P_1(v\cdot\Tdx   \P_1f_1)-  \P_1 G_1]+L^{-1}  \P_1(v\cdot\Tdx  \P_0f_1). \label{p_1}\eq
Substituting \eqref{p_1} into \eqref{G_3}--\eqref{G_6}, we obtain
a compressible Navier-Stokes-Maxwell type system
\bma
\dt n_1+\divx  m_1&=0,\label{G_9}\\
\dt  m_1+\dt R_1+\Tdx n_1+\sqrt{\frac23}\Tdx q_1&=\kappa_1 (\Delta_x m_1+\frac13\Tdx{\rm div}_x m_1)+n_2 E+m_2\times B+R_2,\label{G_7}\\
\dt q_1+\dt R_3+\sqrt{\frac23}\divx m_1&=\kappa_2 \Delta_x q_1+\sqrt{\frac23} E\cdot m_2+R_4,\label{G_8}
\ema
where the viscosity and heat conductivity
 coefficients $\kappa_1,\kappa_2>0$ and the remainder terms $R_1, R_2, R_3, R_4$ are given by
\bmas
\kappa_1&=-(L^{-1}  \P_1(v_1\chi_2),v_1\chi_2),\quad \kappa_2=-(L^{-1}  \P_1(v_1\chi_4),v_1\chi_4),\\
R_1&=( v\cdot\Tdx L^{-1} \P_1f_1,v\sqrt M),\quad R_2=-(v\cdot\Tdx L^{-1}(  \P_1(v\cdot\Tdx \P_1f_1)-  \P_1 G_1),v\sqrt M),\\
R_3&=( v\cdot\Tdx L^{-1} \P_1f_1,\chi_4),\quad R_4=-(v\cdot\Tdx L^{-1}(  \P_1(v\cdot\Tdx \P_1f_1)-  \P_1 G_1),\chi_4).
\emas

By taking the inner product between $\sqrt M$ and \eqref{VMB3a}, we obtain
\bma
\dt n_2+\divx m_2&=0. \label{G_3a}
\ema
Taking the microscopic projection $ P_r$ on \eqref{VMB3a}, we have
\bma
\dt( P_rf_2)+ P_r(v\cdot\Tdx  P_rf_2)-v\sqrt M\cdot E-L_1( P_rf_2)&=- P_r(v\cdot\Tdx  P_{\rm d}f_2)+ P_r G_2,\label{GG2}
\ema
where the nonlinear term $G_2$ is denoted by
\bq G_2=\frac12 (v\cdot E)f_1-(E+v\times B)\cdot\Tdv f_1+\Gamma(f_2,f_1).\label{G2}\eq
By \eqref{GG2}, we can express the microscopic part $ P_rf_2$ as
\bq   P_rf_2=L_1^{-1}[\dt( P_rf_2)+ P_r(v\cdot\Tdx  P_rf_2)- P_r G_2]+L_1^{-1} P_r(v\cdot\Tdx  P_{\rm d}f_2)-L_1^{-1} (v\sqrt M\cdot E). \label{p_c}\eq
Substituting \eqref{p_c} into \eqref{G_3a} and \eqref{VMB3b}, we obtain
\bma
\dt n_2+\dt\divx R_5&=-\kappa_3 n_2+\kappa_3 \Delta_x n_2-\divx R_6,\label{G_9a}\\
\dt E+\dt R_5&=\Tdx\times B+\kappa_3\Tdx n_2-\kappa_3E+R_6,\label{G_9b}\\
\dt B&=-\Tdx\times E,\label{G_9c}
\ema
where the viscosity coefficient $\kappa_3>0$ and the remainder terms $R_5, R_6$ are defined by
\bmas \kappa_3&=-(L_1^{-1}\chi_1,\chi_1),\quad
R_5=(L_1^{-1}P_r f_2,v\sqrt M), \\
R_6&=(L_1^{-1}(P_r(v\cdot\Tdx P_rf_2)-P_rG_2),v\sqrt M).\emas

The following  lemma is from \cite{Duan2,Guo2}.

\begin{lem}[\cite{Duan2,Guo2}]\label{e1}It holds that
\bmas
\|\nu^{k}\dvb\Gamma(f,g)\|_{L^2_v}&\le C\sum_{\beta_1+\beta_2\le\beta}
(\|\dv^{\beta_1}f\|_{L^2_v}\|\nu^{k+1}\dv^{\beta_2}g\|_{L^2_v}+\|\nu^{k+1}\dv^{\beta_1}f\|_{L^2_v}\|\dv^{\beta_2}g\|_{L^2_v}),
\emas for $k\ge -1$, and
$$ \|\Gamma(f,g)\|_{L^{2,1}}\le C(\|f\|_{L^2_{x,v}}\|\nu g\|_{L^2_{x,v}}+\|\nu
f\|_{L^2_{x,v}}\|g\|_{L^2_{x,v}}). $$
\end{lem}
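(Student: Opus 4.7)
The plan is to split $\Gamma(f,g)=\Gamma_+(f,g)-\Gamma_-(f,g)$, where
\bmas
\Gamma_-(f,g)(v)&=f(v)\intr\ints|(v-v_*)\cdot\omega|\sqrt{M_*}\,g(v_*)\,d\omega\,dv_*,\\
\Gamma_+(f,g)(v)&=\intr\ints|(v-v_*)\cdot\omega|\sqrt{M_*}\,f(v')g(v'_*)\,d\omega\,dv_*,
\emas
and treat the two pieces separately. For $\Gamma_-$, differentiation in $v$ is straightforward: the Leibniz rule distributes $\dvb$ among $f$, the kernel $|(v-v_*)\cdot\omega|$ (which is linear in $v$), and $\sqrt{M_*}\,g(v_*)$ is independent of $v$. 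Bounding $|(v-v_*)\cdot\omega|\le |v-v_*|\le C\nu(v)\nu(v_*)$ (hard sphere) and applying Cauchy--Schwarz in $v_*$ against the Gaussian weight $\sqrt{M_*}$ gives the pointwise bound
\bmas
\nu^k(v)|\dvb\Gamma_-(f,g)(v)|\le C\sum_{\beta_1+\beta_2\le\beta}\nu^{k+1}(v)|\dv^{\beta_1}f(v)|\cdot\|\dv^{\beta_2}g\|_{L^2_v},
\emas
and taking $L^2_v$ on both sides yields the desired inequality for the loss part.

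For $\Gamma_+$, the difficulty is that $\dvb$ acts on $f(v')g(v'_*)$ through the $v$-dependence of the post-collision velocities $v',v'_*$. My plan is to either (i) use Grad's splitting / Carleman representation to rewrite $\Gamma_+(f,g)(v)=\intr k(v,u;f,g)\,du$ as a genuine convolution in which $v$-derivatives can be transferred onto a smooth Gaussian factor, or (ii) use the involution $(v,v_*,\omega)\to(v',v'_*,\omega)$ to absorb the $v$-derivative by the change of variables and then apply Leibniz. The pivotal pointwise estimate is $\nu(v)\le C(\nu(v')+\nu(v'_*))$, which follows from energy conservation $|v'|^2+|v'_*|^2=|v|^2+|v_*|^2$ together with the triangle inequality, and permits one to trade the weight $\nu^{k+1}$ on $v$ for a weight on $v'$ or $v'_*$. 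A Cauchy--Schwarz in $(v_*,\omega)$ with the Gaussian $\sqrt{M_*}$ then isolates the bilinear factor $\|\dv^{\beta_1}f\|_{L^2_v}\|\nu^{k+1}\dv^{\beta_2}g\|_{L^2_v}$ (and its symmetric partner) and completes the first estimate.

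For the $L^{2,1}$ bound, I would apply Minkowski's inequality to interchange $(\,\cdot\,)_{L^2_v}$ with the $L^1_x$-norm, giving
\bmas
\|\Gamma(f,g)\|_{L^{2,1}}\le\Big\|\intr\ints|(v-v_*)\cdot\omega|\sqrt{M_*}|f(x,\cdot)g(x,\cdot)|\,d\omega\,dv_*\Big\|_{L^1_x(L^2_v)}\mbox{-type},
\emas
then apply H\"older in $x$ to peel off $\|f(\cdot,v')\|_{L^2_x}\|g(\cdot,v'_*)\|_{L^2_x}$ (or loss-term analogues) pointwise in $(v,v_*,\omega)$, and close with Cauchy--Schwarz in the collisional variables using $|v-v_*|\le C\nu(v)\nu(v_*)$ plus Gaussian decay of $\sqrt{M_*}$; symmetrizing in $f,g$ yields the stated bilinear bound.

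The main obstacle is the clean bookkeeping of weights in $\Gamma_+$: since $\dvb$ produces derivatives evaluated at $v'$ and $v'_*$ rather than at $v$, one must either push the derivative through the change of variable or keep an honest Leibniz expansion in $(v',v'_*,\omega)$. Once the weight identity $\nu(v)\lesssim\nu(v')+\nu(v'_*)$ is deployed, the remaining work is a routine Cauchy--Schwarz absorbing any polynomial growth into the Maxwellian $\sqrt{M_*}$; the two estimates are essentially standard (as recorded in \cite{Duan2,Guo2}) and I would cite them once the decomposition and the key weight inequality are in place.
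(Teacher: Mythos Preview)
The paper does not prove this lemma; it merely quotes it from \cite{Duan2,Guo2} with the line ``The following lemma is from \cite{Duan2,Guo2}.'' Your sketch is a faithful outline of the standard argument in those references: split $\Gamma=\Gamma_+-\Gamma_-$, translate $v_*\mapsto v_*+v$ so that the kernel $|(v-v_*)\cdot\omega|$ becomes $v$-independent and $\partial_v^\beta$ distributes over $f,g$ and the shifted Maxwellian by Leibniz, use energy conservation $|v|^2+|v_*|^2=|v'|^2+|v'_*|^2$ to get $\nu(v)\lesssim\nu(v')+\nu(v'_*)$ (or the multiplicative version) and thereby place the extra $\nu$ on either factor, and close with Cauchy--Schwarz in $(v_*,\omega)$ against the Gaussian $\sqrt{M_*}$. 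The $L^{2,1}$ bound is likewise the standard H\"older-in-$x$ followed by the same collisional Cauchy--Schwarz.

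One cosmetic slip: with the paper's convention $\Gamma(f,g)=M^{-1/2}\Q(\sqrt{M}f,\sqrt{M}g)$ and the loss term $F(v_*)G(v)$ in $\Q$, the loss piece reads
\[
\Gamma_-(f,g)(v)=g(v)\intr\ints|(v-v_*)\cdot\omega|\sqrt{M_*}\,f(v_*)\,d\omega\,dv_*,
\]
so your $f$ and $g$ are interchanged (similarly in $\Gamma_+$). Since the target estimate is symmetric in $f,g$, this is harmless.
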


\begin{lem}[Macroscopic dissipation] \label{macro-en} Let $(n_1,m_1,q_1)$ and $(n_2,E,B)$ be the strong solutions to \eqref{G_9}--\eqref{G_8} and \eqref{G_9a}--\eqref{G_9c} respectively. Then, there are two constants $s_0,s_1>0$ such that
\bma
&\Dt \sum_{k\le |\alpha|\le N-1}s_0(\|\dxa(n_1, m_1,q_1)\|^2_{L^2_x}+2\intr \dxa R_1\dxa m_1dx+2\intr \dxa R_3\dxa q_1dx)\nnm\\
&+\Dt \sum_{k\le |\alpha|\le N-1}4\intr \dxa m_1 \dxa\Tdx n_1dx+\sum_{k\le |\alpha|\le N-1} \|\dxa\Tdx (n_1, m_1,q_1)\|^2_{L^2_x}
\nnm\\
\le & C\sqrt{E_N(U)}D_N(U)+C\sum_{k\le |\alpha|\le N-1}\|\dxa\Tdx \P_1f_1\|^2_{L^2_{x,v}},\label{E_1}
\\
&\Dt \sum_{k\le |\alpha|\le N-1}s_1(\|\dxa (n_2,E,B)\|^2_{L^2_x}+2 \intr\dxa\divx R_5\dxa n_2 dx+2 \intr\dxa R_5\dxa E dx)\nnm\\
&-\Dt \sum_{k\le |\alpha|\le N-2}4\intr \dxa E\dxa(\Tdx\times B)dx\nnm\\
&+\sum_{k\le |\alpha|\le N-1}(\|\dxa n_2\|^2_{L^2_x}+\|\dxa \Tdx n_2\|^2_{L^2_x} +\|\dxa E\|^2_{L^2_x})+\sum_{k+1\le |\alpha|\le N-1}\|\dxa B\|^2_{L^2_x}
\nnm\\
\le& CE_N(U)D_N(U)+C\sum_{k\le |\alpha|\le N}\|\dxa  P_rf_2\|^2_{L^2_{x,v}},\label{E_1a}
\ema
with $0\le k\le N-2$.
\end{lem}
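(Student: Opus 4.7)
\textbf{Proof proposal for Lemma~\ref{macro-en}.}
The plan is to use the fluid-type reformulations \eqref{G_9}--\eqref{G_8} and \eqref{G_9a}--\eqref{G_9c} and run two separate Kawashima-type energy arguments, adapted to the Maxwell structure. For each estimate, the game is the same: the direct energy identity gives dissipation on all variables carrying explicit damping/diffusion, and a cross-interaction identity recovers dissipation on the remaining variable(s).

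\textbf{Step 1: Proof of \eqref{E_1}.} For each multi-index $\alpha$ with $k\le |\alpha|\le N-1$, apply $\da_x$ to \eqref{G_9}--\eqref{G_8} and take the $L^2_x$ inner product with $\da_x n_1$, $\da_x m_1$, $\da_x q_1$ respectively. Summing over the three equations, the first-order coupling terms $\Tdx\cdot m_1$, $\Tdx n_1$, $\sqrt{2/3}\,\Tdx q_1$, $\sqrt{2/3}\,\Tdx\cdot m_1$ cancel after integration by parts, while the viscosity terms produce $\kappa_1 \|\da_x\Tdx m_1\|_{L^2_x}^2+\frac{\kappa_1}3\|\da_x\divx m_1\|_{L^2_x}^2+\kappa_2\|\da_x\Tdx q_1\|_{L^2_x}^2$. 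The remainders $\dt R_1,\dt R_3$ are integrated by parts in $t$ to produce the correction $\Dt\{2\intr(\da_x R_1\cdot\da_x m_1+\da_x R_3\,\da_x q_1)dx\}$ plus lower-order terms involving $\da_x\Tdx \P_1 f_1$. To recover dissipation on $n_1$, apply the Kawashima rotation trick: take $\da_x$ of \eqref{G_7} and pair against $\da_x\Tdx n_1$; integrating the $\dt m_1$ term by parts in time produces $-\Dt\{2\intr \da_x m_1\cdot\da_x\Tdx n_1\,dx\}$, while \eqref{G_9} supplies $\intr \da_x(\dt n_1)\,\da_x\divx m_1\,dx = -\|\da_x\divx m_1\|^2$, and the pressure terms on the left produce a positive constant times $\|\da_x\Tdx n_1\|^2_{L^2_x}$. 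Choosing $s_0>0$ sufficiently large relative to the coefficient in the rotation identity closes this part and yields \eqref{E_1}, where nonlinear pieces ($n_2 E$, $m_2\times B$, $\P_0 G_1$ inside $R_2, R_4$) are controlled by $\sqrt{E_N(U)}\,D_N(U)$ using the Sobolev embedding $H^2\hookrightarrow L^\infty$ together with Lemma \ref{e1} for $\Gamma$.

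\textbf{Step 2: Proof of \eqref{E_1a}.} Apply $\da_x$ to \eqref{G_9a}--\eqref{G_9c} and take the $L^2_x$ inner product with $\da_x n_2$, $\da_x E$, $\da_x B$ respectively. The damping structure gives immediately $\kappa_3\|\da_x n_2\|_{L^2_x}^2+\kappa_3\|\da_x\Tdx n_2\|_{L^2_x}^2$ from the $n_2$-equation (using both $-\kappa_3 n_2$ and $\kappa_3\Delta_x n_2$) and $\kappa_3\|\da_x E\|_{L^2_x}^2$ from the $E$-equation; the coupling $\Tdx\times B$ vs $-\Tdx\times E$ in \eqref{G_9b}--\eqref{G_9c} cancels after integration by parts, and the correction terms involving $R_5$ are moved under $\dt$ to produce the modifications $2\intr\da_x\divx R_5\,\da_x n_2\,dx+2\intr\da_x R_5\cdot\da_x E\,dx$. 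Since no dissipation on $B$ has appeared yet, invoke the Maxwell cross identity: for $k\le|\alpha|\le N-2$, pair $\da_x$ of \eqref{G_9b} with $\da_x(\Tdx\times B)$. Moving $\dt$ off $\da_x E$ gives the correction $\Dt\{-2\intr \da_x E\cdot \da_x(\Tdx\times B)\,dx\}$ plus $\intr \da_x E\cdot\da_x(\Tdx\times(\Tdx\times E))\,dx=\|\da_x\Tdx E\|^2-\|\da_x\divx E\|^2$, while the left-hand side delivers $\|\da_x(\Tdx\times B)\|_{L^2_x}^2$. Invoking $\Tdx\cdot B=0$ turns $\|\da_x(\Tdx\times B)\|^2=\|\da_x\Tdx B\|^2$, which in turn dissipates $\dx^{|\alpha|+1}B$ with $|\alpha|+1$ ranging in $[k+1,N-1]$, matching the claimed norm. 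The spurious $\|\da_x\Tdx E\|^2=\|\dx^{|\alpha|+1}E\|^2$ is absorbed into $\kappa_3\|\dx^{|\alpha|+1}E\|^2$ from the $E$-dissipation at level $|\alpha|+1\le N-1$ by picking $s_1$ large enough.

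\textbf{Step 3: Nonlinear bookkeeping and closing.} The nonlinear source terms appearing in $G_1,G_2$ (and hence in $R_2,R_4,R_6$) are of the three generic types $(v\cdot E)f$, $(E+v\times B)\cdot\Tdv f$, $\Gamma(f,g)$. Each of these is estimated by splitting one factor in $L^\infty_x$ (via Sobolev for $|\alpha|+|\beta|\le N$ with $N\ge 4$) and the other in $L^2_{x,v}$, then using Lemma \ref{e1} for the collision bracket and the weight $\nu(v)\sim(1+|v|)$ to feed the microscopic dissipation $D_N(U)$. After multiplying \eqref{E_1} and \eqref{E_1a} by the chosen weights $s_0,s_1$, the correction cross terms $\intr \da_x R_1\cdot\da_x m_1\,dx,\ \intr \da_x R_5\cdot\da_x E\,dx$, etc., are bounded by $C\,\|\P_1 f_1\|\,\|m_1\|\le\tfrac12\|m_1\|^2+C\|\P_1 f_1\|^2$, and hence absorbed by taking $s_0,s_1$ large.

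\textbf{Main obstacle.} The genuinely delicate point is the recovery of $B$-dissipation via the Maxwell cross identity in Step 2: the byproduct $\|\da_x\Tdx E\|^2$ has one derivative more than $\|\da_x E\|^2$, so one must be careful that the index $|\alpha|$ in the cross term is strictly less than $N-1$ (whence the range $|\alpha|\le N-2$ in the statement) so that $\|\dx^{|\alpha|+1}E\|^2$ still lies within the $E$-dissipation provided by the $-\kappa_3 E$ term, and one must tune the constants $s_0,s_1$ so that every such transfer can be absorbed without destroying the sign of the main quadratic form. The bookkeeping of the $R_j$ corrections, whose time derivatives appear naturally inside the equations, requires the same smallness of $\sqrt{E_N(U)}$, which is ultimately guaranteed by the smallness of $\delta_0$ in the a priori bootstrap.
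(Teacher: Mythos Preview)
Your proposal is essentially correct and follows the same Kawashima-type strategy as the paper: a direct energy identity to extract the obvious dissipation, followed by cross-interaction identities to recover the missing modes ($n_1$ in Step~1, $B$ in Step~2), with $s_0,s_1$ chosen large so that the byproduct terms are absorbed. Your identification of the key obstacle---that the Maxwell cross identity throws off $\|\dxa\Tdx E\|^2$ one derivative above, forcing the restriction $|\alpha|\le N-2$ there---is exactly right.

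There are two small deviations from the paper's execution worth noting. First, for the $n_1$ cross identity you pair the Navier--Stokes--type momentum equation \eqref{G_7} with $\dxa\Tdx n_1$; the paper instead uses the Euler-type equation \eqref{G_5}. Your choice forces you to handle the extra terms $\dt R_1$ and $\kappa_1\Delta_x m_1$ inside the cross pairing, which is doable but adds bookkeeping; the paper's choice avoids this entirely. Second, for the $B$ cross identity you pair \eqref{G_9b} with $\dxa(\Tdx\times B)$, whereas the paper actually uses the raw Maxwell relation $\dt E=\Tdx\times B-m_2$ (i.e.\ \eqref{VMB3b}), obtaining simply
\[
-\Dt\intr\dxa E\cdot\dxa(\Tdx\times B)\,dx-\|\dxa(\Tdx\times E)\|^2+\|\dxa(\Tdx\times B)\|^2=\intr\dxa m_2\cdot\dxa(\Tdx\times B)\,dx,
\]
with $m_2=(P_rf_2,v\sqrt M)$ feeding directly into the $P_rf_2$ error term. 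Again your version works, but carries the $\dt R_5$, $\kappa_3\Tdx n_2$, $\kappa_3 E$, $R_6$ terms through the cross pairing unnecessarily. (Also, a sign slip: integrating $\dt m_1$ by parts in time in your cross step gives $+\Dt\int\dxa m_1\cdot\dxa\Tdx n_1$, matching the $+4$ in the statement, not the $-$ you wrote.)
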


\begin{proof}
First of all, we prove \eqref{E_1}.  Taking the inner product between $\dxa m_1$ and $\dxa\eqref{G_7}$ with $|\alpha|\le N-1$, we have
\bma
&\frac12\Dt \|\dxa m_1\|^2_{L^2_x}+{ \intr \dxa\dt R_1 \dxa m_1dx}+\intr \dxa\Tdx n_1 \dxa m_1dx+\sqrt{\frac23}\intr \dxa\Tdx q_1\dxa m_1dx\nnm\\
&\quad+\kappa_1 (\|\dxa\Tdx  m_1\|^2_{L^2_x}+\frac13\|\dxa\divx  m_1\|^2_{L^2_x})\nnm\\
&=\intr \dxa(n_2 E) \dxa m_1dx+\intr \dxa(m_2\times B) \dxa m_1dx+\intr\dxa R_1\dxa m_1dx.  \label{en_1}
\ema
For the second and third terms on the left hand side of \eqref{en_1}, we have{
\bma
 \intr \dxa\dt R_1 \dxa m_1dx&=\Dt\intr\dxa R_1\dxa m_1dx\nnm\\
&\quad-\intr \dxa R_1\dxa[-\Tdx n_1-\sqrt{\frac23}\Tdx q_1+n_2E+m_2\times B-(v\cdot\Tdx \P_1f_1,v\sqrt M)]dx\nnm\\
&\ge \Dt\intr\dxa R_1\dxa m_1dx-\epsilon(\|\dxa\Tdx n_1\|^2_{L^2_x}+\|\dxa\Tdx q_1\|^2_{L^2_x})\nnm\\
&\quad-C\sqrt{E_N(U)}D_N(U)-\frac{C}{\epsilon}\|\dxa\Tdx \P_1f_1\|^2_{L^2_x},  \label{en_2}
\ema}
and
\bma
\intr \dxa\Tdx n_1 \dxa m_1dx=-\intr\dxa n\dxa\divx mdx=\intr\dxa n_1\dxa\dt n_1dx=\frac12\Dt \|\dxa n_1 \|^2_{L^2_x}. \label{en_3}
\ema
The first and second terms on the right hand side  of \eqref{en_1} are bounded by $C\sqrt{E_N(U)}D_N(U)$. The last term can be estimated by
\bma
\intr\dxa R_2\dxa m_1dx
&\le C \|\dxa\Tdx   \P_1f_1\|_{L^2_{x,v}}
\|\dxa\Tdx m_1\|_{L^2_x}+C(\|\dxa( E f_2)\|_{L^2_{x,v}}\nnm\\
&\quad+\|\dxa( B f_2)\|_{L^2_{x,v}}+\|w^{-\frac12}\dxa \Gamma(f_1,f_1)\|_{L^2_{x,v}})\|\dxa\Tdx m_1\|_{L^2_x}\nnm\\
&\le \frac{\kappa_1}2\|\dxa\Tdx m_1\|_{L^2_x}^2+C\|\dxa\Tdx  \P_1f_1\|^2_{L^2_{x,v}}+C\sqrt{E_N(U)}D_N(U),\label{en_4}
\ema
where we have used  Lemma \ref{e1} to obtain
\bma
&\|w^{-\frac12}\dxa \Gamma(f_1,f_1)\|^2_{L^2_{x,v}}+\|\dxa( E f_2)\|^2_{L^2_{x,v}}+\|\dxa( B f_2)\|^2_{L^2_{x,v}}\nnm\\
&\le C\|  f_1\|^2_{L^2_v(H^{N}_x)}\|w^{\frac12}\Tdx f_1\|^2_{L^2_v(H^{N-1}_x)}+ C(\| E\|^2_{H^N_x}+\| B\|^2_{H^N_x})\|\Tdx f_2\|^2_{L^2_v(H^{N-1}_x)}\nnm\\
&\le C E_N(U) D_N(U).\label{gamma}
\ema
Therefore, it follows from \eqref{en_1}--\eqref{en_4} that
\bma
&\frac12\Dt (\|\dxa m_1\|^2_{L^2_x}+\|\dxa n_1\|^2_{L^2_x})+\Dt\intr\dxa R_1\dxa m_1dx+\sqrt{\frac23}\intr \dxa\Tdx q_1\dxa m_1dx\nnm\\
&\quad+\frac{\kappa_1}2 (\|\dxa\Tdx  m_1\|^2_{L^2_x}+\frac13\|\dxa\divx  m_1\|^2_{L^2_x})\nnm\\
&\le C\sqrt{E_N(U)}D_N(U)+\frac{C}{\epsilon} \|\dxa\Tdx   \P_1f_1\|^2_{L^2_{x,v}}+\epsilon(\|\dxa\Tdx n_1\|^2_{L^2_x}+\|\dxa\Tdx q_1\|^2_{L^2_x}).\label{m_1}
\ema
Similarly, taking the inner product between $\dxa q_1$ and $\dxa\eqref{G_8}$ with $|\alpha|\le N-1$, we have
\bma
&\frac12\Dt \|\dxa q_1\|^2_{L^2_x}+\Dt\intr \dxa R_3\dxa q_1dx+\sqrt{\frac23}\intr \dxa\divx m_1\dxa q_1dx+\frac{\kappa_2}2 \|\dxa\Tdx q_1\|^2_{L^2_x}
\nnm\\
&\le C\sqrt{E_N(U)}D_N(U)+\frac{C}{\epsilon}\|\dxa\Tdx   \P_1f_1\|^2_{L^2_{x,v}}+\epsilon\|\dxa\Tdx m_1\|^2_{L^2_x}.\label{q_1}
\ema
Again, taking the inner product between $\dxa\Tdx n_1$ and $\dxa\eqref{G_5}$ with $|\alpha|\le N-1$ gives
\bma &\Dt\intr \dxa m_1 \dxa\Tdx n_1dx+\frac12\|\dxa\Tdx n_1\|^2_{L^2_x}\nnm\\
&\le C \sqrt{E_N(U)}D_N(U)+\|\dxa\divx  m_1\|^2_{L^2_x}+\|\dxa \Tdx q_1\|^2_{L^2_x}+C\|\dxa\Tdx  \P_1f_1\|^2_{L^2_{x,v}}.\label{abc}
\ema
Taking the summation of $2s_0\sum\limits_{k\le |\alpha|\le N-1}[\eqref{m_1}+\eqref{q_1}]+4\sum\limits_{k\le |\alpha|\le N-1}\eqref{abc}$ with $s_0>0$ large enough, $\epsilon>0$ small enough and $0\le k\le N-1$, we obtain \eqref{E_1}.

Next, we turn to show \eqref{E_1a}. Taking the inner product between $\dxa n_2$ and $\dxa\eqref{G_9a}$ with $|\alpha|\le N-1$, we have
\bma
&\Dt \|\dxa n_2\|^2_{L^2_x}+2\Dt \intr\dxa\divx R_5\dxa n_2 dx+\kappa_3\|\dxa n_2\|^2_{L^2_x}+\kappa_3\|\dxa \Tdx n_2\|^2_{L^2_x}\nnm\\
&\le C\|\dxa\Tdx P_rf_2\|^2_{L^2_{x,v}}+C E_N(U)D_N(U).\label{en_5}
\ema
Similarly, taking the inner product between $\dxa E$ and $\dxa\eqref{G_9b}$ with $|\alpha|\le N-1$ gives
\bma
&\Dt \|\dxa (E,B)\|^2_{L^2_x}+2\Dt \intr\dxa R_5\dxa E dx+\kappa_3\|\dxa  E\|^2_{L^2_x}+\kappa_3\|\dxa  n_2\|^2_{L^2_x}\nnm\\
&\le \frac{C}{\epsilon}(\|\dxa  P_r f_2\|^2_{L^2_{x,v}}+\|\dxa\Tdx P_rf_2\|^2_{L^2_{x,v}})+\epsilon\|\dxa (\Tdx\times B)\|^2_{L^2_{x}}+CE_N(U)D_N(U).\label{en_6}
\ema
And taking the inner product between $\dxa \Tdx\times B$ and $\dxa\eqref{G_9b}$ with $|\alpha|\le N-1$ gives
\bmas
-\Dt\intr \dxa E\dxa(\Tdx\times B)dx-\|\dxa(\Tdx\times E)\|^2_{L^2_x}+\|\dxa(\Tdx\times B)\|^2_{L^2_x}=\intr \dxa m_2\dxa(\Tdx\times B)dx.
\emas
This and the fact that $\|\Tdx\times B\|^2_{L^2_x}=\|\Tdx B\|^2_{L^2_x}$ imply that
\bma
&-2\Dt\intr \dxa E\dxa(\Tdx\times B)dx+\|\dxa\Tdx B\|^2_{L^2_x}\le C\|\dxa  P_r f_2\|^2_{L^2_{x,v}}+2\|\dxa(\Tdx\times E)\|^2_{L^2_x}.\label{en_7}
\ema

Taking the summation of $s_1\sum\limits_{k\le |\alpha|\le N-1}[\eqref{en_5}+\eqref{en_6}]+2\sum\limits_{k\le |\alpha|\le N-2}\eqref{en_7}$ with $s_1>0$ large enough, $\epsilon>0$ small enough and $0\le k\le N-1$, we obtain \eqref{E_1a}.
And this completes the proof of the lemma.
\end{proof}

In the following, we shall estimate the  microscopic terms to close the
energy estimate.

\begin{lem}[Microscopic dissipation]
\label{micro-en}
Let $N\ge 4$ and $(f_1,f_2,E,B)$ be a strong solution to Vlasov-Maxwell-Boltzmann system  \eqref{VMB3}--\eqref{VMB3e}.
Then, there are constants $p_k>0$, $1\le k\le N$ such that
\bma
&\frac12\Dt (\|(f_1,f_2)\|^2_{L^2_{x,v}}+\| (E,B)\|^2_{L^2_x})+\mu \|w^{\frac12}  (\P_1f_1,P_rf_2)\|^2_{L^2_{x,v}}
\le C\sqrt{E_N(U)}D_N(U),\label{E_3}\\
&\frac12\Dt \sum_{1\le|\alpha|\le N}(\|\dxa (f_1,f_2)\|^2_{L^2_{x,v}}+\|\dxa  (E,B)\|^2_{L^2_x})+\mu\sum_{1\le|\alpha|\le N} \|w^{\frac12}\dxa  (\P_1f_1,P_rf_2)\|^2_{L^2_{x,v}}\nnm\\
&\le C\sqrt{E_N(U)}D_N(U),\label{E_2}\\
&\Dt \sum_{1\le k\le N}p_k\sum_{|\beta|=k \atop |\alpha|+|\beta|\le N}\|\dxa\dvb (\P_1f_1,P_rf_2)\|^2_{L^2_{x,v}}
+\mu\sum_{1\le k\le N}p_k\sum_{|\beta|=k \atop |\alpha|+|\beta|\le N}\|w^{\frac12}\dxa\dvb (\P_1f_1,P_rf_2)\|^2_{L^2_{x,v}}\nnm\\
&\le C\sum_{|\alpha|\le N-1}(\|\dxa\Tdx ( f_1, f_2)\|^2_{L^2_{x,v}}+\|\dxa E\|^2_{L^2_x}) +C\sqrt{E_N(U)}D_N(U). \label{E_5}
\ema
\end{lem}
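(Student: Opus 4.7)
\textbf{Proof proposal for Lemma~\ref{micro-en}.}

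The plan is to prove the three inequalities by the standard $L^2$ energy method, with the electromagnetic coupling handled via the Maxwell equations \eqref{VMB3b}--\eqref{VMB3c} and the microscopic dissipation extracted from the spectral gaps of $L$ and $L_1$.

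For \eqref{E_3}, I would take the $L^2_{x,v}$ inner product of \eqref{VMB3} with $f_1$ and of \eqref{VMB3a} with $f_2$, then add them. The transport terms $(v\cdot\Tdx f_i,f_i)$ vanish by divergence. The dissipation estimates give $-(Lf_1,f_1)\ge \mu\|w^{1/2}\P_1f_1\|^2_{L^2_{x,v}}$ and $-(L_1f_2,f_2)\ge \mu\|w^{1/2}P_rf_2\|^2_{L^2_{x,v}}$ (using $\nu(v)\sim w(v)$). The coupling term is $-(v\sqrt M\cdot E,f_2)_{L^2_{x,v}}=-(E,m_2)_{L^2_x}$; by \eqref{VMB3b} one has $m_2=\Tdx\times B-\dt E$, and by \eqref{VMB3c} together with integration by parts, $\frac12\Dt\|B\|^2_{L^2_x}=-(E,\Tdx\times B)_{L^2_x}$, so this term combines into $\frac12\Dt(\|E\|^2_{L^2_x}+\|B\|^2_{L^2_x})$. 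The nonlinear terms $(G_1,f_1)+(G_2,f_2)$ with $G_1,G_2$ as in \eqref{G1},\eqref{G2} are cubic; using integration by parts in $v$ (noting $\Tdv\cdot(v\times B)=0$), Sobolev embeddings $H^2\hookrightarrow L^\infty$ on $E,B$, and Lemma~\ref{e1} for the $\Gamma$ terms, they are bounded by $C\sqrt{E_N(U)}D_N(U)$.

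For \eqref{E_2}, I would apply $\dxa$ to the system \eqref{VMB3}--\eqref{VMB3c} for $1\le|\alpha|\le N$ and repeat the above; since $\dxa$ commutes with $L,L_1,v\cdot\Tdx$ and the Maxwell operators, the linear structure gives the same field/dissipation identity. For the nonlinear terms, I would apply Leibniz to distribute $\dxa$, placing the top-order factor in $L^2$ and using Sobolev embedding on the remaining $E,B,f_1,f_2$ factors (with $N\ge 4$ providing enough room), again absorbed via Lemma~\ref{e1} and \eqref{gamma} into $C\sqrt{E_N(U)}D_N(U)$.

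The most delicate estimate is \eqref{E_5}, which I would prove by induction on $k=|\beta|\ge 1$, applying $\dxa\dvb$ with $|\beta|=k$ and $|\alpha|+|\beta|\le N$ to the microscopic equations \eqref{GG1}--\eqref{GG2}. The commutator $[\dvb,v\cdot\Tdx]f=\sum_i\beta_i\dv^{\beta-e_i}\dxi f$ produces terms of order $|\beta|-1$ in velocity but $|\alpha|+1$ in space, which is exactly the right-hand side term $\sum_{|\alpha|\le N-1}\|\dxa\Tdx(f_1,f_2)\|^2_{L^2_{x,v}}$ coming from \eqref{E_2}, provided the coefficients $p_k$ are chosen inductively so that $p_k\ll p_{k-1}$, letting the loss in $\beta$ be absorbed by the higher-$\beta$ dissipation of the previous induction step. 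The commutator $[\dvb,L]$ (and $[\dvb,L_1]$) is handled via the decomposition $L=K-\nu$: since $\dvb\nu$ is bounded by $w^{1-|\beta|}$ and $K,K_1$ have smooth kernels producing compact perturbations, the commutator is controlled by $\epsilon\|w^{1/2}\dxa\dvb\P_1f\|^2+C_\epsilon\sum_{|\beta'|<|\beta|}\|w^{1/2}\dxa\dv^{\beta'}\P_1f\|^2$, again absorbed into the dissipation. The Lorentz-force commutator from $v\times B\cdot\Tdv f$ gives extra terms $\dv^{\beta-e_i}(v\times B)\cdot\Tdv f$ bounded by Sobolev into $C\sqrt{E_N(U)}D_N(U)$. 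The source $v\sqrt M\cdot E$ in \eqref{VMB3a} contributes a smooth-in-$v$ term whose $\dvb$ is still integrable against $M$, yielding the $\|\dxa E\|^2_{L^2_x}$ term on the right.

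The main obstacle will be the mixed-derivative case \eqref{E_5}: closing the induction requires precise bookkeeping of how each commutator exchanges a velocity derivative for a spatial derivative (raising $|\alpha|$ by $1$ while lowering $|\beta|$ by $1$), and choosing the weights $p_k$ so that the loss is absorbed by the dissipation already produced at the previous induction step while keeping the full cascade controlled within $D_N(U)$ and the right-hand side of \eqref{E_2}.
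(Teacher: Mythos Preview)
Your proposal is correct and follows essentially the same strategy as the paper: $L^2$ energy estimates with the Maxwell coupling producing the $(E,B)$ energy, the cross Lorentz terms $-(E+v\times B)\cdot\Tdv f_2$ in the $f_1$ equation and $-(E+v\times B)\cdot\Tdv f_1$ in the $f_2$ equation cancelling after integration by parts in $v$, and an inductive choice of weights $p_k$ for the mixed derivatives. Two small remarks. First, for \eqref{E_5} the paper works not with \eqref{GG1}--\eqref{GG2} but with their rewritten forms \eqref{G_2}--\eqref{b_1}, in which the Lorentz force has already been split so that $(E+v\times B)\cdot\Tdv P_rf_2$ sits on the left of the $\P_1f_1$ equation and $(E+v\times B)\cdot\Tdv\P_1f_1$ on the left of the $P_rf_2$ equation; after applying $\dxa\dvb$ and pairing, the top-order pieces combine into $\int(E+v\times B)\cdot\Tdv\bigl[\dxa\dvb\P_1f_1\,\dxa\dvb P_rf_2\bigr]=0$, while the Lorentz force acting on $\P_0f_1$ and $P_{\rm d}f_2$ goes harmlessly to the right as cubic terms. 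Your route via \eqref{GG1}--\eqref{GG2} reaches the same cancellation but only after peeling off the $[\dvb,\P_1]$ and $[\dvb,P_r]$ commutators (which are finite-rank, hence benign). Second, your phrase ``higher-$\beta$ dissipation of the previous induction step'' is inverted: the commutator $[\dvb,v\cdot\Tdx]$ drops $|\beta|$ by one, so those terms are absorbed by the \emph{lower}-$|\beta|$ dissipation carrying the \emph{larger} weight $p_{k-1}$; your ordering $p_k\ll p_{k-1}$ is exactly the paper's condition $\mu p_k\ge 2\sum_{j\ge1}p_{k+j}C_{k+j}$.
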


\begin{proof}
Taking the inner product between $\dxa  f_1$ and $\dxa\eqref{VMB3}$ with $|\alpha|\le N$ $(N\ge 4)$, we have
\bma
&\frac12\Dt \|\dxa f_1\|^2_{L^2_{x,v}}-\intr (L\dxa f_1)\dxa f_1dxdv
\nnm\\
&=\frac12\intrr \dxa (v\cdot E f_2)\dxa f_1dxdv-\intrr \dxa ( (E+v\times B)\cdot \Tdv f_2)\dxa f_1dxdv+\intrr\dxa \Gamma(f_1,f_1)\dxa f_1dxdv\nnm\\
&=:I_1+I_2+I_3.\label{G_0}
\ema
For $I_1$, it holds that
\bma
I_1&\le C\sum_{1\le |\alpha'|\le |\alpha|-1}\intr |v|\|\dx^{\alpha'} E\|_{L^3_x} \|\dx^{\alpha-\alpha'}f_2\|_{L^{6}_x}\|\dxa f_1\|_{L^2_{x}}dv\nnm\\
&\quad+C\intr |v|(\| E\|_{L^\infty_x}\|\dxa f_2\|_{L^2_{x}}+\|\dxa E\|_{L^2_x}\|f_2\|_{L^{\infty}_x})\|\dxa f_1\|_{L^2_{x}}dv\nnm\\
&\le C\| E\|_{H^{N}_x}\|w^{\frac12}\Tdx f_2\|_{L^2_v(H^{N-1}_x)}\|w^{\frac12}\dxa f_1\|_{L^2_{x,v}}\le C\sqrt{E_N(U)}D_N(U),\label{I_1}
\ema
for $|\alpha|\ge1$, and
\bma
I_1\le \intr |v|\|E\|_{L^3_x} \|f_2\|_{L^{2}_x}\| f_1\|_{L^6_{x}}dv\le C\sqrt{E_N(U)}D_N(U),
\ema
for $|\alpha|=0$. For $I_2$, it holds that
\bma
I_2&\le C\sum_{1\le|\alpha'|\le N/2}\intr(\|\dx^{\alpha'} E\|_{L^\infty_x}+|v|\|\dx^{\alpha'} B\|_{L^\infty_x})\|\dx^{\alpha-\alpha'}\Tdv f_2\|_{L^{6}_x}\|\dxa f_1\|_{L^2_{x}}dv\nnm\\
&\quad+C\sum_{|\alpha'|\ge N/2}\intr (\|\dx^{\alpha'} E\|_{L^2_x}+|v|\|\dx^{\alpha'} B\|_{L^2_x})\|\dx^{\alpha-\alpha'}\Tdv f_2\|_{L^2_{x}}\|\dxa f_1\|_{L^2_{x}}dv\nnm\\
&\quad-\intrr (E+v\times B)\dxa\Tdv f_2\dxa f_1dxdv\nnm\\
&\le C\sqrt{E_N(U)}D_N(U)-\intrr (E+v\times B)\dxa\Tdv f_2\dxa f_1dxdv.
\ema
For $I_3$, by Lemma 4.1, we obtain
\bma
I_3&\le \|w^{-\frac12}\dxa \Gamma(f_1,f_1)\|_{L^2_{x,v}}\|w^{\frac12}\dxa   \P_1 f_1\|_{L^2_{x,v}}
\le C\sqrt{E_N(U)}D_N(U).\label{I_2}
\ema
Therefore, it follows from \eqref{G_0}--\eqref{I_2} that
\bma
\frac12\Dt \|\dxa f_1\|^2_{L^2_{x,v}}+\mu \|w^{\frac12}\dxa \P_1 f_1\|^2_{L^2_{x,v}} \le C\sqrt{E_N(U)}D_N(U)- \intrr (E+v\times B)\dxa\Tdv f_2\dxa f_1dxdv.\label{G_01}
\ema

Similarly, taking inner product between $\dxa f_2$ and $\dxa\eqref{VMB3a}$ with $|\alpha|\le N$ $(N\ge 4)$, we have
\bma
&\frac12\Dt (\|\dxa f_2\|^2_{L^2_{x,v}}+\|\dxa (E,B)\|^2_{L^2_x})+\mu \|w^{\frac12}\dxa P_r f_2\|^2_{L^2_{x,v}}
\nnm\\
&\le C\sqrt{E_N(U)}D_N(U)-\intrr (E+v\times B)\dxa\Tdv f_1\dxa f_2dxdv.\label{G_00}
\ema

Taking the summation of \eqref{G_01}+\eqref{G_00} for $|\alpha|=0$ and $\sum_{1\le|\alpha|\le N}[\eqref{G_01}+\eqref{G_00}]$, we obtain \eqref{E_3} and \eqref{E_2} respectively.

 In order to close the energy estimate, we need to estimate the terms $\dxa\Tdv f$ with $|\alpha|\le N-1$. For this, we rewrite \eqref{VMB3} and \eqref{VMB3a} as
 \bma
&\partial_t(  \P_1f_1)+v\cdot\Tdx \P_1f_1+ (E+v\times B)\cdot\Tdv
P_rf_2-L(  \P_1f_1)\nnm\\
&=\Gamma(f_1,f_1)+\frac12v\cdot E P_rf_2+ \P_0(v\cdot\Tdx   \P_1f_1-\frac12v\cdot E P_rf_2+ (E+v\times B)\cdot\Tdv P_r f_2)-  \P_1(v\cdot\Tdx  \P_0f_1),\label{G_2}
\ema
and
\bma
&\dt (P_r f_2)+v\cdot\Tdx P_rf_2-v\sqrt M\cdot E+ (E+v\times B)\cdot\Tdv  \P_1f_1+L_1(P_rf_2)\nnm\\
&=\Gamma(f_2,f_1)+\frac12v\cdot E  \P_1f_1+P_{\rm d}(v\cdot\Tdx P_rf_2)-(v\cdot\Tdx P_{\rm d}f_2-\frac12v\cdot E \P_0f_1
+ (E+v\times B)\cdot\Tdv \P_0f_1).\label{b_1}
\ema

Let $1\le k\le N$, and choose $\alpha,\beta $ with $|\beta|=k$ and $|\alpha|+|\beta|\le N$.
Taking inner product between $\dxa\dvb \P_1f_1$ and \eqref{G_2}, between $\dxa\dvb P_rf_2$ and \eqref{b_1} respectively, and then taking summation of the resulted equations, we have
\bma
&\Dt\sum_{|\beta|=k \atop |\alpha|+|\beta|\le N}\|\dxa\dvb (\P_1f_1,P_rf_2)\|^2_{L^2_{x,v}}+\mu\sum_{|\beta|=k \atop |\alpha|+|\beta|\le N}\|w^{\frac12}\dxa\dvb (\P_1f_1,P_rf_2)\|^2_{L^2_{x,v}}\nnm\\
&\le C\sum_{|\alpha|\le N-k}(\|\dxa\Tdx (\P_0f_1,P_{\rm d}f_2)\|^2_{L^2_{x,v}}+\|\dxa\Tdx (\P_1f_1,P_rf_2)\|^2_{L^2_{x,v}}+\|\dxa E\|^2_{L^2_x})\nnm\\
&\quad+C_k\sum_{ |\beta|\le k-1\atop |\alpha|+|\beta|\le N}\|\dxa\dvb (\P_1f_1,P_rf_2)\|^2_{L^2_{x,v}}+C\sqrt{E_N(U)}D_N(U).\label{aa}
\ema
Then taking summation $\sum_{1\le k\le N}p_k\eqref{aa}$ with  constants $p_k$ chosen by
$$\mu p_k\ge 2\sum_{1\le j\le N-k}p_{k+j}C_{k+j},\quad 1\le k\le N-1,\quad p_N=1,$$
we obtain \eqref{E_5}.
The proof of the lemma is then completed.
\end{proof}

With the help of Lemma \ref{macro-en}--\ref{micro-en}, we have

\begin{lem}\label{energy1}
For  $N\ge 4$, there are two equivalent energy functionals
$E^f_{N}(\cdot)\sim E_N(\cdot)$, $H^f_{N}(\cdot)\sim H_N(\cdot)$
such that the following holds. If
$E_N(U_0)$ is sufficiently small, then the Cauchy problem
\eqref{VMB3}--\eqref{VMB3e} of the two-species
Vlasov-Maxwell-Boltzmann
 system admits a unique global
solution $U=(f_1,f_2,E,B)$ satisfying
\bma
\Dt E^f_{N}(U)(t) + \mu D_N(U)(t) &\le 0, \label{G_1}\\
\Dt H^f_{N}(U)(t)+\mu D_N(U)(t)&\le C\|\Tdx (n_1,m_1,q_1)\|^2_{L^2_{x}}+C\|\Tdx B\|^2_{L^2_{x}}.\label{G_4}
\ema
\end{lem}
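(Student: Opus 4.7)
The plan is to combine the macroscopic dissipation estimates of Lemma~\ref{macro-en} with the microscopic dissipation estimates of Lemma~\ref{micro-en} through a careful weighted sum, and then close the resulting inequality by a smallness bootstrap. More precisely, I would define
\bmas
E^f_N(U) &= A\Big(\|(f_1,f_2)\|^2_{L^2_{x,v}}+\|(E,B)\|^2_{L^2_x}\Big)
           +A\!\!\sum_{1\le|\alpha|\le N}\!\!\Big(\|\dxa(f_1,f_2)\|^2_{L^2_{x,v}}+\|\dxa(E,B)\|^2_{L^2_x}\Big) \\
         &\quad +B\sum_{1\le k\le N}p_k\!\!\sum_{|\beta|=k,|\alpha|+|\beta|\le N}\!\!\|\dxa\dvb(\P_1f_1,P_rf_2)\|^2_{L^2_{x,v}}
           +s_0\Lambda_1(U)+s_1\Lambda_2(U),
\emas
where $\Lambda_1(U),\Lambda_2(U)$ denote the macroscopic functionals from \eqref{E_1} and \eqref{E_1a} taken with $k=0$ (i.e.\ including the cross terms $\int\dxa R_j\,\dxa(\cdot)\,dx$ and $\int\dxa m_1\dxa\Tdx n_1\,dx$, $\int\dxa E\dxa(\Tdx\times B)\,dx$). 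The functional $H^f_N(U)$ is defined identically except that the macroscopic contributions are taken only for $1\le|\alpha|\le N-1$ in the $(n_1,m_1,q_1)$--block and for $1\le|\alpha|\le N-1$, $2\le|\alpha|\le N-1$ in the $B$--block, so that the low‐frequency pieces of $(n_1,m_1,q_1,B)$ are omitted.

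First I would verify equivalence $E^f_N\sim E_N$ and $H^f_N\sim H_N$. The main point is that the cross terms $\int\dxa R_1\dxa m_1\,dx$, $\int\dxa R_3\dxa q_1\,dx$, $\int\dxa R_5\dxa E\,dx$, $\int\dxa(\divx R_5)\dxa n_2\,dx$ and $\int\dxa E\dxa(\Tdx\times B)\,dx$ are controlled by $\epsilon E_N(U)+C\|\dxa(\P_1f_1,P_rf_2)\|^2$, using the $L^{-1}$ bounds that give $\|R_1\|+\|R_3\|+\|R_5\|\le C\|(\P_1f_1,P_rf_2)\|_{L^2_{x,v}}$; choosing $A$ large relative to $s_0,s_1,B$ then absorbs them, so the quadratic form is positive definite and comparable to $E_N$ (resp.\ $H_N$). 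Second, differentiating and combining, for appropriate choices of the weights $A,B,s_0,s_1$ (with $A\gg B\gg s_0,s_1$ so as to dominate the $\epsilon(\|\dxa\Tdx n_1\|^2+\|\dxa\Tdx q_1\|^2)$ remainders from \eqref{E_1} and the $\|\dxa\Tdx P_rf_2\|^2$ remainders from \eqref{E_1a} by the full microscopic term $\|w^{1/2}\dxa(\P_1f_1,P_rf_2)\|^2$), we obtain
\bmas
\Dt E^f_N(U)+\mu D_N(U)\le C\sqrt{E_N(U)}\,D_N(U)+CE_N(U)D_N(U).
\emas
The smallness of $E_N(U_0)$, combined with a continuity/bootstrap argument, forces $\sqrt{E_N(U(t))}\le\epsilon_0\ll\mu/(2C)$ for all $t$ as long as the solution exists, so the nonlinear term is absorbed into $\mu D_N(U)/2$, yielding \eqref{G_1} (after a harmless constant rescaling of $\mu$).

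For \eqref{G_4}, the argument is identical except that the macroscopic dissipation estimates are applied with $k=1$ in the $(n_1,m_1,q_1)$--block and $k=1,2$ in the $B$--block of \eqref{E_1a}. Consequently the lowest‐frequency pieces $\|(n_1,m_1,q_1)\|^2_{L^2_x}$ and $\|B\|^2_{L^2_x}$ are not recovered by the dissipation $D_N(U)$, which is reflected exactly by the right‐hand side $C\|\Tdx(n_1,m_1,q_1)\|^2_{L^2_x}+C\|\Tdx B\|^2_{L^2_x}$; note that $(n_2,E)$ does not produce such a defect because \eqref{E_1a} with $k=1$ still retains $\|n_2\|^2+\|E\|^2$ on the left. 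Global existence follows from \eqref{G_1} by a standard local‐existence plus continuation argument in the space $H^N_w$ once the a priori estimate is in hand.

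The main obstacle I anticipate is the careful bookkeeping needed to choose the hierarchy of constants $A\gg B\gg\max(s_0,s_1)\gg1$ so that simultaneously (i) the Cauchy–Schwarz/Young absorptions in the cross terms defining $\Lambda_1,\Lambda_2$ preserve the equivalence $E^f_N\sim E_N$, and (ii) the $\epsilon$--losses $\|\dxa\Tdx(n_1,q_1)\|^2$ in \eqref{E_1} and $\|\dxa\Tdx P_rf_2\|^2$ in \eqref{E_1a} are strictly dominated by the microscopic dissipation. Once these constants are fixed, the nonlinear closure via the smallness of $E_N(U_0)$ is routine.
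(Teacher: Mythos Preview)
Your construction of $E^f_N$ and the derivation of \eqref{G_1} are essentially the paper's argument: the paper also takes $A_1[\eqref{E_1}+\eqref{E_1a}]_{k=0}+A_2[\eqref{E_3}+\eqref{E_2}]+\eqref{E_5}$ with $A_2\gg A_1$, and closes by the smallness bootstrap exactly as you describe.

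However, your treatment of \eqref{G_4} has a genuine gap. The claim that ``\eqref{E_1a} with $k=1$ still retains $\|n_2\|^2+\|E\|^2$ on the left'' is false: with $k=1$ the dissipation in \eqref{E_1a} reads $\sum_{1\le|\alpha|\le N-1}(\|\dxa n_2\|^2+\|\dxa\Tdx n_2\|^2+\|\dxa E\|^2)+\sum_{2\le|\alpha|\le N-1}\|\dxa B\|^2$, so the $|\alpha|=0$ pieces $\|n_2\|^2_{L^2_x}$ and $\|E\|^2_{L^2_x}$ are \emph{not} recovered. Since both belong to $D_N(U)$, your combination would force extra terms $C\|E\|^2_{L^2_x}+C\|n_2\|^2_{L^2_x}$ on the right-hand side of \eqref{G_4}, which the lemma does not allow. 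Relatedly, keeping \eqref{E_3} unchanged in $H^f_N$ puts $\|f_1\|^2_{L^2_{x,v}}+\|(E,B)\|^2_{L^2_x}$ (hence $\|(n_1,m_1,q_1)\|^2_{L^2_x}$ and $\|B\|^2_{L^2_x}$) into the energy at $|\alpha|=0$; neither appears in $H_N$, so the equivalence $H^f_N\sim H_N$ fails.

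The paper resolves this by \emph{not} using \eqref{E_3} in the $H^f_N$ construction. Instead it derives, inside the proof, two tailored zero-order estimates: (i) pairing \eqref{G_9b} with $E$ and \eqref{VMB3a} with $f_2$ and summing gives a functional with energy $\|f_2\|^2+\|E\|^2$ (no $\|B\|^2$) and dissipation $\|E\|^2+\|n_2\|^2+\|w^{1/2}P_rf_2\|^2$, at the price of $C\|\Tdx B\|^2$ on the right; and (ii) pairing the micro-projected equation for $\P_1f_1$ with $\P_1f_1$ gives energy $\|\P_1f_1\|^2$ (no $\|\P_0f_1\|^2$) and dissipation $\|w^{1/2}\P_1f_1\|^2$, at the price of $C\|\Tdx\P_0f_1\|^2$. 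These replace \eqref{E_3} at order zero and produce exactly the two defect terms on the right of \eqref{G_4}. You will need to add these two ingredients to close the argument.
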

\begin{proof}
Assume that $$E_N(U)(t)\le \delta$$ for $\delta>0$ being small.

Taking the summation of $A_1[\eqref{E_1}+\eqref{E_1a}]+A_2[\eqref{E_3}+\eqref{E_2}]+\eqref{E_5}$ with $A_2>C_0A_1>0$ large enough and taking $k=0$ in \eqref{E_1} and \eqref{E_1a}, we obtain \eqref{G_1}.

Taking the inner product between $E$ and \eqref{G_9b},   between $f_2$ and \eqref{VMB3a} respectively, we have
\bma
&\frac12\Dt \| E\|^2_{L^2_x}+\Dt \intr R_5 E dx+\kappa_3\| E\|^2_{L^2_x}+\kappa_3\| n_2\|^2_{L^2_x}\nnm\\
&\le \epsilon \|E\|^2_{L^2_x}+\frac{C}{\epsilon}\|\Tdx B\|^2_{L^2_x}+C(\|P_rf_2\|^2_{L^2_{x,v}}+\|\Tdx P_rf_2\|^2_{L^2_{x,v}})+C\sqrt{E_N(U)}D_N(U),\label{I_5}
\ema
and
\bma
\frac12\Dt (\| f_2\|^2_{L^2_{x,v}}+\| E\|^2_{L^2_x})-\intr (L_1 f_2) f_2dxdv
\le \epsilon \|E\|^2_{L^2_x}+\frac{C}{\epsilon}\|\Tdx B\|^2_{L^2_x}+C\sqrt{E_N(U)}D_N(U).\label{I_6}
\ema
Taking the summation of $s_2\eqref{I_5}+\eqref{I_6}$ with $s_2>0$ large enough and $\epsilon>0$ small enough, we have
\bma
&\Dt s_2(\| f_2\|^2_{L^2_{x,v}}+\| E\|^2_{L^2_x})+2\Dt \intr R_5 E dx+\kappa_3\| E\|^2_{L^2_x}+\kappa_3\| n_2\|^2_{L^2_x}+\mu\|w^{\frac12}P_rf_2\|^2_{L^2_{x,v}}\nnm\\
&\le C\|\Tdx B\|^2_{L^2_x}+C\|\Tdx P_rf_2\|^2_{L^2_{x,v}}+C\sqrt{E_N(U)}D_N(U).\label{I_7}
\ema
Taking the inner product between \eqref{G_2} and $  \P_1f_1$, we have
\bma
\Dt \|   \P_1f_1\|^2_{L^2_{x,v}}+\|w^{\frac12} \P_1f_1\|^2_{L^2_{x,v}}\le C\|\Tdx \P_0f_1\|^2_{L^2_{x,v}}+E_N(U)D_N(U). \label{low1}
\ema
Taking the summation of $A_3[\eqref{E_1}+\eqref{E_1a}]+A_4[\eqref{I_7}+\eqref{low1}]+A_5\eqref{E_2}+\eqref{E_5}$ with $A_5>C_0A_4, A_4>C_1A_3$ large enough and taking $k=1$ in \eqref{E_1} and  \eqref{E_1a}, we obtain \eqref{G_4}.
\end{proof}

Repeating the proofs of Lemmas \ref{macro-en}-\ref{micro-en}, we can show
\begin{lem}\label{energy2}
For $N\ge 4$, there are the equivalent energy functionals $E^f_{N,1}(\cdot)\sim E_{N,1}(\cdot)$, $H^f_{N,1}(\cdot)\sim H_{N,1}(\cdot)$
such that if $E_{N,1}(U_0)$ is sufficiently small, then the solution $U=(f_1,f_2,E,B)(t,x,v)$ to the two-species
 Vlasov-Maxwell-Boltzmann system \eqref{VMB3}--\eqref{VMB3e} satisfies
\bgr \Dt E^f_{N,1}(U)(t)+\mu D_{N,1}(U)(t)\le 0,\label{G_4b}\\
\Dt H^f_{N,1}(U)(t)+\mu D_{N,1}(U)(t)\le C\|\Tdx (n_1,m_1,q_1)\|^2_{L^2_{x}}+C\|\Tdx B\|^2_{L^2_{x}}.\label{G_4a}
\egr
\end{lem}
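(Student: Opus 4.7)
The plan is to retrace the arguments of Lemmas \ref{macro-en}, \ref{micro-en}, \ref{energy1}, but with each $L^2_{x,v}$ inner product replaced by its weighted version $\intrr w^{2}(\cdot)(\cdot)\,dxdv$ so that the resulting dissipation carries the weight $w^{1/2+k}$ appearing in the definition of $D_{N,1}(U)$. Since the macroscopic quantities $(n_1,m_1,q_1,n_2,E,B)$ do not involve $v$, the macroscopic dissipation estimate \eqref{E_1}--\eqref{E_1a} from Lemma \ref{macro-en} can be used as is; the only modification is that the remainder terms $R_1,\dots,R_6$ are controlled by weighted microscopic norms, which is precisely what the weighted dissipation $D_{N,1}(U)$ provides.

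The heart of the proof lies in upgrading the microscopic dissipation estimates \eqref{E_3}, \eqref{E_2}, \eqref{E_5} to weighted versions. First I would take the inner product of $w^{2}\dxa f_1$ with $\dxa\eqref{VMB3}$ and of $w^{2}\dxa f_2$ with $\dxa\eqref{VMB3a}$ (and analogously with $w^{2}\dxa\dvb$ for the pure-velocity-derivative estimate). Two standard facts are essential here: the weighted coercivity of the linearized operators,
\[
 (Lf,w^{2}f)\le-\mu\|w^{3/2}\P_1f\|_{L^2_v}^2+C\|\chi_{\{|v|\le R\}}\P_1f\|_{L^2_v}^2,\qquad
 (L_1f,w^{2}f)\le-\mu\|w^{3/2}P_rf\|_{L^2_v}^2+C\|\chi_{\{|v|\le R\}}P_rf\|_{L^2_v}^2,
\]
and the weighted version of Lemma \ref{e1},
\[
 \|w\,\dvb\Gamma(f,g)\|_{L^2_v}\le C\!\!\sum_{\beta_1+\beta_2\le\beta}\!\!\bigl(\|\dv^{\beta_1}f\|_{L^2_v}\|w^{2}\dv^{\beta_2}g\|_{L^2_v}+\|w^{2}\dv^{\beta_1}f\|_{L^2_v}\|\dv^{\beta_2}g\|_{L^2_v}\bigr),
\]
which together with Sobolev embedding control the weighted version of the nonlinear terms $I_1,I_2,I_3$ in \eqref{G_0} by $\sqrt{E_{N,1}(U)}\,D_{N,1}(U)$. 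A crucial cancellation makes the Lorentz piece harmless: since $\nabla_v w^{2}=2v$, integration by parts gives
\[
 \intrr(v\times B)\cdot\nabla_v f\; w^{2}f\,dxdv=-\tfrac{1}{2}\intrr(v\times B)\cdot\nabla_v(w^{2})\,f^2\,dxdv=-\intrr((v\times B)\cdot v)f^2\,dxdv=0,
\]
and the purely electric piece $E\cdot\nabla_v f$ produces only $-\intrr(E\cdot v)f^{2}dxdv$, which is absorbed into $\sqrt{E_{N,1}(U)}D_{N,1}(U)$.

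For the pure velocity derivatives $\dvb$ with $1\le|\beta|\le N$, I would differentiate \eqref{G_2}, \eqref{b_1} in $\dxa\dvb$, take the weighted inner product with $w^{2}\dxa\dvb(\P_1f_1,P_rf_2)$, and absorb the commutator $[\dvb,v\cdot\nabla_x]$ into a lower-order term with an extra spatial derivative. Combining inductively as in \eqref{aa}, with positive constants $p_k$ chosen so that $\mu p_k\ge 2\sum_{j\ge 1}p_{k+j}C_{k+j}$, yields the weighted analogue of \eqref{E_5} with the right-hand side controlled by $\sum_{|\alpha|\le N-1}(\|\dxa\Tdx(f_1,f_2)\|_{L^2_{x,v}}^2+\|\dxa E\|_{L^2_x}^2)+\sqrt{E_{N,1}(U)}D_{N,1}(U)$. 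Summing a suitable linear combination $A_1[\eqref{E_1}+\eqref{E_1a}]+A_2[\text{weighted }\eqref{E_3}+\eqref{E_2}]+[\text{weighted }\eqref{E_5}]$ with $A_2\gg A_1\gg1$, where \eqref{E_1} and \eqref{E_1a} are taken with $k=0$, the cross terms with the macroscopic derivatives are absorbed and yield \eqref{G_4b}. For \eqref{G_4a}, I would use the $k=1$ version of \eqref{E_1}--\eqref{E_1a} together with \eqref{I_5}--\eqref{low1} to recover the $L^2_x$ norms of $E$, $P_{\rm d}f_2$ and $\P_1f_1$ via the momentum and heat equations, with the unavoidable remaining terms $C\|\Tdx(n_1,m_1,q_1)\|_{L^2_x}^2+C\|\Tdx B\|_{L^2_x}^2$ appearing on the right.

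The main technical obstacle will be the weighted coercivity of $L$ and $L_1$ combined with the velocity-derivative estimates: applying $\dvb$ to the hard-sphere kernel produces terms that grow linearly in $|v|$, so the weighted dissipation $\|w^{3/2}\dvb\P_1f_1\|_{L^2_v}^2$ is indispensable to absorb them. Provided $E_{N,1}(U_0)$ is taken small enough so that the continuation argument keeps $E_{N,1}(U)(t)$ below the smallness threshold used in the nonlinear estimates, this yields the claimed inequalities and, via Gronwall applied to \eqref{G_4b}, the global existence in $H^N_w$ with the weight.
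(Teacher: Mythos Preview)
Your proposal is correct and follows the same approach the paper indicates: the paper's own proof is simply the one-line remark ``Repeating the proofs of Lemmas \ref{macro-en}--\ref{micro-en}, we can show'' Lemma \ref{energy2}, and you have accurately fleshed out what that repetition entails, including the weighted coercivity of $L,L_1$, the weighted collision estimate already present in Lemma \ref{e1} (recall $\nu\sim w$ for hard spheres), the $(v\times B)\cdot v=0$ cancellation, and the same hierarchical combination with constants $p_k$ and $A_1\ll A_2$. The only small omission is that the low-velocity remainder $C\|\chi_{\{|v|\le R\}}\P_1f\|^2$ from the weighted coercivity must be absorbed by also including a large multiple of the \emph{unweighted} estimates \eqref{E_3}, \eqref{E_2}, \eqref{E_5} in the final linear combination; this is routine and implicit in your sketch.
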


\subsection{Convergence rates for two-species}

Based on the above energy estimates and the convergence rates
of the solution to the linearized system,
the convergence rates of the solution to the two-species VMB
can be summarized in the following theorem.

\begin{thm}\label{time6}
Under the assumptions of Theorem \ref{time3}, there exists  a globally unique solution $(f_1,f_2,E,B)$ to the  system~\eqref{VMB3}--\eqref{VMB3e} satisfying
 \bgr
\|\dx^k(f_1(t),\chi_j)\|_{L^2_{x}}\le C\delta_0(1+t)^{-\frac34-\frac k2},\quad j=0,1,2,3,4,\label{t_1a}\\
\|\dx^k  \P_1f_1(t)\|_{L^2_{x,v}}\le C\delta_0(1+t)^{-\frac54-\frac k2},\label{t_2a}\\
\|\dx^k  (f_2(t),\sqrt M)\|_{L^2_{x}}\le C\delta_0(1+t)^{-2-\frac k2},\label{t_3a}\\
\|\dx^k P_rf_2(t)\|_{L^2_{x,v}}+\|\dx^k E(t)\|_{L^2_{x}}\le C\delta_0(1+t)^{-\frac54-\frac k2},\label{t_4a}\\
\|\dx^k B(t)\|_{L^2_{x,v}}\le C\delta_0(1+t)^{-\frac34-\frac k2},\label{t_5a}\\
\|(\P_1f_1,P_rf_2)(t)\|_{H^N_w} +\|\Tdx (\P_0f_1,P_{\rm d}f_2)(t)\|_{L^2_v(H^{N-1}_x)}+\|\Tdx (E,B)(t)\|_{H^{N-1}_x}\le
C\delta_0(1+t)^{-\frac54},\label{t_6a}\egr
for $k=0,1$.
\end{thm}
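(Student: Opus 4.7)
\medskip
\noindent\textbf{Proof proposal for Theorem \ref{time6}.}

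The plan is to combine the mild (Duhamel) representation of the nonlinear system with the linear decay estimates of Theorem~\ref{time1} (for the $(f_2,E,B)$--block) and Theorem~\ref{time1b} (for $f_1$, which obeys the linearized Boltzmann equation), and to close the estimates with the energy inequalities in Lemma~\ref{energy1}--Lemma~\ref{energy2} via a bootstrap argument. The global existence itself is furnished by the energy estimate \eqref{G_1}--\eqref{G_4} together with the smallness of $E_N(U_0)$; the new content is the rates \eqref{t_1a}--\eqref{t_6a}.

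First I would write the nonlinear system as
\begin{equation*}
f_1(t)=e^{t\BB_0}f_{1,0}+\intt e^{(t-s)\BB_0}G_1(s)\,ds,\qquad
(f_2,E,B)(t)=e^{t\AA_0}U_0+\intt e^{(t-s)\AA_0}(G_2,0,0)(s)\,ds,
\end{equation*}
with $G_1,G_2$ given in \eqref{G1}, \eqref{G2}. Theorem~\ref{time1b} applied to the linear part of $f_1$, and Theorem~\ref{time1} applied to the linear part of $(f_2,E,B)$, yield the desired rates for the linear contributions (using $U_0\in L^{2,1}\times L^1_x\times L^1_x$, $k=|\alpha-\alpha'|\le 1$ and $m$ large). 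For the Duhamel integrals I would use Lemma~\ref{e1} to bound the bilinear terms in both $L^2_{x,v}$ and $L^{2,1}$: the typical product estimate
$\|G_i(s)\|_{L^{2,1}}\lesssim \|(E,B)\|_{L^2_x}\|w\Tdx f_j\|_{L^2_{x,v}}+\|f_i\|_{L^2_{x,v}}\|w f_j\|_{L^2_{x,v}}$
converts products of $L^2_x$-decaying factors into an $L^1_x$ source, so that the $L^{2,1}\to L^2_{x,v}$ rate $(1+t-s)^{-5/4-k/2}$ from Theorem~\ref{time1} (resp.~$(1+t-s)^{-3/4-k/2}$ from Theorem~\ref{time1b}) is available.

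Next, I would introduce the weighted sup-norm
\begin{equation*}
\bln
\mathcal{M}(t)=\sup_{0\le s\le t}\Bigl\{\sum_{k=0,1}\Bigl[&(1+s)^{\frac34+\frac k2}\|\dx^k(f_1,\chi_j)\|_{L^2_x}
+(1+s)^{\frac54+\frac k2}\|\dx^k\P_1 f_1\|_{L^2_{x,v}}
+(1+s)^{2+\frac k2}\|\dx^k P_{\rm d}f_2\|_{L^2_{x,v}} \\
&+(1+s)^{\frac54+\frac k2}(\|\dx^k P_rf_2\|_{L^2_{x,v}}+\|\dx^k E\|_{L^2_x})
+(1+s)^{\frac34+\frac k2}\|\dx^k B\|_{L^2_x}\Bigr]\Bigr\}
\eln
\end{equation*}
and then show, under the a priori smallness assumption $\mathcal{M}(t)\le\epsilon$ and the smallness of $\|U_0\|_{H^{N+5}_w\cap L^{2,1}}$ etc., that
\begin{equation*}
\mathcal{M}(t)\le C\delta_0+C\bigl(\mathcal{M}(t)^2+\mathcal{M}(t)E_{N,1}(U)(t)^{1/2}\bigr),
\end{equation*}
which closes the bootstrap. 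The crucial inputs are: (i)~products $E\cdot f$, $B\cdot\Tdv f$, $\Gamma(f,g)$ gain one $L^2_x$ factor (placed in $L^1_x$ when paired) and one $L^\infty_x$ factor (placed in $L^2_x$ via Sobolev embedding using $N+5$ derivatives); (ii)~time convolutions of the form $\intt(1+t-s)^{-a}(1+s)^{-b}ds$ with $a\in\{3/4,5/4\}$ and $b=$ sum of the rates of the two factors in the nonlinear term remain bounded by $(1+t)^{-\min(a,b-1)}$ when $b>1$. The exponential decay of $P_{\rm d}f_2$ at rate $(1+t)^{-2}$ (inherited from the $L^{2,1}\to L^2$ high-frequency part combined with the spectral gap from \eqref{VMB3d}) is critical in controlling $P_{\rm d}f_2$ sources.

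Finally, the high-order piece \eqref{t_6a} is obtained by feeding the pointwise (in $t$) decay rates just established into the energy inequality \eqref{G_4a} from Lemma~\ref{energy2}: the right-hand side $\|\Tdx(n_1,m_1,q_1)\|^2_{L^2_x}+\|\Tdx B\|^2_{L^2_x}$ is now $O(\delta_0^2(1+t)^{-5/2})$ by \eqref{t_1a} and \eqref{t_5a} at $k=1$, so Gronwall (with a careful treatment of the equivalent functional $H^f_{N,1}\sim H_{N,1}$) yields $H_{N,1}(U)(t)\lesssim\delta_0^2(1+t)^{-5/2}$, which is \eqref{t_6a}.

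The main obstacle I expect is the coupling between the slowly decaying factors $B\sim(1+t)^{-3/4}$ and the Boltzmann component $f_1\sim(1+t)^{-3/4}$ that appears in $G_2$ through $(v\times B)\cdot\Tdv f_1$: their product only decays as $(1+t)^{-3/2}$ in $L^1_x$, and the convolution $\intt(1+t-s)^{-5/4}(1+s)^{-3/2}ds\sim(1+t)^{-5/4}$ is borderline for the $P_rf_2$ and $E$ rates. This is exactly the place where the two-species cancellation is indispensable: since the linear decay of $E$ at rate $(1+t)^{-5/4}$ (Theorem~\ref{time1}) holds in $L^2_x$ with an $L^1_x$ source, and since $(f_2,\sqrt M)$ couples only to $E$ (not $B$) via \eqref{VMB3d}, the bootstrap just closes. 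The corresponding step in the one-species problem fails (cf.~Remark~\ref{rem3}), confirming that the cancellation structure is essential.
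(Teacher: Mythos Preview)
Your overall strategy---Duhamel representation plus a bootstrap on a weighted-in-time functional, closed with the energy inequality of Lemma~\ref{energy2}---matches the paper's, but there are two genuine gaps in the execution.

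First, the high-order norm in \eqref{t_6a} must be \emph{inside} the bootstrap functional, not established afterwards. The source estimates you need (e.g.\ $\|G_1(s)\|_{L^2_{x,v}}\lesssim(1+s)^{-2}Q(t)^2$) require a decaying bound on quantities like $\|w\nabla_v\nabla_x f_2\|_{L^2_{x,v}}$; if you only know this is $O(\delta_0)$ from \eqref{G_4b}, the term $\|B\|_{L^3_x}\|w\nabla_v f_2\|_{L^{2,6}}$ decays only like $(1+s)^{-1}$, and the convolution with $(1+t-s)^{-5/4}$ gives $(1+t)^{-1}$, which fails to reproduce the $(1+t)^{-5/4}$ rate for $P_rf_2$ and $E$. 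The paper therefore puts the entire $H^N_w$-piece into its bootstrap quantity $Q(t)$ from the start.

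Second, and more seriously, your Gronwall step for \eqref{t_6a} does not close as written. Inequality \eqref{G_4a} reads $\frac{d}{dt}H^f_{N,1}+\mu D_{N,1}\le C\|\nabla_x(n_1,m_1,q_1)\|^2+C\|\nabla_x B\|^2$, but $D_{N,1}$ does \emph{not} dominate $H_{N,1}$: the top-order pieces $\|\partial_x^\alpha(E,B)\|_{L^2_x}^2$ with $|\alpha|=N$ are absent from $D_{N,1}$ (look at its definition). The paper compensates via \eqref{J_5}, i.e.\ $d_1H^f_{N,1}\le D_{N,1}+C\sum_{|\alpha|=N}\|\partial_x^\alpha(E,B)\|^2$, and then separately bounds $\|\nabla_x^N(E,B)(t)\|_{L^2_x}$ by Duhamel (see \eqref{mag_3z}). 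That in turn forces control of $\|\nabla_x^{N+2}G_2(s)\|_{L^2_{x,v}}$ (because of the high-frequency derivative loss in Theorem~\ref{time1}), which the paper obtains from the higher-regularity energy decay $E_{N+3,1}(U)(t)\lesssim(1+t)^{-3/2}$ imported from \cite{Duan4}; this is precisely why the initial data must lie in $H^{N+5}_w$. Without this extra step the differential inequality cannot be turned into the claimed decay of $H_{N,1}$.
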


\begin{proof}
Let $(f_1,f_2,E,B)$ be a solution to the Cauchy problem \eqref{VMB3}--\eqref{VMB3e} for $t>0$. We can represent the solution in terms of the semigroups $e^{t\BB_0}$ and $e^{t\AA_0}$  as
 \bma
 f_1(t)&=e^{t\BB_0}f_{1,0}+\intt e^{(t-s)\BB_0}G_1(s)ds, \label{Duh1}\\
 (f_2(t),E(t),B(t))&=e^{t\AA_0}(f_{2,0},E_0,B_0)+\intt e^{(t-s)\AA_0}(G_2(s),0,0)ds, \label{Duh2}
 \ema
where the nonlinear terms $G_1$ and $G_2$ are given by \eqref{G1} and \eqref{G2} respectively. Define a functional $Q(t)$  for any $t>0$ by
\bmas Q(t)=\sup_{0\le s\le t}\sum_{k=0}^1&\Big\{\sum_{j=0}^4\|\dx^k(f_1(s),\chi_j)\|_{L^2_{x}}(1+s)^{\frac34+\frac k2}+\|\dx^k  \P_1f_1(s)\|_{L^2_{x,v}}(1+s)^{\frac54+\frac k2}\nnm\\
&+\|\dx^k  P_{\rm d}f_2(s)\|_{L^2_{x,v}}(1+s)^{2+\frac k2}
+(\|\dx^k  P_rf_2(s)\|_{L^2_{x,v}}+\|\dx^k  E(s)\|_{L^2_{x}})(1+s)^{\frac54+\frac k2}\\
&+\|\dx^k  B(s)\|_{L^2_{x}}(1+s)^{\frac34+\frac k2}+(\|(\P_1f_1,P_rf_2)(s)\|_{H^N_w} \nnm\\
&+\|\Tdx (\P_0f_1,P_{\rm d}f_2)(s)\|_{L^2_v(H^{N-1}_x)}+\|\Tdx (E,B)(s)\|_{H^{N-1}_x})(1+s)^{\frac54}\Big\}. \emas
We claim that it holds under the assumptions of Theorem~\ref{time6} that
  \bq
 Q(t)\le C\delta_0.  \label{assume}
 \eq
It is straightforward
 to verify that the estimates  \eqref{t_1a}--\eqref{t_6a}  follow  from \eqref{assume}. Hence, it remains to prove \eqref{assume}.

By Lemma \ref{e1},
we can estimate the nonlinear term $G_1(s),G_2(s)$ for $0\le s\le t$ in terms of $Q(t)$ as
 \bma
 \| G_1(s)\|_{L^2_{x,v}}
 &\le
 C\{\|wf_1\|_{L^{2,3}}\|f_1\|_{L^{2,6}}
    +\|E\|_{L^3_x}(\|wf_2\|_{L^{2,6}}
    +\|\Tdv f_2\|_{L^{2,6}})+\|B\|_{L^3_x}\|w\Tdv f_2\|_{L^{2,6}}\}
    \nnm\\
&\le C(1+s)^{-2}Q(t)^2,  \label{GG_1}
\\
 \| G_1(s)\|_{L^{2,1}}
 & \le
 C\{ \|f_1\|_{L^2_{x,v}}\|w f_1\|_{L^2_{x,v}}
    +\| E\|_{L^3_x}(\|w f_2\|_{L^2_{x,v}}
    +\|\Tdv  f_1\|_{L^2_{x,v}})+\|B\|_{L^3_x}\|w\Tdv f_2\|_{L^2_{x,v}}\}
    \nnm\\
 &\le C(1+s)^{-\frac32}Q(t)^2,\label{GG_2}
 \\
 \| G_2(s)\|_{L^2_{x,v}}
 &\le
 C\{\|wf_2\|_{L^{2,3}}\|f_1\|_{L^{2,6}}+\|wf_1\|_{L^{2,3}}\|f_2\|_{L^{2,6}}\}\nnm\\
   &\quad+ C\{\|E\|_{L^3_x}(\|wf_1\|_{L^{2,6}}
    +\|\Tdv f_1\|_{L^{2,6}})+\|B\|_{L^3_x}\|w\Tdv f_1\|_{L^{2,6}}\}
    \nnm\\
&\le C(1+s)^{-2}Q(t)^2,  \label{GG_3}
\\
 \| G_2(s)\|_{L^{2,1}}
 & \le
 C\{ \|f_2\|_{L^2_{x,v}}\|w f_1\|_{L^2_{x,v}} +\|f_1\|_{L^2_{x,v}}\|w f_2\|_{L^2_{x,v}}\}\nnm\\
   &\quad +C\{\| E\|_{L^3_x}(\|w f_1\|_{L^2_{x,v}}
    +\|\Tdv  f_1\|_{L^2_{x,v}})+\|B\|_{L^3_x}\|w\Tdv f_1\|_{L^2_{x,v}}\}
    \nnm\\
 &\le C(1+s)^{-\frac32}Q(t)^2,\label{GG_4}
 \ema
 and similarly
 \bma
 \|G_2\|_{L^2_v(H^k_x)}
 \le C(1+s)^{-\frac52}Q(t)^2,\label{GG_5}
 \ema
 for $1\le k\le N-1$.
First, we consider the time decay rate of the macroscopic density, momentum and energy of $f_1$. It follows from \eqref{V_1}, \eqref{GG_1} and \eqref{GG_2} that
\bma
\|(\Tdx^k f_1(t),\chi_j)\|_{L^2_x}&\le
C(1+t)^{-\frac34-\frac k2}(\|\Tdx^k f_{1,0}\|_{L^2_{x,v}}+\|f_{1,0}\|_{L^{2,1}})\nnm\\
&\quad+C\intt (1+t-s)^{-\frac34-\frac k2}(\|\Tdx^k G_1(s)\|_{L^2_{x,v}}+\|G_1(s)\|_{L^{2,1}})ds\nnm\\
&\le C\delta_0(1+t)^{-\frac34-\frac k2}+C\intt (1+t-s)^{-\frac34-\frac k2}(1+s)^{-\frac32}Q(t)^2ds\nnm\\
&\le
C\delta_0(1+t)^{-\frac34-\frac k2}+C(1+t)^{-\frac34-\frac k2}Q(t)^2, \quad k=0,1, \label{macro_1}
 \ema
 where we have used
 $$\|\Tdx^k G_1(s)\|_{L^2_{x,v}}+\| G_1(s)\|_{L^{2,1}}\le C(1+s)^{-\frac32}Q(t)^2,\quad k=0,1.$$

Second, we estimate the microscopic part $  \P_1f_1(t)$ as follows. 
By 
\eqref{V_2}, \eqref{GG_1} and \eqref{GG_2}, we have
\bma
\|  \P_1f_1(t)\|_{L^2_{x,v}}&\le
C(1+t)^{-\frac54}(\|f_{1,0}\|_{L^2_{x,v}}+\|f_{1,0}\|_{L^{2,1}})\nnm\\
&\quad+C\intt (1+t-s)^{-\frac54}(\| G_1(s)\|_{L^2_{x,v}}+\|G_1(s)\|_{L^{2,1}})ds\nnm\\
&\le
C\delta_0(1+t)^{-\frac54}+C(1+t)^{-\frac54}Q(t)^2,\label{micro_1}
\ema
and
\bma
\|\Tdx   \P_1f_1(t)\|_{L^2_{x,v}}&\le
C(1+t)^{-\frac74}(\|\Tdx f_{1,0}\|_{L^2_{x,v}}+\|f_{1,0}\|_{L^{2,1}})\nnm\\
&\quad+C\int^{t/2}_0 (1+t-s)^{-\frac74}(\|\Tdx G_1(s)\|_{L^2_{x,v}}+\|G_1(s)\|_{L^{2,1}})ds\nnm\\
&\quad+C\int^t_{t/2} (1+t-s)^{-\frac54}(\|\Tdx G_1(s)\|_{L^2_{x,v}}+\|\Tdx G_1(s)\|_{L^{2,1}})ds\nnm\\
&\le
C\delta_0(1+t)^{-\frac74}+C(1+t)^{-\frac74}Q(t)^2,\label{micro_2}\ema
where we have used
$$
\|\Tdx G_1(s)\|_{L^2_{x,v}}+\|\Tdx G_1(s)\|_{L^{2,1}}\le
C(1+s)^{-2}Q(t)^2.
$$

Next, we turn to consider the time decay rate of the $f_2,E,B$. By \eqref{D_3}, \eqref{D_4},  \eqref{GG_3}, \eqref{GG_4} and \eqref{GG_5}, we have
\bma
\|P_rf_2(t)\|_{L^2_{x,v}}+\|E(t)\|_{L^2_x}\le& C(1+t)^{-\frac54}(\|U_0\|_{Z^2}+\|U_0\|_{Z^1}+\|\Tdx^2U_0\|_{Z^2})\nnm\\
&+C\intt (1+t-s)^{-\frac54}(\|G_2\|_{L^2_{x,v}}+\|G_2\|_{L^{2,1}})ds\nnm\\
&+C\intt (1+t-s)^{-2}\|\Tdx^2G_2\|_{L^2_{x,v}}ds\nnm\\
\le &C\delta_0(1+t)^{-\frac54}+C(1+t)^{-\frac54}Q(t)^2,
\ema
and
\bma
\|\Tdx P_rf_2(t)\|_{L^2_{x,v}}+\|\Tdx E(t)\|_{L^2_x}
\le& C(1+t)^{-\frac74}(\|\Tdx U_0\|_{Z^2}+\|U_0\|_{Z^1}+\|\Tdx^3U_0\|_{Z^2})\nnm\\
&+C\int^{t/2}_0 (1+t-s)^{-\frac74}(\|\Tdx G_2\|_{L^2_{x,v}}+\|G_2\|_{L^{2,1}})ds\nnm\\
&+C\int^t_{t/2} (1+t-s)^{-\frac54}(\|\Tdx G_2\|_{L^2_{x,v}}+\|\Tdx G_2\|_{L^{2,1}})ds\nnm\\
&+C\intt (1+t-s)^{-2}\|\Tdx^3G_2\|_{L^2_{x,v}}ds\nnm\\
\le &C\delta_0(1+t)^{-\frac74}+C(1+t)^{-\frac74}Q(t)^2.
\ema
By \eqref{D_0},  \eqref{GG_3}, \eqref{GG_4} and \eqref{GG_5}, we have
\bma
\|\Tdx^kB(t)\|_{L^2_x}\le& C(1+t)^{-\frac34-\frac k2}(\|\Tdx^k U_0\|_{Z^2}+\|U_0\|_{Z^1}+\|\Tdx^2U_0\|_{Z^2})\nnm\\
&+C\intt (1+t-s)^{-\frac34-\frac k2}(\|\Tdx^k G_2\|_{L^2_{x,v}}+\|G_2\|_{L^{2,1}})ds\nnm\\
&+C\intt (1+t-s)^{-2}\|\Tdx^{k+2}G_2\|_{L^2_{x,v}}ds\nnm\\
\le &C\delta_0(1+t)^{-\frac34-\frac k2}+C(1+t)^{-\frac34-\frac k2}Q(t)^2,\quad k=0,1. \label{mag_1}
\ema

Finally, we estimate the higher order terms.  Since
\bq d_1H^f_{N,1}(U)\le D_{N,1}(U)+C\sum_{|\alpha|=N}\|\dxa(E, B)\|^2_{L^2_{x}}, \label{J_5}\eq
for $d_1>0$, we still need to estimate the decay rate of $\|\dxa(E, B)\|^2_{L^2_{x}}$ for $|\alpha|=N$.
 By  Theorem 1.3 in \cite{Duan4}, one has
\bma
E_{k,1}(U)(t)\le C(1+t)^{-3/2}(E_{k+2,1}(U_0)+(\delta_0+Q(t)^2)^2),\label{J_6z}
\ema
for any integer $k \ge 4$, where $E_{k,1}$ is defined by \eqref{energy3}.
Then
\bma
\|\Tdx^N(E,B)(t)\|_{L^2_x}\le& C(1+t)^{-\frac54}(\|\Tdx^NU_0\|_{Z^2}+\|U_0\|_{Z^1}+\|\Tdx^{N+2}U_0\|_{Z^2})\nnm\\
&+C\intt (1+t-s)^{-\frac54}(\|\Tdx^NG_2(s)\|_{L^2_{x,v}}+\|G_2(s)\|_{L^{2,1}})ds\nnm\\
&+C\intt (1+t-s)^{-2}\|\Tdx^{N+2}G_2(s)\|_{L^2_{x,v}}ds\nnm\\
\le &C\delta_0(1+t)^{-\frac54}+C(1+t)^{-\frac54}(\delta_0+Q(t)^2)^2,\label{mag_3z}
\ema
where we have used \eqref{J_6z} to obtain
$$\|G_2(s)\|_{L^2_{v}(H^{N+2}_x)}\le CE_{N+3,1}(U)(s)\le C(E_{N+5,1}(U_0)+(\delta_0+Q(t)^2)^2)(1+s)^{-\frac32}.
$$
Then, by \eqref{G_4a} and \eqref{J_5}, we have
\bma
&\Dt H^f_{N,1}(U)(t)+d_1\mu H^f_{N,1} (U)(t)\nnm\\
&\le C\|\Tdx (n_1,m_1,q_1)(t)\|^2_{L^2_{x}}+C\|\Tdx B(t)\|^2_{L^2_{x}}+C\sum_{|\alpha|=N}\|\dxa(E,B)(t)\|^2_{L^2_{x}}.
\ema
This and \eqref{mag_3z} give
\bma
H^f_{N,1}(U)(t)\le& e^{-d_1\mu t}H^f_{N,1}(U_0)+C\intt e^{-d_1\mu(t-s)}(\|\Tdx (n_1,m_1,q_1)(s)\|^2_{L^2_{x}}+\|\Tdx B(s)\|^2_{L^2_{x}})ds\nnm\\
&+C\intt e^{-d_1\mu(t-s)}\sum_{|\alpha|=N}\|\dxa(E,B)(s)\|^2_{L^2_{x}}ds\nnm\\
\le& C(1+t)^{-\frac52}(\delta_0+Q(t)^2+(\delta_0+Q(t)^2)^2.\label{other1}
\ema
By summing \eqref{macro_1}--\eqref{mag_1} and \eqref{other1}, we have
$$Q(t)\le C(\delta_0+Q(t)^2)+C(\delta_0+Q(t)^2)^2,$$
which yields \eqref{assume} when $\delta_0>0$ is chosen small enough.
This completes the proof of the theorem.
\end{proof}

Indeed, some of the above convergence rates can be shown to be optimal even
for the nonlinear system.

\begin{thm}
Under the assumption of Theorem \ref{time4}, the global solution $(f_1,f_2,E,B)$  to the two-species
 Vlasov-Maxwell-Boltzmann system~\eqref{VMB3}--\eqref{VMB3e} satisfies
 \bma
 C_1\delta_0(1+t)^{-\frac34}&\le \|(f_1(t),\chi_j)\|_{L^2_{x}}\le C_2\delta_0(1+t)^{-\frac34},\quad j=0,1,2,3,4,\label{B_1z}
 \\
 C_1\delta_0(1+t)^{-\frac54}&\le \|  \P_1f_1(t)\|_{L^2_{x,v}}\le C_2\delta_0(1+t)^{-\frac54},\label{B_2z}
 \\
 C_1\delta_0(1+t)^{-\frac54}&\le \|P_rf_2(t)\|_{L^2_{x,v}}\le C_2\delta_0(1+t)^{-\frac54},\label{B_3z}
 \\
 C_1\delta_0(1+t)^{-\frac54}&\le \|E(t)\|_{L^2_{x}}\le C_2\delta_0(1+t)^{-\frac54},\label{B_4z}
 \\
 C_1\delta_0(1+t)^{-\frac34}&\le \|B(t)\|_{L^2_{x}}\le C_2\delta_0(1+t)^{-\frac34},\label{B_5z}
\ema
 for $t>0$ large with two constants $C_2>C_1$.
\end{thm}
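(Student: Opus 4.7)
The upper bounds in \eqref{B_1z}--\eqref{B_5z} follow directly from Theorem~\ref{time6} (equivalently, Theorem~\ref{time3}), so the whole task is to produce matching lower bounds under the extra low-frequency non-degeneracy assumptions on $\hat U_0=(\hat f_{1,0},\hat f_{2,0},\hat E_0,\hat B_0)$. The plan is to exploit the Duhamel representations \eqref{Duh1}--\eqref{Duh2},
\bmas
f_1(t)&=e^{t\BB_0}f_{1,0}+\intt e^{(t-s)\BB_0}G_1(s)\,ds,\\
(f_2,E,B)(t)&=e^{t\AA_0}(f_{2,0},E_0,B_0)+\intt e^{(t-s)\AA_0}(G_2(s),0,0)\,ds,
\emas
and to show that in each case the free-streaming term dominates while the Duhamel term is strictly smaller.

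For the linear parts, the low-frequency assumptions $\inf_{|\xi|\le r_0}|(\hat f_{1,0},\chi_0)|\ge d_0$, $\inf_{|\xi|\le r_0}|(\hat f_{1,0},\chi_4)|\ge d_1\sup_{|\xi|\le r_0}|(\hat f_{1,0},\chi_0)|$ and $\sup_{|\xi|\le r_0}|(\hat f_{1,0},v\sqrt M)|=0$ are exactly those required by Theorem~\ref{time1b}, which yields
\bmas
C_1\delta_0(1+t)^{-\frac34}&\le \|(e^{t\BB_0}f_{1,0},\chi_j)\|_{L^2_x}\le C_2\delta_0(1+t)^{-\frac34},\\
C_1\delta_0(1+t)^{-\frac54}&\le \|\P_1 e^{t\BB_0}f_{1,0}\|_{L^2_{x,v}}\le C_2\delta_0(1+t)^{-\frac54},
\emas
for large $t$. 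Likewise, the condition $\inf_{|\xi|\le r_0}|\frac{\xi}{|\xi|}\times\hat B_0|\ge d_0$ activates Theorem~\ref{time1}, giving two-sided bounds $(1+t)^{-\frac54}$ on $\|e^{t\AA_0}(f_{2,0},E_0,B_0)\|$ in the $P_rf_2$ and $E$ components and $(1+t)^{-\frac34}$ on the $B$ component.

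To control the Duhamel integrals, I will recycle the bilinear nonlinear estimates \eqref{GG_1}--\eqref{GG_5} already established during the proof of Theorem~\ref{time6}, together with $Q(t)\le C\delta_0$ from \eqref{assume}. For instance, using \eqref{V_1} and the bounds $\|G_1(s)\|_{L^2_{x,v}}\le C\delta_0^2(1+s)^{-2}$, $\|G_1(s)\|_{L^{2,1}}\le C\delta_0^2(1+s)^{-3/2}$, one gets
\bmas
\Big\|\intt (e^{(t-s)\BB_0}G_1(s),\chi_j)\,ds\Big\|_{L^2_x}
\le C\delta_0^2\intt(1+t-s)^{-\frac34}(1+s)^{-\frac32}ds\le C\delta_0^2(1+t)^{-\frac34},
\emas
and analogous bounds follow for the other components by invoking \eqref{V_2} and the rate estimates \eqref{D_2z}--\eqref{D_0z} for $e^{t\AA_0}$. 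In every case the Duhamel contribution is bounded by $C\delta_0^2(1+t)^{-\alpha}$, i.e.\ the same algebraic rate as the linear lower bound but with an extra factor $\delta_0$. Choosing $\delta_0$ sufficiently small then yields, for $t$ large,
\bmas
\|(f_1,\chi_j)(t)\|_{L^2_x}\ge \|(e^{t\BB_0}f_{1,0},\chi_j)\|_{L^2_x}-C\delta_0^2(1+t)^{-\frac34}\ge \tfrac12 C_1\delta_0(1+t)^{-\frac34},
\emas
and similarly for the remaining components.

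The main technical obstacle is the $B$-field lower bound \eqref{B_5z}: the worst Duhamel contribution comes from $e^{(t-s)\AA_0}(G_2(s),0,0)$ through the highest-derivative tail in \eqref{D_0z}, where we need $\|\Tdx^2 G_2(s)\|_{L^2_{x,v}}\le C\delta_0^2(1+s)^{-5/2}$ (cf.\ \eqref{GG_5}) to achieve the required $(1+t)^{-3/4}$ upper bound on the perturbation, so that it is absorbed by half of the linear lower bound from Theorem~\ref{time1}. Once this book-keeping is verified, gathering the pieces yields \eqref{B_1z}--\eqref{B_5z} and completes the proof.
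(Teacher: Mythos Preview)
Your proposal is correct and follows exactly the same strategy as the paper: use the Duhamel representations \eqref{Duh1}--\eqref{Duh2}, invoke the linear lower bounds from Theorems~\ref{time1b} and~\ref{time1} for the free-evolution part, bound the Duhamel integral by $C\delta_0^2(1+t)^{-\alpha}$ using the nonlinear estimates \eqref{GG_1}--\eqref{GG_5} together with $Q(t)\le C\delta_0$, and absorb the quadratic term into the linear one for $\delta_0$ small. The paper's proof is in fact much terser than yours, giving only two representative inequalities and omitting the component-by-component bookkeeping you spell out.
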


\begin{proof}
By \eqref{Duh1}, \eqref{Duh2}, Theorem~\ref{time1b} and Theorem~\ref{time6}, we can establish the lower bounds of the time decay rates of macroscopic density, momentum and energy of the global solution $(f_1,f_2,E,B)$ to the system \eqref{VMB3}--\eqref{VMB3e} and its microscopic part for $t>0$ large enough. For example, it holds that
 \bmas
\|(f_1(t),\chi_j)\|_{L^2_{x}}
&\ge
 \| (e^{t\BB_0}f_{1,0},\chi_j)\|_{L^2_{x}}
 -\intt\| (e^{(t-s)\BB_0}G_1(s),\chi_j)\|_{L^2_{x}}ds
 \\
&\ge
 C_1\delta_0(1+t)^{-\frac34 }-C_2\delta_0^2(1+t)^{-\frac34 },
\\
 \| E(t)\|_{L^2_{x}}
&\ge
 \| (e^{t\AA_0}(f_{2,0},E_0,B_0))_2\|_{L^2_{x}}
 -\intt\|  (e^{(t-s)\AA_0}(G_2(s),0,0))_2\|_{L^2_{x}}ds
 \\
&\ge C_1\delta_0(1+t)^{-\frac54}-C_2\delta_0^2(1+t)^{-\frac54},
\emas
Therefore, for $\delta_0>0$
small and $t>0$ large, we can prove \eqref{B_1z}--\eqref{B_5z}. 
\end{proof}

\subsection{Corresponding results on one-species}
%


Finally, we give the corresponding results on the one-species VMB.
Let $N$ be a positive integer and $U=(f,E,B)$, and
\bmas
E^1_{N,k}(U)&=\sum_{|\alpha|+|\beta|\le N}\|w^k\dxa\dvb f\|^2_{L^2_{x,v}}+\sum_{|\alpha|\le N}\|\dxa(E,B)\|^2_{L^2_x},\\
H^1_{N,k}(U)&= \sum_{|\alpha|+|\beta|\le N}\|w^k\dxa\dvb
\P_1f\|^2_{L^2_{x,v}}+\sum_{1\le|\alpha|\le N}\|\dxa (E,B)\|^2_{L^2_x} \\
&\quad +\sum_{|\alpha|\le N-1}\|\dxa\Tdx \P_0f\|^2_{L^2_{x,v}}+\| P_{\rm d}f\|^2_{L^2_{x,v}},\\
D^1_{N,k}(U)&=\sum_{|\alpha|+|\beta|\le N}\|w^{\frac12+k}\dxa\dvb \P_1f\|^2_{L^2_{x,v}}+\sum_{|\alpha|\le N-1}\|\dxa\Tdx \P_0f\|^2_{L^2_{x,v}}+\| P_{\rm d}f\|^2_{L^2_{x,v}}\nnm\\
&\quad+\sum_{1\le |\alpha|\le N-1}\|\dxa E\|^2_{L^2_x}+\sum_{2\le |\alpha|\le N-1}\|\dxa  B\|^2_{L^2_x},
\emas
for $k\ge 0$. For brevity, we write $E^1_N(U)=E^1_{N,0}(U)$, $H^1_N(U)=H^1_{N,0}(U)$ and $D^1_N(U)=D^1_{N,0}(U)$ for $k=0$.

Applying the similar argument as to derive the equation~\eqref{G_9}--\eqref{G_8} and making use of the system \eqref{1VMB3}--\eqref{1VMB3c}, we can obtain a  compressible Navier-Stokes-Maxwell  type equations with inhomogeneous terms for the macroscopic density, momentum and energy $(n,  m, q)=:((f,\chi_0),(f,v\chi_0),(f,\chi_4))$ and $E,B$ as follows
\bma
\dt n+\divx  m&=0,\label{1G_9a}\\
\dt  m+\dt R_1+\Tdx n+\sqrt{\frac23}\Tdx q-E&=\kappa_1 (\Delta_x m+\frac13\Tdx{\rm div}_x m)+n E+m\times B+R_2,\label{1G_7a}\\
\dt q+\dt R_3+\sqrt{\frac23}\divx m&=\kappa_2 \Delta_x q+\sqrt{\frac23} E\cdot m+R_4,\label{1G_8a}\\
\dt E&=\Tdx\times B-m,\label{1G_9b}\\
\dt B&=-\Tdx\times E,\label{1G_9c}
\ema
where the viscosity and heat conductivity
coefficients $\kappa_1,\kappa_2>0$ and the remainder terms $R_1, R_2, R_3, R_4$ are defined by
\bmas
\kappa_1&=-(L^{-1}   \P_1(v_1\chi_2),v_1\chi_2),\quad \kappa_2=-(L^{-1}   \P_1(v_1\chi_4),v_1\chi_4),\\
R_1&=( v\cdot\Tdx L^{-1}  \P_1f,v\sqrt M),\quad R_2=-(v\cdot\Tdx L^{-1}(   \P_1(v\cdot\Tdx  \P_1f)-   \P_1 G),v\sqrt M),\\
R_3&=( v\cdot\Tdx L^{-1}  \P_1f,\chi_4),\quad R_4=-(v\cdot\Tdx L^{-1}(   \P_1(v\cdot\Tdx  \P_1f)-   \P_1 G),\chi_4).
\emas
Here,
\bq G=\frac12 (v\cdot E)f-(E+v\times B)\cdot\Tdv f+\Gamma(f,f).\label{1G1}\eq

Similar to Section 5.1, we have the energy estimates of the one-species VMB system \eqref{1VMB3}--\eqref{1VMB3d} as  follows.

\begin{lem}[Macroscopic dissipation] \label{macro-en1} Let $(n,m,q,E,B)$  be the strong solutions to \eqref{1G_9a}--\eqref{1G_9c}. Then, there are two constants $s_0,s_1>0$ such that
\bmas
&\Dt \sum_{k\le |\alpha|\le N-1}s_0 (\|\dxa(n, m,q)\|^2_{L^2_x}+\|\dxa (E,B)\|^2_{L^2_x}+2\intr \dxa R_1\dxa mdx+2\intr \dxa R_3\dxa qdx )\nnm\\
&+\Dt \sum_{k\le |\alpha|\le N-1}s_1\intr \dxa m \dxa\Tdx ndx-\Dt \sum_{k+1\le |\alpha|\le N-1}8\intr \dxa m\dxa Edx\nnm\\
&-\Dt \sum_{k+1\le |\alpha|\le N-2}2\intr \dxa E\dxa(\Tdx\times B)dx-\Dt s_0\sqrt{\frac23}\intr m^2qdx\nnm\\
&+\sum_{k\le |\alpha|\le N-1} (\|\dxa\Tdx (n, m,q)\|^2_{L^2_x}+\|\dxa n\|^2_{L^2_x})+\sum_{k+1\le |\alpha|\le N-1} \|\dxa E\|^2_{L^2_x}+\sum_{k+2\le |\alpha|\le N-1} \|\dxa B\|^2_{L^2_x}
\nnm\\
\le & C(\sqrt{E^1_N(U)}+E^1_N(U))D^1_N(U)+C\sum_{k\le |\alpha|\le N}\|\dxa  \P_1f\|^2_{L^2_{x,v}},
\emas
with $0\le k\le N-3$.
\end{lem}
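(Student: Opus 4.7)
The plan is to mimic the strategy of Lemma~\ref{macro-en} used for the two-species model, but with careful attention to the new source term $-E$ that appears in the momentum equation~\eqref{1G_7a} and to the constraint $\Tdx\cdot E=n$ (rather than $\Tdx\cdot E = n_2$), which forces the electric field to be estimated together with the macroscopic density. I will take inner products of $\da$-derivatives of each equation with suitable test functions and combine the resulting identities with appropriate large weights $s_0,s_1>0$, exactly as in the proof of Lemma~\ref{macro-en}, and then close via a Gr\"onwall-type bookkeeping.

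First, pair $\dxa\eqref{1G_7a}$ with $\dxa m$: this produces $\tfrac12\Dt\|\dxa m\|^2$, the viscous dissipation $\kappa_1(\|\dxa\Tdx m\|^2 + \tfrac13\|\dxa\divx m\|^2)$, and the cross term $-(\dxa E,\dxa m)$ coming from the extra source; the $\dt R_1$ contribution is moved into a time derivative $\Dt(\dxa R_1,\dxa m)$ modulo terms bounded by $\epsilon\|\dxa\Tdx(n,q)\|^2+C_\epsilon\|\dxa\Tdx\P_1f\|^2+C\sqrt{E_N^1}D_N^1$, exactly as in \eqref{en_2}. Pairing $\dxa\eqref{1G_8a}$ with $\dxa q$ yields the companion heat-conductivity inequality. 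Pairing $\dxa\eqref{1G_7a}$ with $\dxa\Tdx n$, and then using the continuity equation~\eqref{1G_9a} to rewrite $(\dxa\Tdx n,\dxa m) = -(\dxa n,\dxa\divx m) = \tfrac12\Dt\|\dxa n\|^2$, gives control of $\|\dxa\Tdx n\|^2$ up to the modulus term $\Dt(\dxa m,\dxa\Tdx n)$ plus $\|\dxa\Tdx q\|^2+\|\dxa\Tdx \P_1f\|^2$. The nonlinear bulk term $\sqrt{2/3}\int m^2 q\,dx$ emerges from integrating by parts the $\sqrt{2/3}E\cdot m$ term in the energy equation combined with $\Tdx\cdot E = n$ --- absorbing it as a time derivative is what produces the $-\Dt s_0\sqrt{2/3}\int m^2 q\,dx$ correction in the statement.

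Next comes the electromagnetic dissipation, which is the genuinely new ingredient. To produce $\|\dxa E\|^2$ on the left at orders $k+1\le|\alpha|\le N-1$, I take the inner product of $\dxa\eqref{1G_7a}$ with $-\dxa E$; the term $\|\dxa E\|^2$ appears from the source $-E$, while $-(\dxa\dt m,\dxa E)$ is rewritten, using $\eqref{1G_9b}$, as $-\Dt(\dxa m,\dxa E) + (\dxa m,\dxa(\Tdx\times B)) - \|\dxa m\|^2$, which explains the cross term $-\Dt 8\int \dxa m\cdot \dxa E\,dx$ in the statement. For $\|\dxa B\|^2$ with $k+2\le|\alpha|\le N-1$, I use the Maxwell pair: pair $\dxa\eqref{1G_9b}$ with $\dxa(\Tdx\times B)$, giving $\|\dxa\Tdx\times B\|^2 = \Dt(\dxa E,\dxa\Tdx\times B) - (\dxa E,\dxa\Tdx\times\dt B) + (\dxa m,\dxa\Tdx\times B)$, and invoke $\dt B = -\Tdx\times E$ to convert the middle term into $\|\dxa\Tdx\times E\|^2$ with favourable sign; the identity $\|\Tdx\times F\|^2 = \|\Tdx F\|^2 - \|\divx F\|^2$ together with $\divx B = 0$ and $\divx E = n$ then converts this into the genuine $H^1$-dissipation of $B$, at the cost of absorbing $\|\dxa\Tdx n\|^2$ which is already controlled.

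The final step is to form the linear combination $s_0\{\eqref{en_1}+\eqref{q_1}\}+s_1\{$ cross terms for $(n,m)$ and $(m,E)\}$ plus the Maxwell identity, with $s_0\gg s_1\gg 1$ chosen so that the small parameter $\epsilon$ arising from Cauchy--Schwarz on the cross terms, plus the contributions from $\dt R_1,\dt R_3$, are absorbed by the positive dissipation on the left. The nonlinear terms $(\dxa(nE),\dxa m)$, $(\dxa(m\times B),\dxa m)$, $(\sqrt{2/3}E\cdot m,\dxa q)$ and the remainder $R_2,R_4$ are estimated by $C(\sqrt{E_N^1(U)}+E_N^1(U))D_N^1(U)$ using Sobolev embedding and the collision estimates of Lemma~\ref{e1}, exactly as in \eqref{en_4}--\eqref{gamma}; the quadratic term $\int m^2 q\,dx$ is bounded by $E_N^1(U)\cdot \sqrt{E_N^1(U)}$, which is why it enters as a time derivative rather than in the dissipation. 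The main obstacle is the careful bookkeeping of the cross terms $(\dxa m,\dxa E)$ and $(\dxa E,\dxa\Tdx\times B)$: they must have the right signs and coefficients so that, after summing the weighted identities, all error contributions are either absorbed on the left or appear as the permitted $\|\dxa\P_1 f\|^2$ term on the right.
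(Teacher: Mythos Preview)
Your overall strategy---adapt the proof of Lemma~\ref{macro-en} by adding the electromagnetic cross-pairings---is exactly what the paper intends (it only writes ``Similar to Section~5.1''). However, two of your explanations are off and would leave genuine holes if carried out as written.

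First, the linear cross term $-(\dxa E,\dxa m)$ that appears when you pair the momentum equation with $\dxa m$ cannot be left as ``a cross term to be bookkept'': at $|\alpha|=0$ neither $\|E\|_{L^2}$ nor $\|m\|_{L^2}$ lies in $D^1_N(U)$, so there is nothing to absorb it into. The correct move is to use the Maxwell pair $\dt E=\Tdx\times B-m$, $\dt B=-\Tdx\times E$ to rewrite
\[
-(\dxa E,\dxa m)=(\dxa E,\dxa\dt E)-(\dxa E,\dxa\Tdx\times B)=\tfrac12\Dt\|\dxa E\|^2_{L^2_x}+\tfrac12\Dt\|\dxa B\|^2_{L^2_x},
\]
which is precisely how the block $\|\dxa(E,B)\|^2_{L^2_x}$ enters the energy functional in the first line of the statement. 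You never produce this block, and without it the estimate does not close.

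Second, your explanation of the cubic correction $\sqrt{2/3}\int m^2q\,dx$ is incorrect: it does not come from integrating the $E\cdot m$ term by parts via $\Tdx\cdot E=n$. The problematic trilinear term is $\sqrt{2/3}\int (E\cdot m)\,q\,dx$ arising from the $q$-equation at $|\alpha|=0$; since none of $\|E\|_{L^2},\|m\|_{L^2},\|q\|_{L^2}$ sits in $D^1_N$, no Sobolev splitting yields the required $(\sqrt{E^1_N}+E^1_N)D^1_N$ bound. The fix is to \emph{subtract} $\tfrac12\sqrt{2/3}\,\Dt\int |m|^2q\,dx$ from the energy: expanding with the raw momentum law $\dt m=-\Tdx n-\sqrt{2/3}\Tdx q+E+\cdots$ produces exactly $+\sqrt{2/3}\int(E\cdot m)\,q\,dx$, which cancels the bad term; the remaining pieces of $\Dt\int|m|^2q$ all carry at least one spatial derivative (on $n$, $m$, $q$, or $\P_1f$) and are then bounded by $C(\sqrt{E^1_N}+E^1_N)D^1_N$.

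A smaller omission: the undifferentiated dissipation $\|\dxa n\|^2_{L^2_x}$ in the statement (as opposed to $\|\dxa\Tdx n\|^2_{L^2_x}$) comes from the $-E$ source when you pair the momentum equation with $\dxa\Tdx n$: one has $-(\dxa E,\dxa\Tdx n)=(\dxa\divx E,\dxa n)=\|\dxa n\|^2_{L^2_x}$ by the constraint $\divx E=n$. Your sketch mentions the constraint but does not use it here.
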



\begin{lem}[Microscopic dissipation]
\label{micro-en1}
Let $N\ge 4$ and $(f,E,B)$ be a strong solution to VMB system  \eqref{1VMB3}--\eqref{1VMB3d}.
Then, there are constants $p_k>0$, $1\le k\le N$ such that
\bmas
&\frac12\Dt (\|f\|^2_{L^2_{x,v}}+\| (E,B)\|^2_{L^2_x}-\sqrt{\frac23}\intr m^2qdx)+\mu \|w^{\frac12} \P_1f\|^2_{L^2_{x,v}}\le C(\sqrt{E^1_N(U)}+E^1_N(U))D^1_N(U),
\\
&\frac12\Dt \sum_{1\le|\alpha|\le N}(\|\dxa f\|^2_{L^2_{x,v}}+\|\dxa  (E,B)\|^2_{L^2_x})+\mu\sum_{1\le|\alpha|\le N} \|w^{\frac12}\dxa  \P_1f\|^2_{L^2_{x,v}}\le C\sqrt{E^1_N(U)}D^1_N(U),
\\
&\Dt \|\P_1f\|^2_{L^2_{x,v}}+\|w^{\frac12} \P_1f\|^2_{L^2_{x,v}}\le C\|\Tdx \P_0f\|^2_{L^2_{x,v}}+E^1_N(U)D^1_N(U),
\\
&\Dt \sum_{1\le k\le N}p_k\sum_{|\beta|=k \atop |\alpha|+|\beta|\le N}\|\dxa\dvb \P_1f\|^2_{L^2_{x,v}}
+\mu\sum_{1\le k\le N}p_k\sum_{|\beta|=k \atop |\alpha|+|\beta|\le N}\|w^{\frac12}\dxa\dvb \P_1f\|^2_{L^2_{x,v}}\nnm\\
&\le C\sum_{|\alpha|\le N-1}\|\dxa\Tdx  f\|^2_{L^2_{x,v}} +C\sqrt{E^1_N(U)}D^1_N(U).
\emas
\end{lem}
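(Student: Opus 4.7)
My plan is to mimic the four-part structure of Lemma \ref{micro-en} (the two-species case), taking $L^2_{x,v}$ inner products of \eqref{1VMB3} and of its $\P_1$-projected form against the appropriate test function, and using the Maxwell equations \eqref{1VMB3a}--\eqref{1VMB3c} together with Lemma \ref{e1} to treat the nonlinear pieces of $G$ in \eqref{1G1}. The genuinely new ingredient compared to the two-species argument is the emergence of the correction term $-\sqrt{2/3}\int m^2 q\,dx$ in the zero-order energy, which is forced by the absence of the cancellation structure enjoyed by $(f_1,f_2)$.

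For the zero-order estimate, I take $(\eqref{1VMB3},f)_{L^2_{x,v}}$. The streaming term vanishes by integration by parts; the Lorentz contribution $\int(E+v\times B)\cdot\nabla_v f\,f\,dxdv=0$ since $\nabla_v\cdot(v\times B)=0$ and $E$ is $v$-independent; and $-(L f,f)\ge\mu\|w^{1/2}\P_1 f\|^2$. The electric coupling $-(v\sqrt M\cdot E,f)=-\int E\cdot m\,dx$ is rewritten via $\dt E=\nabla\times B-m$ and $\dt B=-\nabla\times E$ to give $\tfrac12\Dt(\|E\|^2+\|B\|^2)$. The cubic remainder $\tfrac12\int(v\cdot E)f^2\,dxdv$, after expanding $f=\P_0 f+\P_1 f$ and computing the Maxwellian moments, produces the macroscopic piece $\int nE\cdot m\,dx+\sqrt{2/3}\int qE\cdot m\,dx$ plus terms that are directly controlled by $\sqrt{E^1_N(U)}D^1_N(U)$ using Lemma \ref{e1} and the Sobolev embedding $H^2\hookrightarrow L^\infty$. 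The first macroscopic piece is absorbable through $|\int nE\cdot m|\le\|n\|_{L^2}\|E\|_{L^6}\|m\|_{L^3}$, all three factors being controlled by $E^1_N$ and $D^1_N$ (recall $\|n\|^2_{L^2}\in D^1_N$ through $\|P_d f\|^2$, and $\|\nabla E\|,\|\nabla m\|\in D^1_N$).

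The second piece $\sqrt{2/3}\int qE\cdot m\,dx$ cannot be closed by Sobolev alone because neither $\|q\|_{L^2}$ nor $\|m\|_{L^2}$ is contained in $D^1_N$. Here I use the momentum equation \eqref{1G_7a} to substitute $E=\dt m+\nabla_x n+\sqrt{2/3}\nabla_x q+\text{lower-order nonlinear/dissipative terms}$, which yields
\[
\sqrt{2/3}\int qE\cdot m\,dx=\frac{\sqrt{2/3}}{2}\Dt\int q|m|^2\,dx+\sqrt{2/3}\int qm\cdot(\nabla_x n+\sqrt{2/3}\nabla_x q)\,dx+(\ldots).
\]
The last two families of terms are absorbable by integration by parts in $x$ together with the continuity equation $\dt n+\divx m=0$ and interpolation, while moving the time derivative to the left-hand side produces precisely the correction $-\sqrt{2/3}\int m^2 q\,dx$ inside $\tfrac12\Dt(\cdots)$. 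This is the delicate step and the main conceptual obstacle; everything else is a direct adaptation.

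For the higher $x$-derivative estimate, I take $\sum_{1\le|\alpha|\le N}(\dxa f,\dxa\eqref{1VMB3})$. No correction is needed because all cubic terms now involve $x$-derivatives of $(n,m,q)$ or of $(E,B)$, which are contained in $D^1_N$; the Moser-type product estimates and Lemma \ref{e1} give the bound $\sqrt{E^1_N}D^1_N$ directly, as in \eqref{I_1}--\eqref{I_2}. For the pure microscopic estimate on $\|\P_1 f\|^2_{L^2}$, I apply $\P_1$ to \eqref{1VMB3} to obtain the one-species analog of \eqref{G_2}, test against $\P_1 f$, and absorb the streaming coupling $\P_1(v\cdot\nabla_x\P_0 f)$ into $\|\nabla_x\P_0 f\|^2_{L^2_{x,v}}$. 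Finally, for the weighted velocity-derivative estimate, I argue by downward induction on $|\beta|=k$: taking $(\dxa\dvb\P_1 f,\dxa\dvb\cdot)$ of the $\P_1$-projected equation, the commutator $[\dvb,v\cdot\nabla_x]$ produces $\dxa\nabla_x\dv^{\beta-1}\P_1 f$, which is controlled inductively after choosing the coefficients $p_k$ so that $\mu p_k\ge 2\sum_{j>k}p_j C_j$, exactly as in the two-species proof. The Lorentz nonlinearity $(E+v\times B)\cdot\nabla_v$ is handled via $H^2\hookrightarrow L^\infty$ and the smallness of $E^1_N(U)$, closing the estimate.
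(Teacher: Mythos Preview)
Your proposal is correct and matches the paper's (implicit) approach: the paper does not give an explicit proof of Lemma~\ref{micro-en1}, only the remark ``Similar to Section~5.1'', so you are in fact supplying more detail than the paper. You have correctly isolated the single genuinely new step: for $|\alpha|=0$ the cubic term $\tfrac12\int (v\cdot E)f^2\,dxdv$, after the macro-micro split, leaves the residual $\sqrt{2/3}\int qE\cdot m\,dx$ which cannot be absorbed directly because neither $\|q\|_{L^2}$ nor $\|m\|_{L^2}$ sits in $D^1_N$; substituting $E$ via the Euler-type momentum equation and writing $\int q m\cdot\partial_t m=\tfrac12\Dt\int q|m|^2-\tfrac12\int(\partial_t q)|m|^2$ produces exactly the correction $-\sqrt{2/3}\int m^2 q\,dx$ appearing in the statement. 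Two minor points: (i) when bounding the leftover terms such as $\int qm\cdot\nabla n$, $\int(\div m)|m|^2$, or $\int E\cdot m|m|^2$, choose the H\"older splitting so that one factor lands in $L^\infty$ (controlled by $\|\nabla\cdot\|_{H^1}\le\sqrt{D^1_N}$ or $\sqrt{E^1_N}$) rather than $L^3$, to avoid fractional powers like $(E^1_N)^{1/4}(D^1_N)^{5/4}$; (ii) it is cleaner to use the Euler-form momentum equation (the one-species analogue of \eqref{G_5}) rather than \eqref{1G_7a}, so that you avoid the extra $\partial_t R_1$ term in the substitution.
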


\begin{lem}\label{energy1z}
Let $N\ge 4$. Then, there are two equivalent energy functionals
$E^{1,f}_{N}(\cdot)\sim E^1_N(\cdot)$, $H^{1,f}_{N}(\cdot)\sim H^1_N(\cdot)$
such that the following holds. If
$E^1_N(U_0)$ is sufficiently small, then the Cauchy problem
\eqref{1VMB3}--\eqref{1VMB3d} of the one-species
VMB system admits a unique global
solution $U=(f,E,B)$ satisfying
\bma
\Dt E^{1,f}_{N}(U)(t) + \mu D^1_N(U)(t) &\le 0, \label{Gb}\\
\Dt H^{1,f}_N(U)(t)+\mu D^1_N(U)(t)&\le C\|\Tdx (n,m,q)\|^2_{L^2_{x}}+\|\Tdx E\|^2_{L^2_{x}}+\|\Tdx^2 B\|^2_{L^2_{x}}.\label{G_4c}
\ema
\end{lem}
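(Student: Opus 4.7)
The plan is to follow the blueprint of Lemma \ref{energy1} for the two-species case, but with the one-species dissipation estimates of Lemma \ref{macro-en1} and Lemma \ref{micro-en1} replacing Lemmas \ref{macro-en}--\ref{micro-en}. Specifically, $E^{1,f}_N(U)$ and $H^{1,f}_N(U)$ will be constructed as carefully weighted linear combinations of the various functionals appearing under $\frac{\rm d}{{\rm d}t}$ in those two lemmas, the weights being chosen large enough both to ensure positivity (so that $E^{1,f}_N\sim E^1_N$ and $H^{1,f}_N\sim H^1_N$) and to dominate the cross terms that show up after addition. Once the differential energy inequalities \eqref{Gb}--\eqref{G_4c} are established, global existence of a unique solution $U\in C([0,\infty);H^N)$ under the smallness assumption on $E^1_N(U_0)$ follows in a standard way by combining local-in-time existence (which can be obtained by an iteration scheme or by the classical theory for the VMB system) with a continuation argument based on \eqref{Gb}.

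For \eqref{Gb}, I would take
\[
 E^{1,f}_N(U) \;=\; A_1 \,\mathcal{M}_0(U) \;+\; A_2\bigl[\mathcal{I}_1(U)+\mathcal{I}_2(U)\bigr]\;+\;A_3\,\mathcal{I}_3(U)\;+\;\mathcal{I}_4(U),
\]
where $\mathcal{M}_0$ is the macroscopic functional from Lemma \ref{macro-en1} with $k=0$, and $\mathcal{I}_1,\dots,\mathcal{I}_4$ are the four microscopic functionals in the order they appear in Lemma \ref{micro-en1}. Choose $A_3\gg A_2\gg A_1\gg 1$ so that the microscopic pieces dominate the indefinite cross terms inside $\mathcal{M}_0$ (such as $\int \partial^\alpha R_1\,\partial^\alpha m\,dx$, $\int \partial^\alpha m\,\partial^\alpha\nabla_x n\,dx$, $\int \partial^\alpha E\,\partial^\alpha(\nabla_x\times B)\,dx$ and the cubic $\int m^2 q\,dx$), yielding the equivalence $E^{1,f}_N\sim E^1_N$ for $E^1_N(U)\le\delta$ small. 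The sum of the dissipations on the left-hand sides then controls exactly $D^1_N(U)$ (macroscopic dissipation at $k=0$ covers $\|\nabla_x^j(n,m,q)\|^2$ for $1\le j\le N$, $\|\nabla_x^j n\|^2$ for $0\le j\le N-1$, $\|\nabla_x^j E\|^2$ for $1\le j\le N-1$ and $\|\nabla_x^j B\|^2$ for $2\le j\le N-1$, while the microscopic dissipation covers $\|w^{1/2}\partial^\alpha_x\partial^\beta_v \mathbf{P}_1f\|^2$ for all $|\alpha|+|\beta|\le N$). The nonlinear errors are all of the form $C(\sqrt{E^1_N}+E^1_N)D^1_N$, which are absorbed into $\mu D^1_N$ for $\delta$ small enough.

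For \eqref{G_4c} I would repeat the same construction but with the macroscopic estimate from Lemma \ref{macro-en1} taken at $k=1$ instead of $k=0$. The resulting dissipation covers $\|\nabla_x^j(n,m,q)\|^2$ for $2\le j\le N$, $\|\nabla_x^j n\|^2$ for $1\le j\le N-1$, $\|\nabla_x^j E\|^2$ for $2\le j\le N-1$, and $\|\nabla_x^j B\|^2$ for $3\le j\le N-1$. The lowest-order pieces of $D^1_N$ that are \emph{not} controlled by this choice, namely $\|\nabla_x(n,m,q)\|^2$, $\|\nabla_x E\|^2$ and $\|\nabla_x^2 B\|^2$, are precisely the terms that must be moved to the right-hand side of \eqref{G_4c}; the remaining pieces of $D^1_N$ (namely the weighted $\mathbf{P}_1 f$ dissipation, $\|\nabla_x\mathbf{P}_0f\|^2$, $\|P_{\rm d}f\|^2$ and the higher-derivative pieces of $E,B$) are provided by the macroscopic estimate at $k=1$ together with the microscopic estimates.

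The main obstacle will be twofold. First, one must verify that the mixed-order cross terms appearing inside the functionals $\mathcal{M}_0$ in Lemma \ref{macro-en1}—in particular the cubic term $\sqrt{2/3}\int m^2 q\,dx$ and the indefinite pairings $\int \partial^\alpha m\,\partial^\alpha E\,dx$ and $\int \partial^\alpha E\,\partial^\alpha(\nabla_x\times B)\,dx$—are absorbed, via Sobolev embedding and smallness of $E^1_N(U)$, into the positive square part of the energy so that the equivalences $E^{1,f}_N\sim E^1_N$ and $H^{1,f}_N\sim H^1_N$ genuinely hold. Second, unlike the two-species case where the magnetic field's dissipation $\|\nabla_x B\|^2$ could be extracted after a single integration by parts, here one must confirm that no dissipation of $\|B\|^2$ or $\|\nabla_x B\|^2$ is available at the lowest frequencies and that, consequently, $\|\nabla_x^2 B\|^2$ (rather than $\|\nabla_x B\|^2$) is the correct term to place on the right-hand side of \eqref{G_4c}; this reflects the slower magnetic decay in the one-species model. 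With these two points handled, the hierarchy of weights $A_3\gg A_2\gg A_1\gg 1$ closes the argument as in the two-species case.
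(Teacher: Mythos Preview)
Your overall strategy is correct and matches the paper's (implicit) approach, which is simply to repeat the proof of Lemma~\ref{energy1} with Lemmas~\ref{macro-en1}--\ref{micro-en1} in place of Lemmas~\ref{macro-en}--\ref{micro-en}. Two technical points in your weight bookkeeping need correcting, however.

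First, for \eqref{Gb} your hierarchy $A_3\gg A_2\gg A_1$ with $\mathcal{I}_3$ included does not close: the right-hand side of $\mathcal{I}_3$ carries $C\|\nabla_x\mathbf{P}_0f\|^2$, and after multiplying by $A_3$ this cannot be absorbed by the macroscopic dissipation, which only has coefficient $A_1\ll A_3$. Since $\mathcal{I}_1$ already supplies the base-level $\|w^{1/2}\mathbf{P}_1f\|^2$ dissipation without that cost, $\mathcal{I}_3$ should be dropped from the combination for \eqref{Gb}---exactly as in the two-species proof of \eqref{G_1}, where the analog \eqref{low1} is used only for \eqref{G_4}.

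Second, for \eqref{G_4c} the macroscopic estimate at $k=1$ does \emph{not} provide $\|P_{\rm d}f\|^2=\|n\|^2$ in its dissipation (the sum $\sum_{k\le|\alpha|\le N-1}\|\partial^\alpha n\|^2$ starts at $|\alpha|=1$), so your claim that it is ``provided'' there is wrong. The fix is immediate: the constraint $\nabla_x\cdot E=n$ gives $\|n\|^2\le\|\nabla_xE\|^2$, which is already on the right-hand side of \eqref{G_4c}. The same identity is what makes the equivalence $H^{1,f}_N\sim H^1_N$ hold, since $H^1_N$ contains $\|P_{\rm d}f\|^2$ while your functional built at $k=1$ does not contain it directly.
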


\begin{lem}\label{energy2a}
Let $N\ge 4$.  There are the equivalent energy functionals $E^{1,f}_{N,1}(\cdot)\sim E^1_{N,1}(\cdot)$, $H^{1,f}_{N,1}(\cdot)\sim H^1_{N,1}(\cdot)$
such that if $E^1_{N,1}(U_0)$ is sufficiently small, then the solution $U=(f,E,B)(t,x,v)$ to the one-species VMB system \eqref{1VMB3}--\eqref{1VMB3d} satisfies
\bgr \Dt E^{1,f}_{N,1}(U)(t)+\mu D^1_{N,1}(U)(t)\le 0,\label{G_4d}\\
\Dt H^{1,f}_{N,1}(U)(t)+\mu D^1_{N,1}(U)(t)\le C\|\Tdx (n,m,q)\|^2_{L^2_{x}}+\|\Tdx E\|^2_{L^2_{x}}+\|\Tdx^2 B\|^2_{L^2_{x}}.\label{G_4e}
\egr
\end{lem}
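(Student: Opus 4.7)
The plan is to establish Lemma \ref{energy2a} by repeating the argument of Lemma \ref{energy1z} with the velocity weight $w(v)=(1+|v|^2)^{1/2}$ inserted throughout, exactly in parallel with how Lemma \ref{energy2} was obtained from Lemma \ref{energy1} in the two-species case. The macroscopic Navier-Stokes-Maxwell type dissipation lemma (Lemma \ref{macro-en1}) is weight-independent since it is derived from hydrodynamic moments of $f$; therefore its estimates carry over verbatim and contribute the dissipation for $(n,m,q,E,B)$ and the appropriate derivatives. What must be redone is the microscopic part of Lemma \ref{micro-en1}, now in weighted norms.

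For the weighted microscopic estimate I would apply $w\dx^\alpha\dv^\beta$ to the equation \eqref{1VMB3} (or its $\P_1$-projection) and take the inner product with $w\dx^\alpha\dv^\beta f$. The two ingredients needed are: (i) weighted coercivity of the linearized operator, namely $(w^2 L\P_1 f,\P_1 f)\le -\mu\|w\nu^{1/2}\P_1 f\|^2+C\|\chi_{|v|\le R}\P_1 f\|^2$, which follows from the standard decomposition $L=-\nu(v)+K$ together with compactness of $K$ and the absorption of the cut-off piece by unweighted estimates already proved in Lemma \ref{energy1z}; and (ii) control of the nonlinear collision term $\Gamma(f,f)$ in weighted Sobolev norms via Lemma \ref{e1}, which produces $\sqrt{E^1_{N,1}(U)}D^1_{N,1}(U)$ exactly as in the unweighted proof. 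The pure-$v$-derivative hierarchy is closed by the same iterative choice of constants $p_k$ used in Lemma \ref{micro-en1}, since commutators $[\dv^\beta,v\cdot\nabla_x]$ only produce lower-order $|\beta|-1$ terms that are absorbed after summing with suitable weights.

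The main obstacle is controlling the Lorentz force contribution $(E+v\times B)\cdot\Tdv f$, since $|v\times B|$ grows linearly in $|v|$ and could in principle destroy the weighted estimate. The resolution rests on two algebraic points. First, $\Tdv\cdot(v\times B)=0$, so after integration by parts in $v$ the only remaining boundary-like term involves $(v\times B)\cdot\Tdv(w^2)$; but $\Tdv w^2=2v$ and $(v\times B)\cdot v\equiv 0$, which kills the dangerous weight-gradient term outright. Second, commutators $[\dv^\beta,v\times B\cdot\Tdv]$ produce derivatives of $v\times B$ which are just $B$-dependent and hence bounded by $\|\dx^\gamma B\|_{H^N_x}$-type quantities, and combined with $\|w\dv^{\beta'}\P_1 f\|$ via Young's inequality they are absorbed into $\sqrt{E^1_{N,1}(U)}D^1_{N,1}(U)$. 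The $E\cdot\Tdv f$ piece is easier since $\Tdv w$ is bounded and produces lower-order terms readily controlled by Sobolev embedding on the macroscopic factor $\|E\|_{H^N_x}$.

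Finally, to obtain \eqref{G_4d} I would form a linear combination of the form $A_1(\text{weighted macro estimates})+A_2(\text{weighted microscopic $L^2_{x,v}$})+(\text{pure }\dv\text{ hierarchy})$ with suitably chosen large constants so that all $\dx$-dissipative terms dominate the cross terms, exactly as in the proof of Lemma \ref{energy1z}; the resulting Lyapunov functional $E^{1,f}_{N,1}$ is manifestly equivalent to $E^1_{N,1}$ and its time derivative is bounded by $-\mu D^1_{N,1}$. For \eqref{G_4e} one replaces the top-level macroscopic contribution by a version that reserves $\|\Tdx(n,m,q)\|^2_{L^2_x}+\|\Tdx E\|^2_{L^2_x}+\|\Tdx^2 B\|^2_{L^2_x}$ on the right-hand side (taking $k=1$ in the weighted analogue of Lemma \ref{macro-en1}), mirroring exactly the step leading from \eqref{Gb} to \eqref{G_4c} in Lemma \ref{energy1z}. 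Smallness of $E^1_{N,1}(U_0)$ is used only once, to close the standard continuation argument guaranteeing $E^1_{N,1}(U(t))$ stays small for all $t>0$ so that the nonlinear terms are indeed absorbed on the left.
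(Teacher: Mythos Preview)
Your proposal is correct and matches the paper's approach: the paper does not give an explicit proof of Lemma~\ref{energy2a} but simply states it alongside Lemma~\ref{energy1z} under the heading ``Similar to Section~5.1,'' in parallel with how Lemma~\ref{energy2} in the two-species case is preceded by ``Repeating the proofs of Lemmas~\ref{macro-en}--\ref{micro-en}, we can show.'' Your sketch fills in exactly the details that this repetition would require---in particular the cancellation $(v\times B)\cdot\nabla_v w^2=2(v\times B)\cdot v=0$ that prevents the magnetic force from spoiling the weighted estimate---so there is nothing further to add.
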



With  these energy estimates, we can prove  Theorems~\ref{time3a} for  the nonlinear one-species VMB system~\eqref{1VMB3}--\eqref{1VMB3d}.

\begin{proof}[\underline{Proof of Theorem \ref{time3a}}]
Let $f,E,B$ be a solution to the  problem \eqref{1VMB3}--\eqref{1VMB3d} for $t>0$.  We can represent its solution in terms of the semigroup $e^{t\AA_2}$  by
 \bq
 (f,E,B)(t)=e^{t\AA_2}(f_0,E_0,B_0)+\intt e^{(t-s)\AA_2}(G,0,0)(s)ds,     \label{1Duh}
 \eq
where the nonlinear term $G$ is given by \eqref{1G1}.  For this global
solution $f$,  we define a functional $Q(t)$  for
any $t>0$ as
\bmas
 Q(t)=\sup_{0\le s\le t}\sum_{k=0}^1
 &\Big\{
    (1+s)^{1+\frac k4}\|\dx^k(f(s),\sqrt M)\|_{L^2_x}
   +(1+s)^{\frac58+\frac k4}\|\dx^k(f(s),v\sqrt M)\|_{L^2_x}
\\
&+(1+s)^{\frac34+\frac k2}\|\dx^k(f(s),\chi_4)\|_{L^2_x}+(1+s)^{\frac78+\frac k4}\|\dx^k\P_1f(s)\|_{L^2_{x,v}}
\\
&   +(1+s)^{\frac34+\frac k4}\ln(1+t)\|\dx^kE(s)\|_{L^2_x}+(1+s)^{\frac38+\frac k4} \|\dx^kB(s)\|_{L^2_x}
\\
&+(1+s)^{\frac58}
  (\| \P_1f(s)\|_{H^N_w} +\|\Tdx \P_0f(s)\|_{L^2(\R^3_{v},H^{N-1}_x)}+\|\Tdx (E,B)(s)\|_{H^{N-1}_x})\, \Big\},
 \emas

In the case of $\Tdx\cdot E_0=(f_0,\sqrt M)$ and $B_0=0$, we can obtain by
\eqref{1F_2}--\eqref{1B_2} and \eqref{1F_3a}--\eqref{1F_4a} that
  \bma
   \|\dxa (f(t),\chi_j)\|_{L^2_{x}}
   &\le
    C[(1+t)^{-\frac34-\frac{k}2}+(1+t)^{-\frac78-\frac{k}4}]
  (\|\da_x U_0\|_{Z^2}+\|\dx^{\alpha'}U_0\|_{Z^1})\nnm\\
  &\quad+C(1+t)^{-m-\frac12}\|\Tdx^{m}\da_x U_0\|_{Z^2}, \quad j=1,2,3, \label{de_1}\\
     \|\dxa (f(t),\chi_4)\|_{L^2_{x}}
   &\le
    C(1+t)^{-\frac34-\frac{k}2}
     ( \|\dxa U_0\|_{Z^2 }+ \|\dx^{\alpha'}(f_0,\chi_4)\|_{L^1_x }+  \|\dx^{\alpha'}\Tdx U_0\|_{Z^1}),\label{de_1a}\\
   \|\dxa \P_1f(t)\|_{L^2_{x,v}}
   &\le
    C(1+t)^{-\frac98-\frac{k}4}
     ( \|\dxa U_0\|_{Z^2 }+   \|\dx^{\alpha'}U_0\|_{Z^1})+C(1+t)^{-m-\frac12}\|\Tdx^{m}\da_x U_0\|_{Z^2},\label{de_2}\\
  \|\dxa B(t)\|_{L^2_x}
   &\le
    C(1+t)^{-\frac58-\frac{k}4}
     ( \|\dxa U_0\|_{Z^2}+   \|\dx^{\alpha'}U_0\|_{Z^1})+C(1+t)^{-m}\|\Tdx^{m}\da_x U_0\|_{Z^2},\label{de_4}
  \ema
where $(f,E,B)=e^{t\AA_2}U_0$ with $U_0=(f_0,E_0,B_0)$, $\alpha'\le \alpha$ and $k=|\alpha-\alpha'|$.

By Lemma \ref{e1},
we can estimate the nonlinear term $G(s)$ for $0\le s\le t$ in terms of $Q(t)$ as
 \bma
 \| G(s)\|_{L^2_{x,v}}
 &\le
 C\{\|wf\|_{L^{2,3}}\|f\|_{L^{2,6}}
    +\|E\|_{L^3_x}(\|wf\|_{L^{2,6}}
    +\|\Tdv f\|_{L^{2,6}})+\|B\|_{L^3_x}\|w\Tdv f\|_{L^{2,6}}\}
    \nnm\\
&\le C(1+s)^{-5/4}Q(t)^2,  \label{1GG_1}
\\
 \| G(s)\|_{L^{2,1}}
 & \le
 C\{ \|f\|_{L^2_{x,v}}\|w f\|_{L^2_{x,v}}
    +\| E\|_{L^3_x}(\|w f\|_{L^2_{x,v}}
    +\|\Tdv  f\|_{L^2_{x,v}})+\|B\|_{L^3_x}\|w\Tdv f\|_{L^2_{x,v}}\}
    \nnm\\
 &\le C(1+s)^{-1}Q(t)^2,\label{1GG_2}
 \ema
 and similarly
 \bma
 \|G(s)\|_{L^2_v(H^k_x)} \le C(1+s)^{-5/4}Q(t)^2,\label{1GG_3}
 \ema
 for $1\le k\le N-1$.
Then, it follows from \eqref{1D_2}, \eqref{1GG_1} and \eqref{1GG_2} that
 \bma
\|\Tdx^k(f(t),\sqrt M)\|_{L^2_x}&\le
C(1+t)^{-\frac54-\frac k2}(\|\Tdx^kU_0\|_{Z^2}+\|U_0\|_{Z^1})\nnm\\
&\quad+C\intt (1+t-s)^{-\frac54}(\|\Tdx^k G(s)\|_{L^2_{x,v}}+\|\Tdx^kG(s)\|_{L^{2,1}})ds\nnm\\
&\le
C\delta_0(1+t)^{-\frac54-\frac k2}+C(1+t)^{-1-\frac k4}Q(t)^2, \label{density_1}
 \ema
for $k=0,1$.

Similarly, in terms of \eqref{1D_3} and \eqref{de_1} we have
 \bma
\|\Tdx^k(f(t),v\sqrt M)\|_{L^2_x}&\le
C(1+t)^{-\frac58-\frac k4}(\|\Tdx^kU_0\|_{Z^2}+\|U_0\|_{Z^1}+\|\Tdx^2 U_0\|_{Z^2})\nnm\\
&\quad+C\intt (1+t-s)^{-\frac34-\frac {3k}8}(\|\Tdx^k
G(s)\|_{L^2_{x,v}}+\|\Tdx^k  G(s)\|_{L^{2,1}})ds\nnm\\
&\quad+C\intt (1+t-s)^{-2}\|\Tdx^{2+k}G\|_{L^2_{x,v}}ds\nnm\\
&\le C\delta_0(1+t)^{-\frac58-\frac k4}+C(1+t)^{-\frac58-\frac k4}Q(t)^2 \label{momentum_1},
 \ema for $k=0,1$.

In terms of \eqref{1D_4} and \eqref{de_1a},
we can estimate the macroscopic energy $(f(t),\chi_4)$ and its spatial derivative as
 \bma
  \|\Tdx^k(f(t),\chi_4)\|_{L^2_x} &\le
   C(1+t)^{-\frac34-\frac k2}(\|\Tdx^kU_0\|_{Z^2}+\|U_0\|_{Z^1})\nnm\\
   &\quad+C\intt (1+t-s)^{-\frac34-\frac k2}(\|\Tdx^kG(s)\|_{L^2_{x,v}}+\| (G(s),\chi_4)\|_{L^{1}_x} +\|\Tdx G(s)\|_{L^{2,1}})ds\nnm\\
&\le
 C\delta_0(1+t)^{-\frac34-\frac k2}+C(1+t)^{-\frac34-\frac k2}Q(t)^2, \label{energy_1}
\ema
for $k=0,1$, where we have used
$$
 \|(G(s),\chi_4)\|_{L^1_{x}}=\sqrt{\frac23}\|E\cdot m\|_{L^1_{x}}
 \le C(1+s)^{-\frac54}Q(t)^2.
$$
In addition, the microscopic part $ \P_1f(t)$ can be estimated by
\eqref{1D_5}  and \eqref{de_2} as follows
\bma
\| \Tdx^k\P_1f(t)\|_{L^2_{x,v}}
&\le C(1+t)^{-\frac78-\frac k4}(\|\Tdx^kU_0\|_{Z^2}+\|U_0\|_{Z^1}+\|\Tdx^2 U_0\|_{Z^2})
\nnm\\
&\quad+C\intt(1+t-s)^{-\frac98}
         (\|\Tdx^kG(s)\|_{L^2_{x,v}}+\|\Tdx^kG(s)\|_{L^{2,1}})ds
 \nnm\\
 &\quad+C\intt (1+t-s)^{-2}\|\Tdx^{2+k}G\|_{L^2_{x,v}}ds\nnm\\
&\le C\delta_0(1+t)^{-\frac78-\frac k4}+C(1+t)^{-\frac78-\frac k4}Q(t)^2,  \label{miscro1}
\ema
 for $k=0,1$.

Moreover, the electricity potential $E$ is bounded by
   \bma
 \|\Tdx^kE(t)\|_{L^2_x}
 &\le
  C(1+t)^{-\frac34-\frac k2}(\|\Tdx^kU_0\|_{Z^2}+\|U_0\|_{Z^1}+\|\Tdx^2 U_0\|_{Z^2})
\nnm\\
&\quad  +C\intt (1+t-s)^{-\frac34-\frac k2}(\|\Tdx^k G(s)\|_{L^2_{x,v}}+\|G(s)\|_{L^{2,1}})ds
\nnm\\
&\quad+C\intt (1+t-s)^{-2}\|\Tdx^{2+k}G\|_{L^2_{x,v}}ds\nnm\\
&\le
 C\delta_0(1+t)^{-\frac34-\frac k2}+C(1+t)^{-\frac34-\frac k4}\ln(1+t)Q(t)^2, \label{potential1}
\ema
for $k=0,1$.
And the magnetic potential $B$ is bounded by
   \bma
 \|\Tdx^k B(t)\|_{L^2_x}
 &\le
  C(1+t)^{-\frac38-\frac k4}(\|\Tdx^k U_0\|_{Z^2}+\|U_0\|_{Z^1}+\|\Tdx^{k+1} U_0\|_{Z^2})
\nnm\\
&\quad  +C\intt (1+t-s)^{-\frac58-\frac k4}(\| G(s)\|_{L^2_{x,v}}+\|G(s)\|_{L^{2,1}})ds
\nnm\\
&\quad+C\intt (1+t-s)^{-1}\|\Tdx^{k+1}G\|_{L^2_{x,v}}ds\nnm\\
&\le
 C\delta_0(1+t)^{-\frac38-\frac k4}+C(1+t)^{-\frac38-\frac k4}Q(t)^2, \label{potential3}
\ema for $k=0,1,2$.

Next, we estimate the higher order terms as below. By  a similar argument as in Theorem 1.3 in \cite{Duan4}, we have
\bma
E^{1,f}_{k,1}(U)(t)\le C(1+t)^{-3/4}(E_{k+1,1}(U_0)+(\delta_0+Q(t)^2)^2),\label{J_6}
\ema
for any integer $k \ge 4$.
Then
\bma
\|\Tdx^N(E,B)(t)\|_{L^2_x}\le& C(1+t)^{-\frac58}(\|\Tdx^NU_0\|_{Z^2}+\|U_0\|_{Z^1}+\|\Tdx^{N+1}U_0\|_{Z^2})\nnm\\
&+C\intt (1+t-s)^{-\frac78}(\|\Tdx^NG(s)\|_{L^2_{x,v}}+\|G(s)\|_{L^{2,1}})ds\nnm\\
&+C\intt (1+t-s)^{-1}\|\Tdx^{N+1}G(s)\|_{L^2_{x,v}}ds\nnm\\
\le &C\delta_0(1+t)^{-\frac58}+C(1+t)^{-\frac58}(\delta_0+Q(t)^2)^2,\label{mag_3}
\ema
where we have used
$$\|G(s)\|_{L^2_{v}(H^{N+1}_x)}\le E_{N+2,1}(U)(s)\le C(E_{N+3,1}(U_0)+(\delta_0+Q(t)^2)^2)(1+s)^{-\frac34}.
$$
 By \eqref{G_4a} and \eqref{J_5}, we have
\bma
&\Dt H^{1,f}_{N,1}(U)(t)+d_1\mu H^{1,f}_{N,1} (U)(t)\nnm\\
&\le C(\|\Tdx (n,m,q,E,B)(t)\|^2_{L^2_{x}}+\|\Tdx^2 B(t)\|^2_{L^2_{x}})+C\sum_{|\alpha|=N}\|\dxa(E,B)(t)\|^2_{L^2_{x}},
\ema
which and \eqref{J_6} lead to
\bma
H^{1,f}_{N,1}(U)(t)\le& e^{-d_1\mu t}H^{1,f}_{N,1}(U_0)+C\intt e^{-d_1\mu(t-s)}\sum_{|\alpha|=N}\|\dxa(E,B)(s)\|^2_{L^2_{x}}ds\nnm\\
&+C\intt e^{-d_1\mu(t-s)}(\|\Tdx (n,m,q,E,B)(s)\|^2_{L^2_{x}}+\|\Tdx^2 B(s)\|^2_{L^2_{x}})ds\nnm\\
\le& C(1+t)^{-\frac54}(\delta_0+Q(t)^2+(\delta_0+Q(t)^2)^2.\label{other}
\ema
By summing \eqref{density_1}--\eqref{potential3} and \eqref{other}, we have
$$Q(t)\le C(\delta_0+Q(t)^2)+C(\delta_0+Q(t)^2)^2,$$
from which \eqref{t4.2}--\eqref{t4.4} can be  verified provided that $\delta_0>0$ is small enough.
Similarly,  as Theorem \ref{time4}, we can prove \eqref{B_1e}--\eqref{B_3}.
\end{proof}

\bigskip
\noindent {\bf Acknowledgements:}
The research of the first author
was supported by the NNSFC grants No. 11171228, 11231006
and 11225102, and by the Key Project of Beijing Municipal Education
Commission no. CIT$\&$TCD2014\\
0323. The research of the second author was supported by the  NSFC-RGC Grant, N-CityU102/12. And research of
the third author was supported by the NNSFC grants No. 11301094 and Project supported by Beijing Postdoctoral Research Foundation No. 2014ZZ-96.
\medskip


\end{document}